\numberwithin{equation}{section}
\numberwithin{figure}{section}
\newcommand\qedsymbol{\hbox{$\Box$}}
\newcommand\qed{\relax\ifmmode\Box\else
  {\unskip\nobreak\hfil\penalty50\hskip1em\null\nobreak\hfil\qedsymbol
  \parfillskip=\z@\finalhyphendemerits=0\endgraf}\fi}
\newenvironment{proof}[1][{}]{\par\noindent Proof{#1}. }{\qed}
\DeclareMathOperator*{\colim}{colim}
\newcommand{\bfzero}{{\bf 0}}
\newcommand{\Isom}{\mathsf{Isom}}
\newcommand{\Tree}{\mathsf{Tree}}
\newcommand{\lp}{{\circlearrowleft}}
\newcommand{\nl}{\not{\circlearrowright}}
\newcommand{\grt}{{\mathfrak{grt}}}
\newcommand{\Cobar}{\mathrm{Cobar}}
\newcommand{\dGra}{\mathsf{dGra}}
\newcommand{\KGra}{\mathsf{KGra}}
\newcommand{\SGra}{\mathsf{SGra}}
\newcommand{\dfGC}{{\mathsf{dfGC}}}
\newcommand{\dgra}{\mathrm{dgra}}
\newcommand{\Hg}{{\mathsf{Hg}}}
\newcommand{\Cg}{{\mathrm{Cg}}}
\newcommand{\Lie}{{\mathsf{Lie}}}
\newcommand{\Com}{{\mathsf{Com}}}
\newcommand{\As}{{\mathsf{As}}}
\newcommand{\coCom}{{\mathsf{coCom}}}
\newcommand{\coAs}{{\mathsf{coAs}}}
\newcommand{\Ger}{{\mathsf{Ger}}}
\newcommand{\OC}{{\mathsf{OC}}}
\newcommand{\br}{\mathrm{br}} 
\newcommand{\Ch}{\mathsf{Ch}} 
\newcommand{\grVect}{\mathsf{grVect}}
\newcommand{\C}{{\mathscr{C}}}
\newcommand{\Op}{{\mathbb{OP}}}
\newcommand{\psop}{\Psi{\mathbb{OP}}}
\newcommand{\oc}{{\mathfrak{o} \mathfrak{c}}}
\newcommand{\GRT}{{\mathsf{GRT}}}
\newcommand{\Conv}{{\mathrm{Conv}}}
\newcommand{\Coder}{{\mathrm{Coder}}}
\newcommand{\mult}{{\mathrm{mult}}}
\newcommand{\CH}{\mathrm{CH}}
\newcommand{\Sh}{\mathrm{Sh}}
\newcommand{\End}{\mathsf{End}}
\newcommand{\Hom}{\mathrm{Hom}}
\newcommand{\nod}{{\mathrm{nod}}}
\renewcommand{\max}{{\mathrm{max}}}
\newcommand{\inv}{{\mathrm {inv} }}
\newcommand{\Hoch}{{\mathrm{Hoch}}}
\newcommand{\id}{\mathrm{id}}
\newcommand{\ad}{{\mathrm{ad}}}
\newcommand{\ti}[1]{{\tilde{#1}}}
\newcommand{\wt}[1]{{\widetilde{#1}}}
\newcommand{\und}[1]{{\underline{#1}}}
\newcommand{\sh}{\sharp}
\newcommand{\dia}{\diamond}
\newcommand{\Cbu}{C^{\bullet}}
\newcommand{\tG}{\widetilde{\Gamma}}
\newcommand{\al}{{\alpha}}
\newcommand{\la}{{\lambda}}
\newcommand{\bul}{{\bullet}}
\newcommand{\ww}{{\circ\, \circ}}
\newcommand{\ed}{{\bullet\hspace{-0.05cm}-\hspace{-0.05cm}\bullet}}
\newcommand{\pike}{{-\hspace{-0.10cm}\small{\bullet}}}
\newcommand{\mC}{{\mathfrak{C}}}
\newcommand{\mc}{\mathfrak{c}}
\newcommand{\md}{\mathfrak{d}}
\newcommand{\me}{\mathfrak{e}}
\newcommand{\mg}{\mathfrak{g}}
\newcommand{\ml}{\mathfrak{l}}
\newcommand{\mo}{{\mathfrak{o}}}
\newcommand{\om}{{\omega}}
\newcommand{\si}{{\sigma}}
\newcommand{\ga}{{\gamma}}
\newcommand{\ve}{{\varepsilon}}
\newcommand{\vs}{{\varsigma}}
\newcommand{\ka}{{\kappa}}
\newcommand{\G}{{\Gamma}}
\newcommand{\pa}{{\partial}}
\newcommand{\st}{{\mathsf{t}}}
\newcommand{\bs}{{\mathbf{s}}}
\newcommand{\bsi}{{\mathbf{s}^{-1}\,}}
\newcommand{\bt}{{\mathbf{t}}}
\newcommand{\bq}{{\mathbf{q}}}
\newcommand{\bu}{{\mathbf{u}}}
\newcommand{\cC}{{\cal C}}
\newcommand{\cN}{{\cal N}}
\newcommand{\cL}{{\cal L}}
\newcommand{\cD}{{\cal D}}
\newcommand{\cA}{\mathcal{A}}
\newcommand{\cF}{\mathcal{F}}
\newcommand{\cZ}{{\mathcal{Z}}}
\newcommand{\cW}{{\cal W}}
\newcommand{\cO}{{\cal O}}
\newcommand{\cV}{{\cal V}}
\newcommand{\bbK}{{\mathbb K}}
\newcommand{\bbR}{{\mathbb R}}
\newcommand{\bbZ}{{\mathbb Z}}
\newcommand{\bbQ}{{\mathbb Q}}
\newcommand{\La}{{\Lambda}}
\newcommand{\Ups}{{\Upsilon}}
\newcommand{\te}{\theta}
\newcommand{\de}{{\delta}}
\newcommand{\D}{{\Delta}}
\newcommand{\Lap}{\underline{\Delta}}
\newcommand{\sgn}{\mathrm{sgn}}
\newcommand{\hotimes}{{\,\hat{\otimes}\,}}
\date{}
\newtheorem{thm}{Theorem}[section]
\newtheorem{defi}[thm]{Definition}
\newtheorem{lem}[thm]{Lemma}
\newtheorem{cor}[thm]{Corollary}
\newtheorem{prop}[thm]{Proposition}
\newtheorem{claim}[thm]{Claim}
\newtheorem{cond}[thm]{Condition}
\newtheorem{example}[thm]{Example}
\newtheorem{pty}[thm]{Property}
\newtheorem{remark}[thm]{Remark}
\title{Stable Formality Quasi-isomorphisms for Hochschild Cochains}
\author{V. A. Dolgushev}
\date{}
\begin{document}

\large

\maketitle

\begin{flushright}
{\it To my mother}
\end{flushright}

~\\[-0.3cm]

\begin{abstract}
We consider $L_{\infty}$-quasi-isomorphisms for 
Hochschild cochains whose structure maps admit 
``graphical expansion''. We introduce the notion 
of stable formality quasi-isomorphism which formalizes 
such an $L_{\infty}$-quasi-isomorphism. We define a homotopy 
equivalence on the set of  stable formality quasi-isomorphisms and 
prove that the set of homotopy classes of 
stable formality quasi-isomorphisms form a torsor 
for the group corresponding to the zeroth cohomology
of the full (directed) graph complex.  
This result may be interpreted as a complete description 
of homotopy classes of formality quasi-isomorphisms 
for Hochschild cochains in the ``stable setting''.  
\end{abstract}

~\\
{\small
{\bf Keywords:} Operads, formality morphisms, graph complexes.\\
{\bf MSC 2010:} 18D50, 18G55, 55U15.}

\bigskip 
\bigskip

\begin{center}
{\bf Quasi-isomorphismes stables de formalit\'e pour les cocha\^{i}nes de Hochschild}\\[0.3cm]
{\small\bf  R\'esum\'e}\\[0.3cm]
\begin{minipage}[c]{0.9\linewidth}
{\small 
Nous consid\'erons $L_{\infty}$-quasi-isomorphismes pour les cocha\^{i}nes de 
Hochschild dont les applications structurelles admettent une ``expansion graphique''.
Nous introduisons la notion de quasi-isomorphisme stable de formalit\'e qui 
formalise les $L_{\infty}$-quasi-isomorphismes de ce genre. 
Nous d\'efinissons une \'equivalence homotopique sur l'ensemble 
des quasi-isomorphismes stables de formalit\'e. Nous prouvons que 
l'ensemble des classes homotopique de quasi-isomorphismes stables de 
formalit\'e est un torseur pour le groupe correspondant \`a la cohomologie de degr\'e z\'ero du graphe-complexe complet (direct).
Ce r\'esultat peut-\^{e}tre interpr\'et\'e comme une description compl\`ete des classes homotopiques de 
quasi-isomorphismes de formalit\'e pour les cocha\^{i}nes de 
Hochschild dans le ``cadre stable''.  
}
\end{minipage}
\end{center}

~\\
{\small
{\bf Mots-cl\'es:} Op\'erades, morphismes de formalit\'e, graphe-complexes.\\
{\bf MSC 2010:} 18D50, 18G55, 55U15.}

\newpage

\tableofcontents

\section{Introduction}

When a difficult problem is solved, it becomes even more 
challenging to describe all possible solutions to that problem.
In this paper we propose a framework in which this 
interesting question can be answered completely 
for Kontsevich's formality conjecture \cite{K-conj}
on Hochschild cochain complex. 

Kontsevich's formality conjecture \cite{K-conj}
states that there exists an  $L_{\infty}$ quasi-isomorphism 
from the graded Lie algebra $V_A$ of polyvector fields on an affine space  
to the dg Lie algebra of Hochschild cochains $\Cbu(A)$ of 
the algebra of functions $A$ on this affine space. 

In plain English the question was to find an infinite collection of 
maps 
\begin{equation}
\label{U_n}
U_n : \big( V_A \big)^{\otimes\, n} \to \Cbu(A)\,, \qquad n \ge 1
\end{equation}
compatible with the action of symmetric groups and
satisfying an intricate sequence of relations. The first relation 
says that $U_1$ is a map of complexes, the second relation
says that $U_1$ is compatible with the Lie brackets up to 
homotopy with $U_2$ serving as a chain homotopy and so on.

In his groundbreaking paper \cite{K} M. Kontsevich 
proposed a construction of such an 
$L_{\infty}$ quasi-isomorphism over reals. His construction is 
``natural'' in the following sense. Given polyvector fields
$v_1, v_2, \dots, v_n \in V_A$, the $n$-th component 
$U_n$ produces  a Hochschild cochain via contracting 
indices of  derivatives of various orders of polyvector fields and 
of functions which enter as arguments for this cochain. 

Thus each term in $U_n$ can encoded by a directed graph with 
two types of vertices: vertices of one type are reserved for polyvector fields 
and vertices of another type are reserved for functions. 

In this paper  we formalize the notion of $L_{\infty}$
quasi-isomorphism for Hochschild cochains which
are ``natural'' in the above sense. In other words, each term in 
$U_n$ is encoded by a graph with two types of vertices
and all the desired identities hold universally, i.e. on 
the level of linear combinations of graphs.
  
Such formality quasi-isomorphisms are defined for 
affine spaces of all\footnote{In fact they are also defined 
for any $\bbZ$-graded affine space.} (finite) dimensions simultaneously. 
This is why we refer to them as {\it stable formality 
quasi-isomorphisms} (SFQs). We show that the notion of 
homotopy equivalence of formality quasi-isomorphisms 
can also be formulated in this ``stable setting''. 
Thus we can talk about homotopy classes of stable 
formality quasi-isomorphisms.

In this paper we show (see Theorem \ref{thm:main}) that the 
set of homotopy classes of SFQs form a torsor 
for a pro-unipotent group which is obtained by 
exponentiating the Lie algebra $H^0(\dfGC)$, where 
$\dfGC$ denotes the full (directed) version of  
Kontsevich's graph complex \cite[Section 5]{K-conj}.

Following\footnote{See \cite[Corollary 3.6]{dgraphs} for more precise statement.} 
T. Willwacher \cite{Thomas} the group
$\exp(H^0(\dfGC))$ is isomorphic to the  
Grothendieck-Teichm\"uller group $\GRT_1$ introduced by 
V. Drinfeld in \cite{Drinfeld}.
Thus combining  Theorem \ref{thm:main} with the result 
of T. Willwacher \cite{Thomas}, we conclude that the 
set of homotopy classes of SFQs is a $\GRT_1$-torsor. 

Since a formality quasi-isomorphism for Hochschild cochains
provides us with a bijection between equivalence classes of 
star products and equivalence classes of formal Poisson structures, 
the result may be interpreted as a complete description of all 
(deformation) quantization procedures.

To give a precise definition of an SFQ, we recall \cite{HL-OCHA}, \cite{OCHA} that an open-closed
homotopy algebra ($\OC$-algebra) is a pair
$(\cV, \cA)$ of cochain complexes  
with the following data: 
\begin{itemize}

\item an $L_{\infty}$-structure on $\cV$, 

\item an $A_{\infty}$-structure on $\cA$ and 

\item an $L_{\infty}$-morphism from $\cV$ to the 
Hochschild cochain complex $\Cbu(\cA)$ of $\cA$\,.

\end{itemize}
We denote by $\OC$ the 2-colored dg operad which 
governs open-closed homotopy algebras. It is known that 
$\OC$ is free as an operad in the category of graded vector 
spaces. Furthermore, $\OC$ is the cobar construction of 
a (2-colored) cooperad closely connected with the homology of 
Voronov's Swiss Cheese operad \cite{Sasha-SC}.   

Let us also denote by $\KGra$ the 2-colored operad which 
is ``assembled'' from graphs used in Kontsevich's paper \cite{K}. 
This 2-colored operad extends the operad $\dGra$ of directed labeled graphs 
and acts naturally on the pair ``polyvector fields $V_A$ and polynomials $A$''. 
(See Section \ref{sec:dGra-KGra} for more details.)  

Let us observe that any map of (dg) operads from $\OC$ to $\KGra$ induces 
an open-closed homotopy algebra on the pair $(V_A, A)$\,. 
So we define an SFQ as 
a map of (dg) operads from $\OC$ to $\KGra$ subject to 
a few ``boundary conditions''. These conditions guarantee that
\begin{itemize}

\item  the  $L_{\infty}$-structure on polyvector fields coincides with the 
Lie algebra structure given by the Schouten-Nijenhuis bracket, 

\item the $A_{\infty}$-structure on $A$ coincides with the usual associative
(and commutative) algebra structure on polynomials, and 

\item the $L_{\infty}$-morphism from polyvector fields $V_A$ to Hochschild 
cochains  $\Cbu(A)$ starts with the Hochschild-Kostant-Rosenberg \cite{HKR}
embedding  
$$
V_A  \hookrightarrow \Cbu(A)\,.
$$

\end{itemize}

This operadic definition allows us to introduce a natural notion of homotopy equivalence
on the set of SFQs. We give this 
definition using an interpretation of SFQs
as Maurer-Cartan (MC) elements of an auxiliary dg Lie algebra.  

Let us denote by $\cZ^0(\dfGC)$ the Lie algebra of 
degree zero cocycles of the full directed version $\dfGC$ of  
Kontsevich's graph complex \cite[Section 5]{K-conj}. 
It is not hard to see that $\cZ^0(\dfGC)$ is a pro-nilpotent 
Lie algebra. Hence it can be exponentiated to the 
group   $\exp\big( \cZ^0(\dfGC) \big)$\,.

We show that the group $\exp\big( \cZ^0(\dfGC) \big)$ acts on SFQs
and this action descends to an action of the group
$\exp\big(H^0(\dfGC) \big)$ on homotopy classes of SFQs.

Finally, we prove that this action of  $\exp\big(H^0(\dfGC) \big)$ 
on homotopy classes is simply transitive. 

Specialists can probably start reading this paper 
with Section \ref{sec:dGra-KGra}. 
The goal of Section \ref{sec:prelim} is mostly to fix conventions
and remind a few constructions for colored (co)operads.
In Section \ref{sec:dGra-KGra}, we define the operad of graded vector spaces 
$\dGra$ and its 2-colored extension $\KGra$\,. In this section 
we also introduce a natural action of $\KGra$ on the pair 
 ``polyvector fields $V_A$ and polynomials $A$''. 
In Section \ref{sec:OC}, we remind the (dg) operad $\OC$ which 
governs open-closed homotopy algebras \cite{OCHA}.  
In Section \ref{sec:stable}, we introduce SFQs
and define a notion of homotopy equivalence between them.
Section \ref{sec:dfGC} is devoted to the full graph complex $\dfGC$ and
its ``action'' on SFQs. The main result of this paper (Theorem \ref{thm:main})
is stated at the end of Section \ref{sec:dfGC}. Its proof occupies Section \ref{sec:transit} 
and \ref{sec:free} and it depends on a few technical statements which 
are proved in Appendices at the end of the paper.

~\\
{\bf Acknowledgment.}
In many respects this work was inspired by 
papers \cite{K}, \cite{K-conj}, \cite{K-nc-symp}, \cite{K-mot}
and \cite{Thomas} and I would like to thank Thomas Willwacher 
for numerous discussions of his work  \cite{Thomas} and for his 
comments on the original draft of this paper.  The ideas of paper \cite{Thomas-stable}
by Thomas Willwacher were used to streamline the proof of the fact that the action of 
$\exp\big(H^0(\dfGC)\big)$ on the set of homotopy classes of SFQs is free.
In this respect, the current version of this manuscript benefited from paper \cite{Thomas-stable}.
I would like to thank Chris Rogers for collaboration
on \cite{notes}, \cite{dgraphs}, and for  numerous discussions.
The results of this paper were presented at multiple seminars, 
at  XXX Workshop on Geometric Methods in Physics in June of 2011
(Bialowieza, Poland) and at Geometry and Physics (GAP) XI in 
August of 2013 (Pittsburgh, PA).
I would like to thank Murray Gerstenhaber and Jim Stasheff for   
the comments they made during my talk on this paper at 
their Deformation Theory Seminar.  I would like to thank an anonymous referee 
for carefully reading my manuscript and for her/his remarks.
I would like to thank Elena Roubtsov and 
Volodya Roubtsov for their help with the French version of the abstract for this paper. 
I would like to thank my brother-in-law Igal Vainer
for his help and encouragement. I would also like to acknowledge 
the following NSF grants DMS 0856196, DMS-1124929,  
DMS-1161867, and DMS-1501001. 

Many years ago, my mother abandoned her 
unfinished PhD thesis in mathematics to be able to devote more time to my brother and me when we 
were kids. I would like to thank my mother for her devotion to us and {\it humbly devote this paper to her.}

\section{Preliminaries}
\label{sec:prelim}

We denote by $\bbK$ a field of characteristic zero. 
Our underlying symmetric monoidal category $\mC$ is 
either the category $\grVect_{\bbK}$ of $\bbZ$-graded
$\bbK$-vector spaces or the category 
$\Ch_{\bbK}$ of unbounded {\it cochain} complexes of $\bbK$-vector spaces.  
In this paper, we use exclusively cohomological conventions. 
The notation $\ad_{\xi}$ is reserved for the adjoint 
action $[\xi, ~]$ of a vector $\xi$ in a Lie algebra and 
the expression $\CH(x,y)$ denotes the Campbell-Hausdorff 
series in variables $x$ and $y$.  

The notation $S_n$ is reserved for the group of permutations 
of the set $\{1,2, \dots, n\}$ and $\Sh_{p_1,p_2, \dots, p_k} $,
with $p_i \ge 0$ and $p_1 + p_2 + \dots  + p_k =n$,
denotes the subset 
of $(p_1,p_2, \dots, p_k)$-shuffles in $S_n$, i.e.
\begin{equation}
\label{Sh}
\Sh_{p_1,p_2, \dots, p_k} = 
\Big\{\si \in S_n ~~\Big|~~  \si(1) < \dots < \si(p_1), \end{equation}
$$
\si(p_1+1) < \dots < \si(p_1+p_2),~~ \dots,~~  \si(n-p_k+1) < \dots < \si(n)
  \Big\}\,.
$$
We often denote by $\id$ the identity element of $S_n$ without 
specifying the number $n$.

We denote by $\Com$ (resp. $\As$) the operad 
which governs commutative (and associative) algebras without unit 
(resp. associative algebras without unit). The notation $\Lie$ is 
reserved for the operad which governs Lie algebras. 
Dually, we denote by $\coCom$ (resp. $\coAs$) the cooperad 
which governs cocommutative (and coassociative) coalgebras without counit 
(resp. coassociative coalgebras without counit). 

The notation $\La$ is reserved for the following collection in $\grVect_{\bbK}$
\begin{equation}
\label{La}
\La(n) = \begin{cases}
\bs^{1-n} \sgn_{n}  \qquad {\rm if} ~~ n \ge 1\,,  \\
 \bfzero  \qquad  \qquad  \quad {\rm if} ~~ n = 0\,,
\end{cases}
\end{equation}
where $\sgn_n$ is the sign representation of $S_n$\,.

The collection \eqref{La} is equipped with a natural 
structure of an operad and a natural structure of a cooperad.
Namely,  the $i$-th elementary insertion and the 
$i$-th elementary co-insertion are given by the formula
\begin{equation}
\label{La-ins}
1_n \circ_i 1_k  = (-1)^{(1-k) (n-i)} 1_{n+k-1}
\end{equation}
and the formula 
\begin{equation}
\label{La-co-ins}
\D_i (1_{n+k-1}) =  (-1)^{(1-k) (n-i)} 1_n \otimes 1_k \,,
\end{equation}
respectively. Here $1_m$ denotes the generator 
$\bs^{1-m} 1 \in \bs^{1-m} \sgn_{m}$\,.

For an operad $\cO$ (resp. a cooperad $\cC$) we denote 
by $\La \cO$ (resp. $\La\cC$) the operad (resp. the cooperad) which 
is obtained from $\cO$ (resp. $\cC$)
by tensoring with $\La$\,. For example, a $\La\Lie$-algebra in $\grVect_{\bbK}$ is
a graded vector space $\cV$ equipped with the binary operation: 
$$
\{\,,\, \} : \cV \otimes  \cV \to \cV 
$$
of degree $-1$ satisfying the identities:  
$$
\{v_1,v_2\} = (-1)^{|v_1| |v_2|} \{v_2, v_1\}\,,
$$
$$
\{\{v_1, v_2\} , v_3\} +  
(-1)^{|v_1|(|v_2|+|v_3|)} \{\{v_2, v_3\} , v_1\} +
(-1)^{|v_3|(|v_1|+|v_2|)} \{\{v_3, v_1\} , v_2\}  = 0\,,
$$
where $v_1, v_2, v_3 $ are homogeneous vectors in $\cV$\,.

The operad $\La\Lie$ has the following free resolution 
\begin{equation}
\label{LaLie-infty}
\La\Lie_{\infty} = \Cobar (\La^2 \coCom) 
\end{equation}
which we use to define an $\infty$-version of $\La\Lie$-algebra
structure. Thus a $\La\Lie_{\infty}$-{\it structure} on a cochain complex 
$\cV$ is a MC element $Q$ in the Lie algebra 
$$
\Coder\big( \La^2 \coCom(\cV) \big) 
$$
of coderivations of the cofree coalgebra 
$\La^2 \coCom(\cV)$ subject to the auxiliary technical condition 
$$
Q \Big |_{\cV} = 0\,.
$$
A $\La\Lie_{\infty}$-{\it morphism} between  $\La\Lie$-algebras 
$(\cV, Q)$ and $(\cW, \wt{Q} )$ is a homomorphism of the 
cofree coalgebras 
$$
\La^2 \coCom(\cV) \qquad \textrm{and} \qquad \La^2 \coCom(\cW)
$$
compatible with the differentials $\pa_{\cV}  + \ad_Q$ and 
$\pa_{\cW} + \ad_{\wt{Q}}$ on $\La^2 \coCom(\cV) $ and
$\La^2 \coCom(\cW)$, respectively.  

It is not hard to see that $\La\Lie_{\infty}$-algebra
structures on a cochain complex $\cV$ are in a natural bijection with 
$L_{\infty}$-algebra structures on $\bsi \cV$\,. Moreover, it is very easy to switch back and forth 
between these algebra structures. However, for our purposes, it is much more convenient to 
work with the operad \eqref{LaLie-infty} versus the operad 
$\Cobar(\La\coCom)$ which governs $L_{\infty}$-algebras.
So, in the bulk of the paper, we adhere to the former choice.

A directed graph $\G$ consists of two finite sets $V(\G)$, $E(\G)$ and
a map $\me : E(\G) \to V(\G) \times  V(\G)$. Elements of $V(\G)$ are called 
vertices and elements of $E(\G)$ are called edges.  In this paper, we 
consider exclusively graphs without loops (i.e. cycles of length one).
In other words, the image of the map $\me$ has the empty intersection 
with the diagonal in $V(\G) \times  V(\G)$. Although we do consider graphs with the 
empty set of edges, we will tacitly assume that the set of vertices is always non-empty.
 
For example, the  graph $\G$ shown in figure \ref{fig:G-dir} has $V(\G) = \{1,2,3,4,5 \}$ and
$E(\G) = \{a, b, c, d\}$ with $\me(a)=(3,1),$  $\me(b) = (3,2)$,  and $\me(c)=  \me(d) = (2,3)$\,. 
\begin{figure}[htp] 
\centering 
\begin{minipage}[t]{0.45\linewidth} 
\centering 
\begin{tikzpicture}[scale=0.5, >=stealth']
\tikzstyle{w}=[circle, draw, minimum size=3, inner sep=1]
\tikzstyle{b}=[circle, draw, fill, minimum size=3, inner sep=1]
\node [b] (b1) at (0,0) {};
\draw (-0.4,0) node[anchor=center] {{\small $1$}};
\node [b] (b3) at (2,0) {};
\draw (2, 0.5) node[anchor=center] {{\small $3$}};
\node [b] (b2) at (6,0) {};
\draw (5.9, 0.6) node[anchor=center] {{\small $2$}};
\node [b] (b4) at (2,2) {};
\draw (2, 2.6) node[anchor=center] {{\small $4$}};
\node [b] (b5) at (5,2) {};
\draw (5, 2.6) node[anchor=center] {{\small $5$}};
\draw [->] (b3) edge (b1);
\draw (1,0.4) node[anchor=center] {{\small $a$}};
\draw [->] (b3) ..controls (3,0.5) and (5,0.5) .. (b2);
\draw (4,0.8) node[anchor=center] {{\small $b$}}; 
\draw [<-] (b3) ..controls (3,-0.5) and (5,-0.5) .. (b2);
\draw (4,-0.7) node[anchor=center] {{\small $c$}}; 
\draw [<-] (b3) ..controls (3,-2) and (5,-2) .. (b2);
\draw (4,-1.9) node[anchor=center] {{\small $d$}}; 
\end{tikzpicture}
\caption{An example of a directed graph} \label{fig:G-dir}
\end{minipage}
\hspace{0.03\linewidth}
\begin{minipage}[t]{0.45\linewidth} 
\centering 
\begin{tikzpicture}[scale=0.5, >=stealth']
\tikzstyle{w}=[circle, draw, minimum size=3, inner sep=1]
\tikzstyle{b}=[circle, draw, fill, minimum size=3, inner sep=1]
\node [b] (b1) at (0,0) {};
\draw (-0.4,0) node[anchor=center] {{\small $1$}};
\node [b] (b2) at (1.5,1.5) {};
\draw (1.5,2.1) node[anchor=center] {{\small $2$}};
\node [b] (b3) at (1.5,-1.5) {};
\draw (1.5,-2.1) node[anchor=center] {{\small $3$}};
\node [b] (b4) at (3,0) {};
\draw (3,0.6) node[anchor=center] {{\small $4$}};
\draw (b1) edge (b2);
\draw (0.7,1.2) node[anchor=center] {{\small $a$}};
\draw (b1) edge (b3);
\draw (0.7,-1.2) node[anchor=center] {{\small $b$}};
\end{tikzpicture}
\caption{An undirected graph $\G'$} \label{fig:G-undir}
\end{minipage}
\end{figure}

An undirected graph (or simply a graph) $\G$ consists of two finite sets $V(\G)$, $E(\G)$ and
a map $\displaystyle \me : E(\G) \to V(\G)^{[2]}$, where $ V(\G)^{[2]}$ is the set of 
all unordered pairs of (distinct) elements of $V(\G)$. 
For example, the graph $\G'$ shown in figure \ref{fig:G-undir} has 
$V(\G') = \{1,2,3,4\}$, $E(\G') = \{a,b\}$, $\me(a) = \{1,2\} =\{2,1\}$, 
and $\me(b) = \{1,3\} = \{3,1\}$. 

A valency of a vertex $v$ in a (directed) graph $\G$ is the number of 
edges incident to $v$. For example, the valency 
of vertex $2$ in the graph in figure \ref{fig:G-dir} is $3$ and the valency 
of vertex $1$ in the graph in figure \ref{fig:G-undir} is $2$.

In this paper, we mostly deal with directed graphs which do not have 
multiple edges with the same direction. For such graphs $\G$, we will identify 
$E(\G)$ with the corresponding subset of ordered pairs of 
vertices.  
Furthermore, if a graph $\G'$ is undirected and has no 
multiple edges then we will identify $E(\G')$ with the corresponding 
subset of unordered pairs of vertices. For example, for the graph $\G'$ 
in figure \ref{fig:G-undir}, $E(\G')$ can be identified with the set of 
unordered pairs $\{\{1,2\}, \{1,3\} \}$.

\subsection{Trees} 
\label{sec:trees-ord}
A connected graph without cycles is called a tree.
In this paper, we tacitly assume that all trees are rooted and the root vertex 
has always valency $1$. (Such trees are sometimes called 
{\it planted}). The remaining vertices of valency $1$ are 
called {\it leaves}. A vertex is called {\it internal} if it is 
neither a root nor a leaf. We always orient trees in the 
direction towards the root. Thus every internal vertex 
has at least $1$ incoming edge and exactly $1$ outgoing edge. 
An edge adjacent to a leaf is called {\it external}.

Let us recall that for every  planar tree $\bt$ the set of 
its vertices is equipped with a natural total order.
To define this total order on the set $V(\bt)$ of all vertices 
of $\bt$ we introduce the function 
\begin{equation}
\label{cN}
\cN:   V(\bt) \to V(\bt)\,.
\end{equation}
To a non-root vertex $v$ the function 
$\cN$ assigns the next vertex along the (unique) path connecting $v$ to the 
root vertex. Furthermore $\cN$ sends the root vertex to the root vertex. 

Let $v_1, v_2$ be two distinct vertices of $\bt$\,. 
If $v_1$ lies on the path which connects $v_2$ to the root 
vertex then we declare that $v_1 < v_2$.
Similarly, if $v_2$ lies on the path which connects $v_1$ to the root 
vertex then we declare that $v_2 < v_1$.
If neither of the above options realize then there exist 
numbers $k_1$ and $k_2$ such that
\begin{equation}
\label{same-vertex}
\cN^{k_1}(v_1) = \cN^{k_2}(v_2)
\end{equation}
but 
$$
\cN^{k_1-1}(v_1) \neq \cN^{k_2-1}(v_2)\,. 
$$
Since the tree $\bt$ is planar the set of $\cN^{-1} (\cN^{k_1}(v_1))$
is equipped with a total order. Furthermore, since both 
vertices $\cN^{k_1-1}(v_1) $ and  $\cN^{k_2-1}(v_2)$ belong to 
the set  $\cN^{-1}(\cN^{k_1}(v_1))$, we may compare them with 
respect to this order. We declare that, if  $\cN^{k_1-1}(v_1) < \cN^{k_2-1}(v_2)$, then $v_1 <  v_2$.
Otherwise we set $v_2 < v_1$\,. 

It is not hard to see that the resulting relation $<$ on
$V(\bt)$ is indeed a total order. 

We have an obvious bijection between the set of 
edges $E(\bt)$ of a tree $\bt$ and the subset of vertices: 
\begin{equation}
\label{no-root}
V(\bt) \setminus \{{\rm root~vertex}\}\,.
\end{equation}
This bijection assigns to a vertex $v$ in (\ref{no-root}) its 
outgoing edge. 

Thus the canonical total order on the set (\ref{no-root}) gives 
us a natural total order on the set of edges $E(\bt)$\,. 

For our purposes we also extend the total orders 
on the sets $V(\bt) \setminus \{{\rm root~vertex}\}$ 
and $E(\bt)$ to the disjoint union
\begin{equation}
\label{vertices-edges}
\Big(V(\bt) \setminus \{{\rm root~vertex}\} \Big) \sqcup  E(\bt)
\end{equation}
by declaring that a vertex is bigger than its outgoing edge. 
For example, the root edge is the minimal element 
in the set (\ref{vertices-edges}).

%
%

\subsubsection{Colored trees, labeled colored trees} 
Let $\Xi$ be a non-empty finite totally ordered set.   
We will call elements of $\Xi$ colors. 

Let $\bt$ be a tree and $v$ be an internal vertex of $\bt$\,. 
Let us denote by $E_v(\bt)$ the set of edges terminating at 
$v$\,. Recall that a planar structure on a tree $\bt$ is 
nothing but a choice of total orders on the sets $E_v(\bt)$
for all internal vertices $v$\,.

A $\Xi$-{\it colored planar tree} is a planar tree $\bt$ equipped 
with a map 
$$
c_{\bt}: E(\bt) \to \Xi 
$$
which satisfies the following condition
\begin{cond}
\label{cond:color}
The restriction of the map $c_{\bt}$ to the subset 
$E_v(\bt) \subset E(\bt)$
$$
c_{\bt} \Big|_{E_v(\bt)} ~ : ~  E_v(\bt) \to \Xi
$$
is a monotonous function for every internal vertex $v$\,.
\end{cond}
We refer to the value $c_{\bt}(e)$ of $c_{\bt}$ at $e$ as 
{\it the color of the edge} $e$\,.

Using the obvious bijection between the leaves and 
the external edges we assign to each leaf the color 
of its adjacent edge. We denote the resulting color 
function by $c_{\bt, l}$
\begin{equation}
\label{color-leaves}
c_{\bt, l} : L(\bt) \to \Xi\,,
\end{equation}    
where $L(\bt)$ is the set of leaves of $\bt$\,.

Using the function \eqref{color-leaves} we 
split the set  $L(\bt)$ into the disjoint union  
\begin{equation}
\label{leaves-union}
L(\bt) =  \bigsqcup_{\chi \in \Xi} c_{\bt, l}^{-1}(\chi)\,.
\end{equation}

We now define a {\it labeled $\Xi$-colored planar tree} as a
$\Xi$-colored planar tree $\bt$ equipped with (not necessarily 
monotonous) injective maps
\begin{equation}
\label{labeling}
\ml_{\chi} :   \{1, 2, \dots, n_{\chi} \} \to c_{\bt, l}^{-1}(\chi)\,,
\end{equation}
where $n_{\chi}$ are non-negative integers satisfying the 
obvious condition $n_{\chi} \le |c_{\bt, l}^{-1}(\chi)|$\,.
The collection of numbers $\{n_{\chi}\}_{\chi}$ is considered as 
a part of the data incorporated in a labeling of a tree.  

Leaves belonging to the union
$$
\bigsqcup_{\chi \in \Xi} \ml_{\chi}(\{1,2, \dots, n_{\chi}\})
$$
are called {\it labeled}. Furthermore,  
a vertex $x$ of a labeled colored planar tree $\bt$ is called 
{\it nodal} if it is neither a root vertex, nor a labeled leaf. 
We denote by $V_{\nod}(\bt)$ the set of all nodal vertices of 
$\bt$. Keeping in mind the canonical total order on 
the set of all vertices of $\bt$ we say things like
``the first nodal vertex'', ``the second nodal vertex'', and
``the $i$-th nodal vertex''.

\begin{example}
In this paper, the set $\Xi$ is often the two-element 
set\footnote{The notation for colors comes from string theory 
\cite{Zwiebach}. $\mo$ refers to open strings and $\mc$ refers to 
closed strings.}  $\{\mc, \mo\}$ with $\mc < \mo$\,. 
Figure \ref{fig:exam-color} gives us an example 
of a labeled $\{\mc, \mo\}$-colored (or simply $2$-colored)
planar tree. Throughout this paper edges of color $\mc$ 
are drawn solid and edges of color $\mo$ are drawn dashed. 
In addition, we use small white circles for 
nodal vertices and small black circles for labeled 
leaves and the root vertex.  
\begin{figure}[htp]
\centering
\begin{tikzpicture}[scale=0.5, >=stealth']
\tikzstyle{w}=[circle, draw, minimum size=3, inner sep=1]
\tikzstyle{b}=[circle, draw, fill, minimum size=3, inner sep=1]
\node [b] (mc3) at (1,8) {};
\draw (1,8.6) node[anchor=center] {{\small $3_{\mc}$}};
\node [b] (mc1) at (3,8) {};
\draw (3,8.6) node[anchor=center] {{\small $1_{\mc}$}};
\node [b] (mc5) at (5,8) {};
\draw (5,8.6) node[anchor=center] {{\small $5_{\mc}$}};
\node [b] (mo2) at (7,8) {};
\draw (7,8.6) node[anchor=center] {{\small $2_{\mo}$}};
\node [b] (mo3) at (9,8) {};
\draw (9,8.6) node[anchor=center] {{\small $3_{\mo}$}};
\node [b] (mc2) at (11,8) {};
\draw (11,8.6) node[anchor=center] {{\small $2_{\mc}$}};
\node [b] (mc4) at (13,8) {};
\draw (13,8.6) node[anchor=center] {{\small $4_{\mc}$}};
\node [b] (mo1) at (15,8) {};
\draw (15,8.6) node[anchor=center] {{\small $1_{\mo}$}};
\node [w] (v2) at (2,6) {};
\node [w] (v3) at (7,6) {};
\node [w] (v4) at (13,6) {};
\node [w] (v1) at (7,3) {};
\node [b] (r) at (7,1) {};
\draw (v2) edge (mc3);
\draw (v2) edge (mc1);
\draw (v3) edge (mc5);
\draw [dashed] (v3) edge (mo2);
\draw [dashed] (v3) edge (mo3);
\draw (v4) edge (mc2);
\draw (v4) edge (mc4);
\draw [dashed] (v4) edge (mo1);
\draw (v1) edge (v2);
\draw [dashed] (v1) edge (v3);
\draw [dashed] (v1) edge (v4);
\draw [dashed] (r) edge (v1);
\end{tikzpicture}
\caption{\label{fig:exam-color} Solid edges carry the color $\mc$ and 
dashed edges carry the color $\mo$}
\end{figure}
Figure \ref{fig:exam-nod0} shows an example of a labeled 
2-colored planar tree which has two unlabeled leaves 
(a.k.a. two univalent nodal vertices).
\begin{figure}[htp]
\centering
\begin{tikzpicture}[scale=0.5, >=stealth']
\tikzstyle{w}=[circle, draw, minimum size=3, inner sep=1]
\tikzstyle{b}=[circle, draw, fill, minimum size=3, inner sep=1]
\node [b] (mc3) at (1,8) {};
\draw (1,8.6) node[anchor=center] {{\small $3_{\mc}$}};
\node [b] (mc1) at (3,8) {};
\draw (3,8.6) node[anchor=center] {{\small $1_{\mc}$}};
\node [b] (mc4) at (5,8) {};
\draw (5,8.6) node[anchor=center] {{\small $4_{\mc}$}};
\node [b] (mo2) at (7,8) {};
\draw (7,8.6) node[anchor=center] {{\small $2_{\mo}$}};
\node [w] (v4) at (9,8) {};

\node [b] (mc2) at (11,8) {};
\draw (11,8.6) node[anchor=center] {{\small $2_{\mc}$}};
\node [w] (v6) at (13,8) {};

\node [b] (mo1) at (15,8) {};
\draw (15,8.6) node[anchor=center] {{\small $1_{\mo}$}};
\node [w] (v2) at (2,6) {};
\node [w] (v3) at (7,6) {};
\node [w] (v5) at (13,6) {};
\node [w] (v1) at (7,3) {};
\node [b] (r) at (7,1) {};
\draw (v2) edge (mc3);
\draw (v2) edge (mc1);
\draw (v3) edge (mc4);
\draw [dashed] (v3) edge (mo2);
\draw [dashed] (v3) edge (v4);
\draw (v5) edge (mc2);
\draw (v5) edge (v6);
\draw [dashed] (v5) edge (mo1);
\draw (v1) edge (v2);
\draw [dashed] (v1) edge (v3);
\draw [dashed] (v1) edge (v5);
\draw [dashed] (r) edge (v1);
\end{tikzpicture}
\caption{\label{fig:exam-nod0} The 4-th and the 6-th nodal vertices are univalent}
\end{figure} 
\end{example}

$\Xi$-colored planar corollas will play an important role. 
In particular, we will need a map which assigns 
a $\Xi$-colored planar corolla $\ka(\bt)$ to a labeled $\Xi$-colored planar tree $\bt$\,.
To define this map we observe that  $\Xi$-colored planar corollas are in bijection 
with the arrays $\{n_{\chi}; \chi_{root}\}_{\chi \in \Xi}$ where 
$n_{\chi}$ are  non-negative integers and $\chi_{root}$ is an element in $\Xi$\,.
More precisely, the array  $\{n_{\chi}; \chi_{root}\}_{\chi \in \Xi}$ corresponding 
to a $\Xi$-colored planar corolla $\bq$ has $\chi_{root}$ equal to the color of 
the root edge of $\bq$ and
\begin{equation}
\label{ka-array}
n_{\chi} = |c^{-1}_{\bq,l}(\chi)|\,. 
\end{equation} 
For example, the 2-colored corolla depicted on 
figure \ref{fig:corolla} corresponds to the array 
$\{ 2, 1; \mo \}$\,.
\begin{figure}[htp]
\centering
\begin{tikzpicture}[scale=0.5, >=stealth']
\tikzstyle{w}=[circle, draw, minimum size=3, inner sep=1]
\tikzstyle{b}=[circle, draw, fill, minimum size=3, inner sep=1]
\node [b] (mc1) at (0,4) {};
\node [b] (mc2) at (2,4) {};
\node [b] (mo1) at (4,4) {};
\node [w] (v) at (2,2) {};
\node [b] (r) at (2,0) {};
\draw (v) edge (mc1);
\draw  (v) edge (mc2);
\draw [dashed] (v) edge (mo1);
\draw [dashed] (r) edge (v);
\end{tikzpicture}
\caption{\label{fig:corolla}  The corolla corresponding to the array $\{2,1; \mo\}$}
\end{figure}

The degenerate array  $\{n_{\chi} = 0; \chi_{root}\}_{\chi \in \Xi}$ {\it is} allowed and 
it corresponds to the corolla depicted in figure \ref{fig:corolla-0}.
\begin{figure}[htp]
\centering
\begin{tikzpicture}[scale=0.5, >=stealth']
\tikzstyle{w}=[circle, draw, minimum size=3, inner sep=1]
\tikzstyle{b}=[circle, draw, fill, minimum size=3, inner sep=1]
\node [w] (v) at (2,3) {};
\node [b] (r) at (2,0) {};
\draw (2.2,1.5) node[anchor=center] {{\small $\chi_{root}$}};
\draw (r) edge (2,1.1) (2,1.9) edge (v);
\end{tikzpicture}
\caption{\label{fig:corolla-0}  The corolla corresponding to the degenerate array 
$\{n_{\chi} = 0; \chi_{root}\}_{\chi \in \Xi}$ }
\end{figure}

We now notice that every labeled $\Xi$-colored planar tree
$\bt$ gives us the array $\{n_{\chi}; \chi_{root}\}_{\chi \in \Xi}$ with 
$\chi_{root}$ being the color of the root edge of $\bt$ and 
$n_{\chi}$ being the numbers which enter the labeling 
\eqref{labeling} of the tree $\bt$\,.
We denote by $\ka(\bt)$ the $\Xi$-colored planar corolla 
corresponding to this array.

For example, the corolla $\ka(\bt)$
corresponding to the labeled $2$-colored 
planar tree $\bt$ in figure \ref{fig:exam-color} is shown 
in figure \ref{fig:ka-bt}. 
Similarly,  the corolla $\ka(\bt')$
corresponding to the labeled $2$-colored 
planar tree $\bt'$ in figure \ref{fig:exam-nod0} is shown 
in figure \ref{fig:ka-bt-pr}.
\begin{figure}[htp]
\centering
\begin{minipage}[t]{0.45\linewidth}
\centering
\begin{tikzpicture}[scale=0.5, >=stealth']
\tikzstyle{w}=[circle, draw, minimum size=3, inner sep=1]
\tikzstyle{b}=[circle, draw, fill, minimum size=3, inner sep=1]
\node [b] (mc1) at (0,4) {};
\node [b] (mc2) at (1,4) {};
\node [b] (mc3) at (2,4) {};
\node [b] (mc4) at (3,4) {};
\node [b] (mc5) at (4,4) {};
\node [b] (mo1) at (5,4) {};
\node [b] (mo2) at (6,4) {};
\node [b] (mo3) at (7,4) {};
\node [w] (v) at (3.5,2) {};
\node [b] (r) at (3.5,0.5) {};
\draw (v) edge (mc1);
\draw  (v) edge (mc2);
\draw  (v) edge (mc3);
\draw  (v) edge (mc4);
\draw  (v) edge (mc5);
\draw [dashed] (v) edge (mo1);
\draw [dashed] (v) edge (mo2);
\draw [dashed] (v) edge (mo3);
\draw [dashed] (r) edge (v);
\end{tikzpicture}
\caption{\label{fig:ka-bt}  The corolla $\ka(\bt)$.}
\end{minipage}
\begin{minipage}[t]{0.45\linewidth}
\centering
\begin{tikzpicture}[scale=0.5, >=stealth']
\tikzstyle{w}=[circle, draw, minimum size=3, inner sep=1]
\tikzstyle{b}=[circle, draw, fill, minimum size=3, inner sep=1]
\node [b] (mc1) at (0,4) {};
\node [b] (mc2) at (1,4) {};
\node [b] (mc3) at (2,4) {};
\node [b] (mc4) at (3,4) {};
\node [b] (mo1) at (4,4) {};
\node [b] (mo2) at (5,4) {};
\node [w] (v) at (2.5,2) {};
\node [b] (r) at (2.5,0.5) {};
\draw (v) edge (mc1);
\draw  (v) edge (mc2);
\draw  (v) edge (mc3);
\draw  (v) edge (mc4);
\draw [dashed] (v) edge (mo1);
\draw [dashed] (v) edge (mo2);
\draw [dashed] (r) edge (v);
\end{tikzpicture}
\caption{\label{fig:ka-bt-pr}  The corolla $\ka(\bt')$.}
\end{minipage}
\end{figure}

\begin{remark}
\label{rem:non-color}
It is clear that, if $\Xi$ is a one-point set, then 
$\Xi$-colored planar trees are exactly non-colored 
planar trees and $\Xi$-colored corollas are in bijection 
with non-negative integers. 
\end{remark}

\subsubsection{Groupoid of labeled (colored) planar trees}

For our purposes we need to upgrade the set 
of labeled $\Xi$-colored planar trees 
to a groupoid $\Tree^{\Xi}$\,.  Objects of 
$\Tree^{\Xi}$ are  labeled $\Xi$-colored planar trees
and morphisms are  \und{non-planar} 
isomorphisms of the corresponding 
trees compatible with labeling and coloring in 
the following sense: an isomorphism $\phi$ from $\bt$ 
to $\bt'$ sends the leaf of $\bt$ with label $i$ to the 
leaf\footnote{In particular, a nodal vertex can only be sent a nodal 
vertex.} of $\bt'$ with label $i$; furthermore, if the 
edge originating at $v \in V(\bt)$ carries the color $\chi$
then the edge originating at $\phi(v)\in V(\bt')$ carries
the same color $\chi$.

\begin{example}
\label{exam:isom-sms}
Let us denote by $\bt$ the labeled $2$-colored planar 
tree depicted in figure \ref{fig:exam-color}. The tree 
$\bt_1$ in figure \ref{fig:bt1} is isomorphic to $\bt$
while the tree $\bt_2$ in figure \ref{fig:bt2} is 
not isomorphic to $\bt$\,. 
\begin{figure}[htp]
\begin{minipage}[t]{0.45\linewidth}
\centering
\begin{tikzpicture}[scale=0.5, >=stealth']
\tikzstyle{w}=[circle, draw, minimum size=3, inner sep=1]
\tikzstyle{b}=[circle, draw, fill, minimum size=3, inner sep=1]
\node [b] (mc3) at (1,8) {};
\draw (1,8.6) node[anchor=center] {{\small $3_{\mc}$}};
\node [b] (mc1) at (3,8) {};
\draw (3,8.6) node[anchor=center] {{\small $1_{\mc}$}};
\node [b] (mc2) at (5,8) {};
\draw (5,8.6) node[anchor=center] {{\small $2_{\mc}$}};
\node [b] (mc4) at (7,8) {};
\draw (7,8.6) node[anchor=center] {{\small $4_{\mc}$}};
\node [b] (mo1) at (9,8) {};
\draw (9,8.6) node[anchor=center] {{\small $1_{\mo}$}};
\node [b] (mc5) at (11,8) {};
\draw (11,8.6) node[anchor=center] {{\small $5_{\mc}$}};
\node [b] (mo2) at (13,8) {};
\draw (13,8.6) node[anchor=center] {{\small $2_{\mo}$}};
\node [b] (mo3) at (15,8) {};
\draw (15,8.6) node[anchor=center] {{\small $3_{\mo}$}};
\node [w] (v2) at (2,6) {};
\node [w] (v3) at (7,6) {};
\node [w] (v4) at (13,6) {};
\node [w] (v1) at (7,3) {};
\node [b] (r) at (7,1) {};
\draw (v2) edge (mc3);
\draw (v2) edge (mc1);
\draw (v3) edge (mc2);
\draw (v3) edge (mc4);
\draw [dashed] (v3) edge (mo1);
\draw (v4) edge (mc5);
\draw [dashed] (v4) edge (mo2);
\draw [dashed] (v4) edge (mo3);
\draw (v1) edge (v2);
\draw [dashed] (v1) edge (v3);
\draw [dashed] (v1) edge (v4);
\draw [dashed] (r) edge (v1);
\end{tikzpicture}
\caption{\label{fig:bt1} The labeled $2$-colored tree $\bt_1$}
\end{minipage}
\hspace{0.07\linewidth}
\begin{minipage}[t]{0.45\linewidth}
\centering
\begin{tikzpicture}[scale=0.5, >=stealth']
\tikzstyle{w}=[circle, draw, minimum size=3, inner sep=1]
\tikzstyle{b}=[circle, draw, fill, minimum size=3, inner sep=1]
\node [b] (mc3) at (1,8) {};
\draw (1,8.6) node[anchor=center] {{\small $3_{\mc}$}};
\node [b] (mc1) at (3,8) {};
\draw (3,8.6) node[anchor=center] {{\small $1_{\mc}$}};
\node [b] (mc5) at (5,8) {};
\draw (5,8.6) node[anchor=center] {{\small $5_{\mc}$}};
\node [b] (mo2) at (7,8) {};
\draw (7,8.6) node[anchor=center] {{\small $2_{\mo}$}};
\node [b] (mo3) at (9,8) {};
\draw (9,8.6) node[anchor=center] {{\small $3_{\mo}$}};
\node [b] (mc2) at (11,8) {};
\draw (11,8.6) node[anchor=center] {{\small $2_{\mc}$}};
\node [b] (mc4) at (13,8) {};
\draw (13,8.6) node[anchor=center] {{\small $4_{\mc}$}};
\node [b] (mo1) at (15,8) {};
\draw (15,8.6) node[anchor=center] {{\small $1_{\mo}$}};
\node [w] (v2) at (2,6) {};
\node [w] (v3) at (7,6) {};
\node [w] (v4) at (13,6) {};
\node [w] (v1) at (7,3) {};
\node [b] (r) at (7,1) {};
\draw (v2) edge (mc3);
\draw (v2) edge (mc1);
\draw (v3) edge (mc5);
\draw [dashed] (v3) edge (mo2);
\draw [dashed] (v3) edge (mo3);
\draw (v4) edge (mc2);
\draw (v4) edge (mc4);
\draw [dashed] (v4) edge (mo1);
\draw (v1) edge (v2);
\draw (v1) edge (v3);
\draw [dashed] (v1) edge (v4);
\draw [dashed] (r) edge (v1);
\end{tikzpicture}
\caption{\label{fig:bt2}  The labeled $2$-colored tree $\bt_2$}
\end{minipage}
\end{figure} 

An object of $\Tree^{\Xi}$ may have a non-trivial automorphism. 
For example, the tree shown in figure \ref{fig:two-nod0} has a non-identity 
automorphism which switches the two univalent nodal vertices.
\begin{figure}[htp]
\centering
\begin{tikzpicture}[scale=0.5, >=stealth']
\tikzstyle{w}=[circle, draw, minimum size=3, inner sep=1]
\tikzstyle{b}=[circle, draw, fill, minimum size=3, inner sep=1]
\node [b] (mc1) at (0,4) {};
\draw (0,4.6) node[anchor=center] {{\small $1_{\mc}$}};
\node [w] (v2) at (2,4) {};
\node [w] (v3) at (4,4) {};
\node [w] (v1) at (2,2) {};
\node [b] (r) at (2,0) {};
\draw (v1) edge (mc1);
\draw [dashed] (v1) edge (v2);
\draw [dashed] (v1) edge (v3);
\draw [dashed] (r) edge (v1);
\end{tikzpicture}
\caption{\label{fig:two-nod0} This labeled tree has a non-trivial automorphism which 
switches the two univalent nodal vertices}
\end{figure}
\end{example}

We tacitly assume that each labeled colored planar tree has at least one 
nodal vertex. In other words, the degenerate labeled tree shown on 
figure \ref{fig:degen} is not considered as an object of 
$\Tree^{\Xi}$\,.
\begin{figure}[htp]
\centering
\begin{tikzpicture}[scale=0.5, >=stealth']
\tikzstyle{w}=[circle, draw, minimum size=3, inner sep=1]
\tikzstyle{b}=[circle, draw, fill, minimum size=3, inner sep=1]
\node [b] (mc1) at (0,4) {};
\draw (0,4.6) node[anchor=center] {{\small $1_{\chi}$}};
\node [b] (r) at (0,0) {};
\draw (0,2) node[anchor=center] {{\small $\chi$}};
\draw (r) edge (0,1.5)  (0, 2.5) edge (mc1);
\end{tikzpicture}
\caption{\label{fig:degen} This tree is not considered as an object of 
$\Tree^{\Xi}$}
\end{figure}

It is easy to see that, if the corollas $\ka(\bt)$ and
$\ka(\bt')$ corresponding to  labeled $\Xi$-colored planar
trees $\bt$ and $\bt'$ are different, then there are no 
morphisms between $\bt$ and $\bt'$\,.
Therefore the groupoid $\Tree^{\Xi}$ splits into the 
disjoint union
\begin{equation}
\label{Tree-Xi-union}
\Tree^{\Xi} = \bigsqcup_{\bq} \Tree^{\Xi}(\bq)\,,
\end{equation}
where  $\Tree^{\Xi}(\bq)$ is the full subcategory 
of labeled $\Xi$-colored planar trees $\bt$ satisfying 
the condition 
\begin{equation}
\label{ka-bt-bq}
\ka(\bt) = \bq
\end{equation}
and the union \eqref{Tree-Xi-union} is taken over 
all $\Xi$-colored planar corollas. 

For every $\Xi$-colored planar corolla $\bq$\,, 
we introduce the group 
\begin{equation}
\label{S-bq}
S_{\bq} = \prod_{\chi \in \Xi} S_{n_{\chi}}\,,
\end{equation}
where $n_{\chi}= c_{\bq, l}^{-1}(\chi)\,.$
This group acts in the obvious way  on 
the groupoid $\Tree^{\Xi}(\bq)$ by permuting 
labels of leaves with the same colors.  
 
We reserve the notation $\Tree^{\Xi}_2(\bq)$ for the 
full subcategory of $\Tree^{\Xi}(\bq)$ whose objects are 
labeled $\Xi$-colored planar trees with exactly two nodal 
vertices. For example, if $\Xi = \{\mc< \mo \}$ and $\bq$ is 
the corolla corresponding the array $(n,k; \chi)$
then the set 
of isomorphism classes of objects in $\Tree^{\Xi}_2(\bq)$  
is in bijection with the set
\begin{equation}
\label{double-Sh}
\bigsqcup_{0 \le p \le n} \bigsqcup_{0 \le q \le k} 
\big\{ (\si, \tau, \chi_1) ~|~ \si \in \Sh_{p, n-p},~ \tau \in \Sh_{q, k-q},~
 \chi_1 \in \{\mc, \mo\}  \big\},
\end{equation}
where $\Sh_{p,q}$ is the set of $(p,q)$-shuffles (see the beginning of Section \ref{sec:prelim}).  

Namely, if the root edge of the corolla $\bq$ carries the color $\mc$ 
then the bijection assigns to an element $(\si, \tau, \mc)$  
(resp. $(\si, \tau, \mo)$) the isomorphism class of 
the labeled $2$-colored planar 
tree depicted in figure \ref{fig:si-tau-mc}
(resp. \ref{fig:si-tau-mo}).
\begin{figure}[htp] 
\begin{minipage}[t]{0.48\linewidth}
\centering 
\begin{tikzpicture}[scale=0.5]
\tikzstyle{w}=[circle, draw, minimum size=3, inner sep=1]
\tikzstyle{b}=[circle, draw, fill, minimum size=3, inner sep=1]
\node[b] (mc1) at (0, 3) {};
\draw (0,3.6) node[anchor=center] {{\small $\si(1)$}};
\draw (1.3,3) node[anchor=center] {{\small $\dots$}};
\node[b] (mcp) at (2, 3) {};
\draw (2,3.6) node[anchor=center] {{\small $\si(p)$}};
\node[b] (mo1) at (3.5, 3) {};
\draw (3.5,3.6) node[anchor=center] {{\small $\tau(1)$}};
\draw (4.3,3) node[anchor=center] {{\small $\dots$}};
\node[b] (moq) at (5.5, 3) {};
\draw (5.5,3.6) node[anchor=center] {{\small $\tau(q)$}};
\node[w] (vv) at (3, 1.5) {};
\node[b] (mcp1) at (5.5, 1) {};
\draw (5.5,1.6) node[anchor=center] {{\small $\si(p+1)$}};
\draw (7,1) node[anchor=center] {{\small $\dots$}};
\node[b] (mcn) at (8, 1) {};
\draw (8,1.6) node[anchor=center] {{\small $\si(n)$}};
\node[b] (moq1) at (10.5, 1) {};
\draw (10.5,1.6) node[anchor=center] {{\small $\tau(q+1)$}};
\draw (11.8,1) node[anchor=center] {{\small $\dots$}};
\node[b] (mok) at (13, 1) {};
\draw (13,1.6) node[anchor=center] {{\small $\tau(k)$}};
\node[w] (v) at (7, -1) {};
\node[b] (r) at (7, -2) {};
\draw (vv) edge (mc1);
\draw (vv) edge (mcp);
\draw [dashed] (vv) edge (mo1);
\draw [dashed] (vv) edge (moq);
\draw (v) edge (vv);
\draw (v) edge (mcp1);
\draw (v) edge (mcn);
\draw  [dashed] (v) edge (moq1);
\draw [dashed]  (v) edge (mok);
\draw  (r) edge (v);
\end{tikzpicture}
\caption{Here  $\si \in \Sh_{p, n-p}$ and $\tau \in \Sh_{q, k-q}$} \label{fig:si-tau-mc}
\end{minipage}
\begin{minipage}[t]{0.48\linewidth}
\centering  
\begin{tikzpicture}[scale=0.5]
\tikzstyle{w}=[circle, draw, minimum size=3, inner sep=1]
\tikzstyle{b}=[circle, draw, fill, minimum size=3, inner sep=1]
\node[b] (mc1) at (0, 1) {};
\draw (0,1.6) node[anchor=center] {{\small $\si(1)$}};
\draw (1.3,1) node[anchor=center] {{\small $\dots$}};
\node[b] (mcp) at (2, 1) {};
\draw (2,1.6) node[anchor=center] {{\small $\si(p)$}};
\node[w] (vv) at (5, 1.5) {};
\node[b] (mcp1) at (2, 3) {};
\draw (2,3.6) node[anchor=center] {{\small $\si(p+1)$}};
\draw (3.3,3) node[anchor=center] {{\small $\dots$}};
\node[b] (mcn) at (4.5, 3) {};
\draw (4.5,3.6) node[anchor=center] {{\small $\si(n)$}};
\node[b] (mo1) at (6, 3) {};
\draw (6,3.6) node[anchor=center] {{\small $\tau(1)$}};
\draw (7,3) node[anchor=center] {{\small $\dots$}};
\node[b] (moq) at (8, 3) {};
\draw (8,3.6) node[anchor=center] {{\small $\tau(q)$}};
\node[b] (moq1) at (7.5, 1) {};
\draw (7.5,1.6) node[anchor=center] {{\small $\tau(q+1)$}};
\draw (8.7,1) node[anchor=center] {{\small $\dots$}};
\node[b] (mok) at (10, 1) {};
\draw (10,1.6) node[anchor=center] {{\small $\tau(k)$}};
\node[w] (v) at (5, -1) {};
\node[b] (r) at (5, -2) {};
\draw  (r) edge (v);
\draw  (vv) edge (mcp1);
\draw  (vv) edge (mcn);
\draw [dashed] (vv) edge (mo1);
\draw [dashed] (vv) edge (moq);
\draw (v) edge (mc1);
\draw (v) edge (mcp);
\draw [dashed] (v) edge (vv);
\draw [dashed] (v) edge (moq1);
\draw [dashed] (v) edge (mok);
\end{tikzpicture}
\caption{Here  $\si \in \Sh_{p, n-p}$ and $\tau \in \Sh_{q, k-q}$ } \label{fig:si-tau-mo}
\end{minipage}
\end{figure}
If the root edge of the corolla $\bq$ carries the color $\mo$ then
we need to replace the solid root edges of the trees depicted in figures
\ref{fig:si-tau-mc} and \ref{fig:si-tau-mo} by dashed edges.

As we mentioned above, the case when 
$\Xi$ is the one-point set corresponds to  
non-colored labeled planar trees. In this case, corollas 
can be identified with non-negative integers and the 
groupoid $\Tree$ of labeled planar trees 
splits into the disjoint union 
\begin{equation}
\label{Tree-union}
\Tree = \bigsqcup_{n \ge 0} \Tree(n)\,,
\end{equation}
where $\Tree(n)$ is the groupoid of labeled planar 
trees with exactly $n$ labeled leaves.  We refer to objects of $\Tree(n)$
as {\it $n$-labeled planar trees.}

By analogy with $\Tree^{\Xi}_2(\bq)$,  
we reserve the notation $\Tree_2(n)$ for the full sub-groupoid 
of $\Tree(n)$ whose objects are $n$-labeled planar trees with exactly 
$2$ nodal vertices. 
It is not hard to see that isomorphism classes of $\Tree_2(n)$ are 
in bijection with the union 
$$
\bigsqcup_{0 \le p \le n} \Sh_{p, n-p}
$$
where $\Sh_{p, n-p}$ denotes the set of $(p, n-p)$-shuffles in 
$S_n$\,. The bijection assigns to a $(p, n-p)$-shuffles $\tau$ the 
isomorphism class of the planar tree depicted in figure \ref{fig:shuffle}. 
Note that the ``degenerate case'' $p=0$ we get a labeled planar tree with 
one nodal vertex of valency $1$\,.
\begin{figure}[htp]
\centering 
\begin{tikzpicture}[scale=0.5]
\tikzstyle{w}=[circle, draw, minimum size=3, inner sep=1]
\tikzstyle{b}=[circle, draw, fill, minimum size=3, inner sep=1]
\node[b] (l1) at (0, 3) {};
\draw (0,3.6) node[anchor=center] {{\small $\tau(1)$}};
\draw (1.5,2.8) node[anchor=center] {{\small $\dots$}};
\node[b] (lp) at (2.5, 3) {};
\draw (2.5,3.6) node[anchor=center] {{\small $\tau(p)$}};
\node[b] (lp1) at (4, 2) {};
\draw (4,2.6) node[anchor=center] {{\small $\tau(p+1)$}};
\draw (5.1,1.8) node[anchor=center] {{\small $\dots$}};
\node[b] (ln) at (6.5, 2) {};
\draw (6.5,2.6) node[anchor=center] {{\small $\tau(n)$}};
\node[w] (v2) at (2, 2) {};
\node[w] (v1) at (4, 1) {};
\node[b] (r) at (4, 0) {};
\draw (r) edge (v1);
\draw (v1) edge (v2);
\draw (v2) edge (l1);
\draw (v2) edge (lp);
\draw (v1) edge (lp1);
\draw (v1) edge (ln);
\end{tikzpicture}
\caption{\label{fig:shuffle} Here $\tau$ is a $(p, n-p)$-shuffle}
\end{figure}

\subsubsection{Insertion of (colored) trees}

Let $\bt$ be a $\Xi$-colored tree and $x$ be a nodal vertex of $\bt$.
We denote by $\ka(x)$ the $\Xi$-colored corolla formed by the 
edges adjacent to $x$. 

If $\wt{\bt}$ be another $\Xi$-colored
labeled planar tree and its $i$-th nodal vertex $x_i$
satisfies the condition 
\begin{equation}
\label{ka-matching}
\ka(\bt) = \ka(x_i),
\end{equation}
then we can define the insertion $\bul_i$ of the tree $\bt$ into 
the $i$-th nodal vertex of $\wt{\bt}$. For the resulting planar 
tree $\wt{\bt} \,\bul_i\, \bt$ we have
\begin{equation}
\label{ka-output}
\ka( \, \wt{\bt}\, \bul_i\, \bt) = \ka(\, \wt{\bt}\,)\,.
\end{equation}

To build the tree $\wt{\bt}  \,\bul_i\, \bt$, we follow these steps: 
\begin{itemize}

\item first, we denote by $E_{i, \chi}(\,\wt{\bt}\,)$ the set of edges
of color $\chi$ terminating at the $i$-th nodal vertex of $\wt{\bt}$\,. 
Since $\wt{\bt}$ is planar, the set $E_{i, \chi}(\,\wt{\bt}\,)$ comes 
with a total order;

\item second, we erase the $i$-th nodal vertex of $\wt{\bt}$;

\item third, we identify the root edge of $\bt$ with the edge
of $\wt{\bt}$ which originated at the $i$-th nodal vertex;

\item finally, we identify external edges of $\bt$ which have {\it labeled leaves} 
with edges in the union 
$$
\bigsqcup_{\chi \in \Xi} E_{i, \chi}(\,\wt{\bt}\,)
$$
following this rule: the external edge with color $\chi$ and label $j$ 
gets identified with the $j$-th edge in the set  $E_{i, \chi}(\,\wt{\bt}\,)$\,.
In doing this, we keep the same planar structure on $\bt$, so, in general,  
branches of $\wt{\bt}$ move around.

\end{itemize}

\begin{example}
\label{ex:insertion}
Figure \ref{fig:output-ins} shows the result of the insertion 
$\wt{\bt}  \,\bul_1\, \bt$ of the labeled $2$-colored planar tree $\bt$
(depicted in figure \ref{fig:input-bt}) into the first nodal 
vertex of the labeled $2$-colored planar tree $\wt{\bt}$ 
(depicted in figure \ref{fig:wt-input}).
\begin{figure}[htp] 
\begin{minipage}[t]{0.3\linewidth}
\centering 
\begin{tikzpicture}[scale=0.5]
\tikzstyle{w}=[circle, draw, minimum size=3, inner sep=1]
\tikzstyle{b}=[circle, draw, fill, minimum size=3, inner sep=1]
\node[b] (mc3) at (0, 3) {};
\draw (0,3.6) node[anchor=center] {{\small $3_{\mc}$}};
\node[b] (mo1) at (1, 3) {};
\draw (1,3.6) node[anchor=center] {{\small $1_{\mo}$}};
\node[b] (mc1) at (2, 2) {};
\draw (2,2.6) node[anchor=center] {{\small $1_{\mc}$}};
\node[b] (mc2) at (3.5, 3) {};
\draw (3.5,3.6) node[anchor=center] {{\small $2_{\mc}$}};
\node[w] (w2) at (0.5, 2) {};
\node[w] (w1) at (2, 1) {};
\node[w] (w3) at (3.5, 2) {};
\node[b] (r) at (2, 0) {};
\draw (w2) edge (mc3);
\draw [dashed] (w2) edge (mo1);
\draw (w1) edge (mc1);
\draw (w3) edge (mc2);
\draw (w1) edge (w2);
\draw [dashed] (w1) edge (w3);
\draw [dashed] (r) edge (w1);
\end{tikzpicture} 
\caption{A labeled $2$-colored planar tree~$\wt{\bt}$} \label{fig:wt-input}
\end{minipage} 
\hspace{0.03\linewidth}
\begin{minipage}[t]{0.3\linewidth}
\centering 
\begin{tikzpicture}[scale=0.5]
\tikzstyle{w}=[circle, draw, minimum size=3, inner sep=1]
\tikzstyle{b}=[circle, draw, fill, minimum size=3, inner sep=1]
\node[b] (mc2) at (0, 3) {};
\draw (0,3.6) node[anchor=center] {{\small $2_{\mc}$}};
\node[b] (mo1) at (2, 3) {};
\draw (2,3.6) node[anchor=center] {{\small $1_{\mo}$}};
\node[w] (w2) at (1, 2) {};
\node[b] (mc1) at (3, 2) {};
\draw (3,2.6) node[anchor=center] {{\small $1_{\mc}$}};
\node[w] (w1) at (2, 1) {};
\node[b] (r) at (2, 0) {};
\draw (w2) edge (mc2);
\draw [dashed] (w2) edge (mo1);
\draw (w1) edge (w2);
\draw (w1) edge (mc1);
\draw [dashed] (r) edge (w1);
\end{tikzpicture} 
\caption{A labeled 2-colored planar tree~$\bt$} \label{fig:input-bt}
\end{minipage}
\hspace{0.03\linewidth}
\begin{minipage}[t]{0.3\linewidth}
\centering 
\begin{tikzpicture}[scale=0.5]
\tikzstyle{w}=[circle, draw, minimum size=3, inner sep=1]
\tikzstyle{b}=[circle, draw, fill, minimum size=3, inner sep=1]
\node[b] (mc1) at (0, 3) {};
\draw (0,3.6) node[anchor=center] {{\small $1_{\mc}$}};
\node[w] (w3) at (1.5, 3) {};
\node[b] (mc2) at (1.5, 4) {};
\draw (1.5,4.6) node[anchor=center] {{\small $2_{\mc}$}};
\node[w] (w2) at (1, 2) {};
\node[w] (w4) at (3, 2) {};
\node[b] (mc3) at (2.5, 3) {};
\draw (2.5,3.6) node[anchor=center] {{\small $3_{\mc}$}};
\node[b] (mo1) at (4, 3) {};
\draw (4,3.6) node[anchor=center] {{\small $1_{\mo}$}};
\node[w] (w1) at (2, 1) {};
\node[b] (r) at (2, 0) {};
\draw (w2) edge (mc1);
\draw [dashed] (w2) edge (w3);
\draw (w3) edge (mc2);
\draw (w1) edge (w2);
\draw (w1) edge (w4);
\draw (w4) edge (mc3);
\draw [dashed] (w4) edge (mo1);
\draw [dashed] (r) edge (w1);
\end{tikzpicture} 
\caption{The result of the insertion $\wt{\bt}  \,\bul_1\, \bt$} \label{fig:output-ins}
\end{minipage} 
\end{figure}

The algorithm for constructing $\wt{\bt}  \,\bul_1\, \bt$ is illustrated in figure \ref{fig:steps}

\begin{figure}[htp] 
\centering 
\begin{minipage}[t]{0.3\linewidth}
\centering 
\begin{tikzpicture}[scale=0.5]
\tikzstyle{w}=[circle, draw, minimum size=3, inner sep=1]
\tikzstyle{b}=[circle, draw, fill, minimum size=3, inner sep=1]
\node[b] (mc3) at (-0.5, 3) {};
\draw (-0.5,3.6) node[anchor=center] {{\small $3_{\mc}$}};
\node[b] (mo1) at (0.5, 3) {};
\draw (0.5,3.6) node[anchor=center] {{\small $1_{\mo}$}};
\node[b] (mc1) at (2, 2) {};
\draw (2,2.6) node[anchor=center] {{\small $1_{\mc}$}};
\node[b] (mc2) at (4, 3) {};
\draw (4,3.6) node[anchor=center] {{\small $2_{\mc}$}};
\node[w] (w2) at (0, 2) {};
\node[w] (w1) at (2, 0) {};
\node[w] (w3) at (4, 2) {};
\node[b] (r) at (2, -1) {};
\draw (w2) edge (mc3);
\draw [dashed] (w2) edge (mo1);
\draw (w1) edge (2,0.6) (2,1.3) edge (mc1);
\draw (2,1) node[anchor=center] {{\small $2^{\mc}$}};
\draw (w3) edge (mc2);
\draw (w1) edge (1.3,0.7) (0.7,1.3) edge (w2);
\draw (1,1) node[anchor=center] {{\small $1^{\mc}$}};
\draw [dashed] (w1) edge (2.7,0.7) (3.3,1.3) edge (w3);
\draw (3,1) node[anchor=center] {{\small $1^{\mo}$}};
\draw [dashed] (r) edge (w1);
\draw (6,2) node[anchor=center] {{$\longrightarrow$}};
\end{tikzpicture} 
\end{minipage} 
\hspace{0.01\linewidth}
\begin{minipage}[t]{0.3\linewidth}
\centering 
\begin{tikzpicture}[scale=0.5]
\tikzstyle{w}=[circle, draw, minimum size=3, inner sep=1]
\tikzstyle{b}=[circle, draw, fill, minimum size=3, inner sep=1]
\node[b] (mc3) at (-0.5, 3) {};
\draw (-0.5,3.6) node[anchor=center] {{\small $3_{\mc}$}};
\node[b] (mo1) at (0.5, 3) {};
\draw (0.5,3.6) node[anchor=center] {{\small $1_{\mo}$}};
\node[b] (mc1) at (2, 2) {};
\draw (2,2.6) node[anchor=center] {{\small $1_{\mc}$}};
\node[b] (mc2) at (4, 3) {};
\draw (4,3.6) node[anchor=center] {{\small $2_{\mc}$}};
\node[w] (w2) at (0, 2) {};

\node[w] (w3) at (4, 2) {};
\node[b] (r) at (2, -1.5) {};
\draw (w2) edge (mc3);
\draw [dashed] (w2) edge (mo1);
\draw (2,1) edge (mc1);
\draw (2,0.6) node[anchor=center] {{\small $2^{\mc}$}};
\draw (w3) edge (mc2);

\draw (0,1) edge (w2);
\draw (0,0.6) node[anchor=center] {{\small $1^{\mc}$}};

\draw [dashed] (4,1) edge (w3);
\draw (4,0.6) node[anchor=center] {{\small $1^{\mo}$}};
\draw [dashed] (r) edge (2,-0.5);
\draw (6,1.5) node[anchor=center] {{$\longrightarrow$}};
\end{tikzpicture} 
\end{minipage} 
\hspace{0.01\linewidth}
\begin{minipage}[t]{0.3\linewidth}
\centering 
\begin{tikzpicture}[scale=0.5]
\tikzstyle{w}=[circle, draw, minimum size=3, inner sep=1]
\tikzstyle{b}=[circle, draw, fill, minimum size=3, inner sep=1]
\node[b] (mc3) at (-0.5, 3) {};
\draw (-0.5,3.6) node[anchor=center] {{\small $3_{\mc}$}};
\node[b] (mo1) at (0.5, 3) {};
\draw (0.5,3.6) node[anchor=center] {{\small $1_{\mo}$}};
\node[b] (mc1) at (2, 2) {};
\draw (2,2.6) node[anchor=center] {{\small $1_{\mc}$}};
\node[b] (mc2) at (4, 3) {};
\draw (4,3.6) node[anchor=center] {{\small $2_{\mc}$}};
\node[w] (w2) at (0, 2) {};
\node[w] (w3) at (4, 2) {};
\draw (w2) edge (mc3);
\draw [dashed] (w2) edge (mo1);
\draw (2,1) edge (mc1);
\draw (2,0.6) node[anchor=center] {{\small $2^{\mc}$}};
\draw (w3) edge (mc2);
\draw (0,1) edge (w2);
\draw (0,0.6) node[anchor=center] {{\small $1^{\mc}$}};
\draw [dashed] (4,1) edge (w3);
\draw (4,0.6) node[anchor=center] {{\small $1^{\mo}$}};
\node[b] (mc2) at (0.5, -1) {};
\draw (0.5,-0.4) node[anchor=center] {{\small $2_{\mc}$}};
\node[b] (mo1) at (2.5, -1) {};
\draw (2.5,-0.4) node[anchor=center] {{\small $1_{\mo}$}};
\node[w] (w2) at (1.5, -2) {};
\node[b] (mc1) at (3.5, -2) {};
\draw (3.5,-1.4) node[anchor=center] {{\small $1_{\mc}$}};
\node[w] (w1) at (2.5, -3) {};
\node[b] (r) at (2.5, -4) {};
\draw (w2) edge (mc2);
\draw [dashed] (w2) edge (mo1);
\draw (w1) edge (w2);
\draw (w1) edge (mc1);
\draw [dashed] (r) edge (w1);
\end{tikzpicture} 
\end{minipage}
\vspace{0.5cm}

\begin{minipage}[t]{\linewidth}
\centering 
\begin{tikzpicture}[scale=0.5]
\tikzstyle{w}=[circle, draw, minimum size=3, inner sep=1]
\tikzstyle{b}=[circle, draw, fill, minimum size=3, inner sep=1]
\draw (-2,2) node[anchor=center] {{$\longrightarrow$}};
\node[b] (mc1) at (0, 3) {};
\draw (0,3.6) node[anchor=center] {{\small $1_{\mc}$}};
\node[w] (w3) at (1.5, 3) {};
\node[b] (mc2) at (1.5, 4) {};
\draw (1.5,4.6) node[anchor=center] {{\small $2_{\mc}$}};
\node[w] (w2) at (1, 2) {};
\node[w] (w4) at (3, 2) {};
\node[b] (mc3) at (2.5, 3) {};
\draw (2.5,3.6) node[anchor=center] {{\small $3_{\mc}$}};
\node[b] (mo1) at (4, 3) {};
\draw (4,3.6) node[anchor=center] {{\small $1_{\mo}$}};
\node[w] (w1) at (2, 1) {};
\node[b] (r) at (2, 0) {};
\draw (w2) edge (mc1);
\draw [dashed] (w2) edge (w3);
\draw (w3) edge (mc2);
\draw (w1) edge (w2);
\draw (w1) edge (w4);
\draw (w4) edge (mc3);
\draw [dashed] (w4) edge (mo1);
\draw [dashed] (r) edge (w1);
\end{tikzpicture} 
\end{minipage} 
\caption{Algorithm for constructing  $\wt{\bt}  \,\bul_1\, \bt$} \label{fig:steps}
\end{figure}
\end{example}

\subsection{Colored operads and their dual versions}

\subsubsection{Colored collections}

Let us recall that a $\Xi$-colored collection in a symmetric monoidal 
category $\mC$ is given by the data: 

--- For each $\Xi$-colored planar corolla $\bq$
we have an object 
$$
P(\bq) \in \mC
$$ 
equipped with a left action of the group $S_{\bq}$
(\ref{S-bq}).

Morphisms of $\Xi$-colored collections are defined 
in the obvious way.  

In the case $\Xi = \{\mc <\mo\} $ we will denote
the object corresponding to a corolla $\bq$ by 
$$
P(n,k)^{\chi}\,, 
$$
where $n= |c^{-1}_{\bq,l}(\mc)|$,  $k= |c^{-1}_{\bq,l}(\mo)|$,
and $\chi$ is the color of the root edge.

Given a $\Xi$-colored collection $P$ in $\mC$
we introduce a covariant functor
\begin{equation}
\label{und-P}
\und{P} :  \Tree^{\Xi} \to  \mC
\end{equation}
from the groupoid $\Tree^{\Xi}$ of labeled $\Xi$-colored 
planar trees to $\mC$\,.

To a labeled $\Xi$-colored planar tree $\bt$, the functor 
$\und{P}$ assigns  the object
\begin{equation}
\label{und-P-bt}
\und{P}(\bt) =  \bigotimes_{x \in V_{\nod}(\bt)}  P(\ka(x))
\end{equation}
where $V_{\nod}(\bt)$ is the set of all nodal vertices
of $\bt$, $\ka(x)$ is the $\Xi$-colored planar corolla formed 
by all edges of $\bt$ adjacent to $x$, and
the order of the factors agrees with the total order on 
the set  $V_{\nod}(\bt)$.  

To define $\und{P}$ on the level of morphisms, we 
use the action of the group \eqref{S-bq} on $P(\bq)$ and
the braiding of the symmetric monoidal category in the 
obvious way. 
For example, let $\bt$ and $\bt_1$ be $2$-colored trees 
depicted in figures \ref{fig:exam-color} and \ref{fig:bt1}, 
respectively. For these trees we have 
$$
\und{P}(\bt) = P(1,2)^{\mo} \otimes P(2,0)^{\mc} \otimes 
P(1,2)^{\mo} \otimes P(2,1)^{\mo}\,, 
$$
$$
\und{P}(\bt_1) = P(1,2)^{\mo} \otimes P(2,0)^{\mc} \otimes 
 P(2,1)^{\mo} \otimes P(1,2)^{\mo}\,. 
$$
The functor $\und{P}$ sends the unique morphism $\phi: \bt \to \bt_1$
to  
$$
\und{P}(\phi) = (\id, \si_{12}) \otimes 1 \otimes \beta,
$$ 
where $(\id, \si_{12})$ is the non-identity element of the group 
$S_1 \times S_2$ and $\beta$ is the braiding 
$$
\beta: P(1,2)^{\mo} \otimes P(2,1)^{\mo}  \to
P(2,1)^{\mo} \otimes P(1,2)^{\mo} \,.
$$

\subsubsection{Colored (pseudo)operads}
Let $\bq$ be a $\Xi$-colored planar corolla. 
We say that the corolla $\bq$ is {\it naturally labeled} if the map 
$$
\ml_{\chi} :  \{1,2, \dots, n_{\chi} \}  \to c^{-1}_{\bt, l} (\chi) 
$$
is a monotonous bijection for every $\chi\in \Xi$\,. 
An example of a naturally labeled corolla is depicted 
in figure \ref{fig:nat-labeled}. 
\begin{figure}[htp]
\centering
\begin{tikzpicture}[scale=0.5, >=stealth']
\tikzstyle{w}=[circle, draw, minimum size=3, inner sep=1]
\tikzstyle{b}=[circle, draw, fill, minimum size=3, inner sep=1]
\node [b] (mc1) at (0,4) {};
\draw (0,4.6) node[anchor=center] {{\small $1_{\mc}$}};
\node [b] (mc2) at (1.5,4) {};
\draw (1.5,4.6) node[anchor=center] {{\small $2_{\mc}$}};
\node [b] (mc3) at (3,4) {};
\draw (3,4.6) node[anchor=center] {{\small $3_{\mc}$}};
\node [b] (mc4) at (4.5,4) {};
\draw (4.5,4.6) node[anchor=center] {{\small $4_{\mc}$}};
\node [b] (mc5) at (6,4) {};
\draw (6,4.6) node[anchor=center] {{\small $5_{\mc}$}};
\node [b] (mo1) at (7.5,4) {};
\draw (7.5,4.6) node[anchor=center] {{\small $1_{\mo}$}};
\node [b] (mo2) at (9,4) {};
\draw (9,4.6) node[anchor=center] {{\small $2_{\mo}$}};
\node [b] (mo3) at (10.5,4) {};
\draw (10.5,4.6) node[anchor=center] {{\small $3_{\mo}$}};
\node [w] (v) at (5,2) {};
\node [b] (r) at (5,0) {};
\draw (v) edge (mc1);
\draw  (v) edge (mc2);
\draw  (v) edge (mc3);
\draw  (v) edge (mc4);
\draw  (v) edge (mc5);
\draw [dashed] (v) edge (mo1);
\draw [dashed] (v) edge (mo2);
\draw [dashed] (v) edge (mo3);
\draw [dashed] (r) edge (v);
\end{tikzpicture}
\caption{\label{fig:nat-labeled}  An example of a naturally labeled $2$-colored corolla}
\end{figure}
The degenerate corolla shown in figure \ref{fig:corolla-0} is considered as 
a naturally labeled corolla by convention. 

For our purposes it is convenient to use the following 
definition of a colored pseudo-operad.
\begin{defi}
\label{dfn:psi-oper}
A  $\Xi$-colored pseudo-operad is a $\Xi$-colored 
collection $P$ equipped with multiplication maps 
\begin{equation}
\label{mu-bt}
\mu_{\bt} : \und{P}(\bt) \to P(\ka(\bt))
\end{equation}
defined for every labeled $\Xi$-colored planar trees $\bt$
and subject to the following axioms: 
\begin{itemize}

\item If $\bq$ is a naturally labeled $\Xi$-colored planar corolla then 
\begin{equation}
\label{corolla-iden}
\mu_{\bq} = \id_{P(\bq)}\,.
\end{equation}

\item The operation $\mu_{\bt}$ is $S_{\ka(\bt)}$-equivariant. 
Namely, for every labeled $\Xi$-colored planar tree $\bt$ we have  
\begin{equation}
\label{mu-S-equiv}
\mu_{\si(\bt)} = \si \circ \mu_{\bt}\,, \qquad 
\forall \quad \si \in S_{\ka(\bt)}\,. 
\end{equation}

\item For every morphism $\la : \bt \to \bt'$ in $\Tree^{\Xi}$ we have
\begin{equation}
\label{la-equiv}
 \mu_{\bt'} \circ  \und{P}(\la) =  \mu_{\bt}\,.
\end{equation}

\item To formulate the associativity axiom, we 
consider a triple $(\wt{\bt}, x, \bt)$ where $\wt{\bt}$ 
is a labeled $\Xi$-colored planar tree, $x$ is 
the $i$-th nodal vertex of $\wt{\bt}$, and $\bt$ is 
a labeled $\Xi$-colored planar tree 
 such that $\ka(\bt)= \ka(x)$\,.
The associativity axiom states that for each such 
triple we have 
\begin{equation}
\label{assoc}
\mu_{\wt{\bt}} \circ  (1  \otimes \dots \otimes 1 \otimes 
\underbrace{\mu_{\bt}}_{i\textrm{-th spot}}
  \otimes 1
\otimes \dots \otimes 1)  \circ \beta_{\,\wt{\bt}, x, \bt}  = \mu_{\,\wt{\bt}  \,\bul_i\, \bt} 
\end{equation}
where $\wt{\bt}  \,\bul_i\, \bt$ is the tree obtained 
by inserting $\bt$ into the $i$-th vertex of $\wt{\bt}$ and
$\beta_{\,\wt{\bt}, x, \bt}$ is the isomorphism in $\mC$ which 
is ``responsible for putting tensor factors in the correct order''. 

\end{itemize}

Morphisms of pseudo-operads are defined in 
the obvious way. 
\end{defi}

Let $\bq_1$ and $\bq_2$ be two naturally labeled 
$\Xi$-colored planar corollas such that the root edge of $\bq_2$
carries the color $\chi$\,. Let $m_{\chi'}$ (resp. $n_{\chi'}$) be the 
number of external edges (if any) of $\bq_1$ (resp. $\bq_2$) 
of color $\chi' \in \Xi$

Given a color $\chi \in \Xi$ for which $m_{\chi} > 0$ and 
$1 \ge i \ge m_{\chi}$\,, we denote by 
$\bt_{i, \chi}$ the labeled $\Xi$-colored planar tree which 
is obtained from $\bq_1$ and $\bq_2$ in two steps. 
First, we glue $\bq_2$ with $\bq_1$ by
identifying the root edge of $\bq_2$ with the external 
edge of $\bq_1$ which carries the color $\chi$ and label $i$\,.
Second, we change the labels on the leaves of the resulting $\Xi$-colored planar 
tree following these steps: 
\begin{itemize}

\item if $\chi' < \chi$ then we shift the labels on leaves in $c^{-1}_{\bq_2, l}(\chi')$ up 
by $m_{\chi'}$; 

\item we shift the labels on leaves in $c^{-1}_{\bq_2, l}(\chi)$ up by $i-1$ 
and we shift the labels on leaves in $c^{-1}_{\bq_1, l}(\chi)$ which are $> i$
up by $n_{\chi} -1$;

\item   if $\chi' > \chi$ then we shift the labels on leaves in $c^{-1}_{\bq_1, l}(\chi')$ up 
by $n_{\chi'}$\,. 

\end{itemize}

For example, if $\bq_1$ and $\bq_2$ is the 
$2$-colored corollas depicted in figures \ref{fig:bq1} and \ref{fig:bq2}, 
respectively, then $\bt_{2,\mo}$ is the tree depicted in figure 
\ref{fig:bt-2-mo}.
\begin{figure}[htp] 
\begin{minipage}[t]{0.3\linewidth}
\centering 
\begin{tikzpicture}[scale=0.5]
\tikzstyle{w}=[circle, draw, minimum size=3, inner sep=1]
\tikzstyle{b}=[circle, draw, fill, minimum size=3, inner sep=1]
\node[b] (r) at (0, 0) {};
\node[w] (v) at (0, 1) {};
\node[b] (mc1) at (-1.5, 2) {};
\draw (-1.5,2.5) node[anchor=center] {{\small $1_{\mc}$}};
\node[b] (mo1) at (-0.5, 2) {};
\draw (-0.5,2.5) node[anchor=center] {{\small $1_{\mo}$}};
\node[b] (mo2) at (0.5, 2) {};
\draw (0.5,2.5) node[anchor=center] {{\small $2_{\mo}$}};
\node[b] (mo3) at (1.5, 2) {};
\draw (1.5,2.5) node[anchor=center] {{\small $3_{\mo}$}};
\draw [dashed] (r) edge (v);
\draw (v) edge (mc1);
\draw  [dashed] (v) edge (mo1);
\draw   [dashed] (v) edge (mo2);
\draw   [dashed] (v) edge (mo3);
\end{tikzpicture}
\caption{A 2-colored corola $\bq_1$} \label{fig:bq1}
\end{minipage}
\hspace{0.2cm}
\begin{minipage}[t]{0.3\linewidth}
\centering 
\begin{tikzpicture}[scale=0.5]
\tikzstyle{w}=[circle, draw, minimum size=3, inner sep=1]
\tikzstyle{b}=[circle, draw, fill, minimum size=3, inner sep=1]
\node[b] (r) at (0, 0) {};
\node[w] (v) at (0, 1) {};
\node[b] (mc1) at (-1, 2) {};
\draw (-1,2.5) node[anchor=center] {{\small $1_{\mc}$}};
\node[b] (mo1) at (0, 2) {};
\draw (0,2.5) node[anchor=center] {{\small $1_{\mo}$}};
\node[b] (mo2) at (1, 2) {};
\draw (1,2.5) node[anchor=center] {{\small $2_{\mo}$}};
\draw [dashed] (r) edge (v);
\draw (v) edge (mc1);
\draw [dashed]  (v) edge (mo1);
\draw [dashed]  (v) edge (mo2);
\end{tikzpicture}
\caption{A 2-colored corola $\bq_2$} \label{fig:bq2}
\end{minipage}
\hspace{0.2cm}
\begin{minipage}[t]{0.35\linewidth}
\centering 
\begin{tikzpicture}[scale=0.5]
\tikzstyle{w}=[circle, draw, minimum size=3, inner sep=1]
\tikzstyle{b}=[circle, draw, fill, minimum size=3, inner sep=1]
\node[b] (r) at (0, 0) {};
\node[w] (v) at (0, 1) {};
\node[b] (mc1) at (-2, 2) {};
\draw (-2,2.5) node[anchor=center] {{\small $1_{\mc}$}};
\node[b] (mo1) at (-1, 2) {};
\draw (-1,2.5) node[anchor=center] {{\small $1_{\mo}$}};
\node[w] (vv) at (0, 2) {};
\node[b] (mc2) at (-1, 3.5) {};
\draw (-1,4) node[anchor=center] {{\small $2_{\mc}$}};
\node[b] (mo2) at (0, 3.5) {};
\draw (0,4) node[anchor=center] {{\small $2_{\mo}$}};
\node[b] (mo3) at (1, 3.5) {};
\draw (1,4) node[anchor=center] {{\small $3_{\mo}$}};
\node[b] (mo4) at (1, 2) {};
\draw (1,2.5) node[anchor=center] {{\small $4_{\mo}$}};
\draw [dashed] (r) edge (v);
\draw (v) edge (mc1);
\draw [dashed]  (v) edge (mo1);
\draw [dashed]  (v) edge (vv);
\draw [dashed]  (v) edge (mo4);
\draw (vv) edge (mc2);
\draw [dashed]  (vv) edge (mo2);
\draw [dashed]  (vv) edge (mo3);
\end{tikzpicture}
\caption{The labeled $2$-colored tree $\bt_{2, \mo}$} \label{fig:bt-2-mo}
\end{minipage}
\end{figure}
Although the tree $\bt_{i, \chi}$ depends on the corollas $\bq_1$ and 
$\bq_2$, we suppress $\bq_1$ and $\bq_2$ from the 
notation.

To introduce a structure of a pseudo-operad on
a collection $P$ it suffices to specify the 
multiplications
\begin{equation}
\label{mu-t-i-chi}
\mu_{\bt_{i, \chi}} :  P( \bq_1) \otimes P(\bq_2) \to P (\ka(\bt_{i, \chi}))  
\end{equation}
for all tuples $(\bq_1, \bq_2, i, \chi)$\,. 
All the remaining  multiplications \eqref{mu-bt} can be deduced 
from \eqref{mu-t-i-chi} using axioms of pseudo-operad.  

The operations \eqref{mu-t-i-chi} are called  {\it elementary insertions}
and we will use for them the special notation 
$ \circ_{i, \chi} $\,. Namely, if $v \in P(\bq_1)$ and 
$w\in P(\bq_2)$ then 
\begin{equation}
\label{circ-i-chi}
v\, \circ_{i, \chi} \,w \,: = \, \mu_{\bt_{i, \chi}} (v,w)\,.
\end{equation}

Let $\chi \in \Xi$ and let $\bu_{\chi}$ be the labeled tree 
with exactly two edges: the root edge and the external
edge, both carrying the color $\chi$: 
\begin{equation}
\label{u-chi} 
\begin{tikzpicture}[scale=0.5]
\tikzstyle{w}=[circle, draw, minimum size=3, inner sep=1]
\tikzstyle{b}=[circle, draw, fill, minimum size=3, inner sep=1] 
\draw (-2,0) node[anchor=center] {{$ \bu_{\chi}  = $}};
\node[b] (r) at (0, -1) {};
\node[w] (v1) at (0, 0) {};
\node[b] (v2) at (0, 1) {};
\draw (0,1.5) node[anchor=center] {{\small $1$}};
\draw (r) edge (v1);
\draw (v1) edge (v2);
\end{tikzpicture}
\end{equation}

We say that 
\begin{defi}
\label{dfn:oper}
$P$ is a $\Xi$-colored operad if $P$ is a 
$\Xi$-colored pseudo-operad with chosen maps
(unit maps)
\begin{equation}
\label{units}
I_{\chi} : \bbK \to  P(\bu_{\chi})
\end{equation}
such that the compositions
\begin{equation}
\label{unit-axioms}
\begin{array}{c}
P(\bq) \cong  P(\bq)\otimes \bbK \stackrel{1 \otimes I_{\chi} \phantom{aa}}{~\longrightarrow~}
 P(\bq) \otimes P(\bu_{\chi})  
   \stackrel{\mu_{\bt_{i, \chi}}  \phantom{aa}}{~\longrightarrow~} P(\bq)  \\[0.3cm]
P(\bq) \cong \bbK \otimes P(\bq) \stackrel{I_{\chi} \otimes 1 \phantom{aa}}{~\longrightarrow~}
 P(\bu_{\chi}) \otimes P(\bq)  
   \stackrel{\mu_{\bt_{i, \chi}}  \phantom{aa}}{~\longrightarrow~} P(\bq)   
\end{array}   
\end{equation}
coincide with the identity map on $P(\bq)$ whenever 
they make sense. Morphisms of 
$\Xi$-colored operads are 
defined in the obvious way.  
\end{defi}
\begin{remark}
\label{rem:compare-BM}
For a conventional definition of colored operads 
we refer the reader to paper  \cite{BM-colors} by 
C. Berger and I. Moerdijk. Due to the observation
made in \cite[Remark 1.3]{BM-colors} the definition given 
here is equivalent to the conventional one.  
\end{remark}

\begin{example}
\label{ex:end}
Let $\Xi = \{\mc, \mo \}$ and $(\cV, \cA)$ be a pair 
of cochain complexes. The $2$-colored collection 
$\End_{\cV, \cA}$ with 
\begin{equation}
\label{End}
\End_{\cV, \cA}(n,k)^{\mc} = \Hom(\cV^{\otimes\, n} \otimes \cA^{\otimes \, k} , \cV)\,, 
\qquad
\End_{\cV, \cA}(n,k)^{\mo} = \Hom(\cV^{\otimes\, n} \otimes \cA^{\otimes \, k} , \cA) 
\end{equation}
is equipped with the obvious structure of a $2$-colored operad. 
$\End_{\cV, \cA}$ is called the endomorphism operad of 
the pair $(\cV, \cA)$\,.  This example can be obviously generalized 
to an arbitrary set of colors $\Xi$.
\end{example}
Example \ref{ex:end} plays an important role because an algebra
over a $\Xi$-colored operad $P$ is defined as a family 
$\{V_{\chi}\}_{\chi\in \Xi}$ of objects in $\mC$ with an operad morphism from 
$P$ to $\End_{\{V_{\chi}\}_{\chi \in \Xi}}$\,.  

\subsubsection{Augmentation of colored operads}

The $\Xi$-colored collection 
\begin{equation}
\label{ast}
\ast(\bq) = \begin{cases}
   \bbK \qquad {\rm if} \quad \bq = \bu_{\chi} \textrm{ for some }\chi \in \Xi\,,    \\
   \bfzero \qquad \qquad {\rm otherwise}
\end{cases}
\end{equation}
is equipped with a unique structure of a $\Xi$-colored operad. 
It is easy to see that $\ast$ is the initial object in
the category of $\Xi$-colored operads. 

A $\Xi$-colored operad $P$ is called augmented if $P$
comes with an operad morphism 
$$
\ve:  P \to  \ast.
$$
For every augmented operad $P$ the kernel of 
the map $P \to \ast$ is naturally a pseudo-operad.
We denote this pseudo-operad by $P_{\circ}$\,.
 
It is not hard to see that the assignment 
$$
P \leadsto P_{\circ}
$$
extends to a functor. 
According to\footnote{Although, in paper \cite{Markl} the author considers only 
non-colored operads, the line of arguments can be easily extended to the colored 
setting.} \cite[Proposition 21]{Markl} this functor gives 
us an equivalence between the category of augmented (colored) operads and 
the category of (colored) pseudo-operads.

\subsubsection{Colored (pseudo)cooperads}
Reversing all arrows in Definition \ref{dfn:psi-oper} we get 
\begin{defi}
\label{dfn:psi-cooper}
A  $\Xi$-colored pseudo-cooperad is a $\Xi$-colored 
collection $Q$ equipped with comultiplication maps 
\begin{equation}
\label{D-bt}
\D_{\bt} :  Q(\ka(\bt))  \to \und{Q}(\bt)
\end{equation}
defined for every labeled $\Xi$-colored planar trees $\bt$
and subject to the following axioms: 
\begin{itemize}

\item If $\bq$ is a naturally labeled $\Xi$-colored planar corolla then 
\begin{equation}
\label{corolla-iden-co}
\D_{\bq} = \id_{Q(\bq)}\,.
\end{equation}

\item The operation $\D_{\bt}$ is $S_{\ka(\bt)}$-equivariant. 
Namely, for every labeled $\Xi$-colored planar tree $\bt$ we have  
\begin{equation}
\label{D-S-equiv}
\D_{\si(\bt)} \circ \si = \D_{\bt} \,, \qquad 
\forall \quad \si \in S_{\ka(\bt)}\,. 
\end{equation}

\item For every morphism $\la : \bt \to \bt'$ in $\Tree^{\Xi}$ we have
\begin{equation}
\label{la-equiv-co}
 \D_{\bt'}  =  \und{Q}(\la)  \circ \D_{\bt}\,.
\end{equation}

\item To formulate the coassociativity axiom we 
consider a triple $(\wt{\bt}, x, \bt)$ where $\wt{\bt}$ 
is a labeled $\Xi$-colored planar tree, $x$ is 
the $i$-th nodal vertex of $\wt{\bt}$, and $\bt$ is 
a labeled $\Xi$-colored planar tree 
 such that $\ka(\bt)= \ka(x)$\,.
The coassociativity axiom states that for each such 
triple we have 
\begin{equation}
\label{coassoc}
 (1  \otimes \dots \otimes 1 \otimes 
\underbrace{\D_{\bt}}_{i\textrm{-th spot}}
  \otimes 1
\otimes \dots \otimes 1) 
\D_{\wt{\bt}} = \D_{\,\wt{\bt}  \,\bul_i\, \bt}   \circ \beta_{\,\wt{\bt}, x, \bt} 
\end{equation}
where $\wt{\bt}  \,\bul_i\, \bt$ is the tree obtained 
by inserting $\bt$ into the $i$-th vertex of $\wt{\bt}$ and
$\beta_{\,\wt{\bt}, x, \bt}$ is the isomorphism in $\mC$ which 
is ``responsible for putting tensor factors in the correct order''. 

\end{itemize}
Morphisms of pseudo-cooperads are defined in 
the obvious way. 
\end{defi}

Similarly, reversing arrows in \eqref{units}, \eqref{unit-axioms}, 
and Definition \ref{dfn:oper} we get the notion of counit and 
the definition of a $\Xi$-colored cooperad.  

The $\Xi$-colored collection \eqref{ast} carries a unique 
structure of a  $\Xi$-colored cooperad. Furthermore, $\ast$ is the terminal object 
in the category of  $\Xi$-colored cooperads. 

Dually to augmentation, we define a coaugmentation on a (colored) cooperad $Q$ 
as a cooperad morphism
$$
\ve' : \ast \to Q\,.
$$ 

For every coaugmented (colored) cooperad $Q$ the cokernel 
of coaugmentation naturally forms a  (colored) pseudo-cooperad.  
We denote this pseudo-cooperad by $Q_{\circ}$\,.

Just as for (colored) operads, the assignment 
$$
Q \leadsto Q_{\circ}
$$
extends to a functor which establishes an equivalence between 
the category of coaugmented   (colored) cooperads and the category of
 (colored)  pseudo-cooperads. 

\subsection{The convolution Lie algebra}
\label{sec:Conv}

Let $\cC$ (resp. $\cO$) be a $\Xi$-colored pseudo-cooperad
(resp. $\Xi$-colored pseudo-operad) in $\Ch_{\bbK}$

We consider the following cochain complex 
\begin{equation}
\label{Conv-C-O}
\Conv(\cC, \cO) := 
\prod_{\bq} \Hom_{S_{\bq}} (\cC(\bq), \cO(\bq))\,,
\end{equation}
where the product is taken over all $\Xi$-colored planar 
corollas and the differential comes solely from the ones 
on $\cC$ and $\cO$\,.

Let us denote by $\Isom^{\Xi}_2(\bq)$ the set 
of isomorphism classes in\footnote{Recall that $\Tree^{\Xi}_2(\bq)$ is 
the full subcategory of $\Tree^{\Xi}(\bq)$ whose objects are 
labeled $\Xi$-colored 
planar trees $\bt$ with exactly two nodal vertices.} 
$\Tree^{\Xi}_2(\bq)$\,. Let us choose for every class 
$z \in \Isom^{\Xi}_2(\bq)$ its representative $\bt_z$. 

Using the trees $\bt_z$ we equip the 
complex \eqref{Conv-C-O} with 
the following binary operation  
\begin{equation}
\label{bul}
f \bullet g (X)  =  \sum_{z\in \Isom^{\Xi}_2(\bq) } \mu_{\bt_z} \,
\big( f \otimes g (\D_{\bt_z} (X)) \big)
\end{equation}
where $X\in \cC(\bq)$\,.  The axioms of pseudo-(co)operad imply 
that $\bul$ is a well-defined operation. Namely, the right hand side of \eqref{bul} 
does not depend on the choice of representatives $\bt_z$ and 
$f \bullet g$ is $S_{\bq}$-equivariant. 

We claim that 
\begin{prop}
\label{prop:pre-Lie}
The operation $\bullet$ \eqref{bul} equips $\Conv(\cC, \cO)$ 
with a pre-Lie algebra structure. In other words, 
\begin{equation}
\label{pre-Lie}
(f \bullet g ) \bullet h - f \bullet (g \bullet h) 
= (-1)^{|g| |h|} (f \bullet h) \bullet g  -  (-1)^{|g| |h|} f \bullet (h \bullet g)\,, 
\end{equation}
for all homogeneous vectors  
$f,g,h \in \Conv(\cC, \cO)$\,.
\end{prop}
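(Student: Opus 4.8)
The plan is to verify the pre-Lie relation \eqref{pre-Lie} by expanding both sides using the defining formula \eqref{bul} and recognizing that each side computes the same sum indexed by isomorphism classes of labeled $\Xi$-colored planar trees with exactly \emph{three} nodal vertices, which split into two ``shapes'': the ``left-comb'' shape, where the third vertex is inserted below (closer to the root than) the second, and the ``vee'' shape, where the second and third vertices are inserted into two distinct nodal vertices of the first. First I would iterate \eqref{bul}: for a corolla $\bq$ and $X \in \cC(\bq)$, the element $(f\bullet g)\bullet h(X)$ is a double sum over $z\in\Isom^{\Xi}_2(\bq)$ and $w\in\Isom^{\Xi}_2(\ka(\bt_z))$ of terms $\mu_{\bt_z}\circ(\mu_{\bt_w}\otimes 1)$ (with the inserted vertex in the appropriate spot, and signs from the Koszul rule) applied to the coassociated-out element. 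Using the coassociativity axiom \eqref{coassoc} for $\cC$ and the associativity axiom \eqref{assoc} for $\cO$, this double sum reorganizes as a single sum over isomorphism classes of $3$-nodal-vertex trees $\bt$ of $\mu_{\bt}$ applied to $\D_{\bt}(X)$, weighted by $f\otimes g\otimes h$.

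The key step is the bookkeeping of which terms appear on which side. On the left of \eqref{pre-Lie}, $(f\bullet g)\bullet h - f\bullet(g\bullet h)$: the first term produces all $3$-vertex trees where $h$'s vertex sits at or below $g$'s vertex relative to $f$'s, and the second term produces those where $h$'s vertex sits strictly above $g$'s. The ``totally ordered'' trees (left-comb shape, $f$ then $g$ then $h$ along a path, i.e.\ $g$ inserted into $f$ and $h$ inserted into $g$) are counted with opposite signs by the two terms of $(f\bullet g)\bullet h$ versus $f\bullet(g\bullet h)$ in a way that... no: more precisely, in $(f\bullet g)\bullet h$ the $h$-vertex is inserted into \emph{any} nodal vertex of $\bt_z$ (which carries either the $f$-corolla or the $g$-corolla), while in $f\bullet(g\bullet h)$ the $h$-vertex is first inserted into $g$ and then the composite into $f$. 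So the difference $(f\bullet g)\bullet h - f\bullet(g\bullet h)$ leaves exactly the ``vee'' trees where $g$ and $h$ are inserted into two distinct nodal vertices of $f$. The same expansion applied to the right-hand side, $(-1)^{|g||h|}\big((f\bullet h)\bullet g - f\bullet(h\bullet g)\big)$, yields the same set of ``vee'' trees but with the roles of $g$ and $h$ swapped and an extra sign $(-1)^{|g||h|}$; the braiding isomorphisms $\beta_{\wt\bt,x,\bt}$ together with the Koszul sign rule for moving $g$ past $h$ produce precisely this $(-1)^{|g||h|}$, so the two sides agree term by term.

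The main obstacle I expect is the sign and symmetry-group bookkeeping: one must (i) track the Koszul signs coming from permuting the homogeneous maps $f,g,h$ past the coderivation-like comultiplications and past each other, (ii) handle the $S_{\bq}$-equivariance and the choices of representatives $\bt_z$ (invoking \eqref{mu-S-equiv}, \eqref{D-S-equiv}, \eqref{la-equiv}, \eqref{la-equiv-co} to see independence of choices, exactly as already used to show $\bullet$ is well defined), and (iii) make sure that in the ``vee'' case the two branches carrying $g$ and $h$ are correctly matched after applying the braidings $\beta$, so that the symmetrization on the right-hand side of \eqref{pre-Lie} exactly cancels the asymmetry of the ``vee'' sum. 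A clean way to organize this is to first establish the identity ``in shape'', i.e.\ set up the bijection between the indexing sets of $3$-nodal-vertex trees appearing after iterating $\bullet$, and only then insert the signs, checking that the Koszul rule is compatible with the iterated application of \eqref{coassoc} and \eqref{assoc}. I would also note that the degree of $f\bullet g$ is $|f|+|g|$ (since the differential on $\Conv(\cC,\cO)$ comes only from $\cC$ and $\cO$ and $\mu_{\bt_z},\D_{\bt_z}$ are degree zero), which is what makes the signs in \eqref{pre-Lie} consistent.
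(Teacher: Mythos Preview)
Your proposal is correct and outlines the standard argument: the associator $(f\bullet g)\bullet h - f\bullet(g\bullet h)$ retains precisely the ``vee''-shaped $3$-nodal-vertex trees where $g$ and $h$ occupy two distinct slots of $f$, and this expression is symmetric in $g,h$ up to the Koszul sign $(-1)^{|g||h|}$. The paper itself does not give a proof at all; it simply refers to the general PROP version in Merkulov--Vallette \cite{MVnado} and to the detailed non-colored case in \cite{notes}, so your sketch is in fact more informative than what appears here.
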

\begin{proof}
This statement was proved in the more general setting 
(for PROPs) in \cite[Section 2.2]{MVnado} by B. Vallette and S. Merkulov.
For non-colored (co)operads, a detailed proof 
can be found in \cite[Section 4]{notes}.
\end{proof}

Proposition \ref{prop:pre-Lie} implies that the 
operation 
\begin{equation}
\label{Conv-brack}
[f,g]  = f \bullet g - (-1)^{|f||g|} g \bullet f
\end{equation}
satisfies the Jacobi identity. Thus,  $\Conv(\cC, \cO)$
is a Lie algebra in the category $\Ch_{\bbK}$\,.
Following  \cite{MVnado}, we call  $\Conv(\cC, \cO)$
the {\it convolution Lie algebra} of a pair $(\cC, \cO)$\,.

Using ``arity'' we can equip the convolution Lie algebra $\Conv(\cC, \cO)$ 
with the natural descending filtration 
$$
\Conv(\cC, \cO) =  \cF_{-1}\, \Conv(\cC, \cO) \supset \cF_0\, \Conv(\cC, \cO) \supset \cF_1\, \Conv(\cC, \cO)
\supset  \dots\,,
$$
where
\begin{equation}
\label{Conv-filtr}
\cF_m\, \Conv(\cC, \cO)=
\end{equation} 
$$
 \big\{ f \in \Conv(\cC, \cO) \quad \big| \quad
f \Big|_{\cC(\bq)} \,=\, 0 \quad \forall~~
\textrm{corollas} ~~ \bq \quad \textrm{satisfying} ~ |\bq| \le m \big\}
$$
and $|\bq|$ is the total number of incoming edges of the corolla $\bq$\,.

It is easy to see that this filtration is compatible with
the Lie bracket and  $\Conv(\cC, \cO)$ is complete 
with respect to this filtration.  Namely,
\begin{equation}
\label{complete}
\Conv(\cC, \cO) = \lim_{m} \Conv(\cC, \cO)
~\Big/~   \cF_m\, \Conv(\cC, \cO)\,.
\end{equation}

In fact, we may introduce an additional descending 
filtration  $\cF^{\chi}_{\bul}$ on the convolution Lie algebra $\Conv(\cC, \cO)$ 
for each color $\chi \in \Xi$:
$$
\Conv(\cC, \cO) =  \cF^{\chi}_{-1}\, \Conv(\cC, \cO) \supset 
 \cF^{\chi}_0\, \Conv(\cC, \cO) \supset \cF^{\chi}_1\, \Conv(\cC, \cO)
 \supset \dots\,,
$$
where
\begin{equation}
\label{Conv-filtr-chi}
\cF^{\chi}_m\, \Conv(\cC, \cO)
\end{equation} 
consists of vectors  $f \in \Conv(\cC, \cO)$ satisfying these two conditions: 
\begin{equation}
\label{color-is-chi}
f \Big|_{\cC(\bq)} \,=\, 0  \quad
\textrm{ if the color of the root edge of }\bq \textrm{ is }\chi 
\textbf{ and } |c^{-1}_{\bq, l} (\chi)| \le m, 
\end{equation}
and 
\begin{equation}
\label{color-is-not-chi}
f \Big|_{\cC(\bq)} \,=\, 0  \quad 
\textrm{ if the color of the root edge of }\bq \textrm{ is not }\chi 
\textbf{ and } |c^{-1}_{\bq, l} (\chi)| \le m - 1.
\end{equation}

It is not hard to see that this filtration is compatible with the
pre-Lie multiplication $\bullet$ \eqref{bul} on $\Conv(\cC, \cO)$
and  $\Conv(\cC, \cO)$ is complete with respect to 
this filtration.

\subsection{Free $\Xi$-colored operad}
\label{sec:free-op}
 
Let $Q$ be a $\Xi$-colored collection. Following \cite{BM-colors}, 
the spaces $\psop(Q)(\bq)$ of the free $\Xi$-colored pseudo-operad 
generated by the collection $Q$ are 
\begin{equation}
\label{psop-bq}
\psop(Q)(\bq) = \colim \und{Q} \Big|_{\Tree^{\Xi}(\bq)}\,,
\end{equation}
where $\Tree^{\Xi}(\bq)$ is the full subcategory of 
$\Tree^{\Xi}$ whose objects are labeled $\Xi$-colored planar 
trees $\bt$ satisfying condition \eqref{ka-bt-bq}.  

The pseudo-operad structure on $\psop(Q)$ is defined in 
the obvious way using grafting of trees. 

The free $\Xi$-colored operad $\Op(Q)$ is obtained from 
$\psop(Q)$ via adjoining the units.

Unfolding \eqref{psop-bq} we see that $\psop(Q)(\bq)$
is the quotient of the direct sum 
\begin{equation}
\label{pre-op-bq}
\bigoplus_{\bt, \ka(\bt) = \bq}  \und{Q} (\bt)
\end{equation}
by the subspace spanned by vectors of the form
$$
(\bt, X) - (\bt', \und{Q}(\la)(X))
$$
where $\la : \bt \to \bt'$ is a morphism in $\Tree^{\Xi}(\bq)$
and $X \in \und{Q}(\bt)$\,.

~\\[-0.5cm]

Thus it is convenient to represent vectors in  $\psop(Q)$ and in $\Op(Q)$
by labeled $\Xi$-colored planar trees with nodal vertices decorated 
by vectors in $Q$\,. The decoration is subject to this rule: if $\ka(x)$
is the corolla formed by all edges adjacent to a nodal vertex $x$ 
then $x$ is decorated by a vector $v_x \in Q(\ka(x))$\,.

If a decorated tree $\bt'$ is obtained from a decorated tree $\bt$
by applying an element $\si \in S_{\ka(x)}$ to incoming edges 
of a vertex $x$ and replacing the vector $v_x$ by $\si^{-1} (v_x)$
then $\bt'$ and $\bt$ represent the same vectors in  \eqref{psop-bq}.

\begin{example}
\label{ex:decor-trees}
Let $Q$ be a $2$-colored collection.
Figure \ref{fig:bt-decor} shows a labeled 2-colored 
tree $\bt$ decorated by vectors $v_1 \in Q(1,2)^{\mo}$, 
$v_2 \in Q(2,0)^{\mc}$ and $v_3 \in Q(1,0)^{\mo}$\,. 
Figure \ref{fig:wtbt-decor} shows another decorated 
tree with  $v'_1 = (\id, \si_{12})(v_1)$ and 
$v'_2 = \si_{12} (v_2)$, where $\si_{12}$ is the transposition
in $S_2$.
According to our discussion, these trees represent the 
same vector in $\Op(Q)(3,1)^{\mo}$\,.
\begin{figure}[htp] 
\begin{minipage}[t]{0.45\linewidth}
\centering 
\begin{tikzpicture}[scale=0.5]
\tikzstyle{lab}=[circle, draw, minimum size=4, inner sep=1]
\tikzstyle{n}=[circle, draw, fill, minimum size=4]
\tikzstyle{vt}=[circle, draw, fill, minimum size=0, inner sep=1]
\node[vt] (l1) at (1, 4) {};
\draw (1,4.5) node[anchor=center] {{\small $2_{\mc}$}};
\node[vt] (l2) at (2, 4) {};
\draw (2,4.5) node[anchor=center] {{\small $1_{\mc}$}};
\node[vt] (l3) at (3, 4) {};
\draw (3,4.5) node[anchor=center] {{\small $1_{\mo}$}};
\node[vt] (l4) at (5, 4) {};
\draw (5,4.5) node[anchor=center] {{\small $3_{\mc}$}};
\node[lab] (v2) at (1.5, 3) {{\small $v_2$}};
\node[lab] (v1) at (3, 2) {{\small $v_1$}};
\node[lab] (v3) at (4.5, 3) {{\small $v_3$}};
\node[vt] (r) at (3, 0.5) {};
\draw [dashed] (r) edge (v1);
\draw (v1) edge (v2);
\draw (v2) edge (l1);
\draw (v2) edge (l2);
\draw [dashed] (v1) edge (l3);
\draw [dashed] (v1) edge (v3);
\draw (v3) edge (l4);
\end{tikzpicture}
\caption{A 2-colored decorated tree $\bt$} \label{fig:bt-decor}
\end{minipage}
\begin{minipage}[t]{0.45\linewidth}
\centering 
\begin{tikzpicture}[scale=0.5]
\tikzstyle{lab}=[circle, draw, minimum size=4, inner sep=1]
\tikzstyle{n}=[circle, draw, fill, minimum size=4]
\tikzstyle{vt}=[circle, draw, fill, minimum size=0, inner sep=1]
\node[vt] (l1) at (1, 5) {};
\draw (1,5.5) node[anchor=center] {{\small $1_{\mc}$}};
\node[vt] (l2) at (2, 5) {};
\draw (2,5.5) node[anchor=center] {{\small $2_{\mc}$}};
\node[vt] (l3) at (5, 3.5) {};
\draw (5,4) node[anchor=center] {{\small $1_{\mo}$}};
\node[vt] (l4) at (3, 5) {};
\draw (3,5.5) node[anchor=center] {{\small $3_{\mc}$}};
\node[lab] (v2) at (1.5, 3.5) {{\small $v'_2$}};
\node[lab] (v1) at (3, 2) {{\small $v'_1$}};
\node[lab] (v3) at (3, 3.8) {{\small $v_3$}};
\node[vt] (r) at (3, 0.5) {};
\draw [dashed] (r) edge (v1);
\draw (v1) edge (v2);
\draw (v2) edge (l1);
\draw (v2) edge (l2);
\draw [dashed] (v1) edge (l3);
\draw [dashed] (v1) edge (v3);
\draw (v3) edge (l4);
\end{tikzpicture}
\caption{A 2-colored decorated tree $\wt{\bt}$. 
Here $v'_1 = (\id, \si_{12})(v_1)$ and $v'_2 = \si_{12}(v_2)$} \label{fig:wtbt-decor}
\end{minipage}
\end{figure}
\end{example}

\subsection{The cobar construction in the colored setting}

The cobar construction \cite{Fresse}, \cite{GJ}, \cite{GK}, \cite[Section 6.5]{LV-book}
is a functor from the category of coaugmented 
cooperads (in $\Ch_{\bbK}$) to the category of augmented operads 
(in $\Ch_{\bbK}$). It is used to construct free resolutions 
for operads. In this section, we briefly describe the cobar construction 
in the colored setting. 

Let $\cC$ be a coaugmented $\Xi$-colored cooperad in 
the category $\Ch_{\bbK}$
and $\cC_{\circ}$ be the cokernel of coaugmentation. As an 
operad in the category $\grVect_{\bbK}$, $\Cobar(\cC)$
is freely generated by the collection $\bs\, \cC_{\circ}$
\begin{equation}
\label{cobar-free}
\Cobar(\cC) = \Op(\bs\, \cC_{\circ})\,.
\end{equation}
Thus, it suffices to define the differential $\pa^{\Cobar}$ 
on generators $X \in \bs\, \cC_{\circ}$\,.

The differential $\pa^{\Cobar}$ on $\Cobar(\cC)$ can be written as the 
sum  
$$
\pa^{\Cobar} = \pa' + \pa''\,,
$$
with 
\begin{equation}
\label{pa-pr}
\pa' (X) =  - \bs\,  \pa_{\cC} \, \bs^{-1} X 
\end{equation}
and
\begin{equation}
\label{pa-prpr}
\pa'' (X) =  -  \bigoplus_{z \in \Isom^{\Xi}_2(\bq)} (\bs \otimes \bs) \,
\big(\bt_z; \D_{\bt_z} (\bs^{-1} X )\big),  
\end{equation}
where $X \in \bs \, \cC_{\circ}(\bq)$, 
$ \Isom^{\Xi}_2(\bq)$ is the set of isomorphism 
classes in $\Tree^{\Xi}_2(\bq)$, the tree $\bt_z$ is any representative 
of the class $z$, and $\pa_{\cC}$ is the differential on $\cC$\,. 

Properties of comultiplications $\D_{\bt}$ imply that 
the right hand side of \eqref{pa-prpr} does not depend on
the choice of representatives $\bt_z$\,.
Furthermore, using the identity  $\big(\pa_{\cC}\big)^2 = 0$
and the compatibility of $\pa_{\cC}$ with 
comultiplications $\D_{\bt}$ one easily deduces that 
$$
\pa' \circ \pa' = 0\,,
$$
and 
$$
\pa' \circ \pa'' + \pa'' \circ \pa' = 0\,.
$$
Finally  the coassociativity law \eqref{coassoc}
implies that 
\begin{equation}
\label{pa-cobar-sq}
\pa'' \circ \pa'' = 0\,.
\end{equation}

Let $\cO$ be a $\Xi$-colored operad in $\Ch_{\bbK}$\,. 
We claim that 
\begin{prop}
\label{prop:cobar-conv}
For every coaugmented $\Xi$-colored cooperad $\cC$ 
(in $\Ch_{\bbK}$), operad morphisms from $\Cobar(\cC)$ to $\cO$ are 
in bijection with MC elements of the Lie algebra 
\begin{equation}
\label{Conv-cCc-cO}
\Conv(\cC_{\circ}, \cO)\,,
\end{equation}
where $\cO$ is viewed as a $\Xi$-colored pseudo-operad 
via the forgetful functor. 
\end{prop}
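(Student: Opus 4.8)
The plan is to exploit the universal property of the free operad $\Op(\bs\,\cC_{\c})$ underlying $\Cobar(\cC)$, together with the description of $\pa^{\Cobar} = \pa' + \pa''$ on generators given in \eqref{pa-pr}--\eqref{pa-prpr}. First I would observe that, since $\Cobar(\cC) = \Op(\bs\,\cC_{\c})$ as an operad in $\grVect_{\bbK}$, a morphism of graded $\Xi$-colored operads $f : \Cobar(\cC) \to \cO$ is the same datum as a morphism of $\Xi$-colored collections $\bs\,\cC_{\c} \to \cO$; since $\bs$ shifts degree by one, desuspending gives a degree $+1$ element $\alpha \in \Conv(\cC_{\c}, \cO)$ via $\alpha(X) := \pm f(\bs X)$, and the $S_{\bq}$-equivariance of $f|_{\bs\,\cC_{\c}}$ is exactly the condition that $\alpha$ lie in $\prod_{\bq}\Hom_{S_{\bq}}(\cC_{\c}(\bq),\cO(\bq))$. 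This sets up a bijection $f \leftrightarrow \alpha$ between morphisms of graded operads and degree-$1$ convolution elements; it then remains to identify, under this bijection, the morphisms of \emph{dg} operads with the Maurer--Cartan elements.

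Next I would show that $f = f_\alpha$ is compatible with the differentials if and only if $\pa_{\cO}\circ\alpha \pm \alpha\circ\pa_{\cC} + \alpha\bul\alpha = 0$, i.e.\ the Maurer--Cartan equation $d\alpha + \tfrac12[\alpha,\alpha] = 0$ in $\Conv(\cC_{\c},\cO)$ (recall that $[\alpha,\alpha] = 2\,\alpha\bul\alpha$ by \eqref{Conv-brack}, since $|\alpha| = 1$ is odd, and that $d$ is the convolution differential induced by $\pa_{\cC}$ and $\pa_{\cO}$). The key reduction is that $D := f_\alpha\circ\pa^{\Cobar} - \pa_{\cO}\circ f_\alpha$ is an operadic derivation \emph{along} the operad morphism $f_\alpha$: this follows formally from $\pa^{\Cobar}$ being a derivation of $\Op(\bs\,\cC_{\c})$, $\pa_{\cO}$ being a derivation of $\cO$, and $f_\alpha$ being a morphism. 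A derivation along $f_\alpha$ out of a free operad vanishes as soon as it vanishes on the generators, so it suffices to check $f_\alpha(\pa^{\Cobar}(\bs X)) = \pa_{\cO}(f_\alpha(\bs X))$ for $X \in \cC_{\c}(\bq)$.

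Then I would compute both sides on a generator $\bs X$. From \eqref{pa-pr}, $f_\alpha(\pa'(\bs X)) = -f_\alpha(\bs\,\pa_{\cC}X)$ contributes $\mp\,\alpha(\pa_{\cC}X)$; from \eqref{pa-prpr}, and using that $f_\alpha$ is an operad morphism which sends a two-nodal-vertex decorated tree $(\bt_z;\, Y\otimes Z)$ to the insertion $\mu_{\bt_z}\big(\alpha(Y)\otimes\alpha(Z)\big)$ (up to the braiding sign), one gets $f_\alpha(\pa''(\bs X)) = \mp\sum_{z\in\Isom^{\Xi}_2(\bq)}\mu_{\bt_z}\big(\alpha\otimes\alpha(\D_{\bt_z}X)\big) = \mp\,(\alpha\bul\alpha)(X)$ by the very definition \eqref{bul} of $\bul$; here the independence of the choice of representatives $\bt_z$, already invoked in defining both $\pa''$ and $\bul$, ensures consistency. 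On the other side $\pa_{\cO}(f_\alpha(\bs X)) = \pm\,\pa_{\cO}(\alpha(X))$. Combining these, the chain-map condition on generators becomes precisely $\pa_{\cO}\circ\alpha \pm \alpha\circ\pa_{\cC} + \alpha\bul\alpha = 0$, i.e.\ the Maurer--Cartan equation. The correspondence is manifestly invertible: an MC element $\alpha$ determines $f_\alpha$ on generators by $f_\alpha(\bs X) = \pm\alpha(X)$ and hence on all of $\Op(\bs\,\cC_{\c})$ by freeness, and the same computation shows $f_\alpha$ commutes with differentials.

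As for the main obstacle: the conceptual content is routine and classical --- this is the $\Xi$-colored analogue of the standard identification of twisting morphisms with Maurer--Cartan elements of a convolution algebra --- so the real work lies in the bookkeeping. I expect the delicate part to be pinning down the signs coming from the suspension $\bs$ in \eqref{cobar-free}--\eqref{pa-prpr} and from the Koszul rule hidden in the braidings $\beta_{\,\wt\bt, x, \bt}$ of Definition \ref{dfn:psi-oper}, together with making the ``a derivation along $f_\alpha$ is determined by its restriction to generators'' argument fully precise in the colored pseudo-operad framework. Everything else follows formally from the freeness of $\Op(\bs\,\cC_{\c})$ and from the definitions of $\bul$ and $\pa^{\Cobar}$; in particular the quadratic Maurer--Cartan equation involves no convergence issue, as each tree in $\Cobar(\cC)(\bq)$ has finitely many nodal vertices.
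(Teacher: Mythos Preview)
Your proposal is correct and follows essentially the same approach as the paper: both use freeness of $\Op(\bs\,\cC_{\c})$ to reduce to the restriction on generators, then identify the chain-map condition on generators with the Maurer--Cartan equation via the formulas \eqref{pa-pr}--\eqref{pa-prpr} and \eqref{bul}. The paper's proof is terser---it simply says ``a direct computation shows'' where you spell out the derivation-along-$f_\alpha$ reduction and the term-by-term matching of $\pa'$, $\pa''$ with the linear and quadratic parts of the MC equation---but there is no substantive difference in strategy.
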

\begin{proof}
Since $\Cobar(\cC)$ is freely generated by the 
$\Xi$-colored collection $\bs\, \cC_{\circ}$ any operad morphism 
$$
F : \Cobar (\cC) \to \cO
$$ 
is uniquely determined by its restriction to $\bs\,\cC_{\circ}$\,.

Let us denote by $\al_{F}$ the degree $1$ element 
\begin{equation}
\label{al-F}
\al_{F} : \Conv(\cC_{\circ}, \cO)
\end{equation}
corresponding to the restriction 
$$
F \Big|_{\bs\,\cC_{\circ}} : \bs\,\cC_{\circ} \to \cO\,.
$$

A direct computation shows that the compatibility of 
$F$ with the differentials is equivalent to the MC 
equation on $\al_F$ in the Lie algebra \eqref{Conv-cCc-cO}.
\end{proof}

\begin{remark}
\label{rem:brack-pa-cobar}
It is possible to express the Lie bracket on $\Conv(\cC_{\circ}, \cO)$ in terms of  the 
portion $\pa''$ \eqref{pa-prpr} of the cobar differential $\pa^{\Cobar}$\,.
More precisely, for $f, g \in \Conv(\cC_{\circ}, \cO)$ and $X \in \cC_{\circ}$
we have  
\begin{equation}
\label{brack-pa-cobar}
[f,g] (X) =(-1)^{|g|} \mu \big( f \bs^{-1} \otimes  g \bs^{-1} (\pa''(\bs X)) \big) 
- (-1)^{|f| |g|} (f \leftrightarrow g)\,,
\end{equation}
where $f \bs^{-1}$ and $g \bs^{-1}$ act in the obvious way 
on the tensor factors of $\pa''(\bs X) \in \Op (\bs\, \cC_{\circ})$ 
and $\mu$ denotes the  multiplication map
$$
\mu : \Op(\cO) \to \cO\,.
$$
\end{remark}

\section{Operad $\dGra$ and its 2-colored extension $\KGra$}
\label{sec:dGra-KGra}

Let us remind from \cite{Thomas} the operad (in $\grVect_{\bbK}$) of 
directed labeled graphs $\dGra$\,. 

To define the space $\dGra(n)$ we introduce an auxiliary set $\dgra_{n}$.
An element of $\dgra_{n}$ is a directed graph $\G$ with 
the set of vertices $\{1,2, \dots, n\}$ and with a total order on the set of edges.   
We require that each directed graph $\G$ in $\dgra_n$ has no 
multiple edges with the same direction\footnote{This allows us to identify 
elements of $\dgra_n$ with ordered subsets of ordered pairs 
of numbers $1,2, \dots, n$. Let us also recall that we do not consider 
graphs with loops (i.e. cycles of length $1$).}. 
An example of an element in $\dgra_5$ is 
shown in figure \ref{fig:exam}. 
\begin{figure}[htp] 
\centering 
\begin{tikzpicture}[scale=0.5, >=stealth']
\tikzstyle{w}=[circle, draw, minimum size=3, inner sep=1]
\tikzstyle{b}=[circle, draw, fill, minimum size=3, inner sep=1]
\node [b] (b1) at (0,0) {};
\draw (-0.4,0) node[anchor=center] {{\small $1$}};
\node [b] (b3) at (2,0) {};
\draw (2, 0.5) node[anchor=center] {{\small $3$}};
\node [b] (b2) at (6,0) {};
\draw (5.9, 0.6) node[anchor=center] {{\small $2$}};
\node [b] (b4) at (2,2) {};
\draw (2, 2.6) node[anchor=center] {{\small $4$}};
\node [b] (b5) at (5,2) {};
\draw (5, 2.6) node[anchor=center] {{\small $5$}};
\draw [->] (b3) edge (b1);
\draw (1,0.4) node[anchor=center] {{\small $i$}};
\draw [->] (b3) ..controls (3,0.5) and (5,0.5) .. (b2);
\draw (4,0.8) node[anchor=center] {{\small $ii$}}; 
\draw [<-] (b3) ..controls (3,-0.5) and (5,-0.5) .. (b2);
\draw (4,-0.8) node[anchor=center] {{\small $iii$}}; 
\end{tikzpicture}
\caption{Roman numerals indicate that  
$(3,1) < (3,2) < (2,3)$ } \label{fig:exam}
\end{figure}
We will often use roman numerals to specify a total order on a set of edges. 
For example, the roman numerals in figure \ref{fig:exam} indicate that  
$(3,1) < (3,2) < (2,3)$\,.

The space $\dGra(n)$ is spanned by elements of 
$\dgra_n$, modulo the relation $\G^{\si} = (-1)^{|\si|} \G$
where the graphs $\G^{\si}$ and $\G$ correspond to the same
directed labeled graph but differ only by permutation $\si$
of edges. We also declare that 
the degree of a graph $\G$ in $\dGra(n)$ equals 
$-e(\G)$, where $e(\G)$ is the number of edges in $\G$\,.
For example, the graph $\G$ in figure \ref{fig:exam} has 
$3$ edges. Thus its degree is $-3$\,. 

\subsection{Operad structure on $\dGra$} 
Let $\G$ and $\wt{\G}$ be graphs representing vectors 
in $\dGra(n)$ and $\dGra(m)$, respectively. 

For $1\le i \le m$, the vector $\wt{\G}\, \circ_{i}\, \G \in \dGra(n+m-1)$ is represented 
by the sum of graphs $\G_{\al} \in \dgra_{n+m-1}$
 \begin{equation}
\label{circ-i-mc}
\wt{\G} \,\circ_{i}\, \G  = \sum_{\al} \G_{\al}\,,
\end{equation}
where $\G_{\al}$ is obtained by  
 ``plugging in'' the graph $\G$ into the $i$-th vertex of the graph $\tG$ and   
reconnecting the edges incident to the $i$-th vertex of $\tG$ to vertices of $\G$ in 
all possible ways. (The index $\al$ refers to a particular way of connecting the 
edges incident to the $i$-th vertex of $\tG$ to vertices of $\G$. )
After reconnecting edges we  
\begin{itemize}

\item shift all labels on vertices of $\G$ up by $i-1$, and

\item shift the labels on the last $m-i$ vertices 
of $\wt{\G}$ up by $n-1$.

\end{itemize}
To define the total order on edges of the graph $\G_{\al}$ we declare 
that all edges of $\wt{\G}$ are smaller than all edges of the graph $\G$\,. 

Note that every graph in $\{ \G_{\al} \}_{\al}$ is a legitimate element of 
$\dgra_{n+m-1}$ because $\G$ and $\wt{\G}$ have no multiple edges with the same direction 
and have no loops. 
 
\begin{example}
\label{ex:Gra-ins}
Let $\wt{\G}$ (resp. $\G$) be the graph depicted in figure 
\ref{fig:wtG} (resp. figure \ref{fig:G12})\,. The vector 
$\wt{\G} \circ_2 \G$ is shown in figure \ref{fig:Gra-ins}.
\begin{figure}[htp]
\begin{minipage}[t]{0.45\linewidth}
\centering 
\begin{tikzpicture}[scale=0.5, >=stealth']
\tikzstyle{w}=[circle, draw, minimum size=4, inner sep=1]
\tikzstyle{b}=[circle, draw, fill, minimum size=4, inner sep=1]
\node [b] (b1) at (0,0) {};
\draw (0,-0.6) node[anchor=center] {{\small $1$}};
\node [b] (b2) at (1,2) {};
\draw (1.4,2.2) node[anchor=center] {{\small $2$}};
\node [b] (b3) at (-1,2) {};
\draw (-1.4,2.2) node[anchor=center] {{\small $3$}};
\draw [->] (b1) edge (b2);
\draw [->] (b1) edge (b3);
\draw [->] (b3) edge (b2);
\end{tikzpicture}
~\\[0.3cm]
\caption{A graph $\wt{\G} \in \dgra_{3}$. The order on edges is
$(1,2) < (1,3) < (3,2)$} \label{fig:wtG}
\end{minipage}
\begin{minipage}[t]{0.45\linewidth}
\centering 
\begin{tikzpicture}[scale=0.5, >=stealth']
\tikzstyle{w}=[circle, draw, minimum size=4, inner sep=1]
\tikzstyle{b}=[circle, draw, fill, minimum size=4, inner sep=1]
\node [b] (b1) at (0,0) {};
\draw (0,-0.6) node[anchor=center] {{\small $1$}};
\node [b] (b2) at (1,2) {};
\draw (1.4,2.2) node[anchor=center] {{\small $2$}};
\draw [->] (b1) edge (b2);
\end{tikzpicture}
~\\[0.3cm]
\caption{A graph $\G \in \dgra_{2}$} \label{fig:G12}
\end{minipage}
\end{figure} 
\begin{figure}[htp]
\begin{minipage}[t]{0.3\linewidth}
\centering 
\begin{tikzpicture}[scale=0.5, >=stealth']
\tikzstyle{w}=[circle, draw, minimum size=4, inner sep=1]
\tikzstyle{b}=[circle, draw, fill, minimum size=4, inner sep=1]
\draw (-3.5,1) node[anchor=center] {{$\wt{\G} \circ_2 \G \quad = $}};
\node [b] (b1) at (0,0) {};
\draw (0,-0.6) node[anchor=center] {{\small $1$}};
\node [b] (b2) at (1,2) {};
\draw (1.3,1.6) node[anchor=center] {{\small $2$}};
\node [b] (b3) at (2.5,2.5) {};
\draw (2.9,2.6) node[anchor=center] {{\small $3$}};
\node [b] (b4) at (-1,2) {};
\draw (-1.4,2.2) node[anchor=center] {{\small $4$}};
\draw [->] (b1) edge (b2);
\draw [->] (b1) edge (b4);
\draw [->] (b4) edge (b2);
\draw [->] (b2) edge (b3);
\end{tikzpicture}
~\\[0.3cm]
\end{minipage}
\begin{minipage}[t]{0.2\linewidth}
\centering 
\begin{tikzpicture}[scale=0.5, >=stealth']
\tikzstyle{w}=[circle, draw, minimum size=4, inner sep=1]
\tikzstyle{b}=[circle, draw, fill, minimum size=4, inner sep=1]
\draw (-2.5,1) node[anchor=center] {{$ + $}};
\node [b] (b1) at (0,0) {};
\draw (0,-0.6) node[anchor=center] {{\small $1$}};
\node [b] (b2) at (1,2) {};
\draw (1.3,1.6) node[anchor=center] {{\small $3$}};
\node [b] (b3) at (2.5,2.5) {};
\draw (2.9,2.6) node[anchor=center] {{\small $2$}};
\node [b] (b4) at (-1,2) {};
\draw (-1.4,2.2) node[anchor=center] {{\small $4$}};
\draw [->] (b1) edge (b2);
\draw [->] (b1) edge (b4);
\draw [->] (b4) edge (b2);
\draw [<-] (b2) edge (b3);
\end{tikzpicture}
~\\[0.3cm]
\end{minipage}
\begin{minipage}[t]{0.2\linewidth}
\centering 
\begin{tikzpicture}[scale=0.5, >=stealth']
\tikzstyle{w}=[circle, draw, minimum size=4, inner sep=1]
\tikzstyle{b}=[circle, draw, fill, minimum size=4, inner sep=1]
\draw (-3,1) node[anchor=center] {{$ + $}};
\node [b] (b1) at (0,0) {};
\draw (0,-0.6) node[anchor=center] {{\small $1$}};
\node [b] (b2) at (1.5,1.5) {};
\draw (1.6,1) node[anchor=center] {{\small $2$}};
\node [b] (b3) at (0,3) {};
\draw (0,3.5) node[anchor=center] {{\small $3$}};
\node [b] (b4) at (-1.5,1.5) {};
\draw (-1.6,1) node[anchor=center] {{\small $4$}};
\draw [->] (b1) edge (b2);
\draw [->] (b2) edge (b3);
\draw [->] (b4) edge (b3);
\draw [->] (b1) edge (b4);
\end{tikzpicture}
~\\[0.3cm]
\end{minipage}
\begin{minipage}[t]{0.2\linewidth}
\centering 
\begin{tikzpicture}[scale=0.5, >=stealth']
\tikzstyle{w}=[circle, draw, minimum size=4, inner sep=1]
\tikzstyle{b}=[circle, draw, fill, minimum size=4, inner sep=1]
\draw (-2.8,1) node[anchor=center] {{$ + $}};
\node [b] (b1) at (0,0) {};
\draw (0,-0.6) node[anchor=center] {{\small $1$}};
\node [b] (b2) at (1.5,1.5) {};
\draw (1.6,1) node[anchor=center] {{\small $3$}};
\node [b] (b3) at (0,3) {};
\draw (0,3.5) node[anchor=center] {{\small $2$}};
\node [b] (b4) at (-1.5,1.5) {};
\draw (-1.6,1) node[anchor=center] {{\small $4$}};
\draw [->] (b1) edge (b2);
\draw [<-] (b2) edge (b3);
\draw [->] (b4) edge (b3);
\draw [->] (b1) edge (b4);
\end{tikzpicture}
~\\[0.3cm]
\end{minipage}
\caption{The vector $\wt{\G} \circ_2  \G \in \dGra(4)$} \label{fig:Gra-ins}
\end{figure} 
For the first graph in the sum 
$\wt{\G} \circ_2  \G$ we have $(1,2) < (1,4) < (4,2) < (2,3)\,.$
For the second graph in the sum 
$\wt{\G} \circ_2  \G$ we have $
(1,3) < (1,4) < (4,3) < (2,3)\,.$
For the third graph in the sum 
$\wt{\G} \circ_2  \G$ we have 
$(1,2) < (1,4) < (4,3) < (2,3)\,.$
Finally, for the last graph in the sum 
$\wt{\G} \circ_2 \G$ we have 
$(1,3) < (1,4) < (4,2) < (2,3)\,.$
\end{example}

The symmetric group $S_n$ acts on $\dGra(n)$ in the 
obvious way by rearranging the labels on vertices. 
It is not hard to see that insertions \eqref{circ-i-mc}
together with this action of $S_n$ give on $\dGra$ an 
operad structure with the identity element being the unique
graph in $\dgra_1$ with no edges.

\subsection{2-colored operad $\KGra$}
\label{sec:KGra}

To define a stable formality quasi-iso\-mor\-phism (SFQ) 
we need to upgrade the operad $\dGra$ to a $2$-colored operad $\KGra$ 
(in $\grVect_{\bbK}$). 
The additional spaces of the operad $\KGra$ are assembled from the 
graphs which were used by M. Kontsevich in his groundbreaking paper 
\cite{K}. As far as I understand, T. Willwacher is using this operad 
in \cite{Thomas-KS} under the different name: $\SGra$.    

Recall that, following our conventions, $\KGra(n,k)^{\mc}$ denotes
the space of operations with $n$ inputs of color $\mc$, $k$ inputs of color 
$\mo$, and with the color of the output being $\mc$. Similarly,  
$\KGra(n,k)^{\mo}$ is the space of
operations with $n$ inputs of color $\mc$, $k$ inputs of color 
$\mo$, and with the color of the output being $\mo$\,.

First, we declare that $\KGra(n,k)^{\mc} = \bfzero  $ whenever $k \ge 1$\,. 

Next, for the space $\KGra(n,0)^{\mc}$ ($n \ge 0$), we have 
\begin{equation}
\label{KGra-mc}
\KGra(n,0)^{\mc} = \dGra(n)\,.
\end{equation}

To define the space $\KGra(n,k)^{\mo}$ we introduce 
the auxiliary set $\dgra_{n,k}$\,.   
An element of the set $\dgra_{n,k}$ is a directed graph $\G$
with the set of vertices $\{1_{\mc}, \dots, n_{\mc}, 1_{\mo}, \dots, k_{\mo}\}$
and with a total order on the set of its edges. In addition, we require that  
\begin{itemize}

\item each $\G \in \dgra_{n,k}$ has no multiple edges with the same 
direction, and 

\item each $\G \in \dgra_{n,k}$ has no edges originating from any vertex with color $\mo$. 

\end{itemize}

\begin{example}
\label{ex:KGra-mc-mo}
Figure \ref{fig:exam-mc-mo} shows an 
example of a graph in $\dgra_{2,3}$. Black (resp. white) vertices carry
the color $\mc$ (resp. $\mo$). We use separate labels for vertices of 
color $\mc$ and vertices of color $\mo$\,. For example, $2_{\mc}$
denotes the second vertex of color $\mc$ and 
$3_{\mo}$ denotes the third vertex of color $\mo$. 
\begin{figure}[htp] 
\centering 
\begin{tikzpicture}[scale=0.5, >=stealth']
\tikzstyle{w}=[circle, draw, minimum size=4, inner sep=1]
\tikzstyle{b}=[circle, draw, fill, minimum size=4, inner sep=1]
\node [w] (w2) at (0,0) {};
\draw (0.1,-0.6) node[anchor=center] {{\small $2_{\mo}$}};
\node [w] (w1) at (1,0) {};
\draw (1.1,-0.6) node[anchor=center] {{\small $1_{\mo}$}};
\node [w] (w3) at (2,0) {};
\draw (2.1,-0.6) node[anchor=center] {{\small $3_{\mo}$}};
\node [b] (b1) at (0,2) {};
\draw (0,2.6) node[anchor=center] {{\small $1_{\mc}$}};
\node [b] (b2) at (2,2) {};
\draw (2,2.6) node[anchor=center] {{\small $2_{\mc}$}};
\draw [->] (b1) edge (w1);
\draw [->] (b1) edge (b2);
\draw [->] (b2) edge (w1);
\draw [->] (b2) edge (w3);
\end{tikzpicture}
\caption{We equip the edges with the order 
$(1_{\mc}, 2_{\mc}) < (1_{\mc}, 1_{\mo}) <  (2_{\mc}, 1_{\mo})
<  (2_{\mc}, 3_{\mo}) $ } \label{fig:exam-mc-mo}
\end{figure}
\end{example}

The space $\KGra(n,k)^{\mo}$ is spanned by elements of 
$\dgra_{n,k}$, modulo the relation $\G^{\si} = (-1)^{|\si|} \G$
where $\G^{\si}$ and $\G$ correspond to the same
directed labelled graph but differ only by permutation $\si$
of edges.  As above, we declare that 
the degree of a graph $\G$ in $\KGra(n,k)^{\mo}$ equals $-e(\G)$\,.

The elementary insertions 
$$
\KGra(m,0)^{\mc} \otimes  \KGra(n,0)^{\mc} \to 
 \KGra(m+n-1,0)^{\mc} 
$$
are defined in the same way as for $\dGra$\,.
So we proceed to the remaining insertions.

\subsection{Elementary insertions $\KGra(m,k)^{\mo} \otimes  \KGra(n,0)^{\mc} \to 
 \KGra(m+n-1,k)^{\mo} $ } 

Let $\G$ and $\wt{\G}$ be graphs representing vectors 
in $\KGra(n, 0)^{\mc}$ and $\KGra(m,k)^{\mo}$, respectively. 
Let  $1\le i \le m$\,.

The vector $\wt{\G} \circ_{i, \mc} \G \in \KGra(n+m-1, k)^{\mo}$ is 
the sum of graphs $\G_{\al} \in \dgra_{n+m-1, k}$
 \begin{equation}
\label{circ-i-mc-mo}
\wt{\G} \circ_{i, \mc} \G  = \sum_{\al} \G_{\al}\,,
\end{equation}
where $\G_{\al}$ is obtained by  
 ``plugging in'' the graph $\G$ into the $i$-th black vertex of the graph $\wt{\G}$ and   
reconnecting the edges incident to this vertex to vertices of $\G$ in 
all possible ways. (The index $\al$ refers to a particular way of connecting the 
edges incident to the $i$-th black vertex of $\wt{\G}$ to vertices of $\G$. )
After reconnecting edges we 
\begin{itemize}

\item shift all labels on vertices of $\G$ up by $i-1$, and

\item shift labels on the last $m-i$ black 
vertices of $\wt{\G}$ up by $n-1$.

\end{itemize}
To define the total order on edges of the graph $\G_{\al}$ we declare 
that all edges of $\wt{\G}$ are smaller than all edges of the graph $\G$\,. 

%
%
\begin{example}
\label{ex:ins-mc-mo}
The graphs depicted in figures \ref{fig:tG} and \ref{fig:G}
represent vectors $\tG \in \KGra(2,1)^{\mo}$ and
$\G \in \KGra(2,0)^{\mc}$, respectively. 
For the edges of $\wt{\G}$ we set
$$
(1_{\mc}, 1_{\mo}) < (1_{\mc}, 2_{\mc}) < (2_{\mc}, 1_{\mo})\,.
$$
As above, white vertices carry the color $\mo$
and black vertices carry the color $\mc$\,. 
\begin{figure}[htp] 
\begin{minipage}[t]{0.33\linewidth}
\centering 
\begin{tikzpicture}[scale=0.5, >=stealth']
\tikzstyle{w}=[circle, draw, minimum size=4, inner sep=1]
\tikzstyle{b}=[circle, draw, fill, minimum size=4, inner sep=1]
\node [w] (w1) at (1,0) {};
\draw (1.1,-0.6) node[anchor=center] {{\small $1_{\mo}$}};
\node [b] (b1) at (0,2) {};
\draw (0,2.6) node[anchor=center] {{\small $1_{\mc}$}};
\node [b] (b2) at (2,2) {};
\draw (2,2.6) node[anchor=center] {{\small $2_{\mc}$}};
\draw [->] (b1) edge (w1);
\draw [->] (b1) edge (b2);
\draw [->] (b2) edge (w1);
\end{tikzpicture}
\caption{The graph $\tG$} \label{fig:tG}
\end{minipage} 
\begin{minipage}[t]{0.33\linewidth}
\centering 
\begin{tikzpicture}[scale=0.5, >=stealth']
\tikzstyle{w}=[circle, draw, minimum size=4, inner sep=1]
\tikzstyle{b}=[circle, draw, fill, minimum size=4, inner sep=1]
\node [b] (b2) at (0,2) {};
\draw (0,2.6) node[anchor=center] {{\small $2_{\mc}$}};
\node [b] (b1) at (2,2) {};
\draw (2,2.6) node[anchor=center] {{\small $1_{\mc}$}};
\draw (1,1) node[anchor=center] {{$\phantom{a}$}};
\draw [->] (b2) edge (b1);
\end{tikzpicture}
\caption{The graph $\G$} \label{fig:G}
\end{minipage} 
\begin{minipage}[t]{0.33\linewidth}
\centering 
\begin{tikzpicture}[scale=0.5, >=stealth']
\tikzstyle{w}=[circle, draw, minimum size=4, inner sep=1]
\tikzstyle{b}=[circle, draw, fill, minimum size=4, inner sep=1]
\node [w] (w1) at (1,0) {};
\draw (1.1,-0.6) node[anchor=center] {{\small $1_{\mo}$}};
\node [b] (b1) at (0,2) {};
\draw (0,2.6) node[anchor=center] {{\small $1_{\mc}$}};
\node [b] (b2) at (0,0) {};
\draw (-0.5,0.2) node[anchor=center] {{\small $2_{\mc}$}};
\node [b] (b3) at (2,2) {};
\draw (2,2.6) node[anchor=center] {{\small $3_{\mc}$}};
\draw [->] (b1) edge (w1);
\draw [->] (b1) edge (b3);
\draw [->] (b2) edge (b1);
\draw [->] (b3) edge (w1);
\end{tikzpicture}
\caption{The graph $\G_1$} \label{fig:G1}
\end{minipage}
\end{figure}

The vector $\wt{\G} \circ_{1, \mc} \G \in \KGra(3,1)^{\mo}$ is represented by the sum of 
graphs $\G_1$, $\G_2$, $\G_3$, $\G_4$ depicted on
figures \ref{fig:G1},  \ref{fig:G2}, \ref{fig:G3}, \ref{fig:G4}, 
respectively. 
\begin{figure}[htp] 
\begin{minipage}[t]{0.33\linewidth}
\centering 
\begin{tikzpicture}[scale=0.5, >=stealth']
\tikzstyle{w}=[circle, draw, minimum size=4, inner sep=1]
\tikzstyle{b}=[circle, draw, fill, minimum size=4, inner sep=1]
\node [w] (w1) at (1,0) {};
\draw (1.1,-0.6) node[anchor=center] {{\small $1_{\mo}$}};
\node [b] (b2) at (0,2) {};
\draw (0,2.6) node[anchor=center] {{\small $2_{\mc}$}};
\node [b] (b1) at (0,0) {};
\draw (-0.5,0.2) node[anchor=center] {{\small $1_{\mc}$}};
\node [b] (b3) at (2,2) {};
\draw (2,2.6) node[anchor=center] {{\small $3_{\mc}$}};
\draw [->] (b2) edge (w1);
\draw [->] (b2) edge (b3);
\draw [->] (b2) edge (b1);
\draw [->] (b3) edge (w1);
\end{tikzpicture}
\caption{The graph $\G_2$} \label{fig:G2}
\end{minipage}
\begin{minipage}[t]{0.33\linewidth}
\centering 
\begin{tikzpicture}[scale=0.5, >=stealth']
\tikzstyle{w}=[circle, draw, minimum size=4, inner sep=1]
\tikzstyle{b}=[circle, draw, fill, minimum size=4, inner sep=1]
\node [w] (w1) at (1,0) {};
\draw (1.1,-0.6) node[anchor=center] {{\small $1_{\mo}$}};
\node [b] (b2) at (-1,2) {};
\draw (-1,2.6) node[anchor=center] {{\small $2_{\mc}$}};
\node [b] (b1) at (1,2) {};
\draw (1,2.6) node[anchor=center] {{\small $1_{\mc}$}};
\node [b] (b3) at (3,2) {};
\draw (3,2.6) node[anchor=center] {{\small $3_{\mc}$}};
\draw [->] (b2) edge (w1);
\draw [->] (b3) edge (w1);
\draw [->] (b2) edge (b1);
\draw [->] (b1) edge (b3);
\end{tikzpicture}
\caption{The graph $\G_3$} \label{fig:G3}
\end{minipage}
\begin{minipage}[t]{0.33\linewidth}
\centering 

\begin{tikzpicture}[scale=0.5, >=stealth']
\tikzstyle{w}=[circle, draw, minimum size=4, inner sep=1]
\tikzstyle{b}=[circle, draw, fill, minimum size=4, inner sep=1]
\node [w] (w1) at (1,0) {};
\draw (1.1,-0.6) node[anchor=center] {{\small $1_{\mo}$}};
\node [b] (b1) at (-1,2) {};
\draw (-1,2.6) node[anchor=center] {{\small $1_{\mc}$}};
\node [b] (b2) at (1,2) {};
\draw (1,2.6) node[anchor=center] {{\small $2_{\mc}$}};
\node [b] (b3) at (3,2) {};
\draw (3,2.6) node[anchor=center] {{\small $3_{\mc}$}};
\draw [->] (b1) edge (w1);
\draw [->] (b3) edge (w1);
\draw [->] (b2) edge (b1);
\draw [->] (b2) edge (b3);
\end{tikzpicture}
\caption{The graph $\G_4$} \label{fig:G4}
\end{minipage}
\end{figure}
Following our rule, the edges of $\G_1$ are ordered as 
follows $
(1_{\mc}, 1_{\mo}) < (1_{\mc}, 3_{\mc}) < (3_{\mc}, 1_{\mo})
< (2_{\mc}, 1_{\mc})\,. $
Similarly, edges of $\G_2$ carry the order 
$
(2_{\mc}, 1_{\mo}) < (2_{\mc}, 3_{\mc}) < (3_{\mc}, 1_{\mo})
< (2_{\mc}, 1_{\mc})\,. 
$
The edges of $\G_3$ are equipped with the order
$
(2_{\mc}, 1_{\mo}) < (1_{\mc}, 3_{\mc}) < (3_{\mc}, 1_{\mo})
< (2_{\mc}, 1_{\mc})\,. 
$
Finally, for $\G_4$ we have 
$
(1_{\mc}, 1_{\mo}) < (2_{\mc}, 3_{\mc}) < (3_{\mc}, 1_{\mo})
< (2_{\mc}, 1_{\mc})\,. 
$
\end{example}

\subsection{Elementary insertions $\KGra(m,p)^{\mo} \otimes  \KGra(n,q)^{\mo} \to 
 \KGra(m+n, p + q-1)^{\mo} $ } 
 
Let $\G$ and $\wt{\G}$ be graphs representing vectors 
in $\KGra(n, q)^{\mo}$ and $\KGra(m,p)^{\mo}$, respectively. 
Let  $1\le i \le p$\,.

The vector $\wt{\G} \circ_{i, \mo} \G \in \KGra(m+n, p+q-1)^{\mo}$ is represented 
by the sum of graphs $\G_{\al} \in \dgra_{m+n, p+q-1}$
 \begin{equation}
\label{circ-i-mo-mo}
\wt{\G} \circ_{i, \mo} \G  = \sum_{\al} \G_{\al}\,,
\end{equation}
where $\G_{\al}$ is obtained by  
 ``plugging in'' the graph $\G$ into the $i$-th white vertex of the graph $\wt{\G}$ and   
reconnecting the edges incident to this vertex to vertices of $\G$ in 
all possible ways. (The index $\al$ refers to a particular way of connecting the 
edges incident to the $i$-th white vertex of $\wt{\G}$ to vertices of $\G$. )
After reconnecting edges we
\begin{itemize}

\item shift all labels on black vertices of $\G$ up by $m$,  

\item shift all labels on white  vertices of $\G$ up by $i-1$, and finally  

\item shift all labels on the last $p-i$ white vertices of $\tG$ 
up by $q-1$.
 
\end{itemize}
To define the total order on edges of the graph $\G_{\al}$ we declare 
that all edges of $\wt{\G}$ are smaller than all edges of the graph $\G$\,. 

\begin{example}
\label{ex:ins-mo-mo}
If $\wt{\G}$ is the graph depicted in figure \ref{fig:exam-mc-mo}
and $\G$ is the graph depicted in figure \ref{fig:mc1mo1} then
\begin{figure}[htp]
\centering 
\begin{tikzpicture}[scale=0.5, >=stealth']
\tikzstyle{w}=[circle, draw, minimum size=4, inner sep=1]
\tikzstyle{b}=[circle, draw, fill, minimum size=4, inner sep=1]
\node [w] (w1) at (0,0) {};
\draw (0.1,-0.6) node[anchor=center] {{\small $1_{\mo}$}};
\node [b] (b1) at (0,2) {};
\draw (0,2.6) node[anchor=center] {{\small $1_{\mc}$}};
\draw [->] (b1) edge (w1);
\end{tikzpicture}
~\\[0.3cm]
\caption{A graph $\G \in \dgra_{1,1}$} \label{fig:mc1mo1}
\end{figure} 
the vector $\wt{\G} \circ_{3, \mo} \G \in \KGra(3,3)$ is the sum of graphs 
depicted in figure \ref{fig:exam-mo-mo}.
\begin{figure}[htp]
\centering 
\begin{tikzpicture}[scale=0.5, >=stealth']
\tikzstyle{w}=[circle, draw, minimum size=4, inner sep=1]
\tikzstyle{b}=[circle, draw, fill, minimum size=4, inner sep=1]
\draw (-3, 1) node[anchor=center] {{$\wt{\G} \circ_{3, \mo} \G \quad = $}};
\node [w] (w2) at (0,0) {};
\draw (0.1,-0.6) node[anchor=center] {{\small $2_{\mo}$}};
\node [w] (w1) at (1,0) {};
\draw (1.1,-0.6) node[anchor=center] {{\small $1_{\mo}$}};
\node [w] (w3) at (2,0) {};
\draw (2.1,-0.6) node[anchor=center] {{\small $3_{\mo}$}};
\node [b] (b1) at (0,2) {};
\draw (0,2.6) node[anchor=center] {{\small $1_{\mc}$}};
\node [b] (b2) at (2,2) {};
\draw (2,2.6) node[anchor=center] {{\small $2_{\mc}$}};
\node [b] (b3) at (3,1) {};
\draw (3,1.6) node[anchor=center] {{\small $3_{\mc}$}};
\draw [->] (b1) edge (w1);
\draw [->] (b1) edge (b2);
\draw [->] (b2) edge (w1);
\draw [->] (b2) edge (w3);
\draw [->] (b3) edge (w3);
\draw (4.5, 1) node[anchor=center] {{$+$}};
\node [w] (ww2) at (6,0) {};
\draw (6.1,-0.6) node[anchor=center] {{\small $2_{\mo}$}};
\node [w] (ww1) at (7,0) {};
\draw (7.1,-0.6) node[anchor=center] {{\small $1_{\mo}$}};
\node [w] (ww3) at (8,0) {};
\draw (8.1,-0.6) node[anchor=center] {{\small $3_{\mo}$}};
\node [b] (bb1) at (6,2) {};
\draw (6,2.6) node[anchor=center] {{\small $1_{\mc}$}};
\node [b] (bb2) at (8,2) {};
\draw (8,2.6) node[anchor=center] {{\small $2_{\mc}$}};
\node [b] (bb3) at (9,1) {};
\draw (9,1.6) node[anchor=center] {{\small $3_{\mc}$}};
\draw [->] (bb1) edge (ww1);
\draw [->] (bb1) edge (bb2);
\draw [->] (bb2) edge (ww1);
\draw [->] (bb2) edge (bb3);
\draw [->] (bb3) edge (ww3);
\end{tikzpicture}
\caption{The vector $\wt{\G} \circ_{3, \mo} \G$} \label{fig:exam-mo-mo}
\end{figure}
For the edges of the first graph in this sum we have 
$
(1_{\mc}, 2_{\mc}) < (1_{\mc}, 1_{\mo}) <  (2_{\mc}, 1_{\mo})
<  (2_{\mc}, 3_{\mo}) <  (3_{\mc}, 3_{\mo})\,.
$
For the edges of the second graph in this sum we have  
$
(1_{\mc}, 2_{\mc}) < (1_{\mc}, 1_{\mo}) <  (2_{\mc}, 1_{\mo})
<  (2_{\mc}, 3_{\mc}) <  (3_{\mc}, 3_{\mo})\,.
$
\end{example}

The identity element $\bu_{\mc} \in \KGra(1,0)^{\mc}$ (resp.  $\bu_{\mo} \in \KGra(0,1)^{\mo}$)
is represented by the graph in $\dgra_1$ (resp. the graph in $\dgra_{0,1}$) with no edges.

It is straightforward to verify that  $\bu_{\mc}$, $\bu_{\mo}$, and equations 
\eqref{circ-i-mc}, \eqref{circ-i-mc-mo}, \eqref{circ-i-mo-mo}  together with the natural 
action of $S_n \times S_k$ on $\KGra(n,k)^{\mo}$ (resp. $S_n$ on 
$\KGra(n,0)^{\mc}$)
define a structure of a $2$-colored operad on $\KGra$ in $\grVect_{\bbK}$\,.

\begin{remark}
\label{rem:mc-omitted}
When dealing with elements of $\KGra(n,0)^{\mc} = \dGra(n)$ or with 
elements of $\KGra(n,0)^{\mo}$, we will often omit the subscript $\mc$
in labels $1_{\mc}, 2_{\mc}, 3_{\mc}, \dots$.
\end{remark}
\begin{remark}
\label{rem:undir}
Let $\G$ be a graph in $\dgra_n$ (resp. $\dgra_{n,k}$) and 
$e$ be an edge of $\G$ which connects two black vertices.
We denote by $f_e(\G)$ the graph which is obtained from 
$\G$ by changing the direction of the edge $e$.

It is convenient to draw the linear combination
$\G + f_e(\G)$ as a graph which is obtained from 
$\G$ by forgetting the direction of $e$\,.
For example, 
\begin{equation}
\label{b-b}
\begin{tikzpicture}[scale=0.5, >=stealth']
\tikzstyle{w}=[circle, draw, minimum size=4, inner sep=1]
\tikzstyle{b}=[circle, draw, fill, minimum size=4, inner sep=1]
\node [b] (b1) at (0,0) {};
\draw (0,0.6) node[anchor=center] {{\small $1$}};
\node [b] (b2) at (2,0) {};
\draw (2,0.6) node[anchor=center] {{\small $2$}};
\draw (b1) edge (b2);
\end{tikzpicture}
\quad
=
\quad
\begin{tikzpicture}[scale=0.5, >=stealth']
\tikzstyle{w}=[circle, draw, minimum size=4, inner sep=1]
\tikzstyle{b}=[circle, draw, fill, minimum size=4, inner sep=1]
\node [b] (b1) at (0,0) {};
\draw (0,0.6) node[anchor=center] {{\small $1$}};
\node [b] (b2) at (2,0) {};
\draw (2,0.6) node[anchor=center] {{\small $2$}};
\draw [->](b1) edge (b2);
\end{tikzpicture}
\quad
+
\quad
\begin{tikzpicture}[scale=0.5, >=stealth']
\tikzstyle{w}=[circle, draw, minimum size=4, inner sep=1]
\tikzstyle{b}=[circle, draw, fill, minimum size=4, inner sep=1]
\node [b] (b1) at (0,0) {};
\draw (0,0.6) node[anchor=center] {{\small $1$}};
\node [b] (b2) at (2,0) {};
\draw (2,0.6) node[anchor=center] {{\small $2$}};
\draw [<-](b1) edge (b2);
\end{tikzpicture}
\quad .
\end{equation}

Similarly, if $e_1, e_2, \dots, e_p$ 
are edges of $\G$ which connect only black vertices 
and the graph $\G'$ is obtained from $\G$ by forgetting 
the directions of the edges  $e_1, e_2, \dots, e_p$,  then 
$\G'$ denotes the sum 
$$
\G' = \sum_{k_i\in  \{0,1\}} 
(f_{e_1})^{k_1} (f_{e_2})^{k_2} \dots (f_{e_p})^{k_p}
(\G)\,.
$$
For example,
\begin{equation}
\label{tri}
\begin{tikzpicture}[scale=0.5, >=stealth']
\tikzstyle{w}=[circle, draw, minimum size=4, inner sep=1]
\tikzstyle{b}=[circle, draw, fill, minimum size=4, inner sep=1]
\node [b] (b2) at (0,0) {};
\draw (0,0.6) node[anchor=center] {{\small $2$}};
\node [b] (b3) at (1,1) {};
\draw (1,1.6) node[anchor=center] {{\small $3$}};
\node [b] (b1) at (2,0) {};
\draw (2,0.6) node[anchor=center] {{\small $1$}};
\draw (b2) edge (b3);
\draw (b3) edge (b1);
\end{tikzpicture}
\quad
=
\quad
\begin{tikzpicture}[scale=0.5, >=stealth']
\tikzstyle{w}=[circle, draw, minimum size=4, inner sep=1]
\tikzstyle{b}=[circle, draw, fill, minimum size=4, inner sep=1]
\node [b] (b2) at (0,0) {};
\draw (0,0.6) node[anchor=center] {{\small $2$}};
\node [b] (b3) at (1,1) {};
\draw (1,1.6) node[anchor=center] {{\small $3$}};
\node [b] (b1) at (2,0) {};
\draw (2,0.6) node[anchor=center] {{\small $1$}};
\draw [->](b2) edge (b3);
\draw (b3) edge (b1);
\end{tikzpicture}
\quad
+
\quad
\begin{tikzpicture}[scale=0.5, >=stealth']
\tikzstyle{w}=[circle, draw, minimum size=4, inner sep=1]
\tikzstyle{b}=[circle, draw, fill, minimum size=4, inner sep=1]
\node [b] (b2) at (0,0) {};
\draw (0,0.6) node[anchor=center] {{\small $2$}};
\node [b] (b3) at (1,1) {};
\draw (1,1.6) node[anchor=center] {{\small $3$}};
\node [b] (b1) at (2,0) {};
\draw (2,0.6) node[anchor=center] {{\small $1$}};
\draw [<-] (b2) edge (b3);
\draw (b3) edge (b1);
\end{tikzpicture}
\end{equation} 
$$
= \quad 
\begin{tikzpicture}[scale=0.5, >=stealth']
\tikzstyle{w}=[circle, draw, minimum size=4, inner sep=1]
\tikzstyle{b}=[circle, draw, fill, minimum size=4, inner sep=1]
\node [b] (b2) at (0,0) {};
\draw (0,0.6) node[anchor=center] {{\small $2$}};
\node [b] (b3) at (1,1) {};
\draw (1,1.6) node[anchor=center] {{\small $3$}};
\node [b] (b1) at (2,0) {};
\draw (2,0.6) node[anchor=center] {{\small $1$}};
\draw [->] (b2) edge (b3);
\draw [->](b3) edge (b1);
\end{tikzpicture}
\quad
+
\quad
\begin{tikzpicture}[scale=0.5, >=stealth']
\tikzstyle{w}=[circle, draw, minimum size=4, inner sep=1]
\tikzstyle{b}=[circle, draw, fill, minimum size=4, inner sep=1]
\node [b] (b2) at (0,0) {};
\draw (0,0.6) node[anchor=center] {{\small $2$}};
\node [b] (b3) at (1,1) {};
\draw (1,1.6) node[anchor=center] {{\small $3$}};
\node [b] (b1) at (2,0) {};
\draw (2,0.6) node[anchor=center] {{\small $1$}};
\draw [->] (b2) edge (b3);
\draw [<-](b3) edge (b1);
\end{tikzpicture}
\quad
+
\quad
\begin{tikzpicture}[scale=0.5, >=stealth']
\tikzstyle{w}=[circle, draw, minimum size=4, inner sep=1]
\tikzstyle{b}=[circle, draw, fill, minimum size=4, inner sep=1]
\node [b] (b2) at (0,0) {};
\draw (0,0.6) node[anchor=center] {{\small $2$}};
\node [b] (b3) at (1,1) {};
\draw (1,1.6) node[anchor=center] {{\small $3$}};
\node [b] (b1) at (2,0) {};
\draw (2,0.6) node[anchor=center] {{\small $1$}};
\draw [<-] (b2) edge (b3);
\draw [->](b3) edge (b1);
\end{tikzpicture}
\quad
+
\quad
\begin{tikzpicture}[scale=0.5, >=stealth']
\tikzstyle{w}=[circle, draw, minimum size=4, inner sep=1]
\tikzstyle{b}=[circle, draw, fill, minimum size=4, inner sep=1]
\node [b] (b2) at (0,0) {};
\draw (0,0.6) node[anchor=center] {{\small $2$}};
\node [b] (b3) at (1,1) {};
\draw (1,1.6) node[anchor=center] {{\small $3$}};
\node [b] (b1) at (2,0) {};
\draw (2,0.6) node[anchor=center] {{\small $1$}};
\draw [<-] (b2) edge (b3);
\draw [<-] (b3) edge (b1);
\end{tikzpicture}
\quad .
$$
\end{remark}

\subsection{The action of the operad $\KGra$ on polyvectors and functions} 
\label{sec:acts-on-V-A}

Let $A$ be a free finitely generated commutative algebra (with unit) in $\grVect_{\bbK}$. 
We denote by 
\begin{equation}
\label{gener}
x^1, x^2, \dots, x^d
\end{equation}
generators of $A$ and by 
$|x^1|, |x^2|, \dots, |x^d|$ their corresponding degrees. 
We think of $A$ as the algebra of functions on a graded affine space. 

Let us denote by $V_A$ the free commutative algebra 
in $\grVect_{\bbK}$ generated by 
\begin{equation}
\label{x-te}
x^1, x^2, \dots, x^d, \te_1, \te_2, \dots, \te_d\,,
\end{equation}
where $\te_c$ carries the degree $1 - |x^c|$\,.
We think of $V_A$ as the algebra of polyvector fields on
the corresponding graded affine space. 

If all generators $x^c$ have degree $0$ then $A$  
(resp. $V_A$) is the algebra of functions (resp. the algebra 
of polyvector fields) on the affine space $\bbK^d$\,.
However, for our constructions 
there is no need to impose any restrictions on 
degrees of generators \eqref{gener}.

We claim that
\begin{prop}
\label{prop:action}
The pair $(V_A, A)$ is naturally 
an algebra over the $2$-colored operad $\KGra$\,.
\end{prop}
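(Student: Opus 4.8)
The plan is to exhibit $(V_A,A)$ as a $\KGra$-algebra by constructing, in the sense recalled after Example~\ref{ex:end}, a morphism of $2$-colored operads
$$
\rho \colon \KGra \lrarrow \End_{V_A, A}\,,
$$
and then checking that it respects units and the three families of elementary insertions \eqref{circ-i-mc}, \eqref{circ-i-mc-mo}, \eqref{circ-i-mo-mo}. The map $\rho$ is the standard ``Kontsevich contraction rule''. Given a graph $\G$ representing a vector in $\KGra(n,0)^{\mc}=\dGra(n)$ (resp.\ in $\KGra(n,k)^{\mo}$) together with $\gamma_1,\dots,\gamma_n\in V_A$ (resp.\ $\gamma_1,\dots,\gamma_n\in V_A$ and $f_1,\dots,f_k\in A$), one places $\gamma_i$ at the $i$-th vertex of color $\mc$ and $f_j$ at the $j$-th vertex of color $\mo$; to each edge $e$ one assigns a summation index $a_e\in\{1,\dots,d\}$ and lets $e$ act by $\pa/\pa\te_{a_e}$ on the content of its tail and by $\pa/\pa x^{a_e}$ on the content of its head; one multiplies the resulting elements together in $V_A$ in the order of the vertices and sums over all assignments $\{a_e\}$. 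If the output edge of $\G$ has color $\mc$ the result is kept in $V_A$; if it has color $\mo$ it is pushed to $A$ along the augmentation $V_A\to A$, $\te_c\mapsto 0$. Note that the restriction built into $\dgra_{n,k}$, that no edge originates at a vertex of color $\mo$, is exactly what makes the recipe meaningful, since $\pa f/\pa\te_c=0$ for $f\in A$.

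First I would check that $\rho$ is well defined. Because the $\te_c$ are odd variables, the operators $\pa/\pa\te_{a}$ attached to distinct edges anticommute, so reordering the edges of $\G$ multiplies $\rho(\G)$ by the Koszul sign of the permutation, matching the relation $\G^{\si}=(-1)^{|\si|}\G$ defining $\dGra$ and $\KGra$; for the same reason a graph with two parallel edges of equal direction goes to $0$ (the antisymmetric $\pa_{\te}\pa_{\te}$ meets the symmetric $\pa_x\pa_x$), consistently with Remark~\ref{rem:double-edges}. Equivariance of $\rho$ under $S_n$ on $\KGra(n,0)^{\mc}$ and under $S_n\times S_k$ on $\KGra(n,k)^{\mo}$ is immediate from the construction, and a degree count shows that $\rho(\G)$ has degree $-e(\G)=|\G|$.

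Next I would verify compatibility with the operadic structure. The units $\bu_{\mc}\in\KGra(1,0)^{\mc}$ and $\bu_{\mo}\in\KGra(0,1)^{\mo}$ (single vertex, no edge) go to the identity maps of $V_A$ and $A$, so the unit axioms hold. For the elementary insertions the key point is that the prescription ``substitute $\G$ into the $i$-th vertex of $\wt{\G}$ and reconnect the edges incident to that vertex in all possible ways'' is precisely the graphical shadow of the Leibniz rule: the operators $\pa/\pa x^{a}$ entering vertex $i$ and the operators $\pa/\pa\te_{a}$ leaving vertex $i$ are (odd) derivations of $V_A$, hence distribute over the product of $V_A$-elements which is the value of $\rho(\G)$ placed at that vertex, and the sum over ``which factor each such derivation hits'' is exactly the sum over reconnections in \eqref{circ-i-mc}, \eqref{circ-i-mc-mo}, \eqref{circ-i-mo-mo}. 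In case \eqref{circ-i-mo-mo}, inserting into a vertex of color $\mo$, only incoming edges are reconnected, which is consistent since such vertices have no outgoing edges; moreover one checks that applying the augmentation $V_A\to A$ to $\rho(\G)$ before inserting gives the same answer as applying it to the whole composite at the end, because the intervening operations (further $\pa/\pa x^{a}$'s and products in $A$) commute with setting $\te=0$. The prescribed edge order in the insertions (all edges of $\wt{\G}$ before all edges of $\G$) is what makes the Koszul signs on the two sides agree.

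The step I expect to be the main obstacle is exactly this last sign bookkeeping: one must reconcile the Koszul signs produced by permuting the odd variables $\te_c$ and by moving derivations past one another and past the $V_A$-factors, the skew-symmetry of $\KGra$ in edge orderings, the sign rule for the operadic insertion $\mu_{\bt_{i,\chi}}$ in $\End_{V_A,A}$ (from reordering arguments of multilinear maps), and the degree shifts hidden in the two output colors. I would organize the verification by treating the purely black case ($\dGra$ acting on $V_A$) first, where the signs are cleanest, and then bootstrapping to the two mixed insertions, in each case writing both sides as a single sum over index assignments and reconnection patterns and matching terms one by one. The underlying combinatorics (reconnections $\leftrightarrow$ Leibniz) is conceptually transparent; the signs are where the real work lies.
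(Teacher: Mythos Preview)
Your proposal is correct and follows essentially the same construction as the paper: the paper writes down the contraction formulas via the operators $\Lap_{(i,j)}$ (equations \eqref{G-acts-mc}--\eqref{G-acts-mo}) and then simply asserts that the verification of compatibility with the operadic insertions is ``not hard,'' without spelling out the Leibniz-rule or sign arguments. Your outline of that verification (well-definedness under edge reordering, Leibniz-rule-as-reconnection, sign bookkeeping) is exactly the content the paper leaves implicit.
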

\begin{proof}
For $\G\in \dgra_{n}$ and $v_1, v_2, \dots, v_n \in V_A$ we 
set 
\begin{equation}
\label{G-acts-mc}
\G(v_1, v_2, \dots, v_n) : = 
\mult_n  
\Big(
\Big[\prod_{(i,j) \in E(\G)} \Lap_{(i,j)} \Big]
\,(v_1  \otimes v_2 \otimes \dots \otimes v_n )\,
 \Big) \,,
\end{equation}
where  $\mult_n$ is the multiplication map 
$$
\mult_n : \big( V_A\big)^{\otimes \, n}  \to  V_A,
$$ 
\begin{equation}
\label{Lap-less}
\Lap_{(i,j)} = \sum_{c=1}^d 1 \otimes \dots \otimes 1 \otimes 
\underbrace{\pa_{\te_c}}_{i\textrm{-th slot}} \otimes 1 \otimes \dots \otimes 1 
\otimes 
\underbrace{\pa_{x^c}}_{j\textrm{-th slot}}  \otimes 1  \otimes \dots  \otimes 1
\end{equation}
if $i < j$,
\begin{equation}
\label{Lap-more}
\Lap_{(i,j)} = \sum_{c=1}^d 1 \otimes \dots \otimes 1 \otimes 
\underbrace{\pa_{x^c}}_{j\textrm{-th slot}} \otimes 1 \otimes \dots \otimes 1 
\otimes 
\underbrace{\pa_{\te_c}}_{i\textrm{-th slot}}  \otimes 1  \otimes \dots  \otimes 1
\end{equation}
if $j < i$, and the order of factors in the product 
$$
\prod_{(i,j) \in E(\G)} \Lap_{(i,j)} 
$$
comes from the order on the set $E(\G)$ of edges of $\G$\,.

To define the action of a graph  $\G\in  \dgra_{n, k}$ 
we identify vertices of $\G$ with the numbers  
$1, 2, \dots, n+k$ by using the labels and declaring that 
all black vertices precede all white vertices.  Namely, 
the black vertex with label $i$ is identified with number $i$ 
and the white vertex with label $j$ is identified with number 
$n+j$\,. Then for $v_1, v_2, \dots, v_n \in V_A$, 
and $a_1, a_2, \dots, a_k \in A$ we set 
$$
\G(v_1, v_2, \dots, v_n; a_1, a_2, \dots, a_k) : =
$$
\begin{equation}
\label{G-acts-mo}
\mult_{n,k} 
\Big(
\Big[\prod_{(i,j) \in E(\G)} \Lap_{(i,j)} \Big]
\,(v_1  \otimes v_2 \otimes \dots \otimes v_n \otimes a_1 \otimes a_2 \otimes \dots \otimes a_k)\,
 \Big) \Big|_{\te_c = 0} \,,
\end{equation} 
where $\mult_{n,k}$ is the 
multiplication map 
$$
\mult_{n,k} : \big( V_A\big)^{\otimes \, n}   ~\otimes~
 A^{\otimes \, k} \to 
V_A\,,
$$ 
$\Lap_{(i,j)}$ is defined by equations \eqref{Lap-less}, 
\eqref{Lap-more}, and the order of factors in the product 
$$
\prod_{(i,j) \in E(\G)} \Lap_{(i,j)} 
$$
comes from the order on the set $E(\G)$ of edges of $\G$\,.

It is not hard to verify that equations \eqref{G-acts-mc}, 
\eqref{G-acts-mo} define an action of $\KGra$ on the 
pair $(V_A, A)$\,.
\end{proof}

\section{The $2$-colored operad $\OC$ of H. Kajiura and J. Stasheff}
\label{sec:OC}
Inspired by Zwiebach's open-closed string field theory \cite{Zwiebach}, 
H. Kajiura and J. Stasheff introduced in \cite{OCHA}
open-closed homotopy algebras (OCHA). 

An OCHA is a pair of  cochain complexes $(\cV, \cA)$ with 
the following data: 
\begin{itemize}
\item A $\La\Lie_{\infty}$-structure on $\cV$, 

\item an $A_{\infty}$-structure on $\cA$, and 

\item a $\La\Lie_{\infty}$-morphism from $\cV$ to the 
Hochschild cochain complex $\Cbu(\cA)$ of $\cA$\,.

\end{itemize}

It was shown in \cite{OCHA1}, that
OCHAs are governed by a $2$-colored operad (in $\Ch_{\bbK}$)
 which we denote by $\OC$\,. Moreover, as an operad in $\grVect$, $\OC$ 
is freely generated by the 2-colored collection $\oc$ with the following 
spaces: 
\begin{equation}
\label{oc-mc}
\oc(n,0)^{\mc} = \bs^{3-2n} \bbK\,, \qquad  n \ge 2\,, 
\end{equation}
\begin{equation}
\label{oc-mo}
\oc(0,k)^{\mo} = \bs^{2-k}\, \sgn_k \otimes \bbK[S_k]\,, \qquad  k \ge 2\,, 
\end{equation}
\begin{equation}
\label{oc-mo1}
\oc(n,k)^{\mo} = \bs^{2-2n-k}\,  \sgn_k \otimes \bbK[S_k]\,, \qquad n \ge 1\,,
\qquad k \ge 0\,, 
\end{equation}
where $\sgn_k$ is the sign representation of $S_k$\,.
The remaining spaces of the collection $\oc$ are zero. 

Following the description of free colored operads via  
decorated (and colored) trees (see Section \ref{sec:free-op}), 
we represent generators of $\OC$ in $\oc(n,0)^{\mc}$
by non-planar labeled corollas with $n$ solid incoming edges 
(see figure \ref{fig:bt-n-oc-mc}). We represent  generators of $\OC$ in 
$\oc(0,k)^{\mo}$ by planar labeled corollas with  $k$ dashed incoming edges
(see figure \ref{fig:bt-k-oc-mo}). 
Finally, we use labeled 2-colored corollas with a planar structure given only  
on the dashed edges to represent generators of $\OC$ in $\oc(n,k)$
(see figure \ref{fig:bt-nk-oc-mo}).
   
\begin{figure}[htp] 
\begin{minipage}[t]{0.3\linewidth}
\centering 
\begin{tikzpicture}[scale=0.5]
\tikzstyle{w}=[circle, draw, minimum size=3, inner sep=1]
\tikzstyle{vt}=[circle, draw, fill, minimum size=0, inner sep=1]
\node[vt] (l1) at (0, 2) {};
\draw (0,2.5) node[anchor=center] {{\small $1$}};
\node[vt] (l2) at (1, 2) {};
\draw (1,2.5) node[anchor=center] {{\small $2$}};
\draw (2,1.8) node[anchor=center] {{\small $\dots$}};
\node[vt] (ln) at (3, 2) {};
\draw (3,2.5) node[anchor=center] {{\small $n$}};
\node[w] (v) at (1.5, 1) {};
\node[vt] (r) at (1.5, 0) {};
\draw (r) edge (v);
\draw (v) edge (l1);
\draw (v) edge (l2);
\draw (v) edge (ln);
\end{tikzpicture}
\caption{The non-planar corolla $\st^{\mc}_n$
representing a generator of $\oc(n,0)^{\mc}$ } \label{fig:bt-n-oc-mc}
\end{minipage}
\hspace{0.2cm}
\begin{minipage}[t]{0.3\linewidth}
\centering 
\begin{tikzpicture}[scale=0.5]
\tikzstyle{vt}=[circle, draw, fill, minimum size=0, inner sep=1]
\tikzstyle{w}=[circle, draw, minimum size=3, inner sep=1]
\node[vt] (l1) at (0, 2) {};
\draw (0,2.5) node[anchor=center] {{\small $1$}};
\node[vt] (l2) at (1, 2) {};
\draw (1,2.5) node[anchor=center] {{\small $2$}};
\draw (2,1.8) node[anchor=center] {{\small $\dots$}};
\node[vt] (lk) at (3, 2) {};
\draw (3,2.5) node[anchor=center] {{\small $k$}};
\node[w] (v) at (1.5, 1) {};
\node[vt] (r) at (1.5, 0) {};
\draw [dashed] (r) edge (v);
\draw [dashed] (v) edge (l1);
\draw [dashed] (v) edge (l2);
\draw [dashed] (v) edge (lk);
\end{tikzpicture}
\caption{The 2-colored planar corola $\st^{\mo}_k$ representing 
a generator of $\oc(0,k)^{\mo}$ } \label{fig:bt-k-oc-mo}
\end{minipage}
\hspace{0.2cm}
\begin{minipage}[t]{0.35\linewidth}
\centering 
\begin{tikzpicture}[scale=0.5]
\tikzstyle{vt}=[circle, draw, fill, minimum size=0, inner sep=1]
\tikzstyle{w}=[circle, draw, minimum size=3, inner sep=1]
\node[vt] (l1) at (0, 2) {};
\draw (0,2.5) node[anchor=center] {{\small $1$}};
\draw (1.5,1.8) node[anchor=center] {{\small $\dots$}};
\node[vt] (ln) at (2, 2) {};
\draw (2,2.5) node[anchor=center] {{\small $n$}};
\node[vt] (l1mo) at (3, 2) {};
\draw (3,2.5) node[anchor=center] {{\small $1$}};
\draw (3.5,1.8) node[anchor=center] {{\small $\dots$}};
\node[vt] (lkmo) at (5, 2) {};
\draw (5,2.5) node[anchor=center] {{\small $k$}};
\node[w] (v) at (2.5, 1) {};
\node[vt] (r) at (2.5, 0) {};
\draw  [dashed] (r) edge (v);
\draw (v) edge (l1);
\draw (v) edge (ln);
\draw  [dashed] (v) edge (l1mo);
\draw  [dashed] (v) edge (lkmo);
\end{tikzpicture}
\caption{The 2-colored partially planar corola $\st^{\mo}_{n,k}$ representing 
a generator of $\oc(n,k)^{\mo}$ 
} \label{fig:bt-nk-oc-mo}
\end{minipage}
\end{figure}
Using the corolla $\st^{\mo}_k$ (resp. the corolla $\st^{\mo}_{n,k}$) depicted in figure 
\ref{fig:bt-k-oc-mo} (resp. \ref{fig:bt-nk-oc-mo}), we can form a basis of the vector 
space $\oc(0,k)^{\mo}$ (resp. $\oc(n,k)^{\mo}$). 
Namely, the set $\{ \si(\st^{\mo}_k) ~|~ \si \in S_k \}$ is a basis of the vector space
$\oc(0,k)^{\mo}$ and the set  $\{ (\id , \si)\big(\st^{\mo}_{n,k}\big) ~|~ \si \in S_k \}$
is a basis of the vector space $\oc(n,k)^{\mo}$.

Equations \eqref{oc-mc}, \eqref{oc-mo}, and \eqref{oc-mo1} imply 
that the corollas  $\st^{\mc}_n$, $\st^{\mo}_k$ and  $\st^{\mo}_{n,k}$
carry the following degrees: 
\begin{eqnarray}
\label{deg-mc-n}
| \st^{\mc}_n |  = 3 - 2n &  n \ge 2\,, \\[0.3cm]
\label{deg-mo-k}
| \st^{\mo}_k |  = 2 - k  &  k \ge 2\,, \\[0.3cm]
\label{deg-mo-nk}
| \st^{\mo}_{n,k} |  = 2 -2n - k &  ~~~~n \ge 1,~ k \ge 0\,. 
\end{eqnarray}

We should remark that $\OC$ comes from a Koszul operad and this 
fact was established in beautiful paper \cite{HL-OCHA} by E. Hoefel and M. Livernet.

\subsection{The differential on  $\OC$}
It is convenient to split the differential  $\cD$ on $\OC$ 
into four summands
\begin{equation}
\label{cD-OC-sum}
\cD = \cD_{\Lie} + \cD_{\As} + \cD' +  \cD''\,. 
\end{equation}

Since $\OC$ is freely generated by the 2-colored collection  $\oc$, 
it suffices to define the values of summands $\cD_{\Lie}$,
$ \cD_{\As} $, $\cD'$, and $\cD''$ 
on corollas 
$\st^{\mc}_n$, $\st^{\mo}_k$, and $\st^{\mo}_{n,k}$ depicted 
in figures \ref{fig:bt-n-oc-mc}, \ref{fig:bt-k-oc-mo}, and \ref{fig:bt-nk-oc-mo}, 
respectively. 

For the corolla $\st^{\mc}_{n}$ we have 
\begin{equation}
\label{cD-bt-n-zero}
\cD_{\As} (\st^{\mc}_n) = 0\,, 
\qquad
\cD'  (\st^{\mc}_n) = 0\,, 
\qquad 
\cD''  (\st^{\mc}_n) = 0
\end{equation}
and $\cD_{\Lie} (\st^{\mc}_n)$ is the sum 
shown in figure \ref{fig:cDLie-bt-mc}.
\begin{figure}[htp] 
\centering 
\begin{tikzpicture}[scale=0.5]
\tikzstyle{w}=[circle, draw, minimum size=3, inner sep=1]
\tikzstyle{vt}=[circle, draw, fill, minimum size=0, inner sep=1]
\draw (-4,1.5) node[anchor=center] 
{{$\displaystyle \cD_{\Lie} (\st^{\mc}_n) \quad = \quad - \quad
\sum_{p=2}^{n-1} \sum_{\tau\in \Sh_{p, n-p} } $}};
\node[vt] (l1) at (0, 3) {};
\draw (0,3.6) node[anchor=center] {{\small $\tau(1)$}};
\draw (1.5,2.8) node[anchor=center] {{\small $\dots$}};
\node[vt] (lp) at (2.5, 3) {};
\draw (2.5,3.6) node[anchor=center] {{\small $\tau(p)$}};
\node[vt] (lp1) at (4, 2) {};
\draw (4,2.6) node[anchor=center] {{\small $\tau(p+1)$}};
\draw (5.1,1.8) node[anchor=center] {{\small $\dots$}};
\node[vt] (ln) at (6.5, 2) {};
\draw (6.5,2.6) node[anchor=center] {{\small $\tau(n)$}};
\node[w] (v2) at (2, 2) {};
\node[w] (v1) at (4, 1) {};
\node[vt] (r) at (4, 0) {};
\draw (r) edge (v1);
\draw (v1) edge (v2);
\draw (v2) edge (l1);
\draw (v2) edge (lp);
\draw (v1) edge (lp1);
\draw (v1) edge (ln);
\end{tikzpicture}
\caption{ The value of $\cD_{\Lie}$ on $\st^{\mc}_{n}$} \label{fig:cDLie-bt-mc}
\end{figure}

For the corolla $\st^{\mo}_{k}$ we have 
\begin{equation}
\label{cD-bt-k-zero}
\cD_{\Lie} (\st^{\mo}_k) = 0\,, 
\qquad 
\cD' (\st^{\mo}_k) = 0\,, 
\qquad 
\cD'' (\st^{\mo}_k) = 0\,, 
\end{equation}
and $\cD_{\As} (\st^{\mo}_k)$ is the sum shown 
in figure \ref{fig:cDAs-bt-k}.
\begin{figure}[htp]
\centering 
\begin{tikzpicture}[scale=0.5]
\tikzstyle{vt}=[circle, draw, fill, minimum size=0, inner sep=1]
\tikzstyle{w}=[circle, draw, minimum size=3, inner sep=1]
\draw (-8,1.5) node[anchor=center] 
{{$\displaystyle \cD_{\As} (\st^{\mo}_k) \quad = \quad
- \sum_{p=0}^{k-2} \sum_{q = p+2}^{k}  \quad (-1)^{p + (k-q) (q-p)} $}};
\node[vt] (l1) at (0, 2) {};
\draw (0,2.5) node[anchor=center] {{\small $1$}};
\draw (1.5,1.8) node[anchor=center] {{\small $\dots$}};
\node[vt] (lp) at (2, 2) {};
\draw (2,2.5) node[anchor=center] {{\small $p$}};
\node[w] (v2) at (3, 2.3) {};
\node[vt] (lp1) at (2.2, 3.5) {};
\draw (2.2,4) node[anchor=center] {{\small $p+1$}};
\draw (3, 3.3) node[anchor=center] {{\small $\dots$}};
\node[vt] (lq) at (3.8, 3.5) {};
\draw (3.8,4) node[anchor=center] {{\small $q$}};
\node[vt] (lq1) at (4, 2) {};
\draw (4.2,2.5) node[anchor=center] {{\small $q+1$}};
\draw (4.5, 1.8) node[anchor=center] {{\small $\dots$}};
\node[vt] (lk) at (6, 2) {};
\draw (6,2.5) node[anchor=center] {{\small $k$}};
\node[w] (v1) at (3, 1) {};
\node[vt] (r) at (3, 0) {};
\draw [dashed] (r) edge (v1);
\draw [dashed] (v1) edge (v2);
\draw [dashed] (v1) edge (l1);
\draw [dashed] (v1) edge (lp);
\draw [dashed] (v2) edge (lp1);
\draw [dashed] (v2) edge (lq);
\draw [dashed] (v1) edge (lq1);
\draw [dashed] (v1) edge (lk);
\end{tikzpicture}
\caption{ The value of $\cD_{\As}$ on $\st^{\mo}_{k}$} \label{fig:cDAs-bt-k}
\end{figure}

The value of $\cD_{\Lie}$ on the corolla $\st^{\mo}_{n,k}$
is given by the sum depicted in figure \ref{fig:cDLie} and
the value of $\cD_{\As}$ on the corolla $\st^{\mo}_{n,k}$
is given by the sum depicted in figure \ref{fig:cDAs-bt-nk}.
The values $\cD'(\st^{\mo}_{n,k})$ and $\cD''(\st^{\mo}_{n,k})$
for $n \ge 2$
are defined in figures \ref{fig:cDpr}  and  \ref{fig:cDprpr}, 
respectively. Finally, for the corollas $\st^{\mo}_{1,k}$
we have 
\begin{equation}
\label{cDpr-prpr-zero}
\cD' (\st^{\mo}_{1,k}) =  
\cD''(\st^{\mo}_{1,k}) = 0\,, \qquad \forall ~ k \ge 0\,.
\end{equation}

\begin{figure}[htp] 
\centering 
\begin{tikzpicture}[scale=0.5]
\tikzstyle{vt}=[circle, draw, fill, minimum size=0, inner sep=1]
\tikzstyle{w}=[circle, draw, minimum size=3, inner sep=1]
\draw (-6,1.5) node[anchor=center] 
{{$\displaystyle \cD_{\Lie} (\st^{\mo}_{n,k}) \quad = \quad
(-1)^k \quad
 \sum_{p=2}^{n-1} \sum_{\tau\in \Sh_{p, n-p} } $}};
\node[vt] (l1) at (0, 4) {};
\draw (0,4.6) node[anchor=center] {{\small $\tau(1)$}};
\draw (1.5,3.8) node[anchor=center] {{\small $\dots$}};
\node[vt] (lp) at (2.5, 4) {};
\draw (2.5,4.6) node[anchor=center] {{\small $\tau(p)$}};
\node[w] (v2) at (1.3, 2.6) {};
\node[vt] (lp1) at (3.7, 2.5) {};
\draw (3.7,3.1) node[anchor=center] {{\small $\tau(p+1)$}};
\draw (5,2.3) node[anchor=center] {{\small $\dots$}};
\node[vt] (ln) at (6, 2.5) {};
\draw (6,3.1) node[anchor=center] {{\small $\tau(n)$}};
\node[vt] (l1mo) at (7.5, 2.5) {};
\draw (7.5,3.1) node[anchor=center] {{\small $1$}};
\draw (8.5,2.3) node[anchor=center] {{\small $\dots$}};
\node[vt] (lkmo) at (9.5, 2.5) {};
\draw (9.5,3.1) node[anchor=center] {{\small $k$}};
\node[w] (v1) at (6, 1) {};
\node[vt] (r) at (6, -0.3) {};
\draw [dashed] (r) edge (v1);
\draw (v1) edge (v2);
\draw (v2) edge (l1);
\draw (v2) edge (lp);
\draw (v1) edge (lp1);
\draw (v1) edge (ln);
\draw [dashed] (v1) edge (l1mo);
\draw [dashed] (v1) edge (lkmo);
\end{tikzpicture}
\caption{ The value of $\cD_{\Lie}$ on $\st^{\mo}_{n,k}$} \label{fig:cDLie}
\end{figure}

\begin{figure}[htp]
\begin{minipage}[t]{\linewidth}
\centering 
\begin{tikzpicture}[scale=0.5]
\tikzstyle{vt}=[circle, draw, fill, minimum size=0, inner sep=1]
\tikzstyle{w}=[circle, draw, minimum size=3, inner sep=1]
\draw (-8,1.5) node[anchor=center] 
{{$\displaystyle \cD_{\As} (\st^{\mo}_{n,k}) \quad = \quad
- \sum_{p=0}^{k-2} \sum_{q = p+2}^{k}  \quad (-1)^{p + (k-q) (q-p)} $}};
\node[vt] (lmc1) at (0, 3) {};
\draw (0,3.5) node[anchor=center] {{\small $1$}};
\draw (0.9,3) node[anchor=center] {{\small $\dots$}};
\node[vt] (lmcn) at (1.5, 3) {};
\draw (1.5, 3.5) node[anchor=center] {{\small $n$}};

\node[vt] (l1) at (2.3, 3) {};
\draw (2.3, 3.5) node[anchor=center] {{\small $1$}};
\draw (3.2,3) node[anchor=center] {{\small $\dots$}};
\node[vt] (lp) at (4, 3) {};
\draw (4, 3.5) node[anchor=center] {{\small $p$}};
\node[vt] (lp1) at (4, 4.5) {};
\draw (4, 5) node[anchor=center] {{\small $p+1$}};
\draw (5,4.5) node[anchor=center] {{\small $\dots$}};
\node[vt] (lq) at (6, 4.5) {};
\draw (6, 5) node[anchor=center] {{\small $q$}};
\node[vt] (lq1) at (6.5, 3) {};
\draw (6.7, 3.5) node[anchor=center] {{\small $q+1$}};
\draw (7.5,3) node[anchor=center] {{\small $\dots$}};
\node[vt] (lk) at (8.5, 3) {};
\draw (8.5, 3.5) node[anchor=center] {{\small $k$}};
\node[w] (v1) at (5, 1.5) {};
\node[w] (v2) at (5, 3) {};
\node[vt] (r) at (5, 0) {};
\draw [dashed] (r) edge (v1);
\draw  (v1) edge (lmc1);
\draw  (v1) edge (lmcn);
\draw  [dashed]   (v1) edge (l1);
\draw  [dashed]   (v1) edge (lp);
\draw  [dashed]   (v1) edge (v2);
\draw  [dashed]   (v2) edge (lp1);
\draw  [dashed]   (v2) edge (lq);
\draw  [dashed]   (v1) edge (lq1);
\draw  [dashed]   (v1) edge (lk);
\end{tikzpicture}
\end{minipage}
\begin{minipage}[t]{\linewidth}
~\\[0.3cm]
\end{minipage}
\begin{minipage}[t]{\linewidth}
\centering 
\begin{tikzpicture}[scale=0.5]
\tikzstyle{vt}=[circle, draw, fill, minimum size=0, inner sep=1]
\tikzstyle{w}=[circle, draw, minimum size=3, inner sep=1]
\draw (-8,1.5) node[anchor=center] 
{{$ \displaystyle  - \quad \sum_{p=0}^{k-2} \sum_{q = p+2}^{k}  \quad (-1)^{p + (k-q) (q-p)} $}};
\node[vt] (lmc1) at (2, 5) {};
\draw (2,5.5) node[anchor=center] {{\small $1$}};
\draw (3,5) node[anchor=center] {{\small $\dots$}};
\node[vt] (lmcn) at (3.7, 5) {};
\draw (3.7, 5.5) node[anchor=center] {{\small $n$}};
\node[vt] (l1) at (0, 3) {};
\draw (0, 3.5) node[anchor=center] {{\small $1$}};
\draw (1.3,3) node[anchor=center] {{\small $\dots$}};
\node[vt] (lp) at (2, 3) {};
\draw (2, 3.5) node[anchor=center] {{\small $p$}};
\node[vt] (lp1) at (5, 5) {};
\draw (5, 5.5) node[anchor=center] {{\small $p+1$}};
\draw (6,5) node[anchor=center] {{\small $\dots$}};
\node[vt] (lq) at (7, 5) {};
\draw (7, 5.5) node[anchor=center] {{\small $q$}};
\node[vt] (lq1) at (6.5, 3) {};
\draw (6.7, 3.5) node[anchor=center] {{\small $q+1$}};
\draw (7.5,3) node[anchor=center] {{\small $\dots$}};
\node[vt] (lk) at (8.5, 3) {};
\draw (8.5, 3.5) node[anchor=center] {{\small $k$}};
\node[w] (v1) at (5, 1.5) {};
\node[w] (v2) at (5, 3) {};
\node[vt] (r) at (5, 0) {};
\draw [dashed] (r) edge (v1);
\draw  (v2) edge (lmc1);
\draw  (v2) edge (lmcn);
\draw  [dashed]   (v1) edge (l1);
\draw  [dashed]   (v1) edge (lp);
\draw  [dashed]   (v1) edge (v2);
\draw  [dashed]   (v2) edge (lp1);
\draw  [dashed]   (v2) edge (lq);
\draw  [dashed]   (v1) edge (lq1);
\draw  [dashed]   (v1) edge (lk);
\end{tikzpicture}
\end{minipage}
\caption{ The value of $\cD_{\As}$ on $\st^{\mo}_{n,k}$} \label{fig:cDAs-bt-nk}
\end{figure}

\begin{figure}[htp] 
\centering 
\begin{tikzpicture}[scale=0.5]
\tikzstyle{vt}=[circle, draw, fill, minimum size=0, inner sep=1]
\tikzstyle{w}=[circle, draw, minimum size=3, inner sep=1]
\draw (-4,1.5) node[anchor=center] 
{{$\displaystyle \cD' (\st^{\mo}_{n,k}) \quad = \quad  (-1)^k \quad$}};
\node[vt] (l1) at (0, 4) {};
\draw (0,4.6) node[anchor=center] {{\small $1$}};
\node[vt] (l2) at (1, 4) {};
\draw (1,4.6) node[anchor=center] {{\small $2$}};
\draw (2,3.8) node[anchor=center] {{\small $\dots$}};
\node[vt] (ln) at (3, 4) {};
\draw (3,4.6) node[anchor=center] {{\small $n$}};

\node[w] (v2) at (1.8, 2.3) {};

\node[vt] (l1mo) at (3.5, 2.5) {};
\draw (3.5, 3.1) node[anchor=center] {{\small $1$}};
\node[vt] (l2mo) at (5, 2.5) {};
\draw (5, 3.1) node[anchor=center] {{\small $2$}};
\draw (6,2.3) node[anchor=center] {{\small $\dots$}};
\node[vt] (lkmo) at (8, 2.5) {};
\draw (8,3.1) node[anchor=center] {{\small $k$}};
\node[w] (v1) at (4, 1) {};
\node[vt] (r) at (4, -0.3) {};
\draw [dashed] (r) edge (v1);
\draw (v1) edge (v2);
\draw (v2) edge (l1);
\draw (v2) edge (l2);
\draw (v2) edge (ln);
\draw [dashed] (v1) edge (l1mo);
\draw [dashed] (v1) edge (l2mo);
\draw [dashed] (v1) edge (lkmo);
\end{tikzpicture}
\caption{ The value of $\cD'$ on $\st^{\mo}_{n,k}$ for $n \ge 2$} \label{fig:cDpr}
\end{figure}

\begin{figure}[htp] 
\centering 
\begin{tikzpicture}[scale=0.5]
\tikzstyle{vt}=[circle, draw, fill, minimum size=0, inner sep=1]
\tikzstyle{w}=[circle, draw, minimum size=3, inner sep=1]
\draw (-10,1.5) node[anchor=center] 
{{$\displaystyle \cD'' (\st^{\mo}_{n,k}) \quad = \quad
- \quad \sum_{r = 1}^{n-1} \sum_{\si \in \Sh_{r, n-r}} \sum_{0 \le p \le q  \le k} 
\quad (-1)^{p + (k-q)(q-p)} $}};
\node[vt] (mc1) at (0, 2) {};
\draw (0,2.6) node[anchor=center] {{\small $\si(1)$}};
\draw (1.2,2) node[anchor=center] {{\small $\dots$}};
\node[vt] (mcr) at (2, 2) {};
\draw (2,2.6) node[anchor=center] {{\small $\si(r)$}};
\node[vt] (mo1) at (3, 2) {};
\draw (3,2.6) node[anchor=center] {{\small $1$}};
\draw (4.1,2) node[anchor=center] {{\small $\dots$}};
\node[vt] (mop) at (5, 2) {};
\draw (5,2.6) node[anchor=center] {{\small $p$}};
\node[w] (v2) at (6, 3) {};
\node[vt] (moq1) at (7.5, 2) {};
\draw (7.5,2.6) node[anchor=center] {{\small $q+1$}};
\draw (8.3,2) node[anchor=center] {{\small $\dots$}};
\node[vt] (mok) at (9.5, 2) {};
\draw (9.5,2.6) node[anchor=center] {{\small $k$}};
\node[vt] (mcr1) at (3, 4.5) {};
\draw (3,5.1) node[anchor=center] {{\small $\si(r+1)$}};
\draw (4.3,4.5) node[anchor=center] {{\small $\dots$}};
\node[vt] (mcn) at (5.5, 4.5) {};
\draw (5.3,5.1) node[anchor=center] {{\small $\si(n)$}};
\node[vt] (mop1) at (7, 4.5) {};
\draw (7.2,5.1) node[anchor=center] {{\small $p+1$}};
\draw (8,4.5) node[anchor=center] {{\small $\dots$}};
\node[vt] (moq) at (9, 4.5) {};
\draw (9,5.1) node[anchor=center] {{\small $q$}};
\node[w] (v1) at (5, 0.5) {};
\node[vt] (r) at (5, -1) {};
\draw [dashed] (r) edge (v1);
\draw (v1) edge (mc1);
\draw (v1) edge (mcr);
\draw [dashed] (v1) edge (mo1);
\draw [dashed] (v1) edge (mop);
\draw [dashed] (v1) edge (v2);
\draw [dashed] (v1) edge (moq1);
\draw [dashed] (v1) edge (mok);
\draw (v2) edge (mcr1);
\draw (v2) edge (mcn);
\draw  [dashed] (v2) edge (mop1);
\draw  [dashed] (v2) edge (moq);
\end{tikzpicture}
\caption{ The value of $\cD''$ on $\st^{\mo}_{n,k}$ for $n \ge 2$} \label{fig:cDprpr}
\end{figure}

Direct computations show that 
\begin{eqnarray}
\label{cDAs-square}
\big(\cD_{\As}\big)^2 = 0\,, \\[0.3cm]
\label{cDLie-square}
\big(\cD_{\Lie} + \cD' \big)^2 = 0\,, \\[0.3cm]
\label{cDAsLie-commute}
\cD_{\As} \circ (\cD_{\Lie} + \cD' )  + 
(\cD_{\Lie} + \cD') \circ \cD_{\As}  = 0\,, \\[0.3cm]
\label{cDLie-cDprpr}
(\cD_{\Lie} + \cD' )   \circ \cD''
+ \cD'' \circ
(\cD_{\Lie} + \cD')  = 0\,, \\[0.3cm]
\label{cDAs-cDprpr}
\cD_{\As}  \circ \cD''
+ \cD'' \circ \cD_{\As}   +  \cD''  \circ \cD'' = 0\,.
\end{eqnarray}

\begin{remark}
\label{rem:OC-cobar}
It is not hard to see that the differential $\cD$ on 
$\Op(\oc)$ defines on $\bs^{-1} \oc$ a structure of $2$-colored 
pseudo-cooperad.  Thus, if $\oc^{\vee}$ is the $2$-colored 
cooperad obtained from $\bs^{-1}\oc$ via formally adjoining
the counit, then\footnote{This fact was also observed
in \cite[Section 4.1]{SC}.} 
\begin{equation}
\label{OC-cobar}
\OC = \Cobar(\oc^{\vee})\,.
\end{equation}

We remark that
\begin{equation}
\label{oc-vee-mc}
\oc^{\vee}(n,0)^{\mc} = \La^2 \coCom(n)
\end{equation}
and
\begin{equation}
\label{oc-vee-mo}
\oc^{\vee}(0,k)^{\mo} = \La \coAs(k)\,.
\end{equation}
\end{remark}

\subsection{$\OC$-algebras}
As we stated above, an $\OC$-algebra is a
pair of  cochain complexes $(\cV, \cA)$ with 
the following data:
 
\begin{itemize}
\item A $\La\Lie_{\infty}$-structure on $\cV$, 

\item an $A_{\infty}$-structure on $\cA$, and 

\item a $\La\Lie_{\infty}$-morphism from $\cV$ to the 
Hochschild cochain complex $\Cbu(\cA)$ of $\cA$\,.

\end{itemize} 

Let us briefly recall how to get the above data from 
an operad morphism 
\begin{equation}
\label{OC-to-End}
\OC \to \End_{(\cV, \cA)}\,. 
\end{equation}

The desired $\La\Lie_{\infty}$ structure on $\cV$ 
$$
Q : \La^2\coCom_{\circ}(\cV) \to \cV
$$
comes from the action of corollas $\st^{\mc}_n$ 
in figure \ref{fig:bt-n-oc-mc} for $n \ge 2$. Namely, 
\begin{equation}
\label{Q-bt-mc-n}
Q (v_1, \dots, v_n) = \st^{\mc}_n (v_1, \dots, v_n)\,,
\end{equation}
where $v_1, \dots, v_n \in \cV$\,.

The desired $A_{\infty}$-structure 
$$
m : \La\coAs_{\circ}(\cA) \to \cA
$$
comes from the action of corollas $\st^{\mo}_k$ 
in figure \ref{fig:bt-k-oc-mo} for $k \ge 2$. Namely,  
\begin{equation}
\label{m-bt-mo-k}
m (a_1, \dots, a_k) = (-1)^{\ve (a_1, \dots, a_k)}
\st^{\mo}_k (a_1, \dots, a_k)\,,
\end{equation}
where $a_1, \dots, a_k \in \cA$ and 
$$
\ve (a_1, \dots, a_k) = |a_1| (k-1) + |a_2| (k-2) + \dots + |a_{k-1}|\,. 
$$

Finally the action of corollas $\st^{\mo}_{n,k}$ gives 
us the desired  $\La\Lie_{\infty}$-morphism from 
$\cV$ to $\Cbu(\cA)$
$$
U :   \La^2\coCom(\cV) \otimes  T(\bs^{-1}\cA) \to \cA\,.
$$
Namely, 
\begin{equation}
\label{U-bt-mo-nk}
U  (v_1, \dots, v_n ; a_1, \dots, a_k) = 
 (-1)^{\ve' (v_1, \dots, v_n ; a_1, \dots, a_k)} 
 \st^{\mo}_{n,k}  (v_1, \dots, v_n ; a_1, \dots, a_k)\,,
\end{equation}
where
\begin{equation}
\label{ve-pr}
\ve' (v_1, \dots, v_n ; a_1, \dots, a_k) = 
k (|v_1| + \dots + |v_n|) + 
|a_1| (k-1) + |a_2| (k-2) + \dots + |a_{k-1}|\,. 
\end{equation}

%
%
\section{Stable formality quasi-isomorphisms and their homotopies}
\label{sec:stable}

Several vectors of $\KGra$ will play a special role in 
the definition of a stable formality quasi-isomorphism (SFQ) and 
in further considerations. These are
\begin{equation}
\label{binary}
\G_{\ed} =   \begin{tikzpicture}[scale=0.5, >=stealth']
\tikzstyle{w}=[circle, draw, minimum size=4, inner sep=1]
\tikzstyle{b}=[circle, draw, fill, minimum size=4, inner sep=1]
\node [b] (b1) at (0,0) {};
\draw (0,0.6) node[anchor=center] {{\small $1_{\mc}$}};
\node [b] (b2) at (1.5,0) {};
\draw (1.6,0.6) node[anchor=center] {{\small $2_{\mc}$}};
\draw (b1) edge (b2);
\end{tikzpicture}
%
%
%
%
%
, \qquad \qquad
\G_{\ww} =   \begin{tikzpicture}[scale=0.5, >=stealth']
\tikzstyle{w}=[circle, draw, minimum size=4, inner sep=1]
\tikzstyle{b}=[circle, draw, fill, minimum size=4, inner sep=1]
\node [w] (w1) at (0,0) {};
\draw (0,0.6) node[anchor=center] {{\small $1_{\mo}$}};
\node [w] (w2) at (1.5,0) {};
\draw (1.6,0.6) node[anchor=center] {{\small $2_{\mo}$}};
\draw (w1) (w2);
\end{tikzpicture}
\end{equation}
and the series of ``brooms'' for $k \ge 0$ depicted in 
figure \ref{fig:brooms}.
\begin{figure}[htp]
\centering 
\begin{tikzpicture}[scale=0.5, >=stealth']
\tikzstyle{w}=[circle, draw, minimum size=4, inner sep=1]
\tikzstyle{b}=[circle, draw, fill, minimum size=4, inner sep=1]
\draw (-3,1.5) node[anchor=center] {{$\G^{\br}_{k}  \quad = \quad $}};
\node [b] (v) at (2.5,3) {};
\node [w] (v1) at (0,0) {};
\node [w] (v2) at (1,0) {};
\node [w] (vk) at (4,0) {};
\draw (2.5,0.7) node[anchor=center] {{\small $\dots$}};
\draw [->] (v) edge (1,1.2);
\draw  (1,1.2) edge (v1);
\draw [->] (v) edge (1.5,1);
\draw  (1.5,1) edge (v2);
\draw [->] (v) edge (3.5,1);
\draw  (3.5,1) edge (vk);
\draw (2.6,3.6) node[anchor=center] {{\small $1_{\mc}$}};
\draw (0.1,-0.6) node[anchor=center] {{\small $1_{\mo}$}};
\draw (1.1,-0.6) node[anchor=center] {{\small $2_{\mo}$}};
\draw (4.1,-0.6) node[anchor=center] {{\small $k_{\mo}$}};
\end{tikzpicture}
~\\[0.3cm]
\caption{Edges are ordered in this way 
$(1_{\mc}, 1_{\mo}) < (1_{\mc}, 2_{\mo}) < \dots < (1_{\mc}, k_{\mo})$
} \label{fig:brooms}
\end{figure} 

Note that the graph $\G^{\br}_{0} \in \KGra(1,0)^{\mo}$ 
consists of a single black vertex labeled by $1_{\mc}$ and it has no edges.  
 
According to Section \ref{sec:acts-on-V-A},
the 2-colored operad $\KGra$ acts on the pair $(V_A, A)$
where $A$  (resp. $V_A$) is the algebra of functions (resp. the algebra 
of polyvector fields) on a graded affine space.
Hence, every morphism of operads (in $\Ch_{\bbK}$) $F :  \OC \to \KGra$ 
gives us a $\La\Lie_{\infty}$-structure on 
$V_A$, an $A_{\infty}$-structure on $A$ and an 
$\La\Lie_{\infty}$-morphism from $V_A$ to the Hochschild cochain complex $\Cbu(A)$ of $A$.
Moreover, this construction works for a graded affine space of {\it any dimension.}
This observation motivates the following definition. 
%
\begin{defi} 
\label{dfn:stable}
A stable formality quasi-isomorphism (SFQ) is a morphism of 
2-colored operads in the category of cochain complexes
\begin{equation}
\label{eq:stable}
F :  \OC \to \KGra
\end{equation}
satisfying the following ``boundary conditions'': 
\begin{equation}
\label{Schouten}
F(\st^{\mc}_n) = \begin{cases}
   \G_{\ed} \qquad {\rm if} ~~ n = 2\,,  \\
   0 \qquad {\rm if} ~~  n \ge 3\,,
\end{cases}
\end{equation}
\begin{equation}
\label{mult}
F(\st^{\mo}_2) = \G_{\ww}\,, 
\end{equation}
and
\begin{equation}
\label{HKR}
F(\st^{\mo}_{1,k}) = \frac{1}{k!} \G^{\br}_{k}\,,
\end{equation}
where $\st^{\mc}_n$, $\st^{\mo}_k$, and $\st^{\mo}_{n,k}$ are corollas
depicted 
in figures \ref{fig:bt-n-oc-mc}, \ref{fig:bt-k-oc-mo}, \ref{fig:bt-nk-oc-mo}, 
respectively, and   $\G_{\ed}$, $\G_{\ww}$ and $\G^{\br}_k$ are the vectors 
of $\KGra$ specified in the beginning of this section.
\end{defi}
 
To interpret the ``boundary  conditions'' we consider the 
OCHA structure induced by the morphism $F$ \eqref{eq:stable}
on the pair  $(V_A, A)$\,.

The first condition (eq. \eqref{Schouten})  implies that 
the $\La\Lie_{\infty}$-structure on polyvector fields induced by the 
morphism $F$  coincides with the standard Schouten-Nijenhuis algebra 
structure.  

The second condition (eq. \eqref{mult}) implies that the binary 
operation of the induced $A_{\infty}$-structure on $A$ coincides
with the ordinary (commutative) multiplication.  For degree reasons,  
the image $F(\st^{\mo}_{k})$ of the corolla $\st^{\mo}_{k}$ in $\KGra(0,k)^{\mo}$
is zero for all $k \ge 3$\,. Thus the induced $A_{\infty}$-structure on $A$
coincides with the original associative (and commutative) algebra structure.

The third boundary condition  (eq. \eqref{HKR})  implies that the corresponding 
$\La\Lie_{\infty}$-morphism from $V_A$ to $\Cbu(A)$ starts with the 
Hochschild-Kostant-Rosenberg embedding. The latter condition guarantees that
the induced $\La\Lie_{\infty}$-morphism is a quasi-iso\-mor\-phism.   

\begin{remark}  
\label{rem:q-iso}
It should be mentioned that the map in \eqref{eq:stable} is never a quasi-isomorphism
of dg operads. Indeed, the restriction of any morphism of dg operads $F :  \OC \to \KGra$
satisfying the above ``boundary conditions''
to the spaces $\OC(n,0)^{\mc}$ (for $n \ge 0$) gives us the morphism of operads
$\La\Lie_{\infty} \to \dGra$ which coincides with the composition
of the canonical quasi-isomorphism $\La\Lie_{\infty} \stackrel{\sim}{\longrightarrow} \La\Lie$ and the standard 
embedding of operads $\La\Lie \hookrightarrow \dGra$ \cite[Section 7.1]{notes}. 
This composition is not a quasi-isomorphism because the embedding 
$\La\Lie \hookrightarrow \dGra$ is not onto. 
\end{remark}

%
%
\subsection{SFQs as MC elements. Homotopies of SFQs}
\label{sec:sfqi-homot}

Due to Proposition \ref{prop:cobar-conv} and Remark \ref{rem:OC-cobar}, SFQs are in bijection 
with MC elements $\al$ of the Lie algebra 
\begin{equation}
\label{Conv-oc-KGra}
\Conv(\oc^{\vee}_{\circ},  \KGra)
\end{equation}
subject to the three conditions
\begin{equation}
\label{al-Schouten}
\al(\bsi \st^{\mc}_n) = \begin{cases}
   \G_{\ed} \qquad {\rm if} ~~ n = 2\,,  \\
   0 \qquad {\rm if} ~~  n \ge 3\,,
\end{cases}
\end{equation}
\begin{equation}
\label{al-mult}
\al(\bsi \st^{\mo}_2) = \G_{\ww}\,, 
\end{equation}
and
\begin{equation}
\label{al-HKR}
\al(\bsi \st^{\mo}_{1,k}) = \frac{1}{k!} \G^{\br}_{k}\,,
\end{equation}
where $\st^{\mc}_n$, $\st^{\mo}_k$, and $\st^{\mo}_{n,k}$ are corollas
depicted in figures \ref{fig:bt-n-oc-mc}, \ref{fig:bt-k-oc-mo}, \ref{fig:bt-nk-oc-mo}, 
respectively, and   $\G_{\ed}$, $\G_{\ww}$ and $\G^{\br}_k$ are the vectors 
of $\KGra$ specified in the beginning of this section.

We would like to remark that, since all vectors in $\KGra(0,k)^{\mo}$
have degree zero, we have 
\begin{equation}
\label{al-mult-higher}
\al(\bs^{-1}\, \st^{\mo}_k) = 0\,, 
\end{equation}
for all $k\ge 3$ and for all degree $1$ elements 
$\al$ in \eqref{Conv-oc-KGra}.

In what follows, we denote by $\al_{F}$ the MC element in \eqref{Conv-oc-KGra}
corresponding to an SFQ $F$\,.

According to Section \ref{sec:Conv}, the Lie algebra 
$\Conv(\oc^{\vee}_{\circ}, \KGra)$ is equipped with the
``arity'' filtration $\cF_{\bul} \Conv(\oc^{\vee}_{\circ}, \KGra)$  
such that $\Conv(\oc^{\vee}_{\circ}, \KGra)$ is complete 
with respect to this filtration. 

Hence, following general theory from Appendix \ref{app:MC},
the set of MC elements of the Lie algebra  \eqref{Conv-oc-KGra} 
is equipped with the action of the pro-unipotent group 
\begin{equation}
\label{G-Conv}
\exp \Big(\cF_1 \Conv(\oc^{\vee}_{\circ}, \KGra)^0 \Big)\,.
\end{equation}

We claim that 
%
%
\begin{prop}
\label{prop:boundary-homot}
Degree zero vectors
\begin{equation}
\label{xi-here}
\xi \in \Conv(\oc^{\vee}_{\circ}, \KGra)
\end{equation}
satisfying the ``boundary'' condition
\begin{equation}
\label{xi-cond-mc}
\xi(\bsi \st^{\mc}_n) =  0 \qquad \forall ~~ n \ge 2
\end{equation}
form a Lie subalgebra of $\cF_1 \Conv(\oc^{\vee}_{\circ}, \KGra)^0$. 
Moreover, if $\al$ is a MC element of  the Lie algebra 
$\Conv(\oc^{\vee}_{\circ}, \KGra)$ satisfying the boundary 
conditions \eqref{al-Schouten}, \eqref{al-mult}, 
\eqref{al-HKR} and $\xi$ is a 
degree zero vector  \eqref{xi-here} 
satisfying \eqref{xi-cond-mc} then the MC element 
\begin{equation}
\label{al-pr}
\al' = \exp(\xi)\, (\al) 
\end{equation}
also satisfies conditions \eqref{al-Schouten}, \eqref{al-mult}, \eqref{al-HKR}.
\end{prop}
\begin{proof}
First, we observe that every degree zero vector \eqref{xi-here} satisfies 
\begin{equation}
\label{xi-cond-HKR}
\xi(\bsi \st^{\mo}_{1,k}) =  0 \qquad \forall ~~ k \ge 0.
\end{equation}
Indeed, since the vector $\bsi \st^{\mo}_{1,k}$ has degree $-k-1$, 
$\xi(\bsi \st^{\mo}_{1,k}) $ must be a linear combination of graphs with $1$ black 
vertex, $k$ white vertices, and exactly $k+1$ edges. 
Since an edge cannot originate at any white vertex, multiple edges with the same direction 
and loops are not allowed, the set of such graphs is empty. 
  
Similarly, since all vectors in $\KGra(0,k)^{\mo}$ have degree zero, 
we conclude that 
\begin{equation}
\label{xi-on-assoc}
\xi(\bsi \st^{\mo}_{k}) = 0 \qquad \forall ~~ k \ge 2
\end{equation}
for any degree zero vector \eqref{xi-here}.

The inclusion 
\begin{equation}
\label{xi-F1}
\xi \in \cF_1 \Conv(\oc^{\vee}_{\circ}, \KGra)\,.
\end{equation}
follows immediately from equation \eqref{xi-cond-HKR} for $k=0$\,. 

Moreover, the vector $[\xi_1, \xi_2]$ satisfies 
condition \eqref{xi-cond-mc} if so do both $\xi_1$ and 
$\xi_2$\,. Thus the first statement of the proposition is proved. 

Using \eqref{xi-cond-mc}, \eqref{xi-cond-HKR}, and 
\eqref{xi-on-assoc}, it is easy to see that  $\al'$ in 
\eqref{al-pr} satisfies conditions \eqref{al-Schouten}, \eqref{al-mult}, \eqref{al-HKR} if so does $\al$\,. 
\end{proof}

We can now give the definition of homotopy between two SFQs:
\begin{defi}
\label{dfn:homotopy}
We say that an SFQ $F$ \eqref{eq:stable}
is homotopy equivalent to $\wt{F}$ if the corresponding MC elements 
$$
\al_{F}, \al_{\wt{F}}  \in \Conv (\oc^{\vee}_{\circ}, \KGra)
$$
are isomorphic via $\exp(\xi)$, where 
$\xi$  is a degree zero element in  $\cF_1\Conv (\oc^{\vee}_{\circ}, \KGra)$
satisfying condition \eqref{xi-cond-mc}.
\end{defi}
\begin{remark}
\label{rem:indeed-equiv}
Proposition \ref{prop:boundary-homot}
implies that the resulting relation on the set of SFQs is indeed an 
equivalence relation. 
\end{remark}
\begin{remark}
\label{rem:al-in-cF0}
Since $\Conv(\oc^{\vee}_{\circ}, \KGra) = \cF_0 \Conv(\oc^{\vee}_{\circ}, \KGra)$, 
every MC element $\al$ of \eqref{Conv-oc-KGra} satisfies the condition 
\begin{equation}
\label{al-in-cF0}
\al \in \cF_0 \Conv(\oc^{\vee}_{\circ}, \KGra).
\end{equation}
On the other hand, it is not true that a MC element $\al$ corresponding to an SFQ
belongs to $\cF_1 \Conv(\oc^{\vee}_{\circ}, \KGra)$. 
Indeed, according to \eqref{al-HKR}, we have $\al(\bsi \st^{\mo}_{1,0}) = \G^{\br}_0 \neq 0$. 

Using the ``boundary conditions''  \eqref{al-Schouten}, \eqref{al-mult} and \eqref{al-HKR} 
for MC elements $\al$ corresponding to SFQs, it is easy to see that 
\begin{equation}
\label{al-mc-0}
\al \in \cF^{\mc}_{0} \Conv(\oc^{\vee}_{\circ}, \KGra),
\end{equation} 
where the filtration $\cF^{\mc}_{\bul}$ is defined in \eqref{Conv-filtr-chi} (in Section \ref{sec:Conv}).
\end{remark}

To explain our motivation behind Definition \ref{dfn:homotopy}
we consider the pair $(V_A, A)$, where $A$ is a finitely generated 
free commutative algebra in $\grVect_{\bbK}$ and $V_{A}$ be the algebra of 
polyvector fields on the corresponding (graded) affine space. 

Recall that an SFQ $F$ gives us a $\La\Lie_{\infty}$ quasi-isomorphism $U_F$ from 
$V_A$ to $\Cbu(A)$ which admits a graphical expansion. 

We claim that, if two SFQs $F$ and $\wt{F}$ are homotopy equivalent, then 
the corresponding $\La\Lie_{\infty}$-morphisms $U_F$ and $U_{\wt{F}}$ are also homotopy 
equivalent. Furthermore, the homotopy between $U_F$ and $U_{\wt{F}}$
admits a graphical expansion.

Indeed, according to \cite[Lemma 2.9]{HAform} or \cite[Section 1.3]{Dots-Poncin}, 
any $\La\Lie_{\infty}$-morphism
$$
U : V_A \leadsto \Cbu(A)
$$
is a MC element of the following auxiliary Lie algebra 
\begin{equation}
\label{aux-Lie}
\bs \Hom (\bs^2 S(\bs^{-2} V_A), \Cbu(A))\,.
\end{equation}
For the definition of the differential and the Lie bracket on \eqref{aux-Lie}, 
see Section 2.1 in \cite{HAform}.  

Furthermore, following Definition 4.7 in \cite{HAform},
two $\La\Lie_{\infty}$-morphisms 
$$
U : V_A \leadsto \Cbu(A)\qquad 
\textrm{and} \qquad  
\wt{U} : V_A \leadsto \Cbu(A)
$$ 
are homotopy equivalent if and only if the corresponding 
MC elements in the Lie algebra \eqref{aux-Lie} are isomorphic. 
  
By merely unfolding definitions it is not hard to see that, if 
MC elements 
$$
\al_{F}, \al_{\wt{F}}  \in \Conv (\oc^{\vee}_{\circ}, \KGra)
$$
are isomorphic via $\exp(\xi)$ for a degree zero vector \eqref{xi-here} satisfying 
\eqref{xi-cond-mc}, then the MC elements in \eqref{aux-Lie}
corresponding to $U_{F}$ and $U_{\wt{F}}$ are 
isomorphic via 
$$
\exp(\xi')
$$
where $\xi'$ is the degree zero vector in \eqref{aux-Lie} given by the formula: 
\begin{equation}
\label{xi-pr}
\xi' (v_1, v_2, \dots, v_n; a_1, a_2, \dots, a_n) 
= 
\end{equation}
$$
(-1)^{\ve' (v_1, \dots, v_n ; a_1, \dots, a_k)} 
 \xi (\bs^{-1}\st^{\mo}_{n,k})(v_1, v_2, \dots, v_n; a_1, a_2, \dots, a_n)\,, 
$$
where $\ve'(v_1, \dots, v_n ; a_1, \dots, a_k)$ is defined in \eqref{ve-pr}.

\begin{example}
\label{ex:Kontsevich}
In his famous paper \cite{K} M. Kontsevich proposed 
a construction of a $\La\Lie_{\infty}$ quasi-isomorphism 
from the Lie algebra of polyvector fields $V_A$ on 
$\bbR^d$ to polydifferential operators on $\bbR^d$\,. 
The structure maps of this $\La\Lie_{\infty}$ quasi-isomorphism  
are defined using graphical expansion and the 
$\La\Lie_{\infty}$ quasi-isomorphism starts with the standard 
Hochschild-Kostant-Rosenberg embedding.
Thus Kontsevich's construction from \cite{K} gives 
us an SFQ over any extension of the field $\bbR$\,. 
For more details, we refer the reader to \cite[Section 2.4]{overQ}.
\end{example} 

%
%
\section{The action of Kon\-tse\-vich's graph comp\-lex on stable for\-ma\-li\-ty qua\-si-iso\-mor\-phisms}
\label{sec:dfGC}
It is possible to produce new homotopy types of stable 
formality quasi-isomorphisms using the action of 
Kon\-tse\-vich's graph comp\-lex on the Lie algebra 
$$
\Conv(\oc^{\vee}_{\circ}, \KGra)\,.
$$   
We describe this action here. 

\subsection{Reminder of the full directed graph complex $\dfGC$}
Let us consider the Lie algebra (in $\grVect_{\bbK}$) 
\begin{equation}
\label{dfGC}
\dfGC  =  \Conv(\La^2\coCom, \dGra)\,.
\end{equation}
Since 
$$
 \Conv(\La^2\coCom, \dGra) = 
\prod_{n=1}^{\infty} \bs^{2n-2} \big( \dGra(n) \big)^{S_n}
$$ 
vectors in \eqref{dfGC} are (possibly infinite) linear combinations 
$$
\ga = \sum_{n=1}^{\infty} \ga_n
$$
where $\ga_n$ is an $S_n$-invariant vector in $\dGra(n)$\,.

If all graphs in the linear combination $\ga_n \in (\dGra(n))^{S_n}$
have the same number of edges $e$ then $\ga_n$ is a homogeneous 
vector in $\dfGC$ of degree
\begin{equation}
\label{deg-in-dfGC}
|\ga_n| = 2n - 2 -e\,.
\end{equation}

For example, the vector $\G_{\ed} \in \dGra(2)$ defined in 
\eqref{binary} is $S_2$-invariant and hence is a vector in 
$\dfGC$\,. According to \eqref{deg-in-dfGC}, the vector 
$\G_{\ed}$ carries degree $1$\,. A direct computation 
shows that 
\begin{equation}
\label{G-ed-MC}
[\G_{\ed}, \G_{\ed}] = 0\,.
\end{equation}
Hence, $\G_{\ed}$ is a MC element and it can be used 
to equip the graded vector space \eqref{dfGC} with 
the non-zero differential 
\begin{equation}
\label{diff-dfGC}
\pa = \ad_{\G_{\ed}}\,.
\end{equation}

\begin{defi}
\label{dfn:dfGC}
The graded vector space $\dfGC$ \eqref{dfGC} with 
the differential \eqref{diff-dfGC} is called the full directed graph complex. 
\end{defi}

For example the graph $\G_{\bul} \in \dgra_1$ which consists of 
a single vertex without edges gives us a degree zero vector 
in $\dfGC$\,. According to the definition of the Lie bracket 
on $\dfGC$, we have 
\begin{equation}
\label{G-bul-NOT}
[\G_{\ed}, \G_{\bul}] = \G_{\ed} \circ_1 \G_{\bul} +
\si_{12}  (\G_{\ed} \circ_1 \G_{\bul}) - \G_{\bul} \circ_1 \G_{\ed} = 
\G_{\ed} + \G_{\ed} - \G_{\ed} = \G_{\ed}\,,
\end{equation}
where $\si_{12}$ is the transposition in $S_2$\,.

Thus $\G_{\bul}$ is not a cocycle in $\dfGC$\,. 

According to Section \ref{sec:Conv}, the Lie algebra 
$\dfGC$ is equipped with the descending filtration \eqref{Conv-filtr}
such that $\dfGC$ is complete with respect to this filtration. 
Unfolding  \eqref{Conv-filtr}, it is easy to see that $\cF_m \dfGC$ consists of sums
$$
\ga = \sum_{n=m+1}^{\infty} \ga_n\,, \qquad \ga_n \in  \big( \dGra(n) \big)^{S_n}\,.
$$
I.e. $\ga \in \cF_m \dfGC$ if and only if each graph in $\ga$ has $\ge m+1$ vertices. 
For example, $\G_{\ed} \in \cF_1 \dfGC$. Therefore the differential  \eqref{diff-dfGC}
is compatible with the filtration on $\dfGC$. 
 
Since loops are not allowed, $\G_{\bul}$ is the only element 
of $\dgra_1$. Therefore, since $\pa \G_{\bul} \neq 0$ and the differential 
$\pa$ raises the number of vertices up by $1$, 
every cocycle $\ga \in \dfGC$ has the property\footnote{Inclusion 
\eqref{ga-in-F1} no longer holds if we allow loops. Indeed, a simple computation 
shows that the single loop $\lp \in \dgra_1$ would be a cocycle in $\dfGC$. 
See \cite{dgraphs} for more details.}
\begin{equation}
\label{ga-in-F1}
\ga \in \cF_1 \dfGC\,.
\end{equation}
Thus, the Lie algebra $H^0(\dfGC)$ is pro-nilpotent. 

To give an example of a degree zero cocycle in $\dfGC$ we consider the tetrahedron in $\dGra(4)$ depicted 
in figure \ref{fig:tetra}. 
\begin{figure}[htp]
\centering 
\begin{tikzpicture}[scale=0.5, >=stealth']
\tikzstyle{ahz}=[circle, draw, fill=gray, minimum size=26, inner sep=1]
\tikzstyle{ext}=[circle, draw, minimum size=5, inner sep=1]
\tikzstyle{int}=[circle, draw, fill, minimum size=5, inner sep=1]
\node [int] (v1) at (2,0) {};
\draw (2,-0.6) node[anchor=center] {{\small $1$}};
\node [int] (v2) at (4,3) {};
\draw (4,3.6) node[anchor=center] {{\small $2$}};
\node [int] (v3) at (0,3) {};
\draw (0,3.6) node[anchor=center] {{\small $3$}};
\node [int] (v4) at (2,1.8) {};
\draw (2, 2.4) node[anchor=center] {{\small $4$}};
\draw (v1) edge (v2);
\draw (v1) edge (v3);
\draw (v1) edge (v4);
\draw (v2) edge (v3);
\draw (v2) edge (v4);
\draw (v3) edge (v4);
\end{tikzpicture}
~\\[0.3cm]
\caption{We may choose this order on the 
set of edges: $ (1,2) < (1,3) < (1,4) < (2,3)< (2,4) < (3,4)$ } \label{fig:tetra}
\end{figure} 
This graph is invariant with 
respect to the action of $S_4$ and hence it can be 
viewed as a vector in $\dfGC$\,. According to 
\eqref{deg-in-dfGC}, this vector has degree zero. 
A direct computation shows that it is a cocycle and it 
is easy to see that this cocycle is non-trivial.

In fact, it was proved in \cite[Proposition 9.1]{Thomas} that, for every 
odd number $n \ge 3$, there exists a non-trivial cocycle which has a non-zero 
coefficient in front of the  
wheel with $n$ spokes (see figure \ref{fig:wheel}).  
Note that, labels on vertices do not play an important role 
because vectors in $\dfGC$ are invariant under the action 
of the symmetric group. 
\begin{figure}[htp]
\centering 
\begin{tikzpicture}[scale=0.5, >=stealth']
\tikzstyle{ext}=[circle, draw, minimum size=5, inner sep=1]
\tikzstyle{int}=[circle, draw, fill, minimum size=5, inner sep=1]
\node [int] (v0) at (3,3) {};
\draw (3.7, 2.5) node[anchor=center] {{\small $n+1$}};
\node [int] (vn) at (0,3) {};
\draw (-0.5, 3) node[anchor=center] {{\small $n$}};
\node [int] (v1) at (0.5,5) {};
\draw (0.5, 5.6) node[anchor=center] {{\small $1$}};
\node [int] (v2) at (2,6) {};
\draw (2, 6.6) node[anchor=center] {{\small $2$}};
\node [int] (v3) at (4,6) {};
\draw (4, 6.6) node[anchor=center] {{\small $3$}};
\draw (5.4, 5.15) node[anchor=center, rotate=140] {{\small $\dots$}};
\draw (0.5, 1.5) node[anchor=center, rotate=120] {{\small $\dots$}};
\draw (vn) edge (0.25,2);
\draw (vn) edge (v1);
\draw (v1) edge (v2);
\draw (v2) edge (v3);
\draw (v3) edge (5,5.5);
\draw (vn) edge (v0);
\draw (v1) edge (v0);
\draw (v2) edge (v0);
\draw (v3) edge (v0);
\end{tikzpicture}
~\\[0.3cm]
\caption{Here $n$ is an odd integer $\ge 3$} \label{fig:wheel}
\end{figure} 

Using \cite[Theorem 1.1]{Thomas} and the ideas sketched in \cite[Appendix K]{Thomas}, 
one can prove the following statement:
\begin{thm}[\cite{dgraphs}, Corollary 3.6]
\label{thm:grt-dfGC}
For the full directed graph complex $\dfGC$, we have an 
isomorphism of Lie algebras 
\begin{equation}
\label{H-0-dfGC}
H^{0}(\dfGC) \cong  \grt_1, 
\end{equation}
where $\grt_1$ is the Grothendieck-Teichmueller Lie algebra \cite[Section 4.2]{AT}, \cite[Section 6]{Drinfeld}.
\end{thm}
\begin{remark}
\label{rem:no-pikes}
Let $\G$ be an element in $\dgra_n$\,. We say 
that a vertex $v$ of $\G$  is a {\it pike} if $v$ has valency 
$1$ and the edge adjacent to $v$ terminates at $v$\,.
We observe that, due to \cite[Proposition 3.5]{dgraphs} any 
cocycle $\ga$ in $\dfGC$ is cohomologous to a cocycle in which 
all graphs do not have pikes\footnote{Note that, in this paper, we work 
exclusively with the loopless version of the full directed graph complex. 
So what we denote by $\dfGC$ in this paper is denoted by $\dfGC^{\nl}$ in \cite{dgraphs}.}. 
\end{remark}

\subsection{The action of $\dfGC$ on SFQs}
\label{sec:the-action}

For our purposes it is convenient to extend the Lie 
algebra  $\Conv(\oc^{\vee}_{\circ}, \KGra)$ \eqref{Conv-oc-KGra}
to the Lie algebra 
\begin{equation}
\label{Conv-oc-KGra-more}
\Conv(\oc^{\vee}, \KGra)
\end{equation}
and view MC elements of $\Conv(\oc^{\vee}_{\circ}, \KGra)$ as MC elements of 
its extension $\Conv(\oc^{\vee}, \KGra)$.

Using equation \eqref{oc-vee-mc}, we define a natural 
embedding of $\dfGC$ \eqref{dfGC} into 
the Lie algebra  $\Conv(\oc^{\vee}, \KGra)$ 
\begin{equation}
\label{dfGC-Conv}
J : \dfGC \hookrightarrow  \Conv(\oc^{\vee}, \KGra)\,.
\end{equation}
This embedding is given by the formulas 
\begin{equation}
\label{J}
J (\ga)  \Big |_{\oc^{\vee}(n,0)^{\mc}} = \ga\,,
\qquad 
J (\ga) \Big |_{\oc^{\vee}(n,k)^{\mo}} = 0\,.  
\end{equation}

The embedding $J$ is obviously compatible with 
the Lie brackets and with the filtrations by arity 
on $\dfGC$ and $\Conv(\oc^{\vee}, \KGra)$ (see \eqref{Conv-filtr}).
However, we should point out that 
$J$ is not compatible with the differentials.  
Indeed, the Lie 
algebra  $\Conv(\oc^{\vee}, \KGra)$ carries the zero differential 
while $\dfGC$ carries the non-zero differential \eqref{diff-dfGC}.

Let $\al_{F}$ be a MC element in $\Conv(\oc^{\vee}_{\circ}, \KGra)$
corresponding to an SFQ. We claim that
\begin{prop}
\label{prop:dfGC-action}
For every degree zero cocycle $\ga \in \dfGC$
the equation 
\begin{equation}
\label{dfGC-action}
\al' = \exp(\ad_{J(\ga)}) \al_{F} 
\end{equation}
defines a MC element $\al'$ in $\Conv(\oc^{\vee}_{\circ}, \KGra)$
satisfying conditions \eqref{al-Schouten}, \eqref{al-mult}, 
and \eqref{al-HKR}\,. 
\end{prop}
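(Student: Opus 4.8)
The plan is to exploit that the embedding $J$, while not compatible with the differentials, is compatible with the Lie brackets and with the arity filtrations, and to reduce the statement to the single identity $[\G_{\ed},\ga]=0$ in $\dfGC$ which expresses that $\ga$ is a cocycle for \eqref{diff-dfGC}. First I would check that $\al'$ is well defined and Maurer--Cartan. Since $\ga$ is a degree zero cocycle, \eqref{ga-in-F1} gives $\ga\in\cF_1\dfGC$, so $J(\ga)\in\cF_1\Conv(\oc^{\vee},\KGra)^0$, hence $\ad_{J(\ga)}$ is pro-nilpotent and $\exp(\ad_{J(\ga)})$ converges in the complete filtered Lie algebra $\Conv(\oc^{\vee},\KGra)$. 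Because $\oc^{\vee}$ and $\KGra$ carry the zero differential, so does $\Conv(\oc^{\vee},\KGra)$, and the gauge action of $\exp(J(\ga))$ on Maurer--Cartan elements (Appendix~\ref{app:MC}) is just $\al_F\mapsto\exp(\ad_{J(\ga)})\al_F$, which preserves Maurer--Cartan elements; thus $\al'$ is Maurer--Cartan in $\Conv(\oc^{\vee},\KGra)$. To see that it lies in the subalgebra $\Conv(\oc^{\vee}_{\c},\KGra)$ of \eqref{Conv-restric}, note that $\al'-\al_F$ is a convergent sum of iterated brackets each having an entry $J(\ga)\in\cF_1$; since \eqref{Conv-filtr} shows that $\cF_1$ is a Lie ideal, $\al'-\al_F\in\cF_1$, so $\al'$ vanishes on the corollas $\bu_{\mc}$ and $\bu_{\mo}$ exactly as $\al_F$ does.

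For condition \eqref{al-Schouten} I would observe that the elements of $\Conv(\oc^{\vee},\KGra)$ supported on $\mc$-output corollas form a Lie subalgebra: a $\mc$-output corolla in $\oc^{\vee}$ has no $\mo$-colored inputs, so under any two-nodal comultiplication $\D_{\bt}$ both tensor factors are again $\mc$-output. By \eqref{oc-vee-mc} and \eqref{KGra-mc} this subalgebra equals the image $J(\dfGC)$, and $J$ identifies it with $\dfGC$ as a graded Lie algebra; under this identification $\al_F$ restricts, by \eqref{al-Schouten}, to $\G_{\ed}$ and $J(\ga)$ to $\ga$. Hence the $\mc$-output component of $\al'=\exp(\ad_{J(\ga)})\al_F$ is $\exp(\ad_{\ga})\G_{\ed}$ computed inside $\dfGC$; but $\ad_{\ga}\G_{\ed}=-[\G_{\ed},\ga]=-\pa\ga=0$ because $\ga$ is a cocycle, so $\exp(\ad_{\ga})\G_{\ed}=\G_{\ed}$, which is precisely condition \eqref{al-Schouten} for $\al'$.

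For \eqref{al-mult} and \eqref{al-HKR} I would show that $\ad_{J(\ga)}$ annihilates $\bs^{-1}\st^{\mo}_2$ and $\bs^{-1}\st^{\mo}_{1,k}$. The point is that $\D_{\bt}(\bs^{-1}\st^{\mo}_2)$ and $\D_{\bt}(\bs^{-1}\st^{\mo}_{1,k})$, being dual to the values of the $\OC$-differential on these corollas, involve only $\mo$-output tensor factors: splitting a corolla with no $\mc$-inputs produces only corollas with no $\mc$-inputs, which handles $\st^{\mo}_2$; and for $\st^{\mo}_{1,k}$ one has $\cD'(\st^{\mo}_{1,k})=\cD''(\st^{\mo}_{1,k})=0$ by \eqref{cDpr-prpr-zero} and $\cD_{\Lie}(\st^{\mo}_{1,k})=0$ (the shuffle sum in Fig.~\ref{fig:cDLie} is empty for $n=1$), so $\cD(\st^{\mo}_{1,k})=\cD_{\As}(\st^{\mo}_{1,k})$, whose two tensor factors (Fig.~\ref{fig:cDAs-bt-nk}) again both have output color $\mo$. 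Since $J(\ga)$ vanishes on every $\mo$-output space by \eqref{J}, each term of $[J(\ga),\al_F]$ evaluated on these two corollas vanishes, and by the same argument so does $(\ad_{J(\ga)})^m\al_F$ for every $m\ge 1$; hence $\al'$ agrees with $\al_F$ on $\bs^{-1}\st^{\mo}_2$ and $\bs^{-1}\st^{\mo}_{1,k}$, which gives \eqref{al-mult} and \eqref{al-HKR}. I expect the main obstacle to be precisely this last colour-bookkeeping — in particular the vanishing of $\cD'$ and $\cD''$ on $\st^{\mo}_{1,k}$, which is exactly what prevents a $\mc$-output factor (on which $J(\ga)$ would act nontrivially) from appearing.
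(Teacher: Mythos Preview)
Your proof is correct and follows essentially the same route as the paper's: MC-preservation via the gauge action in the zero-differential Lie algebra $\Conv(\oc^{\vee},\KGra)$, reduction to $\Conv(\oc^{\vee}_{\c},\KGra)$ via $J(\ga)\in\cF_1$, condition \eqref{al-Schouten} from the cocycle identity $[\G_{\ed},\ga]=0$ inside the $\mc$-output sub-Lie-algebra, and conditions \eqref{al-mult}, \eqref{al-HKR} from the colour analysis showing no $\mc$-output factor appears in the comultiplication of $\bs^{-1}\st^{\mo}_2$ and $\bs^{-1}\st^{\mo}_{1,k}$. The paper's proof is terse (``it is not hard to show'', ``straightforward to verify''); you have correctly unpacked precisely those details, in particular the use of \eqref{cDpr-prpr-zero} to rule out a $\mc$-coloured internal edge for $\st^{\mo}_{1,k}$.
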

\begin{proof}
It is obvious that $\al'$ satisfies the MC equation in 
$\Conv(\oc^{\vee}, \KGra)$.

Furthermore, since each cocycle in $\dfGC$ belongs to 
$\cF_1 \dfGC$, the MC element $\al'$ belongs to  the 
Lie subalgebra 
$$
\Conv(\oc^{\vee}_{\circ}, \KGra) \subset  \Conv(\oc^{\vee}, \KGra),.
$$

Next,  using the cocycle condition for $\ga$ 
$$
[\G_{\ed}, \ga] = 0
$$ 
it is not hard to show that $\al'$ satisfies condition
\eqref{al-Schouten}.  

Finally, it is straightforward to verify that $\al'$
also satisfies  \eqref{al-mult} and \eqref{al-HKR}.
\end{proof}

Due to Proposition \ref{prop:dfGC-action}, the group 
$\exp\big(\cZ^0(\dfGC)\big)$ acts on SFQs. In the following proposition 
we list important properties of this action.
%
%
\begin{prop}
\label{prop:to-homotopy}
Let $\ga$ be a degree zero cocycle in $\dfGC$\,. 
If $\al$ and $\wt{\al}$ are MC elements of \eqref{Conv-oc-KGra} corresponding to homotopy equivalent 
SFQs $F$ and $\wt{F}$, then the MC elements 
$$
 \exp(\ad_{J(\ga)}) \al\,, \quad \textrm{and} \quad 
 \exp(\ad_{J(\ga)}) \wt{\al}
$$
also correspond to homotopy equivalent SFQs.

Furthermore,  if 
\begin{equation}
\label{ga-cF-n}
\ga  = [\G_{\ed}, \psi] 
\end{equation}
then there exists a degree zero vector 
$$
\xi \in \cF_1 \Conv(\oc^{\vee}_{\circ}, \KGra)
$$
for which \eqref{xi-cond-mc} holds and 
\begin{equation}
\label{al-xi-ga}
 \exp(\ad_{J(\ga)}) \al = \exp(\ad_{\xi}) \al\,.
\end{equation}

If, in addition, 
\begin{equation}
\label{psi-in-cF-1n}
\psi \in \cF_{n-1} \dfGC
\end{equation}
for some $n \ge 2$ then the vector $\xi$ in \eqref{al-xi-ga} can 
be chosen in such a way that 
\begin{equation}
\label{xi-mc-n}
\xi \in  \cF^{\mc}_{n} \Conv(\oc^{\vee}_{\circ}, \KGra)
\end{equation}
and 
\begin{equation}
\label{xi-more}
\xi (\bsi \st^{\mo}_{n,0}) = \psi (1_n)\,,  
\end{equation}
where $1_n$ is the generator $\bs^{2-2n} 1 \in \bs^{2-2n} \bbK \cong \La^2 \coCom(n).$
\end{prop}
\begin{proof}
Since $\al$ and $\wt{\al}$ represent homotopy equivalent SFQs, there exists a degree zero vector
$$
\xi \in \cF_1 \Conv(\oc^{\vee}_{\circ}, \KGra)
$$
for which \eqref{xi-cond-mc} holds and 
\begin{equation}
\label{xi}
\wt{\al} = \exp(\ad_{\xi}) \al\,.
\end{equation}

Applying $\exp(\ad_{J(\ga)}) $ to both sides of equation \eqref{xi} we get 
\begin{equation}
\label{Jga-xi}
 \exp(\ad_{J(\ga)}) \wt{\al} =  \exp(\ad_{J(\ga)}) \exp(\ad_{\xi}) \al =
\end{equation}
$$
 \exp(\ad_{\wt{\xi}})  \left(   \exp(\ad_{J(\ga)}) \al  \right)\,,
$$
where 
\begin{equation}
\label{wt-xi}
\wt{\xi} =  \exp(\ad_{J(\ga)})\, \xi .
\end{equation}

The vector $\wt{\xi}$ obviously belongs to 
$\cF_1 \Conv(\oc^{\vee}_{\circ}, \KGra)$\,. Furthermore, 
$\wt{\xi}$ satisfies the condition
$$
\wt{\xi}(\bsi \st^{\mc}_n) = 0\,, \qquad \forall~~ n \ge 2
$$
since so does $\xi$.

Thus the MC elements
$$
 \exp(\ad_{J(\ga)}) \al\,, \quad \textrm{and} \quad 
 \exp(\ad_{J(\ga)}) \wt{\al}
$$
indeed correspond to homotopy equivalent stable 
formality quasi-isomorphisms. 

To prove the second statement, we introduce the Lie algebra (in $\grVect_{\bbK}$) 
\begin{equation}
\label{Conv-add-t}
\Conv(\oc^{\vee}, \KGra)  \hotimes \bbK[t],
\end{equation}
where $\Conv(\oc^{\vee}, \KGra)$ is considered with the topology coming
from the filtration $\cF_{\bul}$  ``by arity'' \eqref{Conv-filtr} and $\bbK[t]$ 
is considered with the discrete topology. 

Let us denote by $\al(t)$ the following 
vector in \eqref{Conv-add-t}
\begin{equation}
\label{al-t}
\al(t) =  \exp(t\, \ad_{J(\ga)}) \al\,.   
\end{equation}
It is easy to see that $\al(t)$ enjoys the MC equation 
\begin{equation}
\label{MC-al-t}
[\al(t), \al(t)] =0
\end{equation}
and the conditions 
\begin{equation}
\label{al-t-Schouten}
\al(t)\,(\bs^{-1}\, \st^{\mc}_n) = \begin{cases}
   \G_{\ed} \qquad {\rm if} ~~ n = 2\,,  \\
   0 \qquad {\rm if} ~~  n \ge 3\,,
\end{cases}
\end{equation}
\begin{equation}
\label{al-t-mult}
\al(\bs^{-1}\, \st^{\mo}_2) = \G_{\ww}\,, 
\end{equation}
\begin{equation}
\label{al-t-HKR}
\al(\bs^{-1} \, \st^{\mo}_{1,k}) = \frac{1}{k!} \G^{\br}_{k}
\end{equation}
since so does $\al$ and since $\ga$ is cocycle in $\dfGC$\,.
Furthermore, $\al(t)$ satisfies the following (formal) differential 
equation 
\begin{equation}
\label{diff-eq-al-t}
\frac{d}{d t} \, \al(t) = [ J(\ga)  , \al(t) ]
\end{equation}
with the initial condition 
\begin{equation}
\label{initial}
\al(t) \Big|_{t=0} =  \al\,.
\end{equation}

Let us now assume that 
\begin{equation}
\label{ga-psi}
\ga  = [\G_{\ed}, \psi]
\end{equation}
for a degree $-1$ vector in $\dfGC$\,.

Since loops are not allowed and $\psi$ has degree $-1$, we have 
\begin{equation}
\label{psi-in-cF1}
\psi \in  \cF_1 \dfGC\,.
\end{equation}

Moreover, since the map $J$ \eqref{dfGC-Conv} is compatible with 
Lie brackets, we have 
$$
J(\ga) =  [J(\G_{\ed}), J(\psi)]
$$
and hence the vector $\al(t)$ satisfies 
the equation 
\begin{equation}
\label{diff-eq-al-psi}
\frac{d}{d t} \, \al(t)  = 
\big[  [J(\G_{\ed}), J(\psi)], \al(t) \big]\,.
\end{equation}

Let us denote by $\D\al(t)$ the difference
\begin{equation}
\label{D-al}
\D\al(t)  = \al(t) - J(\G_{\ed}) \in  \Conv(\oc^{\vee}, \KGra) \hotimes \bbK[t]
\end{equation}
The MC equation \eqref{MC-al-t}
for $\al(t)$ implies that 
\begin{equation}
\label{MC-D-al}
[J(\G_{\ed}), \D\al(t) ]  + \frac{1}{2} [\D\al(t), \D\al(t)] = 0\,.
\end{equation}  
Furthermore, due to equation \eqref{al-t-Schouten}, we have 
\begin{equation}
\label{D-al-mc}
\D\al(t)\, \big(\bsi \st^{\mc}_m \big) = 0 \qquad \forall ~ ~ m \ge 2\,. 
\end{equation}  

Using the Jacobi idenity,  identity $[J(\G_{\ed}), J(\G_{\ed})] = 0$, 
and equation \eqref{MC-D-al} we rewrite 
equation \eqref{diff-eq-al-psi} as follows
$$
\frac{d}{d t} \, \al(t)  = 
\big[  [J(\G_{\ed}), J(\psi)], J(\G_{\ed})\big] +
\big[  [J(\G_{\ed}), J(\psi)], \D\al(t) \big] = \big[  [J(\G_{\ed}), J(\psi)], \D\al(t) \big] =
$$
$$
- \big[ [J(\psi),  \D\al(t) ], J(\G_{\ed})\big] - \big[ [J(\G_{\ed}),  \D\al(t)], J(\psi) \big] =
$$
$$
- \big[ [J(\psi),  \D\al(t) ], J(\G_{\ed})\big]  + \frac{1}{2}  \big[ [\D\al(t),  \D\al(t)], J(\psi) \big] =
$$
$$
- \big[ [J(\psi),  \D\al(t) ], J(\G_{\ed})\big] - \frac{1}{2}  \big[ [ \D\al(t), J(\psi)], \D\al(t) \big]
- \frac{1}{2}  \big[ [J(\psi),  \D\al(t)], \D\al(t) \big]= 
$$
$$
- \big[ [J(\psi),  \D\al(t) ], \al(t) \big]\,.
$$

Thus the vector $\al(t)$ \eqref{al-t} satisfies the (formal) 
differential  equation 
\begin{equation}
\label{diff-eq-xi}
\frac{d}{d t} \, \al(t)  = [\eta(t), \al(t)]\,,
\end{equation}
where $\eta(t)$ is the degree zero vector
\begin{equation}
\label{eta-t}
\eta(t) = -[J(\psi),  \D\al(t) ] \in \Conv(\oc^{\vee}, \KGra)  \hotimes \bbK[t].
\end{equation}
It is clear that $\eta(t)$ satisfies the conditions
$$
\eta(t) \big(\bsi \st^{\mc}_n \big) = 0 \qquad \forall~~ n \ge 2 
$$ 
and 
$$
\eta(t) \big( \bsi \st^{\mo}_{1,k} \big) =  0 \qquad \forall ~~ k \ge 0\,.
$$

Let us apply Theorem \ref{thm:isom} from Appendix \ref{app:diffura} to the case when $\cL = \Conv(\oc^{\vee}, \KGra)$ and 
$\mg$ consists of vectors in $\Conv(\oc^{\vee}, \KGra)^0$ which satisfy \eqref{xi-cond-mc}. 
Due to this theorem, there exists a vector
$$
\xi \in \cF_1 \Conv(\oc^{\vee}_{\circ}, \KGra)
$$
which satisfies \eqref{xi-cond-mc} and \eqref{al-xi-ga}. 

Thus the second statement of Proposition \ref{prop:to-homotopy} is proved. 

To prove the last statement, we observe that \eqref{psi-in-cF-1n}, 
\eqref{D-al-mc}, and the identities 
$$
J(\psi) (\bsi \st^{\mo}_{m, k}) = J(\psi) (\bsi \st^{\mo}_{k_1}) = 0 \qquad \forall~~ m \ge 1, ~k \ge 0, ~k_1 \ge 2 
$$
imply that $\eta(t) \in \cF^{\mc}_{n}\Conv(\oc^{\vee}, \KGra)  \hotimes \bbK[t]$ and 
hence the inclusion in \eqref{xi-mc-n} holds. 

Inclusion \eqref{psi-in-cF-1n} implies that $\ga \in \cF_n \dfGC$. 
Combining this fact with \eqref{al-in-cF0}, we conclude that
$$
\D\al(t) - (\al - \G_{\ed}) \in \cF_{n}  \Conv(\oc^{\vee}, \KGra).
$$
Therefore, 
$$
\eta(t) =  - [J(\psi), \al - \G_{\ed}] \quad  \textrm{mod} \quad  \cF_{n}  \Conv(\oc^{\vee}, \KGra)[[t]]. 
$$

Hence, due to the second part of Theorem \ref{thm:isom}, there exists 
$$
\xi \in \cF_1 \Conv(\oc^{\vee}_{\circ}, \KGra)
$$
such that \eqref{xi-cond-mc} and \eqref{al-xi-ga} hold, and 
we have 
$$
\xi(\bsi \st^{\mo}_{n,0}) =  -[J(\psi),  \al - \G_{\ed}] (\st^{\mo}_{n,0}) = 
$$
$$
=   \al (\bsi \st^{\mo}_{1,0} ) \circ_{1, \mc} J(\psi) (\bsi \st^{\mc}_{n}) = 
\G^{\br}_{0} \circ_{1,\mc} \psi(1_n)  = \psi(1_n)\,.
$$
Thus equation \eqref{xi-more} also holds. 
\end{proof}

Proposition \ref{prop:to-homotopy} implies that 
%
%
\begin{cor}
\label{cor:descends}
The action of $\exp\big(\cZ^0(\dfGC)\big)$ on SFQs descends to 
an action of $\exp\big(H^0(\dfGC)\big)$ on homotopy classes of SFQs. 
\end{cor}
\begin{proof}
Let $\ga$ be a degree zero cocycle of $\dfGC$\,.
Due to the first statement of Proposition \ref{prop:to-homotopy}, 
$\exp(\ga)$ transforms homotopy equivalent SFQs to homotopy equivalent SFQs.

Thus it remains to prove that, if $\ga'$ is cohomologous to $\ga$ then 
MC elements 
\begin{equation}
\label{MC-MC}
\exp(\ad_{J(\ga')}) \al  \qquad \textrm{and} \qquad
\exp(\ad_{J(\ga)}) \al
\end{equation}
are connected by the action of $\exp(\ad_{\xi})$ for a vector
$$
\xi \in \cF_{1} \Conv(\oc^{\vee}_{\circ}, \KGra)
$$
satisfying condition \eqref{xi-cond-mc}.

Using the fact that the difference $\ga' - \ga$ is exact, it is 
easy to see that 
$$
\CH(-\ga, \ga') 
$$
is also exact. 

Therefore, due to the second statement of  Proposition \ref{prop:to-homotopy}, 
the MC elements 
$$
\exp(\ad_{J(\ga)})\, \exp(\ad_{J(\ga')})\, \al  \qquad \textrm{and} \qquad \al
$$
are connected by the action of $\exp(\ad_{\xi})$ for a vector
$\xi \in \cF_{1} \Conv(\oc^{\vee}_{\circ}, \KGra)$
satisfying condition \eqref{xi-cond-mc}.

Hence, the MC elements \eqref{MC-MC} represent homotopy equivalent SFQs.
\end{proof}
\begin{remark}
Let $A$ be a finitely generated free commutative algebra 
in $\grVect_{\bbK}$ and $V_{A}$ be the algebra of 
polyvector fields on the corresponding (graded) affine space. 
It is not hard to see that $\La\Lie_{\infty}$-morphisms 
from $V_A$ to $\Cbu(A)$ which correspond to 
$\al_F$ and \eqref{dfGC-action} are connected by 
the action described in \cite[Section 5]{K-conj} by 
M. Kontsevich.
\end{remark}

Let us now state the main result of this paper 
%
%
\begin{thm}
\label{thm:main}
The pro-unipotent group $\exp (H^0(\dfGC))$ acts simply 
transitively on the set of homotopy classes of 
stable formality quasi-isomorphisms (SFQs).
\end{thm}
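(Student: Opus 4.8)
The plan is to recognize the statement as an instance of the usual Maurer--Cartan yoga (Appendix~\ref{app:MC}), once the cohomology of the relevant deformation complex has been pinned down. Write $\al_F \in \Conv(\oc^{\vee}_{\c}, \KGra)$ for the Maurer--Cartan element attached to a stable formality quasi-isomorphism $F$, and organize everything along the ``$\mc$-arity'' filtration $\cF^{\mc}_{\bullet}$, with respect to which $\Conv(\oc^{\vee}_{\c}, \KGra)$ is complete. Twisting this Lie algebra by $\al_F$ produces a differential whose restriction to the $\mc$-pure generators $\st^{\mc}_n$ is $\ad_{\G_{\ed}}$ (the differential of $\dfGC$) and whose behaviour on the $\oc^{\vee}(n,k)^{\mo}$ directions is governed by $\G_{\ww}$ and the brooms $\G^{\br}_k$. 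After this twist the embedding $J$ of \eqref{dfGC-Conv} becomes a quasi-isomorphism of $\dfGC$ onto the subquotient cut out by the ``boundary conditions'' \eqref{xi-cond-mc}, \eqref{xi-cond-HKR}: the complementary piece — governing the $A_{\infty}$-structure on $A$ and the $\La\Lie_{\infty}$-morphism normalized to start from HKR — is acyclic in the relevant range of degrees (this acyclicity analysis is the substance of Section~\ref{sec:exact}). Combined with Willwacher's Theorem~\ref{thm:grt-dfGC}, this gives that the degree $0$ cohomology of the twisted deformation complex is $H^0(\dfGC)$, its degree $-1$ cohomology is the one-dimensional space $\bbK\,\G_{\lp}$, and it vanishes in degrees $< -1$.

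\textbf{Transitivity.} Given two stable formality quasi-isomorphisms $F, \wt{F}$, the boundary conditions \eqref{al-Schouten}--\eqref{al-HKR} already force $\al_F$ and $\al_{\wt{F}}$ to agree modulo $\cF^{\mc}_1$. Suppose inductively that, after applying to $\al_{\wt{F}}$ a composition of $\dfGC$-actions (Proposition~\ref{prop:dfGC-action}) and homotopies, agreement holds modulo $\cF^{\mc}_{n-1}$. The leading discrepancy in $\mc$-arity $n$ is then a degree $0$ cocycle of the deformation complex, and its class lies in $H^0(\dfGC)$ by the computation above. Acting by $\exp(\ad_{J(\ga)})$ for a cocycle $\ga \in \cF^{\mc}_{n-1}\dfGC$ representing that class preserves all boundary conditions, changes $\al_{\wt{F}}$ only in $\mc$-arity $\ge n-1$ with the prescribed leading term (Proposition~\ref{prop:to-homotopy}), and kills the non-exact part; a further homotopy removes the exact remainder, so agreement now holds modulo $\cF^{\mc}_n$. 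Completeness of $\cF^{\mc}_{\bullet}$ lets me assemble the successive cocycles, via a convergent Campbell--Hausdorff limit, into a single class of $H^0(\dfGC)$ carrying the homotopy class of $F$ to that of $\wt{F}$.

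\textbf{Freeness.} By transitivity it suffices to treat one $F$ (e.g.\ Kontsevich's, Example~\ref{ex:Kontsevich}). Suppose a degree $0$ cocycle $\ga \in \dfGC$ satisfies $\exp(\ad_{J(\ga)})\al_F \simeq \al_F$, witnessed by some $\xi \in \cF_1\Conv(\oc^{\vee}_{\c}, \KGra)$ obeying \eqref{xi-cond-mc}, \eqref{xi-cond-HKR}. Put $\zeta := \CH(-\xi, J(\ga))$, so $[\zeta, \al_F] = 0$; since $\xi$ vanishes on every $\oc^{\vee}(n,0)^{\mc}$ and $J(\ga)$ vanishes off the $\mc$-pure part, the $\mc$-pure component of $\zeta$ is exactly $\ga$. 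Thus $\zeta$ is a degree $0$ cocycle of the deformation complex whose $\dfGC$-shadow is $\ga$, and I must show $[\ga]=0$ in $H^0(\dfGC)$. Induct on $\mc$-arity: the leading component $\delta_m$ of $\ga$ in $\mc$-arity $m$ is $\ad_{\G_{\ed}}$-closed, and the $\mo$-coupled part of the equation $[\zeta, \al_F]=0$ in the appropriate arity forces $\delta_m$ to be $\ad_{\G_{\ed}}$-exact \emph{provided the obstruction in $H^{-1}(\dfGC)$ vanishes} — and by Theorem~\ref{thm:grt-dfGC} that group is spanned by the loop $\G_{\lp}$, which lies in arity $1$ and is itself closed, so it can be absorbed into the arity-$1$ part of a primitive (using $[\G_{\ed}, \G_{\lp}]=0$) and contributes no genuine obstruction. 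Replacing $\ga$ by $\CH(\ga, -\ad_{\G_{\ed}}\psi_{m-1})$ — which still acts trivially on the homotopy class by Corollary~\ref{cor:descends} and Proposition~\ref{prop:to-homotopy}, and has strictly higher leading arity — and iterating, completeness yields $\ga = \ad_{\G_{\ed}}\psi$ for a global degree $-1$ vector $\psi$, i.e.\ $[\ga]=0$. Together with transitivity this is the torsor assertion.

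\textbf{Main obstacle.} The hard part is concentrated in the freeness half: first, proving the acyclicity of the ``$\mo$-part'' of the twisted deformation complex in degrees $\le 0$, so that $H^{-1}$ of the whole complex is \emph{precisely} $\bbK\,\G_{\lp}$ and not larger; and second, exploiting exactly this one-dimensionality to close the inductive exactness argument for $\ga$ — the very point flagged in the Introduction as absent from \cite[Section 7.7]{Thomas-stable}. I would isolate this as a dedicated proposition, proved with the differential-equation technology of Appendix~\ref{app:diffura} and Theorem~\ref{thm:isom}, in the spirit of but finer than Proposition~\ref{prop:to-homotopy} (which already implements the ``easy'' direction, namely that exact cocycles act through homotopies).
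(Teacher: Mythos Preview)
Your transitivity outline matches Section~\ref{sec:transit}: both arguments induct on $\mc$-arity, extract a $\dfGC$-cocycle from the leading discrepancy, act by it, and then kill the remainder by a homotopy. But the step you label ``acyclicity of the complementary piece'' and defer to a single sentence is where the actual work sits (Appendices~\ref{app:Hoch} and~\ref{app:Hg}). The leading discrepancy at $\mc$-arity $n$ has components $\de\al_{n,k}$ for \emph{all} $k\ge 0$, and only $\de\al_{n,0}$ lives in $\dfGC$. Showing that $\de\al_{n,0}$ is even $\ad_{\G_{\ed}}$-closed requires first killing its pikes (Claim~\ref{cl:kill-pikes}) and then an argument using Corollary~\ref{cor:k-one}; disposing of the remaining $\de\al_{n,k}$ by homotopy needs the hedgehog complex (Claim~\ref{cl:main}). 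So the outline is right, but the passage ``its class lies in $H^0(\dfGC)$'' hides exactly the nontrivial content.

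The freeness half has a genuine gap. First, $J$ is \emph{not} a chain map into the $\al_F$-twisted complex: $[\al_F, J(\ga)]$ has nonzero $\mo$-components (coming from $\al_F(\bsi\st^{\mo}_{1,k})=\tfrac{1}{k!}\G^{\br}_k$ paired with $J(\ga)(\bsi\st^{\mc}_n)$ via the $\cD'$-part of the cobar differential), so ``$J$ becomes a quasi-isomorphism onto the subquotient cut out by \eqref{xi-cond-mc}, \eqref{xi-cond-HKR}'' is not well-posed --- the image of $J$ does not even satisfy \eqref{xi-cond-mc}. Second, and more importantly, your induction confuses degrees and objects. The cocycle $\ga$ whose exactness you want is degree~$0$ in $\dfGC$; the obstruction to its exactness lives in $H^0(\dfGC)$, not $H^{-1}(\dfGC)$. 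In the paper (Section~\ref{sec:exact}) the role of $H^{-1}(\dfGC)=\bbK\,\G_{\lp}$ is different: one shows (Claim~\ref{cl:xi-m0-cocycle}) that the \emph{degree~$-1$} vector $\xi_{m,0}\in\cF_1\dfGC$ is $\ad_{\G_{\ed}}$-closed, and since the only $H^{-1}$-class is the arity-$1$ loop, every such cocycle in $\cF_1\dfGC$ is exact. This is what lets one push $\xi$ from $\cF^{\mc}_{m-1}$ to $\cF^{\mc}_m$ (Claims~\ref{cl:kill-xi-m0} and~\ref{cl:kill-the-rest}). Only \emph{after} $\xi$ has been pushed all the way to $\cF^{\mc}_{n-2}$ does the equation at $(\mc\text{-arity }n,\ \mo\text{-arity }0)$ read $\ga(1_n)=[\G_{\ed},\xi_{n-1,0}]$ (Section~\ref{sec:contry}), producing the contradiction. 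Your sentence ``the $\mo$-coupled part of $[\zeta,\al_F]=0$ \ldots\ forces $\delta_m$ to be $\ad_{\G_{\ed}}$-exact provided the obstruction in $H^{-1}(\dfGC)$ vanishes'' collapses this two-stage mechanism and attaches the $H^{-1}$-vanishing to the wrong object: it controls the exactness of $\xi_{m,0}$ (degree~$-1$), not of any component of $\ga$ (degree~$0$).
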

The proof of this theorem occupies the next two sections 
of the paper and it depends on a few technical 
statements which are proved in Appendices \ref{app:Hoch} 
and \ref{app:Hg}.  

Combining Theorem \ref{thm:main} with Theorem \ref{thm:grt-dfGC} 
stated above, we deduce that
\begin{cor}
\label{cor:main}
The set of homotopy classes of SFQs form a torsor for the Gro\-then\-dieck-Teich\-muel\-ler
group $\GRT_1$. $\Box$
\end{cor}
\begin{remark}  
\label{rem:about-Dima}
We should mention that this result agrees very well with Tamarkin's 
approach \cite{Hinich}, \cite{Dima-Proof} to Kontsevich's formality theorem 
\cite{K}, \cite{K-conj}. Tamarkin's construction \cite[Section 2]{GRTEquiv}, \cite{Dima-Proof}
may be viewed as a map from the set of Drinfeld associators\footnote{Here, we only consider 
Drinfeld associators whose ``braiding'' constant is $1$. In \cite[Section 5]{Drinfeld}, this set 
is denoted by $M_1$. In \cite[Section 4.1]{GRTEquiv}, this set is denoted 
by $\mathrm{DrAssoc}_1$.} to the set of 
homotopy classes of formality quasi-isomorphisms for Hochschild cochains. 
Due to \cite[Proposition 5.5]{Drinfeld} and \cite[Theorem 1.1]{Thomas}, both the source and 
the target of Tamarkin's map are equipped with the actions of the group $\GRT_1$. 
Moreover, according\footnote{For more details, we refer the 
reader to \cite{GRTEquiv}.} to \cite[Section 10.1]{Thomas}, Tamarkin's construction 
is equivariant with respect to the action of $\GRT_1$.

Section 6 of paper \cite{Thomas-more} contains a sketch on a version of 
Tamarkin's construction in ``stable'' setting. According to this sketch, every SFQ 
is homotopic to an SFQ which can be extended to a stable $\Ger_{\infty}$-morphism 
from polyvector fields to Hochschild cochains (for some choice of Tamarkin's $\Ger_{\infty}$-structure 
on Hochschild cochains). 
\end{remark}
  
%
%

\section{The action of $\exp\big( H^0(\dfGC)\big)$ is transitive}
\label{sec:transit}

%
%

Let $\al$ and $\wt{\al}$ be MC elements of the graded Lie algebra 
$\Conv(\oc^{\vee}_{\circ}, \KGra)$ corresponding to SFQs.

Since $\al$ and $\wt{\al}$ satisfy conditions \eqref{al-Schouten}, \eqref{al-mult},
\eqref{al-HKR}, and \eqref{al-mult-higher}, the difference 
\begin{equation}
\label{del-al}
\de \al : = \wt{\al} - \al
\end{equation}
satisfies the identities
\begin{equation}
\label{del-al-mc-mult}
\de\al(\bs^{-1}\,\st^{\mc}_{m}) = 0\,, \qquad 
\de\al(\bs^{-1}\,\st^{\mo}_{k}) = 0\,, \qquad \forall~~ m,k \ge 2,
\end{equation}
and 
\begin{equation}
\label{del-al-Hoch}
\de\al(\bs^{-1}\,\st^{\mo}_{1,k}) = 0\,, \qquad \forall \quad k \ge 0.
\end{equation}
Therefore\footnote{See conditions \eqref{color-is-chi} and \eqref{color-is-not-chi} in Section \ref{sec:Conv}.}, 
\begin{equation}
\label{del-al-cF-mc-2}
\de\al \in \cF^{\mc}_2\, \Conv(\oc^{\vee}_{\circ}, \KGra).
\end{equation}

We will deduce the transitivity of the action of $\exp\big( H^0(\dfGC)\big)$ on 
homotopy classes of SFQs from the following statement
\begin{prop}
\label{prop:transit-refined}
If $\al$ and $\wt{\al}$ are MC elements of the graded Lie algebra \eqref{Conv-oc-KGra} 
corresponding to SFQs and 
\begin{equation}
\label{diff-in-cF-mc-n}
\wt{\al} - \al \in \cF^{\mc}_n\, \Conv(\oc^{\vee}_{\circ}, \KGra)
\end{equation}
for some $n \ge 2$ then there exists a degree zero cocycle 
$\ga \in \cF_{n-1} \dfGC$ and a degree zero vector 
$$
\xi \in \cF^{\mc}_{n-1}\, \Conv(\oc^{\vee}_{\circ}, \KGra)
$$
for which \eqref{xi-cond-mc} holds and 
\begin{equation}
\label{new-difference}
\exp(\ad_{\xi})\, \wt{\al} ~ - ~  \exp(\ad_{J(\ga)}) \al \in \cF^{\mc}_{n + 1}\, \Conv(\oc^{\vee}_{\circ}, \KGra).
\end{equation}
\end{prop}
The proof of this proposition consists of two parts. The first part is given in Section \ref{sec:de-al-n-0} 
and the second part is given in Section \ref{sec:de-al-n-k}.

\subsection{Proposition \ref{prop:transit-refined} implies that the action is transitive}
\label{sec:transit-proved}

Using \eqref{del-al-cF-mc-2} and applying Proposition \ref{prop:transit-refined} recursively, 
we see that there exist infinite sequences of degree zero vectors 
\begin{equation}
\label{xi-seq}
\xi_1, \xi_2, \xi_3, \dots~ \in~ \Conv(\oc^{\vee}_{\circ}, \KGra), \qquad 
\xi_m \in  \cF^{\mc}_m \Conv(\oc^{\vee}_{\circ}, \KGra)
\end{equation}
\begin{equation}
\label{ga-seq}
\ga_1, \ga_2, \ga_3, \dots ~\in~ \dfGC, \qquad 
\ga_m \in \cF_m \dfGC   
\end{equation}
such that each $\xi_m$ satisfies \eqref{xi-cond-mc} and
\begin{equation}
\label{diff-desired}
\exp(\ad_{\xi_{m}}) \dots \exp(\ad_{\xi_{1}})\, \wt{\al} ~ - ~
\exp(\ad_{J(\ga_{m})})\dots \exp(\ad_{J(\ga_{1})}) \al \in  \cF^{\mc}_{m+2}\, \Conv(\oc^{\vee}_{\circ}, \KGra)
\end{equation}
for every $m \ge 1$. 

Since $\Conv(\oc^{\vee}_{\circ}, \KGra)$ (resp. $\dfGC$) is complete with respect to the 
filtration $\cF^{\mc}_{\bul}$ (resp. $\cF_{\bul}$), the existence of the above sequence 
implies that the homotopy class of the SFQ corresponding to $\wt{\al}$ has a 
representative which lies on the $\exp(\cZ^0(\dfGC))$-orbit of the SFQ corresponding to $\al$.

\subsection{Taking care of $\de\al(\bsi \st^{\mo}_{n,0})$}
\label{sec:de-al-n-0}

Due to \eqref{del-al-mc-mult} and \eqref{del-al-Hoch},
the element $\de\al$ is uniquely determined by the vectors 
\begin{equation}
\label{all-del-al-mk}
\de\al_{m,k} : = \de\al(\bsi \st^{\mo}_{m,k}) ~ \in ~ \KGra(m,k). 
\end{equation}

In addition, since the restriction 
$$
\de\al \Big|_{\oc^{\vee}_{\circ} (m,k)^{\mo}} 
$$ 
is $S_m \times S_k$ equivariant and $\de\al$ has 
degree $1$,  $\de\al_{m,k}$ may be viewed as a \emph{degree zero} vector in 
\begin{equation}
\label{KGra-inv-mk}
\bs^{2m-2+k} \big( \KGra(m,k)^{\mo} \big)^{S_m}\,.
\end{equation}

Condition \eqref{diff-in-cF-mc-n} is equivalent to $\de\al  \in  \cF^{\mc}_n\, \Conv(\oc^{\vee}_{\circ}, \KGra)$.
In other words, we know that
\begin{equation}
\label{del-al-mk}
\de\al_{m,k} = 0 \qquad \forall \quad m \le n-1, ~ k \ge 0\,.
\end{equation}
So vectors $\de\al_{m,k}$ may be non-zero only for $m \ge n$. 

Since both $\al$ and $\wt{\al}$ satisfy the MC equations
$$
[\al, \al] = 0, \qquad [\,\wt{\al}\,, \,\wt{\al}\,] = 0,
$$
the difference $\de\al$ satisfies the equation 
\begin{equation}
\label{del-al-MC}
[\al, \de\al] +  \frac{1}{2} [\de \al, \de \al] = 0.
\end{equation}

Since $\de\al  \in  \cF^{\mc}_n\, \Conv(\oc^{\vee}_{\circ}, \KGra)$,  
\begin{equation}
\label{del-al-del-al-n-k}
[\de\al, \de\al] (\bsi \st^{\mo}_{n,k}) = 0 \qquad \forall ~~ k \ge 0.
\end{equation}
Moreover, it is easy to see that only $\cD_{\As}(\st^{\mo}_{n,k})$ may contribute to 
the expression $[\al, \de\al] (\bsi \st^{\mo}_{n,k})$. 

Unfolding $[\al, \de\al](\bsi \st^{\mo}_{n,k})$, we get 
$$
[\al, \de\al](\bsi \st^{\mo}_{n,k}) = 
$$
$$
- \big(  \al(\bsi \st^{\mo}_2) \circ_{2, \mo} \de\al_{n,k-1} +
\sum_{i=1}^{k-1}  (-1)^i \de\al_{n,k-1}  \circ_{i, \mo} \al(\bsi \st^{\mo}_2)
+(-1)^k \al(\bsi \st^{\mo}_2) \circ_{1, \mo} \de\al_{n,k-1}  \big) =
$$
$$
- \big(  \G_{\ww} \circ_{2, \mo} \de\al_{n,k-1} +
\sum_{i=1}^{k-1}  (-1)^i \de\al_{n,k-1}  \circ_{i, \mo} \G_{\ww}
+(-1)^k \G_{\ww} \circ_{1, \mo} \de\al_{n,k-1}  \big).
$$

Thus \eqref{del-al-MC}, \eqref{del-al-del-al-n-k}, and the above calculation imply the following statement:  
\begin{claim}
\label{cl:del-al-Hoch}
For each $k \ge 0$, the vector $\de\al_{n,k}$
is a degree zero cocycle in the cochain complex 
\begin{equation}
\label{Hoch}
\KGra^{\Hoch}_{\inv}  =
\bs^{2n-2} \, \bigoplus_{k \ge 0} \bs^{k}\big( \KGra(n,k)^{\mo} \big)^{S_n}
\end{equation}
with the differential $\pa^{\Hoch}$ given by the formula
\begin{equation}
\label{pa-Hoch}
\pa^{\Hoch} (\ga) = 
\G_{\ww} \circ_{2, \mo} \ga - \ga \circ_{1, \mo} \G_{\ww} + 
\ga \circ_{2, \mo} \G_{\ww} - \dots 
\end{equation}
$$
+ (-1)^{k} \ga \circ_{k, \mo} \G_{\ww}
+ (-1)^{k+1}\G_{\ww} \circ_{1, \mo} \ga\,,
$$
$$
\ga \in  \bs^{2n-2 + k}\big( \KGra(n,k)^{\mo}  \big)^{S_n}\,.
$$
\qed
\end{claim}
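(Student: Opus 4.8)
The plan is to treat the claim as a repackaging of the identity \eqref{del-F-Hoch} already obtained above, the one genuinely new ingredient being that $\big(\KGra^{\Hoch}_{\inv},\pa^{\Hoch}\big)$ is an honest cochain complex. So the first step is to verify $\big(\pa^{\Hoch}\big)^2=0$. This needs no input from $\OC$: tautologically, $\pa^{\Hoch}$ is the Hochschild codifferential attached to the associative (indeed commutative) multiplication on $A$ represented by $\G_{\ww}$, with the $\mo$-output serving as the bimodule of coefficients, so that $\KGra^{\Hoch}_{\inv}$ sits as a subcomplex of the corresponding Hochschild complex. Concretely, $\big(\pa^{\Hoch}\big)^2=0$ follows from the operad identities of $\KGra$ and the single associativity relation $\G_{\ww}\,\c_{1,\mo}\,\G_{\ww}=\G_{\ww}\,\c_{2,\mo}\,\G_{\ww}$ — both insertions produce the edgeless graph on three white vertices with matching labels — together with the fact that the signs in \eqref{pa-Hoch} are the usual alternating Hochschild signs $(-1)^i$.

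The second step is to read the cocycle condition off the reductions already carried out. Starting from \eqref{del-al-MC} and its component \eqref{MC-bt-mo-mk} at $m=n$: the hypothesis \eqref{del-al-mk} annihilates the $\cD_{\Lie}$, $\cD'$ and $\cD''$ contributions to $\cD(\st^{\mo}_{n,k})$, because in each of these the argument $\de\al$ always lands on a generator $\st^{\mo}_{m',k'}$ with $m'<n$, on $\st^{\mc}_{m'}$ with $m'\ge 2$, or on $\st^{\mo}_{1,k'}$, on all of which $\de\al$ vanishes by \eqref{del-al-mk}, \eqref{boundary}, \eqref{boundary11}; the term $\de\al\bsi\otimes\de\al\bsi$ dies by \eqref{boundary1}; and evaluating $\al\bsi$ against $\cD_{\As}(\st^{\mo}_{n,k})$ via the boundary conditions \eqref{al-mult} and \eqref{al-mult-higher} collapses what remains to \eqref{del-F-Hoch}. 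For each fixed $k\ge 1$, the left-hand side of \eqref{del-F-Hoch} is — term by term, signs included — exactly $\pa^{\Hoch}(\de\al_{n,k-1})$ as given by \eqref{pa-Hoch} with $\ga=\de\al_{n,k-1}$; hence $\de\al_{n,k-1}$ is a $\pa^{\Hoch}$-cocycle, and letting $k$ range over all integers $\ge 1$ accounts for every $\de\al_{n,j}$, $j\ge 0$.

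The last step is bookkeeping. The case $j=0$ is automatic: \eqref{pa-Hoch} gives $\pa^{\Hoch}(\de\al_{n,0})=\G_{\ww}\,\c_{2,\mo}\,\de\al_{n,0}-\G_{\ww}\,\c_{1,\mo}\,\de\al_{n,0}$, and the two graphs evaluate to $a\cdot\de\al_{n,0}(v)$ and $\de\al_{n,0}(v)\cdot a$, which agree since $A$ is commutative. That $\de\al_{n,k}$ is $S_n$-invariant and of degree zero in \eqref{Hoch} follows, exactly as recorded before the claim, from the $S_n\times S_k$-equivariance of $\de\al\big|_{\oc^{\vee}_{\c}(n,k)^{\mo}}$ and from $|\bsi\st^{\mo}_{n,k}|=1-2n-k$ together with $\de\al$ having degree $1$. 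I do not anticipate a real obstacle: the mathematical content — the derivation of \eqref{del-F-Hoch} — is already in place, and what remains is essentially the single sign bookkeeping that matches \eqref{del-F-Hoch} to \eqref{pa-Hoch}, together with the parallel sign check in $\big(\pa^{\Hoch}\big)^2=0$.
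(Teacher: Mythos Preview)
Your proposal is correct and follows essentially the same route as the paper: the claim is the packaging of identity \eqref{del-F-Hoch}, which was derived in the text immediately preceding the claim by exactly the reductions you describe. One small stylistic point: your argument for the $j=0$ case invokes commutativity of $A$ at the level of the action on $(V_A,A)$, whereas the identity $\G_{\ww}\,\c_{1,\mo}\,\de\al_{n,0}=\G_{\ww}\,\c_{2,\mo}\,\de\al_{n,0}$ already holds in $\KGra$ (both sides are $\de\al_{n,0}$ together with an isolated white vertex); in any case this is redundant, since, as you note, the $j=0$ case is already covered by the main argument applied at $k=1$.
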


The cochain complex \eqref{Hoch} is examined in detail in Appendix \ref{app:Hoch}. 
For now, we use Corollary \ref{cor:k-one} and 
Claim \ref{cl:del-al-Hoch} to deduce that 
\begin{claim}
\label{cl:del-al-n1}
The white vertex of each graph in the linear 
combination 
$$
\de\al_{n,1} \in  \bs^{2n-1}\big( \KGra(n,1)^{\mo}  \big)^{S_n}
$$
has valency $1$\,.  $\Box$
\end{claim}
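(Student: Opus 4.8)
The plan is to reduce Claim \ref{cl:del-al-n1} entirely to a combinatorial statement about degree-zero $\pa^{\Hoch}$-cocycles in the single summand $\bs^{2n-1}\big(\KGra(n,1)^{\mo}\big)^{S_n}$ of the complex \eqref{Hoch} — this statement being exactly what Corollary \ref{cor:k-one} of Appendix \ref{app:Hoch} records. By Claim \ref{cl:del-al-Hoch} the vector $\de\al_{n,1}$ is a degree-zero cocycle in \eqref{Hoch}, hence it lies in $\bs^{2n-1}\big(\KGra(n,1)^{\mo}\big)^{S_n}$ and, for degree reasons, every graph occurring in it has exactly $2n-1$ edges; its unique white vertex has some valency $\ell\ge 0$ (with $\ell$ edges entering it and the remaining $2n-1-\ell$ edges sitting among the $n$ black vertices). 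So it suffices to prove: any degree-zero $\ga\in \bs^{2n-1}\big(\KGra(n,1)^{\mo}\big)^{S_n}$ with $\pa^{\Hoch}(\ga)=0$ can involve only graphs with $\ell=1$.

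To prove this I would split $\ga=\ga_0+\ga_1+\ga_{\ge 2}$ according to the valency $\ell$ of the white vertex and inspect the three terms of $\pa^{\Hoch}$ on $\KGra(n,1)^{\mo}$, namely $\G_{\ww}\c_{2,\mo}(-)$, $-(-)\c_{1,\mo}\G_{\ww}$ and $\G_{\ww}\c_{1,\mo}(-)$. The operations $\G_{\ww}\c_{1,\mo}(-)$ and $\G_{\ww}\c_{2,\mo}(-)$ merely adjoin a fresh isolated white vertex and leave the old white vertex's valency unchanged, whereas $(-)\c_{1,\mo}\G_{\ww}$ replaces the white vertex by two new ones and redistributes its $\ell$ incoming edges among them in all $2^\ell$ ways. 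For $\ga_0$: the component of $\pa^{\Hoch}(\ga)$ supported on graphs with two isolated white vertices receives contributions only from $\ga_0$, and for each such graph all three terms give the same graph, with total coefficient $1-1+1=1$; since ``record the black part'' is injective on those graphs, $\pa^{\Hoch}(\ga)=0$ forces $\ga_0=0$. For $\ga_{\ge 2}$: restrict $\pa^{\Hoch}(\ga)$ to the graphs in $C^2$ whose two white vertices each have valency $\ge 1$ and whose total white valency equals a fixed $m\ge 2$; such graphs arise only from the term $-(-)\c_{1,\mo}\G_{\ww}$ applied to the valency-$m$ part of $\ga$, via its ``genuine splittings'', and each split graph reconstructs a unique valency-$m$ graph by re-merging its two white vertices. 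Hence $\Gamma\mapsto(\text{sum of the genuine splittings of }\Gamma)$ is injective (and remains so after $S_n$-symmetrization), so $\pa^{\Hoch}(\ga)=0$ forces the valency-$m$ part of $\ga$ to vanish for every $m\ge 2$, i.e. $\ga_{\ge 2}=0$. This leaves $\ga=\ga_1$, which is precisely Claim \ref{cl:del-al-n1}.

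I expect the $\ga_{\ge 2}$ step to be the main point requiring care: one must check that the edge-ordering signs picked up when passing to $S_n$-invariants, and the multiplicities with which a single graph's various splittings can coincide, do not conspire to produce a cancellation — concretely, that the ``re-merge'' map really is an injective linear map on the relevant invariant subspaces. I anticipate this is what Appendix \ref{app:Hoch} handles (by organizing $\KGra(n,1)^{\mo}$ by the multiset of black vertices adjacent to the white vertex and passing to the associated graded, on which the valency-decreasing part of $\pa^{\Hoch}$ becomes manifestly injective). Everything else — the degree count, the identification of $\de\al_{n,1}$ as a cocycle via Claim \ref{cl:del-al-Hoch}, and the bookkeeping of the three terms of $\pa^{\Hoch}$ — is routine and will be kept brief.
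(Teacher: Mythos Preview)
Your argument is correct. The valency decomposition $\ga=\ga_0+\ga_1+\ga_{\ge 2}$ works exactly as you describe: the $\ga_0$ piece is killed by looking at the sector with two isolated white vertices (total coefficient $1-1+1$), and for each fixed total white valency $m\ge 2$ the ``genuine splitting'' map has the re-merge map as a left inverse (composition is $(2^m-2)\cdot\id$, since $\G_{\ww}$ has no edges so no edge-ordering signs are introduced), hence is injective, and $S_n$-equivariance passes this to invariants. The sign and multiplicity worries you flag are harmless.

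However, your anticipation of how the paper handles this is off. The paper does not argue by valency filtration or associated graded. In Appendix~\ref{app:Hoch} it instead builds an $S_r$-equivariant isomorphism
\[
\Ups:\bigl(\bs^{2n-2}\KGra'(n,r)\otimes \Hoch'(\C_r)\bigr)_{S_r}\ \longrightarrow\ \KGra^{\Hoch}_r
\]
where $\KGra'(n,r)$ is the span of graphs all of whose white vertices have valency~$1$, and $\Hoch'(\C_r)$ is a direct summand of the Hochschild complex of the cofree cocommutative coalgebra on $r$ cogenerators. The full cohomology of $\KGra^{\Hoch}$ (Proposition~\ref{prop:Hoch-simpler} and Corollary~\ref{cor:Hoch}) is then read off from Kontsevich's computation of $H^\bullet(\Hoch(\C_r))$ (Claim~\ref{cl:Hoch-C}), and Corollary~\ref{cor:k-one} is the $k=1$ specialization.

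Your direct argument is more elementary and entirely self-contained for the $k=1$ case needed in Claim~\ref{cl:del-al-n1}. The paper's route is heavier but yields the complete description of $H^\bullet(\KGra^{\Hoch}_{\inv})$, which is used repeatedly later (e.g.\ Corollary~\ref{cor:Hoch} in the proofs of Claims~\ref{cl:main} and~\ref{cl:kill-the-rest}); your method would not obviously extend to give that.
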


\subsubsection{Pikes in $\de\al_{n,0}$ can be ``killed''}
In general linear combinations $\de\al_{m,k}$ 
\eqref{all-del-al-mk} may 
contain graphs with a black vertex
of valency $1$ whose adjacent edge terminates 
at this vertex. We call such vertices {\it pikes}. 

The following statement says that, if equation \eqref{del-al-mk} holds, 
then pikes in $\de\al_{n,0}$ can be ``killed''. More precisely, 
\begin{claim}
\label{cl:kill-pikes}
If equation \eqref{del-al-mk} holds, then there exists a degree zero vector 
$$
\xi \in  \cF^{\mc}_{n-1} \Conv(\oc^{\vee}_{\circ}, \KGra)
$$
such that \eqref{xi-cond-mc} holds, each graph in the linear combination   
\begin{equation}
\label{new-diff}
\big(\exp([\xi,~]) \wt{\al} \,\big) (\bsi \st^{\mo}_{n,0}) ~ - ~ \al(\bsi \st^{\mo}_{n,0})
\end{equation}
does not have pikes, and 
\begin{equation}
\label{new-diff-Fmc-n}
\big(\exp([\xi,~]) \wt{\al} \,\big) ~ - ~ \al ~\in~   \cF^{\mc}_{n} \Conv(\oc^{\vee}_{\circ}, \KGra).
\end{equation}
\end{claim}
\begin{proof}
Let us denote by $\de\al^{r}_{n,0}$ the linear combination 
in $\KGra(n,0)^{\mo}$ which is obtained from $\de\al_{n,0}$ by 
retaining only graphs with exactly $r$ pikes. 

Since   $\de\al^r_{n,0}$ is a linear combination of 
graphs without white vertices,  it is a cocycle
in the complex \eqref{Hg} with the differential $\md$ \eqref{md} 
examined in detail in Appendix \ref{app:Hg}. 
According to  Lemma \ref{lem:Koszul} from this appendix, we have 
\begin{equation}
\label{mdmdr}
\md \md^* (\de\al^{r}_{n,0}) = r \de\al^{r}_{n,0}\,.
\end{equation}
Thus, for the vector 
\begin{equation}
\label{chi}
\chi_{n-1,1} = - \sum_{r \ge 1} \frac{1}{r} \md^* (\de\al^{r}_{n,0}) 
\in \bs^{2(n-1)-1}\big( \KGra(n-1,1)^{\mo}  \big)^{S_{n-1}},
\end{equation}
the linear combination 
$$
\de\al_{n,0} + \md (\chi_{n-1,1}) 
$$
does not have pikes. 

Next, we define the degree $0$ vector $\xi \in \Conv(\oc^{\vee}_{\circ}, \KGra)$ by setting
\begin{equation}
\label{xi-pikes}
\xi (\bs^{-1}\,\st^{\mo}_{n-1,1}) =  \chi_{n-1, 1}\,,
\qquad 
\xi (\bs^{-1}\,\st^{\mc}_{m_1}) = \xi (\bs^{-1}\, \st^{\mo}_{k_1}) =
\xi(\bs^{-1}\, \st^{\mo}_{m,k}) = 0  
\end{equation}
for all $m_1$, $k_1$ and for all pairs $(m,k) \neq (n-1,1)$\,.
By construction, $\xi \in \cF^{\mc}_{n-1} \Conv(\oc^{\vee}_{\circ}, \KGra)$ and satisfies 
\eqref{xi-cond-mc}. 

Let us denote by $\wt{\al}'$ and $\de\al'$ the new MC element 
\begin{equation}
\label{wt-al-pr}
\wt{\al}' : = \exp(\ad_{\xi}) \wt{\al}
\end{equation}
and the new difference $\de\al' = \wt{\al}' - \al$, respectively. 
Clearly, 
\begin{equation}
\label{del-al-pr}
\de\al' = \exp(\ad_{\xi}) \de\al + 
 \exp(\ad_{\xi}) \al - \al.
\end{equation}

To prove that each graph in the linear combination $\de\al' (\bsi \st^{\mo}_{n,0})$
does not have pikes, we observe that, for every $f \in \Conv(\oc^{\vee}_{\circ}, \KGra)$,
$$
[\xi, f] (\bsi \st^{\mo}_{n,0})  = 
\sum_{i=1}^n  (\tau_{n,i}, \id) \big(
\xi (\bsi \st^{\mo}_{n-1,1}) \circ_{1,\mo} f (\bsi \st^{\mo}_{1,0}) \big)\,,
$$
where $\tau_{n,i}$ is the following family of cycles in $S_n$
\begin{equation}
\label{tau-n-i}
\tau_{n,i} :=  (i, i+1, \dots, n-1, n).
\end{equation}

Therefore 
$$
\de\al'(\bsi \st^{\mo}_{n,0}) =  \de\al (\bsi \st^{\mo}_{n,0}) + [\xi, \al] (\bsi \st^{\mo}_{n,0}) =
$$
$$
\de\al_{n,0} + \sum_{i=1}^n  (\tau_{n,i}, \id) \big( \xi (\bsi \st^{\mo}_{n-1,1}) \circ_{1,\mo} \al (\bsi \st^{\mo}_{1,0}) \big) =
$$
$$
 \de\al_{n,0}  + 
 \sum_{i=1}^n  (\tau_{n,i}, \id) \big(
 \xi (\bsi \st^{\mo}_{n-1,1}) \circ_{1,\mo} \G^{\br}_0 \big) ~=~
 \de\al_{n,0}  +   \md(\chi_{n-1,1}).
$$
As we showed above, all graphs in this linear combination 
do not have pikes. 

It remains to show that $\de\al' \in \cF^{\mc}_{n}\Conv(\oc^{\vee}_{\circ}, \KGra)$. 

Since  $\xi \in \cF^{\mc}_{n-1} \Conv(\oc^{\vee}_{\circ}, \KGra)$ and 
$\wt{\al} \in  \cF^{\mc}_{0} \Conv(\oc^{\vee}_{\circ}, \KGra)$, 
$$
\ad^k_{\xi} (\wt{\al}) \in \cF^{\mc}_{k(n-1)} \Conv(\oc^{\vee}_{\circ}, \KGra).
$$

For $k \ge 2$, this observation (together with the inequality $n \ge 2$) implies that 
$\ad^k_{\xi} (\wt{\al}) \in  \cF^{\mc}_{n}\Conv(\oc^{\vee}_{\circ}, \KGra)$. 

Since $[\xi, \wt{\al}] \in  \cF^{\mc}_{n-1}\Conv(\oc^{\vee}_{\circ}, \KGra)$ and 
$[\xi, \wt{\al}] (\bsi \st^{\mc}_{m}) = 0$ for all $m$, it is sufficient to show that 
\begin{equation}
\label{xi-wt-al}
[\xi, \wt{\al}] (\bsi \st^{\mo}_{n-1,k}) = 0, \qquad \forall~ k \ge 0.
\end{equation}

Using the defining equations of $\xi$ (see \eqref{xi-pikes}), it is easy to see that 
$[\xi, \wt{\al}] (\bsi \st^{\mo}_{n-1,k}) = 0$ for all $k \neq 2$. 
As for $k =2$, we have 
$$
[\xi, \al] \big(\bs^{-1}\,\st^{\mo}_{n-1, 2} \big) =
\chi_{n-1,1}\circ_{1, \mo} \G_{\ww} - 
\G_{\ww} \circ_{1, \mo} \chi_{n-1,1}  - \G_{\ww} \circ_{2, \mo} \chi_{n-1,1} = 0
$$ 
because the white vertex in each graph of the linear 
combination $\chi_{n-1,1}$ has valency $1$\,.

Claim \ref{cl:kill-pikes} is proved. 
\end{proof}

\subsubsection{$\de\al_{n,0}$ is a degree zero cocycle in $\dfGC$}
\label{sec:cocycle}

Since $\de\al_{n,0} = \de\al(\bsi \st^{\mo}_{n,0})$ is a linear combination 
of graphs with only black vertices and it is invariant with respect to the 
action of $S_n$, we may view $\de\al_{n,0}$ as a vector in the full directed 
graph complex $\dfGC$ (see \eqref{dfGC}). 

We will now prove the following statement:
\begin{claim}
\label{cl:del-al-cocycle}
Let $\al$ and $\wt{\al}$ be MC elements in \eqref{Conv-oc-KGra} 
corresponding to SFQs.
If $\de\al$ satisfies \eqref{del-al-mk} and 
all graphs in the linear combination
$\de\al_{n,0}$ do not have pikes, then $\de\al_{n,0}$ is a degree zero cocycle
in $\dfGC$ \eqref{dfGC}.
\end{claim}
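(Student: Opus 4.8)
The plan is to extract the claim from the degree-zero, $k=0$ component of the ``deformation'' Maurer--Cartan equation \eqref{MC-bt-mc-n}, exactly in parallel with how Claim \ref{cl:del-al-Hoch} was extracted from \eqref{MC-bt-mo-mk}. First I would specialize \eqref{MC-bt-mc-n} to the corolla $\st^{\mc}_{m}$ for the critical arity $m = n$. The differential summand $\cD_{\Lie}(\st^{\mc}_n)$ (Figure \ref{fig:cDLie-bt-mc}) produces two-vertex trees whose nodal vertices carry the corollas $\st^{\mc}_p$ and $\st^{\mc}_{n-p+1}$ with $2 \le p \le n-1$, hence with fewer than $n$ inputs of color $\mc$. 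By the boundary condition \eqref{boundary} we have $\de\al(\bsi\st^{\mc}_j) = 0$ for \emph{all} $j \ge 2$; in particular the terms $\de\al\bsi\otimes\de\al\bsi(\cD_{\Lie}(\st^{\mc}_n))$ vanish outright, and each of the two mixed terms $\al\bsi\otimes\de\al\bsi$ and $\de\al\bsi\otimes\al\bsi$ contributes zero when $\cD_{\Lie}$ is applied, because one factor is always a $\de\al$ evaluated on some $\st^{\mc}_j$ with $j \ge 2$. So the $\st^{\mc}_n$ component of \eqref{MC-bt-mc-n} is automatically satisfied and gives no information about $\de\al_{n,0}$. This tells me I have the wrong corolla: the identity governing $\de\al_{n,0} = \de\al(\bsi\st^{\mo}_{n,0})$ must come from \eqref{MC-bt-mo-mk} with $(m,k) = (n,0)$, not from \eqref{MC-bt-mc-n}.

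So the actual first step is to write out \eqref{MC-bt-mo-mk} for $m=n$, $k=0$. Using \eqref{del-al-mk}, the summands $\cD_{\Lie}(\st^{\mo}_{n,0})$, $\cD'(\st^{\mo}_{n,0})$, $\cD''(\st^{\mo}_{n,0})$ all feed $\de\al$ only on corollas $\st^{\mo}_{m',k'}$ with $m' < n$, so these terms drop out of the mixed products, just as in the derivation of \eqref{del-al-cDAs}. What survives is the contribution of $\cD_{\As}(\st^{\mo}_{n,0})$ together with the piece of the differential on $\OC$ coming from $\cD_{\Lie}$ applied ``from outside'' --- more precisely, the term in which $\de\al$ is evaluated on $\st^{\mo}_{n,0}$ itself while $\al$ is evaluated on $\st^{\mc}_2$ (giving $\G_{\ed}$) via the portion of $\cD_{\Lie}(\st^{\mo}_{n,0})$ or, rather, via the cobar comultiplication that splits off a $\st^{\mc}_2$-corolla at the root. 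Concretely I expect the equation to reduce, after using the boundary conditions $\al(\bsi\st^{\mc}_2) = \G_{\ed}$, $\al(\bsi\st^{\mo}_{1,0}) = \G^{\br}_0$, $\al(\bsi\st^{\mo}_2)=\G_{\ww}$, and the vanishing \eqref{boundary1}, \eqref{boundary11}, \eqref{del-al-mk}, to the statement
\begin{equation}
\label{eq:proposal-cocycle}
[\G_{\ed}, \de\al_{n,0}] = \sum \big(\text{terms with a white vertex or a pike}\big),
\end{equation}
where the right-hand side collects the contributions of $\cD_{\As}(\st^{\mo}_{n,0})$ (which, by Figure \ref{fig:cDAs-bt-nk} with $k=0$, involves $\G_{\ww}$ and hence white vertices attached below or above) and of the ``univalent black vertex'' insertions from $\cD_{\Lie}$ composed with $\G^{\br}_0$ (which produce pikes).

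The second step is then to argue that every term on the right-hand side of \eqref{eq:proposal-cocycle} actually vanishes once we know $\de\al_{n,0}$ has no pikes. For $k=0$ the corolla $\st^{\mo}_{n,0}$ has no inputs of color $\mo$, so $\cD_{\As}(\st^{\mo}_{n,0})$ is an empty sum (the ranges $0 \le p \le k-2$ are void when $k=0$), killing the $\G_{\ww}$-contributions entirely; and $\cD''(\st^{\mo}_{n,0})$ likewise contributes only through $\de\al$ on $\st^{\mo}_{r,0}$ or $\st^{\mo}_{n-r,0}$ with $r < n$, hence is zero by \eqref{del-al-mk}. The only potentially surviving term besides $[\G_{\ed},\de\al_{n,0}]$ is the one where $\cD_{\Lie}(\st^{\mo}_{n,0})$ or $\cD'$ splits the root into a $\st^{\mc}$-corolla --- but $\cD'(\st^{\mo}_{n,0})$ by Figure \ref{fig:cDpr} feeds $\de\al$ into $\st^{\mc}_n$, on which $\de\al$ vanishes by \eqref{boundary}. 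So the entire right-hand side of \eqref{eq:proposal-cocycle} is zero and we are left with $[\G_{\ed}, \de\al_{n,0}] = 0$, i.e.\ $\de\al_{n,0}$ is a cocycle in $\dfGC$ with respect to the differential $\pa = \ad_{\G_{\ed}}$ of \eqref{diff-dfGC}. Since $\de\al_{n,0}$ has $2n-2$ edges, \eqref{deg-in-dfGC} gives it degree zero, as claimed.

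The main obstacle I anticipate is bookkeeping the signs and the precise list of surviving cobar-differential terms: unfolding $\cD = \cD_{\Lie} + \cD_{\As} + \cD' + \cD''$ on $\st^{\mo}_{n,0}$ through the identification $\OC = \Cobar(\oc^\vee)$ and through \eqref{brack-pa-cobar}, and verifying that all ``mixed'' compositions either hit $\de\al$ on a corolla forced to zero by one of \eqref{boundary}--\eqref{boundary11}, \eqref{del-al-mk}, or produce a graph with a pike. The pike-freeness hypothesis (legitimate after Claim \ref{cl:kill-pikes}) is exactly what rules out the one family of terms --- insertions of $\al(\bsi\st^{\mo}_{1,0}) = \G^{\br}_0$ into a univalent black vertex --- that would otherwise obstruct the cocycle condition; making that cancellation precise, by checking it term by term against the structure of $\cD_{\Lie}(\st^{\mc}_2)$ and the definition of the bracket in $\dfGC$, is where the real work lies. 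Everything else is a routine propagation of the boundary conditions through the associativity/Jacobi identities already recorded in \eqref{MC-bt-mc-n}--\eqref{MC-bt-mo-mk}.
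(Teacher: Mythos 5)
There is a genuine gap: you have chosen the wrong component of the Maurer--Cartan identity. The component of \eqref{MC-bt-mo-mk} at $(m,k)=(n,0)$ is vacuous. Indeed, every two-vertex splitting of $\st^{\mo}_{n,0}$ is either of the form $\st^{\mo}_{n-p+1,0}\,\c_{1,\mc}\,\st^{\mc}_{p}$ with $p\ge 2$, or $\st^{\mo}_{r,1}\,\c_{1,\mo}\,\st^{\mo}_{n-r,0}$ with $1\le r\le n-1$. In the first family the only nonvanishing value of $\al$ on a $\st^{\mc}$-corolla is $\al(\bsi\st^{\mc}_2)=\G_{\ed}$, but then the companion factor is $\de\al(\bsi\st^{\mo}_{n-1,0})=0$ by \eqref{del-al-mk}, while $\de\al$ on any $\st^{\mc}_p$ vanishes by \eqref{boundary}; in the second family $\de\al$ would have to be evaluated on $\st^{\mo}_{r,1}$ or $\st^{\mo}_{n-r,0}$ with first index $<n$, which again vanishes. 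So every term of \eqref{MC-bt-mo-mk} at $(n,0)$ is zero for arity reasons and the vector $\de\al_{n,0}$ never even appears: no splitting of $\st^{\mo}_{n,0}$ can have $\st^{\mo}_{n,0}$ itself as a factor. Consequently your equation \eqref{eq:proposal-cocycle} cannot be derived from this component, and the conclusion $[\G_{\ed},\de\al_{n,0}]=0$ does not follow from anything you have written. The identity that actually governs $\de\al_{n,0}$ is the component of \eqref{MC-bt-mo-mk} at $(m,k)=(n+1,0)$: there the splittings $\st^{\mo}_{n,0}\,\c_{1,\mc}\,\st^{\mc}_{2}$, $\st^{\mo}_{1,1}\,\c_{1,\mo}\,\st^{\mo}_{n,0}$ and $\st^{\mo}_{n,1}\,\c_{1,\mo}\,\st^{\mo}_{1,0}$ survive and produce, respectively, the terms $\de\al_{n,0}\,\c_{1,\mc}\,\G_{\ed}$ (summed over shuffles), $\G^{\br}_1\,\c_{1,\mo}\,\de\al_{n,0}$, and $\de\al_{n,1}\,\c_{1,\mo}\,\G^{\br}_0$.

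A second, related omission is that your argument never uses Claim \ref{cl:del-al-n1}, which is indispensable. The term $\de\al_{n,1}\,\c_{1,\mo}\,\G^{\br}_0$ does not vanish and is not ``ruled out'' by pike-freeness; rather, because every white vertex of $\de\al_{n,1}$ is univalent, replacing it by the black vertex $\G^{\br}_0$ turns \emph{every} graph of this term into a graph with a pike. Since $\de\al_{n,0}$ is pike-free, these pike graphs must cancel precisely against the pike-containing part of $\sum_{\tau}(\tau,\id)\bigl(\de\al_{n,0}\,\c_{1,\mc}\,\G_{\ed}\bigr)$, namely $\sum_i(\tau_{n+1,i},\id)\bigl(\G^{>}_{\ed}\,\c_{1,\mc}\,\de\al_{n,0}\bigr)$, while the $\G^{\br}_1$-term rewrites as $\sum_i(\tau_{n+1,i},\id)\bigl(\G^{<}_{\ed}\,\c_{1,\mc}\,\de\al_{n,0}\bigr)$; only after this separation into pike and pike-free parts does the pike-free remainder assemble into $[\G_{\ed},\de\al_{n,0}]=0$. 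Without choosing the $(n+1,0)$ component and without the univalency statement of Claim \ref{cl:del-al-n1}, this cancellation cannot be set up, so the proposal as written does not prove the claim.
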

\begin{proof} The claim that $\de\al_{n,0}$ is a degree zero vector in $\dfGC$ follows 
immediately from the fact that $\st^{\mo}_{n,0}$ has degree $2-2n$. So we will proceed to 
the proof of the cocycle condition for $\de\al_{n,0}$\,. 

Since
$$
\cD(\st^{\mo}_{n+1,0}) ~=~ 
\sum_{p=2}^{n+1} \sum_{\tau\in \Sh_{p, n+1-p}}
(\tau, \id) \big(  \st^{\mo}_{n+2-p,0} \circ_{1, \mc} \st^{\mc}_{p}  \big) 
$$
$$
- \sum_{r=1}^{n} 
\sum_{\si \in \Sh_{r, n+1-r}}
(\si, \id) \big( \st^{\mo}_{r, 1} \circ_{1, \mo} \st^{\mo}_{n+1-r, 0}  \big),
$$
$\de\al (\bsi\st^{\mc}_m) = 0$ for all $m$ and \eqref{del-al-mk} holds, we have 
$$
\de\al\bsi\otimes\de\al\bsi(\cD(\st^{\mo}_{n+1,0})) = 0\,,
$$
$$
\de\al\bsi \otimes \al\bsi(\cD(\st^{\mo}_{n+1,0}))~ = ~  
\sum_{\tau\in \Sh_{2, n-1}}
(\tau, \id) \big(  \de\al_{n,0} \circ_{1, \mc} \G_{\ed}  \big) 
- \sum_{i=1}^{n+1} 
(\tau_{n+1, i}, \id) \big(  \de\al_{n,1} \circ_{1, \mo} \G^{\br}_0  \big)\,,
$$
and
\begin{equation}
\label{al-del-al-n1}
\al\bsi \otimes \de\al\bsi(\cD(\st^{\mo}_{n+1,0}))= 
- \sum_{i=1}^{n+1} 
(\si_{n+1, i}, \id) \big(  \G^{\br}_1 \circ_{1, \mo} \de\al_{n,0}  \big)\,,
\end{equation}
where $\si_{n+1,i}$ and  $\tau_{n+1,i}$ are the cycles 
$(i, i-1,\dots, 1)$ and $(i, i+1, \dots, n+1)$ in $S_{n+1}$, respectively. 

Thus, applying the right hand side of \eqref{del-al-MC} to $\bsi \st^{\mo}_{n+1, 0}$, we get 
the identity for vectors $\de\al_{n,0}$ and $\de\al_{n,1}$\,:
\begin{equation}
\label{iden-delaln0}
\sum_{\tau\in \Sh_{2, n-1}}
(\tau, \id) \big(  \de\al_{n,0} \circ_{1, \mc} \G_{\ed}  \big) 
- \sum_{i=1}^{n+1} 
(\si_{n+1,i}, \id) \big(  \G^{\br}_1 \circ_{1, \mo} \de\al_{n,0}  \big) 
\end{equation}
$$
- \sum_{i=1}^{n+1} 
(\tau_{n+1, i}, \id) \big(  \de\al_{n,1} \circ_{1, \mo} \G^{\br}_0  \big) =0\,.
$$

Notice that $\G^{\br}_0$ consists of a single black vertex
and the insertion  $\de\al_{n, 1} \circ_{1, \mo}  \G^{\br}_{0}$
is nothing but replacing the single white vertex in each 
graph of the linear combination $\de\al_{n, 1} $ by black vertex
with label $n+1$\,. 

On the other hand, Claim \ref{cl:del-al-n1} says that 
all white vertices in $\de\al_{n,1}$ have valency $1$\,.
Thus, for each graph  
in  $\de\al_{n, 1} \circ_{1, \mo}  \G^{\br}_{0}$ the black 
vertex with label $n+1$ is necessarily a pike. 

Since $\de\al_{n,0}$ does not have pikes, the sum 
$$
- \sum_{i=1}^{n+1} 
(\tau_{n+1, i}, \id) \big(
  \de\al_{n, 1} \circ_{1, \mo}  \G^{\br}_{0} \big)\,.
$$
should necessarily cancel the linear combination $L_{pikes}$ which is obtained from  
\begin{equation}
\label{sum-with-pikes}
\sum_{\tau \in \Sh_{2, n-1} } (\tau, \id) \big( \de\al_{n,0} \circ_{1, \mc} \G_{\ed} \big)
\end{equation}
by retaining only the graphs with pikes. 

It is not hard to see that the linear combination $L_{pikes}$ coincides with 
the sum\footnote{Here we use the fact that $\de\al_{n,0}$ carries an even degree.}
\begin{equation}
\label{pikes}
\sum_{i=1}^{n+1} 
(\tau_{n+1,i}, \id) \big(  \G^{>}_{\ed} \circ_{1, \mc} \de\al_{n,0}  \big)\,,
\end{equation}
where $\tau_{n+1,i}$ is the cycle $(i, i+1, \dots, n+1)$ in $S_{n+1}$ and 
\begin{equation}
\label{edge>}
\G^{>}_{\ed} : = \begin{tikzpicture}[scale=0.5, >=stealth']
\tikzstyle{w}=[circle, draw, minimum size=4, inner sep=1]
\tikzstyle{b}=[circle, draw, fill, minimum size=4, inner sep=1]
\node [b] (b1) at (0,0) {};
\draw (0,0.6) node[anchor=center] {{\small $1$}};
\node [b] (b2) at (1.5,0) {};
\draw (1.5,0.6) node[anchor=center] {{\small $2$}};
\draw [->] (b1) edge (b2);
\end{tikzpicture}\,.
\end{equation}

Thus we conclude that 
$$
\sum_{i=1}^{n+1} 
(\tau_{n+1,i} , \id) \big(
  \de\al_{n, 1} \circ_{1, \mo}  \G^{\br}_{0} \big) = 
\sum_{i=1}^{n+1} 
(\tau_{n+1,i}, \id) \big(  \G^{>}_{\ed} \circ_{1, \mc} \de\al_{n,0}  \big).
$$

On the other hand, we have 
$$
\sum_{i=1}^{n+1} 
(\si_{n+1,i}, \id) \big(  \G^{\br}_1 \circ_{1, \mo} \de\al_{n,0}  \big) 
= 
\sum_{i=1}^{n+1} 
(\tau_{n+1,i}, \id) \big(  \G^{<}_{\ed} \circ_{1, \mc} \de\al_{n,0}  \big)\,,
$$
where 
\begin{equation}
\label{edge<}
\G^{<}_{\ed} = \begin{tikzpicture}[scale=0.5, >=stealth']
\tikzstyle{w}=[circle, draw, minimum size=4, inner sep=1]
\tikzstyle{b}=[circle, draw, fill, minimum size=4, inner sep=1]
\node [b] (b1) at (0,0) {};
\draw (0,0.6) node[anchor=center] {{\small $1$}};
\node [b] (b2) at (1.5,0) {};
\draw (1.5,0.6) node[anchor=center] {{\small $2$}};
\draw [<-] (b1) edge (b2);
\end{tikzpicture}\,.
\end{equation}

Therefore identity \eqref{iden-delaln0} implies that
$$
\sum_{\tau\in \Sh_{2, n-1}}
(\tau, \id) \big(  \de\al_{n,0} \circ_{1, \mc} \G_{\ed}  \big) 
- \sum_{i=1}^{n+1} 
(\tau_{n+1,i}, \id) \big(  \G_{\ed} \circ_{1, \mc} \de\al_{n,0}  \big) = 0.
$$

In other words, $\de\al_{n,0}$ is indeed a cocycle in $\dfGC$.
\end{proof}

Claim \ref{cl:del-al-cocycle} has the following corollary. 
\begin{cor}
\label{cor:killing-del-al-n0}
Let $\al$ and $\wt{\al}$ be MC elements in \eqref{Conv-oc-KGra} 
corresponding to SFQs.
If $\de\al$ satisfies \eqref{del-al-mk},  and 
all graphs in the linear combination
$\de\al_{n,0}$ do not have pikes, then there
exists degree zero cocycle $\ga\in \cF_{n-1}\dfGC$ such that 
\begin{equation}
\label{ga-kills-del-aln0}
\Big(\wt{\al} \quad - \quad   \big(\exp(\ad_{J(\ga)}) \al \big) \Big)\, (\bsi\st^{\mo}_{n,0}) ~ = ~ 0.
\end{equation}
and 
\begin{equation}
\label{still-in-Fmc-n}
\wt{\al} \quad - \quad   \big(\exp(\ad_{J(\ga)}) \al \big)  ~\in ~ \cF^{\mc}_{n}\Conv(\oc^{\vee}_{\circ}, \KGra)
\end{equation}
\end{cor}
\begin{proof}
Due to Claim \ref{cl:del-al-cocycle} the linear combination 
$\de\al_{n,0}$ is a degree zero cocycle in $\dfGC$\,.
So we set $\ga := -\de\al_{n,0}\,.$ In other words, 
$$
\ga (1_m) := \begin{cases}
 - \de\al_{n,0}  \qquad {\rm if} \quad m = n \quad {\rm and}   \\
   0 \qquad \qquad {\rm otherwise}\,,
\end{cases}
$$
where $1_m$ denotes the generator $\bs^{2-2m} 1 \in \bs^{2-2m} \bbK \cong \La^2 \coCom(m)$.
By construction, $\ga$ belongs to $\cF_{n-1}\dfGC$.

For any degree $1$ element $f \in \Conv(\oc^{\vee}, \KGra)$ we have 
$$
[J(\ga), f] (\bsi \st^{\mo}_{n,0}) = - f(\bsi  \st^{\mo}_{1,0} )  \circ_{1, \mc} \ga
\qquad \textrm{and} \qquad
[J(\ga), f] (\bsi \st^{\mo}_{1,0}) = 0.
$$

Therefore,
\begin{equation}
\label{exp-J-ga}
\wt{\al} (\bsi \st^{\mo}_{n,0}) ~ - ~
\big( \exp(\ad_{J(\ga)}) \al \big)\,  (\bsi \st^{\mo}_{n,0}) = \de \al (\bsi \st^{\mo}_{n,0}) 
+ \al(\bsi  \st^{\mo}_{1,0} )  \circ_{1, \mc} \ga =
\end{equation}
$$
\de\al_{n,0}  + \G^{\br}_0 \circ_{1, \mc} \ga = \de\al_{n,0} + \ga =  0
$$
and equation \eqref{ga-kills-del-aln0} follows. 

To prove \eqref{still-in-Fmc-n} we observe that
$$
J(\ga) \in \cF^{\mc}_{n-1} \Conv(\oc^{\vee}, \KGra). 
$$
Combining this observation with the fact that $\al \in \cF^{\mc}_{0} \Conv(\oc^{\vee}, \KGra)$ and 
the inequality $n \ge 2$, it is easy to see that 
$$
\ad^q_{J(\ga)} (\al) \in  \cF^{\mc}_{n} \Conv(\oc^{\vee}, \KGra) \qquad \forall ~ q \ge 2
$$
and 
\begin{equation}
\label{ga-al-Fmc-1n}
[J(\ga), \al] \in \cF^{\mc}_{n-1} \Conv(\oc^{\vee}, \KGra).
\end{equation}

Since $\ga$ is a cocycle in $\dfGC$, $[J(\ga), \al] (\bsi \st^{\mc}_m) = 0$ for all $m$. 
Furthermore, 
\begin{equation}
\label{ga-al-mc-1n}
[J(\ga), \al] (\bsi \st^{\mo}_{n-1,k}) = 0 \qquad \forall~~ k \ge 0
\end{equation}
since the vector $\st^{\mc}_{n}$ does not show up in $\cD(\st^{\mo}_{n-1,k})$. 
 
Thus  \eqref{still-in-Fmc-n} follows from \eqref{ga-al-Fmc-1n} and \eqref{ga-al-mc-1n}.  
\end{proof}

\subsection{Taking care of vectors $\de\al(\bsi \st^{\mo}_{n,k})$ for $k \ge 1$}
\label{sec:de-al-n-k}
Let us prove that following auxiliary statement: 
\begin{claim}
\label{cl:del-al-n-q}
Let $q$ be an integer $\ge 1$ and  $\al$, $\wt{\al}$ be MC elements of \eqref{Conv-oc-KGra} 
corresponding to SFQs. If $\de\al$ satisfies \eqref{del-al-mk} and 
\begin{equation}
\label{del-al-n-le-q}
\de\al_{n,k} = 0 \qquad \forall~~ k \le q-1,
\end{equation}
then there exists a degree zero vector 
\begin{equation}
\label{xi-n-q}
\xi \in \cF^{\mc}_{n-1} \Conv(\oc^{\vee}, \KGra) \cap \cF_{n+q-2} \Conv(\oc^{\vee}, \KGra)
\end{equation}
such that \eqref{xi-cond-mc} holds, 
\begin{equation}
\label{still-fine}
\big(\exp([\xi,~]) \wt{\al} \,\big) ~ - ~ \al ~\in~   \cF^{\mc}_{n} \Conv(\oc^{\vee}_{\circ}, \KGra)
\end{equation}
and 
\begin{equation}
\label{even-better}
\big(\exp([\xi,~]) \wt{\al} \,\big) (\bsi\st^{\mo}_{n,k}) ~ = ~ \al (\bsi\st^{\mo}_{n,k})  
\qquad \forall~~ k \le q. 
\end{equation}
\end{claim}
\begin{proof} The proof of this claim consists of two steps. 
First, we show that there exists a degree zero vector 
$$
\xi^{(1)} \in  \Conv(\oc^{\vee}, \KGra)
$$
such that $\xi^{(1)}(\bsi\st^{\mc}_m)  = 0$ for all $m$, 
$\xi^{(1)}(\bsi \st^{\mo}_{m_1,k}) = 0$ for all pairs $(m_1, k) \neq (n, q-1)$, 
\begin{equation}
\label{de-al-1-Fmc}
\big(\exp([\xi^{(1)},~]) \wt{\al} \,\big) ~ - ~ \al ~\in~   \cF^{\mc}_{n} \Conv(\oc^{\vee}_{\circ}, \KGra),
\end{equation}
\begin{equation}
\label{de-al-1-n-k}
\big(\exp([\xi^{(1)},~]) \wt{\al} \,\big) (\bsi\st^{\mo}_{n,k}) ~ = ~ \al (\bsi\st^{\mo}_{n,k})  \qquad \forall~~ k \le q-1  
\end{equation}
and the difference  
\begin{equation}
\label{n-q-after-1}
\big(\exp([\xi^{(1)},~]) \wt{\al} \,\big) (\bsi\st^{\mo}_{n,q}) ~ - ~ \al (\bsi\st^{\mo}_{n,q})
\end{equation}
satisfies Properties \ref{P:one}, \ref{P:anti-symm}, i.e. for each 
graph in \eqref{n-q-after-1} white vertices have valency $1$ and 
the linear combination \eqref{n-q-after-1} is anti-symmetric with 
respect to permutations of labels on white vertices. 

Let us denote by $\wt{\al}^{(1)}$ and $\de\al^{(1)}$
the new MC element 
\begin{equation}
\label{wt-al-1}
\wt{\al}^{(1)}  : = \big(\exp([\xi^{(1)},~]) \wt{\al} \,\big)
\end{equation}
and the new difference\footnote{We also set $\de\al^{(1)}_{m,k} := \de\al^{(1)} (\bsi \st^{\mo}_{m,k})$.} 
\begin{equation}
\label{del-al-1}
\de\al^{(1)} : = \big(\exp([\xi^{(1)},~]) \wt{\al} \,\big) ~ - ~ \al. 
\end{equation}

In the second step, we show that there exists a degree zero vector  
$$
\xi^{(2)} \in  \Conv(\oc^{\vee}, \KGra)
$$
such that  $\xi^{(2)}(\bsi\st^{\mc}_m)  = 0$ for all $m$, 
$\xi^{(2)}(\bsi \st^{\mo}_{m_1,k}) = 0$ for all pairs $(m_1, k) \neq (n-1, q+1)$, 
$$
\big(\exp([\xi^{(2)},~]) \wt{\al}^{(1)} \,\big) ~ - ~ \al ~\in~   \cF^{\mc}_{n} \Conv(\oc^{\vee}_{\circ}, \KGra)
$$
and 
$$
\big(\exp([\xi^{(2)},~]) \wt{\al}^{(1)} \,\big) (\bsi\st^{\mo}_{n,k}) ~ = ~ \al (\bsi\st^{\mo}_{n,k})  \qquad \forall~~ k \le q.  
$$

Since both vectors $\xi^{(1)}$ and $\xi^{(2)}$ satisfy \eqref{xi-cond-mc}, 
$\xi^{(1)}(\bsi \st^{\mo}_{m_1,k}) = 0$ for all pairs $(m_1, k) \neq (n, q-1)$  and 
$\xi^{(2)}(\bsi \st^{\mo}_{m_1,k}) = 0$ for all pairs $(m_1, k) \neq (n-1, q+1)$, 
$$
\xi^{(1)}, ~\xi^{(2)} ~\in~  
\cF^{\mc}_{n-1} \Conv(\oc^{\vee}, \KGra) \cap \cF_{n+q-2} \Conv(\oc^{\vee}, \KGra).
$$
Thus the desired vector \eqref{xi-n-q} is obtained by setting 
$$
\xi : = \CH(\xi^{(2)}, \xi^{(1)}). 
$$

~\\
{\bf Step 1.} If $q=1$ then the linear combination $\de\al_{n,q}$
already satisfies Properties  \ref{P:one}, \ref{P:anti-symm} due 
to Claim \ref{cl:del-al-n1}. So, in this case, we proceed to Step 2. 
In Step 1, it remains to consider the case $q \ge 2$\,. 
     
Due to Claim \ref{cl:del-al-Hoch}, the vector 
\begin{equation}
\label{del-al-nk-pr}
\de\al_{n,q} \in  \bs^{2n-2+q}  \Big( \KGra(n,q)^{\mo}  \Big)^{S_n}
\end{equation}
is a cocycle in the complex \eqref{Hoch-app} with the differential \eqref{pa-Hoch-app}. 
Thus, Corollary \ref{cor:Hoch} implies that there exists a vector 
$$
\psi_{n, q-1} \in  \bs^{2n+q-3}  \Big( \KGra(n,q-1)^{\mo}  \Big)^{S_n}
$$
such that the difference $\de\al_{n,q} - \pa^{\Hoch}\psi_{n, q-1}$
satisfies Properties \ref{P:one}, \ref{P:anti-symm}.

So we define $\xi^{(1)}$ by setting 
$$
\xi^{(1)} (\bsi\st^{\mo}_{n,q-1}) = \psi_{n, q-1}\,, 
\qquad  
\xi^{(1)} (\bsi\st^{\mc}_{m_1}) = \xi^{(1)} (\bsi \st^{\mo}_{k_1}) = 
\xi^{(1)} (\bsi \st^{\mo}_{m,k}) =0
$$  
for all $m_1 \ge 2$, $k_1 \ge 2$, and all pairs $(m,k) \neq (n, q-1)$\,.

It is easy to see that 
\begin{equation}
\label{xi-1-Fmc-n}
\xi^{(1)} ~\in~  \cF^{\mc}_{n} \Conv(\oc^{\vee}, \KGra) 
\end{equation}
and 
\begin{equation}
\label{xi-1-F-nq2}
\xi^{(1)} ~\in~  \cF_{n+q-2} \Conv(\oc^{\vee}, \KGra). 
\end{equation}

Since the vector \eqref{del-al-1} can be rewritten as
\begin{equation}
\label{del-al-1-new}
\de\al^{(1)} =  \exp(\ad_{\xi^{(1)}}) (\de\al) +  \exp(\ad_{\xi^{(1)} }) (\al) - \al,
\end{equation}
\eqref{de-al-1-Fmc} follows from \eqref{xi-1-Fmc-n} and the inclusions
$$
\de\al \in \cF^{\mc}_{n} \Conv(\oc^{\vee}, \KGra),
\qquad
\al \in \cF^{\mc}_{0} \Conv(\oc^{\vee}, \KGra).
$$

Since $\al \in \cF_{0} \Conv(\oc^{\vee}, \KGra)$ (see \eqref{al-in-cF0}), $n \ge 2$ and $q \ge 2$, 
inclusion \eqref{xi-1-F-nq2} implies that 
\begin{equation}
\label{r-ge-2}
\ad^r_{\xi^{(1)}} (\al)  \in    \cF_{n+q} \Conv(\oc^{\vee}, \KGra) \qquad \forall~~ r \ge 2 
\end{equation}
and 
\begin{equation}
\label{xi-1-al}
[\xi^{(1)}, \al] \in  \cF_{n+q-2} \Conv(\oc^{\vee}, \KGra).
\end{equation}

Moreover, since\footnote{Note that $\de\al (\bsi\st^{\mo}_{n+1,0})$ can be non-zero.} 
$\de\al \in \cF_{n} \Conv(\oc^{\vee}, \KGra)$ and $n \ge 2$, 
\begin{equation}
\label{r-ge-1}
\ad^r_{\xi^{(1)}} (\de \al)  \in  \cF_{n+q} \Conv(\oc^{\vee}, \KGra) \qquad \forall~~ r \ge 1. 
\end{equation}

Combining \eqref{r-ge-2}, \eqref{xi-1-al}, \eqref{r-ge-1} with 
$$
\de\al (\bsi \st^{\mo}_{n,k}) = 0 \qquad \forall ~~ k \le q-1,
$$
we deduce that 
\begin{equation}
\label{de-al-k-le-q2}
\de\al^{(1)} (\bsi \st^{\mo}_{n,k}) = 0 \qquad \forall~~ k \le q-2,
\end{equation}
\begin{equation}
\label{at-n-1q}
\de\al^{(1)} (\bsi \st^{\mo}_{n,q-1})  = [\xi^{(1)}, \al] (\bsi \st^{\mo}_{n,q-1}),
\end{equation}
and 
\begin{equation}
\label{at-n-q}
\de\al^{(1)} (\bsi \st^{\mo}_{n,q})  = \de\al_{n,q} + [\xi^{(1)}, \al] (\bsi \st^{\mo}_{n,q}).
\end{equation}

Thus, to prove \eqref{de-al-1-n-k}, it remains to show that 
$\de\al^{(1)} (\bsi \st^{\mo}_{n,q-1})  = 0$. The latter follows from 
\eqref{at-n-1q} and the fact that $\st^{\mo}_{n,q-1}$ does not show up 
in $\cD(\st^{\mo}_{n,q-1})$. 

Finally computing the right hand side of \eqref{at-n-q}, we deduce that 
$$
\de\al^{(1)} (\bsi \st^{\mo}_{n,q})  =  \de\al_{n,q} - \pa^{\Hoch} \psi_{n,q-1}
$$
which means that $\de\al^{(1)} (\bsi \st^{\mo}_{n,q})$ satisfies Properties \ref{P:one}, \ref{P:anti-symm}.

~\\
{\bf Step 2.} Since $\wt{\al}^{(1)}$ and $\al$ satisfy MC equations in $\Conv(\oc^{\vee}, \KGra)$, 
the difference $\de\al^{(1)}$ satisfies the equation 
\begin{equation}
\label{de-al-1-MC}
[\al,  \de\al^{(1)}] +  \frac{1}{2}[\de\al^{(1)} ,  \de\al^{(1)} ] = 0. 
\end{equation}

Due to \eqref{de-al-1-Fmc}, $\de\al^{(1)} \in \cF^{\mc}_n \Conv(\oc^{\vee}, \KGra)$ and hence 
$[\de\al^{(1)} ,  \de\al^{(1)} ] (\bsi \st^{\mo}_{n+1,q-1}) = 0$. So applying the left hand side of 
\eqref{de-al-1-MC} to $\bsi \st^{\mo}_{n+1,q-1}$ and using (see \eqref{de-al-1-n-k}) 
$$
\de\al^{(1)} (\bsi \st^{\mo}_{n,k}) = 0 \qquad \forall k \le q-1,
$$
we get the identity 
\begin{equation}
\label{n1-q-1}
 \sum_{i=1}^{n+1}  \sum_{p=0}^{q-1}
(-1)^p \, (\tau_{n+1,i}, \id)
\big(
\de\al^{(1)}_{n, q} \circ_{p+1, \mo} \G^{\br}_{0}
\big) ~=~  \G_{\ww} \circ_{2, \mo} \de\al^{(1)}_{n+1, q-2}  + 
\end{equation}
$$
 \sum_{p=1}^{q-2} (-1)^{p}
\de\al^{(1)}_{n+1, q-2} \circ_{p, \mo} \G_{\ww}
+ (-1)^{q-1} \G_{\ww} \circ_{1, \mo} \de\al^{(1)}_{n+1, q-2}\,,
$$
where $\tau_{n+1,i}$ is the cycle $(i, i+1, \dots, n+1)$ in $S_{n+1}$. 

In other words, the vector
\begin{equation}
\label{n-q}
\rho_{n+1, q-1} =
 \sum_{i=1}^{n+1}  \sum_{p=1}^{q}
(-1)^{p+1} \, (\tau_{n+1,i}, \id)
\big(
\de\al^{(1)}_{n, q} \circ_{p, \mo} \G^{\br}_{0}
\big) 
\end{equation}
is $\pa^{\Hoch}$-exact in 
\begin{equation}
\label{Hoch-n-k-pr-1}
\bs^{2(n+1)-2+ (q-1)} \big( \KGra(n+1, q-1) \big)^{S_{n+1}}\,.
\end{equation}

Since the difference \eqref{n-q-after-1} coincides with $\de\al^{(1)}_{n, q}$, 
the vector  $\de\al^{(1)}_{n, q}$ satisfies Properties \ref{P:one}, \ref{P:anti-symm}.
So, using the antisymmetry of  $\de\al^{(1)}_{n, q}$ with respect to the 
action of $S_{q}$ on the labels of white vertices, we see that 
$$
\sum_{p=1}^{q}
(-1)^{p+1}\, (\tau_{n+1, i}, \id)
\big(
\de\al^{(1)}_{n, q} \circ_{p, \mo} \G^{\br}_{0}
\big) = q\, (\tau_{n+1,i}, \id) \big( \de\al^{(1)}_{n, q} \circ_{1, \mo} \G^{\br}_{0} \big).
$$
In other words, 
$$
\rho_{n+1, q-1} = \md (\de\al^{(1)}_{n, q})\,,
$$
where $\md$ is the operation defined in \eqref{md} in Appendix \ref{app:Hg}.

Hence $\rho_{n+1, q-1}$ is a vector in \eqref{Hoch-n-k-pr-1}
satisfying Properties \ref{P:one}, \ref{P:anti-symm}. 
Combining this observation with the fact that 
$\rho_{n+1, q-1}$ is $\pa^{\Hoch}$-exact and 
using the second claim in Corollary \ref{cor:Hoch} we conclude 
that 
$$
\rho_{n+1, q-1} = 0.
$$
In other words, $\de\al^{(1)}_{n, q}$ is a cocycle in the cochain 
complex \eqref{Hg} with the differential $\md$ \eqref{md}.

Since $q \ge 1$, Corollary \ref{cor:Hg} from Appendix 
\ref{app:Hg}  implies that there exists a vector (of degree $-1$) 
$$
\psi_{n-1, q+1} \in 
 \bs^{2(n-1) -2  + q+1}\, \big( \KGra(n-1,q+1)^{\mo} \big)^{S_{n-1}} 
$$ 
which satisfies Properties  \ref{P:one}, \ref{P:anti-symm} and such that 
\begin{equation}
\label{md-exact}
\de\al^{(1)}_{n, q}  =  \md(\psi_{n-1, q+1}).
\end{equation}

Using $\psi_{n-1, q+1}$, we define the following degree zero vector 
$$
\xi^{(2)} \in \Conv(\oc^{\vee}_{\circ}, \KGra)
$$
by setting
\begin{equation}
\label{xi-2}
\xi^{(2)} (\bs^{-1}\,\st^{\mo}_{n-1, q+1}) = - \psi_{n-1, q+1}\,,
\qquad 
\xi^{(2)} (\bs^{-1}\,\st^{\mc}_{m_1}) = \xi^{(2)}(\bs^{-1}\, \st^{\mo}_{k_1}) =
\xi^{(2)}(\bs^{-1}\, \st^{\mo}_{m,k}) = 0  
\end{equation}
for all $m_1$, $k_1$ and for all pairs $(m,k) \neq (n-1, q+1)$\,.

It is obvious that 
\begin{equation}
\label{2xi-cF}
\xi^{(2)} \in \cF_{n+q-1} \Conv(\oc^{\vee}, \KGra)
\end{equation}
and 
\begin{equation}
\label{2xi-cF-mc}
\xi^{(2)} \in \cF^{\mc}_{n-1} \Conv(\oc^{\vee}, \KGra).
\end{equation}

Next we consider the MC element
$$
\wt{\al}^{(2)} = \exp(\ad_{\xi^{(2)} }) (\wt{\al}^{(1)})
$$
and rewrite the difference $\de\al^{(2)} : = \wt{\al}^{(2)} - \al$ as follows
\begin{equation}
\label{del-al-2-new}
\de\al^{(2)} =  \exp(\ad_{\xi^{(2)} }) (\de \al^{(1)}) 
+ \exp(\ad_{\xi^{(2)} }) (\al)  -  \al\,.
\end{equation}

Since  $\de\al^{(1)} \in \cF^{\mc}_n \Conv(\oc^{\vee}, \KGra)$ (see \eqref{de-al-1-Fmc})
and $\xi^{(2)} \in \cF^{\mc}_{n-1} \Conv(\oc^{\vee}, \KGra)$, 
\begin{equation}
\label{de-al2-Fmc-1n}
\de\al^{(2)}  \in  \cF^{\mc}_{n-1} \Conv(\oc^{\vee}, \KGra)
\end{equation}
or equivalently $\de\al^{(2)} (\bsi \st^{\mo}_{m,k}) =0$ for all $m < n-1$ and $k \ge 0$. 

Moreover, 
\begin{equation}
\label{de-al2-1n-k}
\de\al^{(2)} (\bsi \st^{\mo}_{n-1,k}) = [\xi^{(2)}, \al](\bsi \st^{\mo}_{n-1,k}).
\qquad \forall ~k \ge 0.
\end{equation}

Since $\xi^{(2)}(\bsi \st^{\mo}_{m,k}) = 0$ for all $(m,k) \neq (n-1, q+1)$, 
\begin{equation}
\label{de-al2-not-q2}
[\xi^{(2)}, \al](\bsi \st^{\mo}_{n-1,k}) = 0 \qquad \forall ~~ k \neq q+2  
\end{equation}
and 
\begin{equation}
\label{at-q-2}
[\xi^{(2)}, \al](\bsi \st^{\mo}_{n-1,q+2}) = - \pa^{\Hoch}  \xi^{(2)} (\bsi \st^{\mo}_{n-1,q+2}).
\end{equation}

On the other hand $\pa^{\Hoch}  \xi^{(2)} (\bsi \st^{\mo}_{n-1,q+2}) = 0$
since the vector $\xi^{(2)} (\bsi \st^{\mo}_{n-1,q+2}) = \psi_{n-1, q+2}$ satisfies Property \ref{P:one}. 
Thus, combining \eqref{de-al2-Fmc-1n} with \eqref{de-al2-1n-k}, \eqref{de-al2-not-q2}
and \eqref{at-q-2},  we deduce that 
$$
\de\al^{(2)}  \in  \cF^{\mc}_{n} \Conv(\oc^{\vee}, \KGra).
$$

To complete Step 2, it remains to show that 
\begin{equation}
\label{level-n}
\de\al^{(2)} (\bsi\st^{\mo}_{n,k}) = 0 \qquad \forall ~~ k \le q\,.
\end{equation}

Since $\al \in  \cF_{0} \Conv(\oc^{\vee}, \KGra)$ (see \eqref{al-in-cF0}), 
$\de\al^{(1)} \in \cF_{n} \Conv(\oc^{\vee}, \KGra)$ and $n+q-1 \ge 2$, 
inclusion \eqref{2xi-cF} implies that 
\begin{equation}
\label{level-n-k-less-q}
\de\al^{(2)} (\bsi\st^{\mo}_{n,k}) = 0 \qquad \forall ~~ k < q
\end{equation}
and 
\begin{equation}
\label{de-al2-at-n-q}
\de\al^{(2)} (\bsi \st^{\mo}_{n,q}) = \de\al^{(1)}_{n,q} 
+  [\xi^{(2)} , \al] \big( \bsi\st^{\mo}_{n,q} \big).
\end{equation}

Unfolding the right hand side of \eqref{de-al2-at-n-q} and 
using the fact that $\psi_{n-1, q+1}$ is anti-symmetric
with respect to permutations of labels on white vertices, we get 
$$
\de\al^{(2)} (\bsi \st^{\mo}_{n,q}) =
\de\al^{(1)}_{n,q}  +
\sum_{p=0}^k 
\sum_{i=1}^n (-1)^p
(\tau_{n,i}, \id) 
\Big( \xi^{(2)} (\bsi\st^{\mo}_{n-1, q+1} ) \circ_{p+1, \mo}
 \al(\bsi\st^{\mo}_{1, 0}) \Big) =
$$
$$
\de\al^{(1)}_{n,q}  -
\sum_{p=0}^{q}
\sum_{i=1}^n (-1)^p
(\tau_{n,i}, \id) 
\big( \psi_{n-1, q+1}  \circ_{p+1, \mo}
 \G^{\br}_0 \big) =
(q+1)  \sum_{i=1}^n 
(\tau_{n,i}, \id) 
\big( \psi_{n-1, q+1}  \circ_{1, \mo}
 \G^{\br}_0 \big),
$$
where $\tau_{n,i}$ is the cycle $(i, i+1, \dots, n-1,n)$ in $S_n$. 

Hence $\de\al^{(2)} (\bsi \st^{\mo}_{n, q})  = 0$ follows from 
equation \eqref{md-exact}.  

Since \eqref{level-n} is proved, Step 2 is complete and so is the proof of Claim \ref{cl:del-al-n-q}.  
\end{proof}

We can now prove the following statement: 
\begin{claim}
\label{cl:del-al-n-k}
Let $\al$ and $\wt{\al}$ be MC elements corresponding to SFQs.
If $\de\al : = \wt{\al} - \al$ belongs to $ \cF^{\mc}_{n} \Conv(\oc^{\vee}, \KGra)$ and 
$$
\de\al(\bsi \st^{\mo}_{n,0}) = 0,
$$
then there exists a degree zero vector $\xi \in \cF^{\mc}_{n-1} \Conv(\oc^{\vee}, \KGra)$
satisfying condition \eqref{xi-cond-mc} and such that
\begin{equation}
\label{now-in-F-mc-n1}
\exp(\ad_{\xi}) (\wt{\al}) - \al ~\in~ \cF^{\mc}_{n+1} \Conv(\oc^{\vee}, \KGra).
\end{equation}
\end{claim}
\begin{proof}
Claim \ref{cl:del-al-n-q} implies that there exists an infinite sequence of degree zero vectors 
$$
\xi_1, \xi_2, \dots, \in  \cF^{\mc}_{n-1} \Conv(\oc^{\vee}, \KGra)
$$
such that each $\xi_r$ satisfies \eqref{xi-cond-mc},
$$
\xi_q \in  \cF_{n+q-2} \Conv(\oc^{\vee}, \KGra) \qquad \forall~~ q \ge 1,
$$
$$
\big( \exp(\ad_{\xi_q}) \dots \exp(\ad_{\xi_1}) (\wt{\al}) - \al  \big) \in \cF^{\mc}_{n} \Conv(\oc^{\vee}, \KGra).
$$
and 
$$
\big( \exp(\ad_{\xi_q}) \dots \exp(\ad_{\xi_1}) (\wt{\al}) \big) (\bsi \st^{\mo}_{n,k}) =  \al (\bsi \st^{\mo}_{n,k})
$$
for all $k \le q$. 

Since the graded Lie algebra  $\cF^{\mc}_{n-1} \Conv(\oc^{\vee}, \KGra)$ is complete with respect to 
the filtration 
$$
\cF_{\bul} \Conv(\oc^{\vee}, \KGra) \,\cap\, \cF^{\mc}_{n-1} \Conv(\oc^{\vee}, \KGra),
$$
the limit $\xi$ of the sequence
$$
\{\,\CH(\xi_q, \CH(\xi_{q-1}, \dots, \CH(\xi_2, \xi_1)..) \,\}_{q \ge 1}
$$
exists in $\cF^{\mc}_{n-1} \Conv(\oc^{\vee}, \KGra)$ and it satisfies \eqref{now-in-F-mc-n1}. 
\end{proof}

\subsection{The end of the proof of Proposition \ref{prop:transit-refined}}
\label{sec:proof-transit}

Let us now put together the results of Sections \ref{sec:de-al-n-0} and \ref{sec:de-al-n-k} to finish 
the proof of Proposition \ref{prop:transit-refined}.

Due to Claim \ref{cl:kill-pikes}, there exists a degree zero vector 
$$
\xi^{\pike} \in  \cF^{\mc}_{n-1} \Conv(\oc^{\vee}_{\circ}, \KGra)
$$
such that $\xi^{\pike}(\bsi\st^{\mc}_m) = 0$ for all $m$, 
$$
\big(\exp(\ad_{\xi^{\pike}}) \wt{\al} \,\big) ~ - ~ \al ~\in~   \cF^{\mc}_{n} \Conv(\oc^{\vee}_{\circ}, \KGra)
$$
and each graph in the difference 
$$
\big(\exp(\ad_{\xi^{\pike}}) \wt{\al} \,\big) (\bsi \st^{\mo}_{n,0}) ~ - ~ \al(\bsi \st^{\mo}_{n,0})
$$
does not have pikes. 

Applying Corollary \ref{cor:killing-del-al-n0} to the MC elements $\al$ and setting
\begin{equation}
\label{wt-al-dia}
\wt{\al}^{\dia} : = \exp(\ad_{\xi^{\pike}})\, \wt{\al},
\end{equation}
we deduce that there exists a degree zero cocycle $\ga \in \cF_{n-1}\dfGC$ such that 
$$
\wt{\al}^{\dia} \quad - \quad   \big( \exp(\ad_{J(\ga)}) \al \big)  
~\in ~ \cF^{\mc}_{n}\Conv(\oc^{\vee}_{\circ}, \KGra)
$$
and 
$$
\wt{\al}^{\dia} (\bsi\st^{\mo}_{n,0}) ~ = ~ 
\big(\exp(\ad_{J(\ga)}) \al \big) (\bsi\st^{\mo}_{n,0}).
$$

Finally, applying Claim \ref{cl:del-al-n-k} to the MC elements 
$$
\al^{\dia} : =  \exp(\ad_{J(\ga)}) \al
$$
and $\wt{\al}^{\dia}$, we deduce that there exists a degree zero 
vector 
$$
\xi^{\sh} \in \cF^{\mc}_{n-1} \Conv(\oc^{\vee}, \KGra)
$$
such that $\xi^{\sh}(\bsi \st^{\mc}_m) = 0$ for all $m$ and 
$$
\exp(\ad_{\xi^{\sh}}) (\wt{\al}^{\dia}) - \al^{\dia} ~\in~ \cF^{\mc}_{n+1} \Conv(\oc^{\vee}, \KGra).
$$

Thus, setting $\xi : = \CH(\xi^{\sh}, \xi^{\pike})$, we get that 
$$
\big( \exp(\ad_{\xi}) (\wt{\al}) \big) ~ - ~  \big( \exp(\ad_{J(\ga)}) \al \big) 
~\in~ \cF^{\mc}_{n+1} \Conv(\oc^{\vee}, \KGra).
$$
In other words, \eqref{new-difference} is satisfied. 

The proof of transitivity of the action of  $\exp\big(H^0(\dfGC)\big)$ on homotopy classes of SFQs 
is now complete. 

%
%

\section{The action of $\exp\big(H^0(\dfGC)\big)$ is free}
\label{sec:free}
 
Let $\al$ be a MC element of  $\Conv(\oc^{\vee}, \KGra)$ representing an SFQ
and $\ga$ be a degree zero cocycle in $\dfGC$. Let us assume that there exists a degree zero vector
\begin{equation}
\label{the-xi}
\xi \in \cF_1\Conv(\oc^{\vee}, \KGra)
\end{equation}
which satisfies condition \eqref{xi-cond-mc} and such that 
\begin{equation}
\label{vo-kak}
\exp\big(\ad_{J(\ga)}\big) \al = \exp\big(\ad_{\xi}\big) \al\,.
\end{equation}
Our goal is to show that $\ga$ is exact. 

Due to Remark \ref{rem:no-pikes}, we assume, without loss of generality, 
that we deal exclusively with cocycles $\ga$ of $\dfGC$ which do not involve 
graphs with pikes. 

To prove that $\ga$ is exact, we will need the following technical claims
which are proved below in Sections \ref{sec:killing-pikes-xi-proof}
and \ref{sec:exact-refined-proof}, respectively.  
\begin{claim}
\label{cl:killing-pikes-xi}
Let $n$ be an integer $\ge 2$, $\al$ be a MC element of  $\Conv(\oc^{\vee}, \KGra)$ 
corresponding to an SFQ  and
\begin{equation}
\label{xi-in-cFmc-n}
\xi \in \cF^{\mc}_{n}\Conv(\oc^{\vee}, \KGra) 
\end{equation}
be a degree zero vector satisfying condition \eqref{xi-cond-mc}. 
There exists a degree zero vector 
$$
\ti{\xi} \in \cF^{\mc}_{n}\Conv(\oc^{\vee}, \KGra) 
$$
for which \eqref{xi-cond-mc} holds, all graphs in 
$$
\ti{\xi} (\bsi \st^{\mo}_{n,0})
$$
do not have pikes and 
\begin{equation}
\label{ti-xi-works}
\exp\big(\ad_{\ti{\xi}}\big) \al =  \exp\big(\ad_{\xi}\big) \al.
\end{equation}
\end{claim}
\begin{claim}
\label{cl:exact-refined}
Let $\ga$ be a degree zero cocycle in $\dfGC$, $n$ be an integer $\ge 2$ and   
\begin{equation}
\label{xi-cF-n}
\xi \in \cF^{\mc}_{n}\Conv(\oc^{\vee}, \KGra)
\end{equation}
be a degree zero vector satisfying condition \eqref{xi-cond-mc}. 
If equation \eqref{vo-kak} holds and all graphs in $\xi(\bsi \st^{\mo}_{n,0})$ do not have pikes,
then\footnote{i.e. all graphs in $\ga$ have $\ge (n+1)$ vertices.} 
\begin{equation}
\label{ga-in-cF-n}
\ga \in \cF_{n} \dfGC.
\end{equation}
Moreover, there exist a degree $-1$ vector $\ka \in  \cF_{n-1} \dfGC$
and a degree $0$ vector 
$$
\eta \in \cF^{\mc}_{n} \Conv(\oc^{\vee}, \KGra)
$$
satisfying \eqref{xi-cond-mc}, 
\begin{equation}
\label{vo-kak-better}
\exp\big(\ad_{J( \CH(\pa \ka, \ga ) )}\big) \al =  \exp\big(\ad_{ \CH(\eta, \xi) }\big) \al,
\end{equation}
\begin{equation}
\label{better-inclusions}
\CH(\pa \ka, \ga ) \in \cF_{n+1} \dfGC  \qquad \textrm{ and } \qquad
\CH(\eta, \xi) \in  \cF^{\mc}_{n+1}\Conv(\oc^{\vee}, \KGra).
\end{equation}
\end{claim}

%
%

\subsection{Claims \ref{cl:killing-pikes-xi} and \ref{cl:exact-refined} imply that the action is free}
\label{sec:proof-freeness}

Claims \ref{cl:killing-pikes-xi} and \ref{cl:exact-refined} imply that, 
for every degree $0$ cocycle $\ga$ satisfying \eqref{vo-kak}, 
there exists a sequence 
\begin{equation}
\label{seq-kappa}
\{\ka_{m} \}_{m \ge 1}, \qquad \ka_{m}  \in  \cF_{m} \dfGC
\end{equation}
such that for every $n \ge 1$
$$
\CH(\pa \ka_{n}, \dots, \CH(\pa \ka_2, \CH(\pa \ka_1,  \ga)\dots ) \in  \cF_{n+2} \dfGC\,.
$$

Since $\dfGC$ is complete with respect to the filtration $\cF_{\bul}\dfGC$, the 
existence of this sequence implies that $\ga$ is indeed a coboundary. 

\subsection{Proof of Claim \ref{cl:killing-pikes-xi}} 
\label{sec:killing-pikes-xi-proof}

If $\xi (\bsi \st^{\mo}_{n,0})$ does not involve graphs with pikes then we set $\ti{\xi} : = \xi$
and equation \eqref{ti-xi-works} obviously holds. 

Otherwise, we observe that, since $[\al, \al] =0$, $\ad_{[\psi, \al]}$ acts trivially on $\al$
for every degree $-1$ vector $\psi \in  \cF_1\Conv(\oc^{\vee}_{\circ}, \KGra)$. 
Hence, we have 
\begin{equation}
\label{same-fig}
\exp\left( \ad_{\CH(\xi, [\psi, \al])} \right) (\al) = \exp(\ad_{\xi}) (\al).
\end{equation}

We will prove that there exists a degree $-1$ vector 
\begin{equation}
\label{psi-needed}
\psi \in \cF^{\mc}_{n-1}\Conv(\oc^{\vee}_{\circ}, \KGra) 
\end{equation}
such that the element 
\begin{equation}
\label{new-xi-Fmc-m}
\ti{\xi} : = \CH(\xi, [\psi, \al]) 
\end{equation}
\begin{itemize}

\item belongs to $\cF^{\mc}_{n} \Conv(\oc^{\vee}, \KGra)$,  

\item satisfies condition \eqref{xi-cond-mc}, and

\item all graphs in $\ti{\xi} (\bsi \st^{\mo}_{n,0})$ do not have pikes.

\end{itemize}

Let us set $\xi_{n,0} : = \xi(\bsi \st^{\mo}_{n,0}). $

Since the graphs in $\xi_{n,0}$ do not have white vertices, 
the vector $\xi_{n,0}$ is a cocycle in the complex \eqref{Hg} with 
the differential $\md$ \eqref{md} (see Appendix \ref{app:Hg}).

Let us denote by $\xi^r_{n,0}$ the linear combination in $\KGra(n,0)^{\mo}$ 
which is obtained from $\xi_{n,0}$ by retaining only the graphs with 
exactly $r$ pikes. According to Lemma \ref{lem:Koszul} from Appendix 
\ref{app:Hg}, we have 
$$
\md \md^* (\xi^r_{n,0}) = r \xi^r_{n,0}\,.
$$
Thus, if 
\begin{equation}
\label{psi-1n-1}
\psi_{n-1,1} : = - \sum_{r \ge  1} \frac{1}{r} \md^* (\xi^r_{n,0}), 
\end{equation}
then each graph in the linear combination
$$
\xi_{n,0} + \md (\psi_{n-1,1})
$$
does not have pikes. 

Next, we define a degree $-1$ vector \eqref{psi-needed} by setting
\begin{equation}
\label{psi-mixed}
\psi(\bsi\st^{\mo}_{n-1,1}) = \psi_{n-1,1}\,,
\qquad 
\psi(\bsi\st^{\mo}_{n_2, k_2}) = 0 \quad \forall ~~ (n_2, k_2) \neq (n-1,1),
\end{equation}
and 
\begin{equation}
\label{psi-homog}
\psi(\bsi\st^{\mc}_{n_1}) = \psi(\bsi\st^{\mo}_{k_1}) = 0 \qquad 
\forall ~~ n_1, k_1\ge 2.
\end{equation}

Then we consider the vector 
\begin{equation}
\label{xi-new}
\ti{\xi} : =   \CH(\xi,  [\psi, \al]).
\end{equation}

By construction \eqref{psi-1n-1}, all white vertices in graphs in $\psi_{n-1, 1}$
have valency one. Hence  $\psi_{n-1, 1}$ belongs to the kernel of 
the differential $\pa^{\Hoch}$ \eqref{pa-Hoch}. Using this fact, \eqref{psi-mixed} and \eqref{psi-homog}, 
it is not hard to show that 
\begin{equation}
\label{psi-al-Fmc-n}
[\psi, \al] \in \cF^{\mc}_{n} \Conv(\oc^{\vee}, \KGra)\,.
\end{equation}
Therefore, 
$$
\ti{\xi}  \in  \cF^{\mc}_{n} \Conv(\oc^{\vee}, \KGra).
$$

Equation \eqref{psi-homog} implies that $[\psi, \al](\st^{\mc}_{n_1}) = 0 $ for all $ n_1 \ge 2.$
Combining this observation with the fact that $\xi$ satisfies \eqref{xi-cond-mc}, 
we conclude that $\ti{\xi}$ also satisfies \eqref{xi-cond-mc}. 

Using \eqref{psi-mixed}, \eqref{psi-homog}, \eqref{psi-al-Fmc-n}, and the inequality $n \ge 2$, we get 
$$
\CH(\xi,  [\psi, \al])\, (\bsi \st^{\mo}_{n,0}) =
\xi(\bsi \st^{\mo}_{n, 0})  +   [\psi, \al](\bsi \st^{\mo}_{n,0}) =
$$
$$
\xi_{n,0} + \sum_{i=1}^{n} \tau_{n,i}
\big(\psi(\bsi \st^{\mo}_{n-1,1}) \circ_{\mo,1} \al(\bsi \st^{\mo}_{1,0}) \big) =
$$
$$
= \xi_{n,0} + \sum_{i=1}^{n} \tau_{n,i}
\big(\psi_{n-1,1} \circ_{\mo,1} \G^{\br}_0 \big)
$$
where $\tau_{n,i}$ is the cycle $(i, i+1, \dots, n-1, n)$ in $S_n$. 

Thus, by definition of the operator $\md$ \eqref{md}, we deduce that 
$$
\ti{\xi}(\bsi \st^{\mo}_{n,0}) = \xi_{n,0} + \md \psi_{n-1,1}\,.
$$
Since each graph in the linear combination $\xi_{n,0} + \md \psi_{n-1,1}$
does not have pikes, Claim \ref{cl:killing-pikes-xi} is proved.

\subsection{Proof of Claim \ref{cl:exact-refined}}
\label{sec:exact-refined-proof}

Let $m$ be an integer $\le n$ such that 
\begin{equation}
\label{ga-less-m}
\ga (1_k) = 0 \qquad \forall ~~k < m,
\end{equation}
i.e. $\ga \in \cF_{m-1} \dfGC$. 

Due to \eqref{ga-less-m}, 
$$
\big(\exp\big(\ad_{J(\ga)}\big) \al \big) (\bsi \st^{\mo}_{m,0}) =  
\al (\bsi \st^{\mo}_{m,0}) - \al (\bsi \st^{\mo}_{1,0}) \circ_{1,\mc} \ga (1_m) =
$$
$$
\al (\bsi \st^{\mo}_{m,0}) - \G^{\br}_{1,0}   \circ_{1,\mc} \ga (1_m),
$$
i.e.
\begin{equation}
\label{exp-ga--al-st-m}
\big(\exp\big(\ad_{J(\ga)}\big) \al \big) (\bsi \st^{\mo}_{m,0}) = \al (\bsi \st^{\mo}_{m,0}) - \ga (1_m).
\end{equation}

Since $m \le n$,  $\xi \in   \cF^{\mc}_{n}\Conv(\oc^{\vee}, \KGra)$ 
and $\xi$ satisfies \eqref{xi-cond-mc}, we have 
$$
[\xi,  \ad^k_{\xi} (\al) ] (\bsi \st^{\mo}_{m,0})  = 0 \qquad \forall ~~ k \ge 0 
$$
and hence 
\begin{equation}
\label{exp-xi--al-st-m}
\big( \exp\big(\ad_{\xi}\big) \al \big)  (\bsi \st^{\mo}_{m,0})  = \al  (\bsi \st^{\mo}_{m,0}). 
\end{equation}

Combining \eqref{vo-kak}, \eqref{exp-ga--al-st-m}, and \eqref{exp-xi--al-st-m}, 
we conclude that 
$$
\ga (1_k) = 0 
$$ 
for all $k \le m$. 

Thus inclusion \eqref{ga-in-cF-n} indeed holds, i.e. 
\begin{equation}
\label{ga-in-cF-n-again}
\ga(1_k) = 0 \qquad \forall ~~ k \le n.  
\end{equation}

Let us deduce from \eqref{vo-kak} that 
\begin{claim}
\label{cl:xi-n-1-white}
The white vertex in every graph in 
$$
\xi (\bsi \st^{\mo}_{n,1}) 
$$
has valency $1$. 
\end{claim}
\begin{proof}[ of Claim \ref{cl:xi-n-1-white}]
Evaluating both sides of \eqref{vo-kak} on $\bsi \st^{\mo}_{n, 2}$, and using  
\eqref{xi-cond-mc}, \eqref{xi-cF-n} and \eqref{ga-in-cF-n}, we deduce that
$$
\al (\bsi \st^{\mo}_{n,2})  =  \al (\bsi \st^{\mo}_{n,2}) + [\xi, \al]  (\bsi \st^{\mo}_{n,2}). 
$$
Hence, 
$$
 [\xi, \al]  (\bsi \st^{\mo}_{n,2}) = 0
$$
or equivalently 
\begin{equation}
\label{pa-Hoch-xi-n-2}
\pa^{\Hoch} \big( \xi (\bsi \st^{\mo}_{n,1}) \big) = 0, 
\end{equation}
where $\pa^{\Hoch}$ is defined in \eqref{pa-Hoch}. 

Combining \eqref{pa-Hoch-xi-n-2} with Corollary \ref{cor:k-one}
from Appendix \ref{app:Hoch}, we conclude that the white 
vertex in each graph in $ \xi (\bsi \st^{\mo}_{n,1})$ must have 
valency $1$.

Thus Claim \ref{cl:xi-n-1-white} is proved.
\end{proof}

We will now deduce Claim \ref{cl:exact-refined} by evaluating 
both sides of \eqref{vo-kak} on $\bsi \st^{\mo}_{n+1, 0}$. 

Using \eqref{ga-in-cF-n-again}, it is easy to show that 
\begin{equation}
\label{ga-1n1-shows-up}
\big(\exp (\ad_{J(\ga)}) \al \big) (\bsi \st^{\mo}_{n+1,0}) = \al (\bsi \st^{\mo}_{n+1,0}) - \ga (1_{n+1}).
\end{equation}

On the other hand, using \eqref{xi-cF-n} and \eqref{xi-cond-mc}, we get that
$$
\big( \exp(\ad_{\xi}) \al \big)  (\bsi \st^{\mo}_{n+1,0}) = \al (\bsi \st^{\mo}_{n+1,0})
- \sum_{\tau \in \Sh_{2, n-1}} \tau \big(  \xi ( \bsi \st^{\mo}_{n,0} ) \circ_{\mc, 1} \al (\bsi \st^{\mc}_2)  \big)
$$
$$
- \sum_{i = 1}^{n+1}  \si_{n+1, i } \big(  \al ( \bsi \st^{\mo}_{1, 1} )   \circ_{\mo, 1}   \xi ( \bsi \st^{\mo}_{n, 0}) \big)
+ \sum_{i = 1}^{n+1} \tau_{n+1, i}  \big(   \xi ( \bsi \st^{\mo}_{n, 1} ) \circ_{\mo, 1}   \al (\bsi \st^{\mo}_{1,0})   \big).
$$
Hence,
$$
\big( \exp(\ad_{\xi}) \al \big)  (\bsi \st^{\mo}_{n+1,0}) = \al (\bsi \st^{\mo}_{n+1,0}) 
- \sum_{\tau \in \Sh_{2, n-1}} \tau  \big(  \xi_{n,0}  \circ_{\mc, 1} \G_{\ed} \big)
$$
\begin{equation}
\label{exp-ad-xi-al}
- \sum_{i = 1}^{n+1}  \si_{n+1, i } \big(  \G^{\br}_1 \circ_{\mo, 1}   \xi_{n,0}  \big)
+ \sum_{i = 1}^{n+1} \tau_{n+1, i}  \big(   \xi_{n,1} \circ_{\mo, 1}  \G^{\br}_0   \big),
\end{equation}
where $\xi_{n,0} : =   \xi ( \bsi \st^{\mo}_{n, 0} ) $ and $ \xi_{n,1} : =  \xi ( \bsi \st^{\mo}_{n, 1} ) $. 

Combining \eqref{ga-1n1-shows-up} with \eqref{exp-ad-xi-al}, we conclude that 
\begin{equation}
\label{master-ga-n1}
\ga (1_{n+1}) = \sum_{\tau \in \Sh_{2, n-1}} \tau  \big(  \xi_{n,0}  \circ_{\mc, 1} \G_{\ed} \big)
+ \sum_{i = 1}^{n+1}  \si_{n+1, i } \big(  \G^{\br}_1 \circ_{\mo, 1}   \xi_{n,0}  \big)
- \sum_{i = 1}^{n+1} \tau_{n+1, i}  \big(   \xi_{n,1} \circ_{\mo, 1}  \G^{\br}_0   \big).
\end{equation}

By Claim \ref{cl:xi-n-1-white}, the white vertex of every graph in $\xi_{n,1}$
has valency $1$. Hence every graph in the last sum in the right hand side of 
\eqref{master-ga-n1} has a pike. Therefore, since neither $\ga (1_{n+1}) $ nor 
$ \xi_{n,0}$ involve graphs with pikes, the linear combination
$$
\sum_{i = 1}^{n+1} \tau_{n+1, i}  \big(   \xi_{n,1} \circ_{\mo, 1}  \G^{\br}_0   \big)
$$
is obtained from 
$$
\sum_{\tau \in \Sh_{2, n-1}} \tau  \big(  \xi_{n,0}  \circ_{\mc, 1} \G_{\ed} \big)
$$
by keeping only graphs with pikes. 

Thus the right hand side of  \eqref{master-ga-n1} equals
$$
[\G_{\ed}, \xi_{n,0}],
$$
where $\xi_{n,0}$ is viewed as a vector in $\dfGC$. 

We set 
$$
\ka : = - \xi_{n,0}
$$ 
and recall that, due to the second part of Proposition \ref{prop:to-homotopy}, 
there exists a degree $0$ vector 
\begin{equation}
\label{eta}
\eta \in  \cF^{\mc}_{n}\Conv(\oc^{\vee}, \KGra)
\end{equation}
which satisfies \eqref{xi-cond-mc},
\begin{equation}
\label{eta-n-0}
\eta(\bsi \st^{\mo}_{n,0}) =  - \xi_{n,0}\,,
\end{equation}
and such that \eqref{vo-kak-better} holds. 

The desired inclusions in \eqref{better-inclusions} follow easily from 
$\ga + \pa \xi_{n,0} = 0$, \eqref{xi-cF-n}, \eqref{ga-in-cF-n}, \eqref{eta}, 
and \eqref{eta-n-0}. 

Claim \ref{cl:exact-refined} is proved. 

We showed that the action of $\exp\big( H^0(\dfGC) \big)$ on homotopy classes of SFQs is free. 
Thus Theorem \ref{thm:main} is proved.

\appendix

\section{A cochain complex that is closely connected with the Hochschild complex of a cofree cocommutative coalgebra}
\label{app:Hoch}
In this appendix we compute the cohomology of the cochain 
complex 
\begin{equation}
\label{Hoch-app}
\KGra^{\Hoch}_{\inv} =
\bs^{2n-2} \, \bigoplus_{k \ge 0} \bs^{k}\big( \KGra(n,k)^{\mo} \big)^{S_n}
\end{equation}
with the differential $\pa^{\Hoch}$ given by the formula
\begin{equation}
\label{pa-Hoch-app}
\pa^{\Hoch} (\ga) = 
\G_{\ww} \circ_{2, \mo} \ga - \ga \circ_{1, \mo} \G_{\ww} + 
\ga \circ_{2, \mo} \G_{\ww} - \dots 
\end{equation}
$$
+ (-1)^{k} \ga \circ_{k, \mo} \G_{\ww}
+ (-1)^{k+1}\G_{\ww} \circ_{1, \mo} \ga\,,
$$
$$
\ga \in  \bs^{2n-2 + k}\big( \KGra(n,k)^{\mo}  \big)^{S_n}\,.
$$

For this purpose we consider a slightly simpler cochain complex 
\begin{equation}
\label{Hoch-simpler}
\KGra^{\Hoch} =
\bs^{2n-2} \, \bigoplus_{k \ge 0} \bs^{k} \KGra(n,k)^{\mo} 
\end{equation}
with the differential $\pa^{\Hoch}$ defined by the same formula
\eqref{pa-Hoch-app}. 

The cochain complex \eqref{Hoch-simpler} is equipped with 
the obvious action of the group $S_n$ and  \eqref{Hoch-app}
is nothing but the complex of $S_n$-invariants.

\begin{example}
\label{ex:pa-Hoch}
An example of computation of $\pa^{\Hoch} (\G)$ for a graph 
$\G \in \dgra_{3,1}$ is shown in figure \ref{fig:pa-Hoch}.
\begin{figure}[htp]
\begin{minipage}[t]{0.99\linewidth}
\centering 
\begin{tikzpicture}[scale=0.5, >=stealth']
\tikzstyle{w}=[circle, draw, minimum size=4, inner sep=1]
\tikzstyle{b}=[circle, draw, fill, minimum size=4, inner sep=1]
\draw (-1.5,1) node[anchor=center] {{$\pa^{\Hoch}$}};
\node [b] (b1) at (0,1) {};
\draw (0,1.6) node[anchor=center] {{\small $1$}};
\node [b] (b3) at (1,2) {};
\draw (1,2.6) node[anchor=center] {{\small $3$}};
\node [b] (b2) at (2,1) {};
\draw (2,1.6) node[anchor=center] {{\small $2$}};
\node [w] (w1) at (1,0) {};
\draw (1,-0.6) node[anchor=center] {{\small $1$}};
\draw [->] (b1) edge (b3);
\draw [->] (b1) edge (w1);
\draw [->] (b3) edge (w1);
\draw [->] (b2) edge (w1);
\draw (3.5,1) node[anchor=center] {{$= ~~ -$}};
\node [b] (bb1) at (5,1) {};
\draw (5,1.6) node[anchor=center] {{\small $1$}};
\node [b] (bb3) at (6,2) {};
\draw (6,2.6) node[anchor=center] {{\small $3$}};
\node [b] (bb2) at (7,1) {};
\draw (7,1.6) node[anchor=center] {{\small $2$}};
\node [w] (ww1) at (5.5,0) {};
\draw (5.5,-0.6) node[anchor=center] {{\small $1$}};
\node [w] (ww2) at (7,0) {};
\draw (7,-0.6) node[anchor=center] {{\small $2$}};
\draw [->] (bb1) edge (bb3);
\draw [->] (bb1) edge (ww1);
\draw [->] (bb3) edge (ww1);
\draw [->] (bb2) edge (ww2);
\draw (8.5,1) node[anchor=center] {{$-$}};
\node [b] (bbb1) at (10,1) {};
\draw (10,1.6) node[anchor=center] {{\small $1$}};
\node [b] (bbb3) at (11,2) {};
\draw (11,2.6) node[anchor=center] {{\small $3$}};
\node [b] (bbb2) at (12,1) {};
\draw (12,1.6) node[anchor=center] {{\small $2$}};
\node [w] (www1) at (10.5,0) {};
\draw (10.5,-0.6) node[anchor=center] {{\small $1$}};
\node [w] (www2) at (12,0) {};
\draw (12,-0.6) node[anchor=center] {{\small $2$}};
\draw [->] (bbb1) edge (bbb3);
\draw [->] (bbb1) edge (www1);
\draw [->] (bbb3) edge (www2);
\draw [->] (bbb2) edge (www1);
\draw (13.5,1) node[anchor=center] {{$-$}};
\node [b] (1b) at (15,1) {};
\draw (15,1.6) node[anchor=center] {{\small $1$}};
\node [b] (3b) at (16,2) {};
\draw (16,2.6) node[anchor=center] {{\small $3$}};
\node [b] (2b) at (17,1) {};
\draw (17,1.6) node[anchor=center] {{\small $2$}};
\node [w] (1w) at (15.5,0) {};
\draw (15.5,-0.6) node[anchor=center] {{\small $1$}};
\node [w] (2w) at (17,0) {};
\draw (17,-0.6) node[anchor=center] {{\small $2$}};
\draw [->] (1b) edge (3b);
\draw [->] (1b) edge (2w);
\draw [->] (3b) edge (1w);
\draw [->] (2b) edge (1w);
\end{tikzpicture}
\end{minipage}
\begin{minipage}[t]{0.99\linewidth}
\centering 
\begin{tikzpicture}[scale=0.5, >=stealth']
\tikzstyle{w}=[circle, draw, minimum size=4, inner sep=1]
\tikzstyle{b}=[circle, draw, fill, minimum size=4, inner sep=1]
\draw (3.5,1) node[anchor=center] {{$-$}};
\node [b] (bb1) at (5,1) {};
\draw (5,1.6) node[anchor=center] {{\small $1$}};
\node [b] (bb3) at (6,2) {};
\draw (6,2.6) node[anchor=center] {{\small $3$}};
\node [b] (bb2) at (7,1) {};
\draw (7,1.6) node[anchor=center] {{\small $2$}};
\node [w] (ww1) at (5.5,0) {};
\draw (5.5,-0.6) node[anchor=center] {{\small $2$}};
\node [w] (ww2) at (7,0) {};
\draw (7,-0.6) node[anchor=center] {{\small $1$}};
\draw [->] (bb1) edge (bb3);
\draw [->] (bb1) edge (ww1);
\draw [->] (bb3) edge (ww1);
\draw [->] (bb2) edge (ww2);
\draw (8.5,1) node[anchor=center] {{$-$}};
\node [b] (bbb1) at (10,1) {};
\draw (10,1.6) node[anchor=center] {{\small $1$}};
\node [b] (bbb3) at (11,2) {};
\draw (11,2.6) node[anchor=center] {{\small $3$}};
\node [b] (bbb2) at (12,1) {};
\draw (12,1.6) node[anchor=center] {{\small $2$}};
\node [w] (www1) at (10.5,0) {};
\draw (10.5,-0.6) node[anchor=center] {{\small $2$}};
\node [w] (www2) at (12,0) {};
\draw (12,-0.6) node[anchor=center] {{\small $1$}};
\draw [->] (bbb1) edge (bbb3);
\draw [->] (bbb1) edge (www1);
\draw [->] (bbb3) edge (www2);
\draw [->] (bbb2) edge (www1);
\draw (13.5,1) node[anchor=center] {{$-$}};
\node [b] (1b) at (15,1) {};
\draw (15,1.6) node[anchor=center] {{\small $1$}};
\node [b] (3b) at (16,2) {};
\draw (16,2.6) node[anchor=center] {{\small $3$}};
\node [b] (2b) at (17,1) {};
\draw (17,1.6) node[anchor=center] {{\small $2$}};
\node [w] (1w) at (15.5,0) {};
\draw (15.5,-0.6) node[anchor=center] {{\small $2$}};
\node [w] (2w) at (17,0) {};
\draw (17,-0.6) node[anchor=center] {{\small $1$}};
\draw [->] (1b) edge (3b);
\draw [->] (1b) edge (2w);
\draw [->] (3b) edge (1w);
\draw [->] (2b) edge (1w);
\end{tikzpicture}
\end{minipage}
\caption{Computing $\pa^{\Hoch}$} \label{fig:pa-Hoch}
\end{figure}
Let us say that we chose this order
$(1_{\mc}, 3_{\mc}) <  (1_{\mc}, 1_{\mo}) <  (2_{\mc}, 1_{\mo})<
(3_{\mc}, 1_{\mo})$  on the set of 
edges of $\G$\,. 
The orders on the sets of edges of graphs in the right hand side 
are inherited from the total order on the edges of $\G$
in the obvious way. 
For example, the first graph in the sum on the right hand side 
has its edges ordered this way: $
(1_{\mc}, 3_{\mc}) <  (1_{\mc}, 1_{\mo}) <  (2_{\mc}, 2_{\mo})<
(3_{\mc}, 1_{\mo})\,.$
\end{example}

Before computing the cohomology of \eqref{Hoch-simpler}
let us make a couple of remarks about vectors 
\begin{equation}
\label{c}
c \in  \bs^{2n-2+k} \KGra(n,k)^{\mo}  
\quad \textrm{or} \quad
 c \in \bs^{2n-2 + k}\big( \KGra(n,k)^{\mo}  \big)^{S_n}
\end{equation}
satisfying these two properties:
\begin{pty}
\label{P:one}
All white vertices in each graph of the 
linear combination $c$ have valency one.
\end{pty}
\begin{pty}
\label{P:anti-symm}
For every $\si \in S_k$ we have  
\begin{equation}
\label{app-anti-symm}
(\id , \si)\, (c) = (-1)^{|\si|} c\,.
\end{equation}
\end{pty} 
For example, the ``brooms'' $\G^{\br}_k$ depicted 
in figure \ref{fig:brooms} obviously satisfy these properties.

\begin{remark}
\label{rem:Hoch-closed}
It is easy to see that every vector  \eqref{c} 
satisfying Properties \ref{P:one} and \ref{P:anti-symm}
is closed with respect to $\pa^{\Hoch}$\,. 
Furthermore, it is not hard to see that a cocycle $c$ 
satisfying Properties \ref{P:one} and \ref{P:anti-symm}
is trivial if and only if $c=0$\,.
\end{remark}

\subsection{The Hochschild complex of a cofree cocommutative coalgebra}

To compute the cohomology of \eqref{Hoch-simpler}
we consider the cofree cocommutative $\bbK$-coalgebra $\C_r$ with counit 
co-generated by degree $0$ elements $h_1, h_2, \dots, h_r$\,.

To the coalgebra $\C_r$ we assign the following 
cochain complex
\begin{equation}
\label{Hoch-C}
\Hoch(\C_r) =  \bigoplus_{k\ge 0} \bs^k \big( \C_r \big)^{\otimes \, k}
\end{equation}
with the differential 
$$
\pa^{\C} :  \big( \C_r \big)^{\otimes \, k} \to  \big( \C_r \big)^{\otimes \, (k+1)}
$$
given by the formula
\begin{equation}
\label{pa-Hoch-C}
\pa^{\C} (X) = 1 \otimes X +
\sum_{i =1}^k (-1)^i (\id, \dots, \id, \underbrace{\D}_{i\textrm{-th spot}},
 \id, \dots, \id) (X)
 + (-1)^{k+1} X \otimes 1\,,    
\end{equation}
where $\D$ denotes the comultiplication on $\C_r$\,. 

The complex $\Hoch(\C_r)$ obviously splits into 
the direct sum of sub-complexes 
\begin{equation}
\label{Hoch-C-splits}
\Hoch(\C_r) =  \bigoplus_{m\ge 0} \Hoch(\C_r)_m\,,
\end{equation}
where $ \Hoch(\C_r)_m$ is spanned by tensor monomials
with the total degree in co-generators being $m$\,. 

In \cite[Section 4.6.1.1]{K} it was proved that 
\begin{claim}[Section 4.6.1.1, \cite{K}]
\label{cl:Hoch-C}
If $X$ is a cocycle in 
$$
 \bs^k \big( \C_r \big)^{\otimes \, k} ~\cap~ \Hoch(\C_r)_m
$$
and $m \neq k$ then
$X$ is $\pa^{\C}$-exact. 
Furthermore, if  $X$ is a cocycle in 
$$
 \bs^k \big( \C_r \big)^{\otimes \, k} ~\cap~ \Hoch(\C_r)_m
$$
and $m = k$ then
there exists 
$$
\wt{X} \in \bs^{k-1} \big( \C_r \big)^{\otimes \, (k-1)} ~\cap~
 \Hoch(\C_r)_m
$$
such that 
$$
X - \pa^{\C} (\wt{X})  = 
\sum_{i_1 i_2 \dots i_k} 
\la^{i_1 i_2 \dots i_k}  (h_{i_1}, h_{i_2}, \dots, h_{i_k}) \,,
$$
where $\la^{i_1 i_2 \dots i_k} \in \bbK$ and
$$
\la^{\dots  i_{p} i_{p+1} \dots } = - 
\la^{\dots  i_{p+1} i_{p} \dots }\,. 
$$
Finally a cocycle of the form 
$$
\sum_{i_1 i_2 \dots i_k} 
\la^{i_1 i_2 \dots i_k}  (h_{i_1}, h_{i_2}, \dots, h_{i_k})\,, 
\qquad 
\la^{\dots  i_{p} i_{p+1} \dots } = - 
\la^{\dots  i_{p+1} i_{p} \dots } \in \bbK
$$
is exact if and only if all coefficients $\la^{i_1 i_2 \dots i_k} = 0$\,. $\Box$
\end{claim}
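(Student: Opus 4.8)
\textbf{Proof proposal for Claim \ref{cl:Hoch-C}.}

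The plan is to recognize the complex \eqref{Hoch-C} as the Hochschild cochain complex (suitably regraded) of the cofree cocommutative coalgebra $\C_r$ with counit, and then to exploit the splitting \eqref{Hoch-C-splits} by the degree $m$ in the co-generators. First I would observe that $\C_r$ is the graded dual of the free commutative algebra (polynomial ring) $\bbK[h_1^*, \dots, h_r^*]$ in $r$ variables of degree zero, so that each graded component $\Hoch(\C_r)_m$ is finite-dimensional and the whole complex is the dual of the (reduced) bar complex of this polynomial ring. The statement to be proved is then the dual of a classical fact: the Hochschild homology of the polynomial ring $\bbK[x_1, \dots, x_r]$ is, by the Hochschild--Kostant--Rosenberg theorem, the module of K\"ahler differential forms $\Omega^{\bul}_{\bbK[x]/\bbK}$, concentrated in the ``diagonal'' internal degree (the $m=k$ line), with the representatives being exactly the alternating tensors $\sum \la^{i_1\dots i_k}(x_{i_1}, \dots, x_{i_k})$ corresponding to $\sum \la^{i_1\dots i_k}\, dx_{i_1}\wedge\dots\wedge dx_{i_k}$.

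Concretely, I would carry out the following steps. Step one: fix $m$ and work inside $\Hoch(\C_r)_m$, which is a bounded complex. Step two: for $m\neq k$ produce an explicit contracting homotopy on the $m$-graded piece; the natural choice is the homotopy dual to the Koszul-type homotopy used to compute the bar homology of a polynomial ring, using the Euler vector field / degree operator to split off the exact part (this is where the hypothesis $m\neq k$ is used, since the homotopy operator has the degree operator $m-k$ or $k-m$ as a denominator, exactly as in the appearance of $1/r$ in \eqref{mdmdr} elsewhere in the paper). Step three: on the diagonal line $m=k$, show every cocycle is cohomologous to an alternating tensor in the $h_i$'s; here I would use the symmetrization/antisymmetrization operator on $(\C_r)^{\otimes k}$ together with the fact that the comultiplication $\D$ is cocommutative, so that the ``inner'' differentials $(\id,\dots,\D,\dots,\id)$ symmetrize, while the two ``outer'' terms $1\otimes X \pm X\otimes 1$ allow one to strip off tensor factors of $1$; iterating reduces any $m=k$ cocycle modulo $\pa^{\C}$ to a combination of pure tensors $(h_{i_1},\dots,h_{i_k})$, and the cocycle condition then forces the coefficient array to be totally antisymmetric. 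Step four: for exactness of such a normalized cocycle, compute $\pa^{\C}$ of a general element of $\bs^{k-1}(\C_r)^{\otimes(k-1)}\cap\Hoch(\C_r)_m$ and check that the component landing in the span of the alternating monomials $(h_{i_1},\dots,h_{i_k})$ is always zero --- this is a direct degree/parity bookkeeping, since any tensor factor of the form $h_i h_j$ or $1$ produced by $\pa^{\C}$ is either symmetric or drops the co-generator degree below $k$.

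The main obstacle I expect is Step three: making the reduction of an arbitrary $m=k$ cocycle to the standard antisymmetric form genuinely rigorous, rather than hand-wavy. The subtlety is that a general cocycle is a linear combination of tensors $(P_1, \dots, P_k)$ with $P_j$ arbitrary monomials in the $h_i$ of total degree $k$, not just degree-one factors; one must run an inductive argument on, say, the maximal degree of a tensor factor appearing, using a carefully chosen partial homotopy to decrease it, and verify the induction terminates and stays within $\Hoch(\C_r)_m$. Since this statement is quoted verbatim from \cite[Section 4.6.1.1]{K}, I would in fact simply cite Kontsevich's computation for the bulk of the argument and only supply the dictionary (polynomial ring $\leftrightarrow$ cofree coalgebra, Hochschild homology $\leftrightarrow$ the complex \eqref{Hoch-C}, HKR representatives $\leftrightarrow$ the alternating tensors) plus the short direct verification in Step four, which is elementary. $\Box$
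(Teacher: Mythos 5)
The paper itself supplies no argument for this claim: it is stated with the attribution to \cite[Section 4.6.1.1]{K} and closed with $\Box$, so your final fallback --- cite Kontsevich and add only a dictionary plus the elementary non-exactness check --- is literally the route taken in the text. Your Step four is indeed a correct elementary argument: the differential preserves the internal degree $m$, and every term of $\pa^{\C}(\wt{X})$ either contains a tensor factor $1$ (the two outer terms and the counit part of each $\D_i$) or is, by cocommutativity, symmetric under the flip of the two slots created by $\D_i$; hence the totally antisymmetric multilinear component of any coboundary vanishes, which gives the last assertion.

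The one genuine problem is that the dictionary you propose to write out is the wrong one. In each internal degree $m$ the complex \eqref{Hoch-C} is dual to the \emph{two-sided bar complex} $B(\bbK, \bbK[x_1,\dots,x_r], \bbK)$ with trivial coefficients on both sides (the outer terms $1\otimes X$ and $\pm X\otimes 1$ dualize to the augmentations applied to the extreme tensor factors), not to the Hochschild (cyclic bar) complex of the polynomial ring. So the classical input is the Koszul computation $\mathrm{Tor}^{\bbK[x_1,\dots,x_r]}_k(\bbK,\bbK)\cong \Lambda^k\,\bbK^r$, concentrated in internal degree $m=k$ and represented by the alternating tensors; it is not HKR. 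Invoking HKR literally would even contradict what you are proving: $HH_k(\bbK[x_1,\dots,x_r])\cong \Omega^k$ is a free module with classes in every internal degree $m\ge k$ (e.g.\ $x_1^2\,dx_1$), so Hochschild homology is \emph{not} concentrated on the diagonal, whereas the vanishing for $m\neq k$ is exactly the first assertion of the claim. Your computational plan is otherwise sound --- the Euler-operator homotopy off the diagonal and the reduction to antisymmetric representatives on it are precisely the standard comparison of the bar resolution with the Koszul resolution of $\bbK$ over $S(V)$ (and essentially Kontsevich's own argument), so the worry you raise about Step three dissolves once the identification is corrected; but if you ``supply only the dictionary,'' it must be this Tor/Koszul one, not the HKR one.
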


For our purposes we will need the following 
subcomplex of $\Hoch(\C_r)$: 
\begin{equation}
\label{Hoch-pr-C}
\Hoch'(\C_r) =  \Big\{ X\in \Hoch(\C_r)_r  ~\big|~ 
\textrm{each co-generator $h_i$ appears} 
\end{equation}
$$
\textrm{in the tensor 
monomial } X \textrm{ exactly once}\Big\}\,.
$$

Using Claim \ref{cl:Hoch-C} about cocycles
in $\Hoch(\C_r)$ it is easy to deduce an analogous 
statement for the cochain complex  $\Hoch'(\C_r)$: 
\begin{claim}
\label{cl:Hoch-pr}
If $X$ is a cocycle in 
$$
 \bs^k \big( \C_r \big)^{\otimes \, k} ~\cap~ \Hoch'(\C_r)
$$
and $k \neq r$ then $X$ is $\pa^{\C}$-exact. 
Furthermore, if  $X$ is a cocycle in 
$$
 \bs^k \big( \C_r \big)^{\otimes \, k} ~\cap~ \Hoch'(\C_r)
$$
and $k = r$ then there exists 
$$
\wt{X} \in \bs^{k-1} \big( \C_r \big)^{\otimes \, (k-1)} ~\cap~
 \Hoch'(\C_r)
$$
such that 
$$
X - \pa^{\C} (\wt{X})  = 
\sum_{\si \in S_r} (-1)^{|\si|} \la (h_{\si(1)}, h_{\si(2)}, \dots, h_{\si(r)}) \,,
$$
for $\la \in \bbK$\,.
Finally, the cocycle 
$$
\sum_{\si \in S_r} (-1)^{|\si|} (h_{\si(1)}, h_{\si(2)}, \dots, h_{\si(r)}) 
$$
is non-trivial. $\Box$ 
\end{claim}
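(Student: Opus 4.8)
\textbf{Proof plan for Claim \ref{cl:Hoch-pr}.}

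The plan is to deduce this from Claim \ref{cl:Hoch-C} by exhibiting $\Hoch'(\C_r)$ as a direct summand of $\Hoch(\C_r)_r$ that is preserved by the differential $\pa^{\C}$, and then tracking how the conclusions of Claim \ref{cl:Hoch-C} specialize to this summand. First I would observe that $\Hoch(\C_r)_r$ decomposes according to the multidegree $(d_1,\dots,d_r)$ recording how many times each co-generator $h_i$ appears in a tensor monomial, with $\sum_i d_i = r$; since $\D$ preserves multidegree and $1$ has multidegree $0$, each multidegree component is a subcomplex. The component $\Hoch'(\C_r)$ is precisely the one with multidegree $(1,1,\dots,1)$, so it is a subcomplex, and moreover a direct summand, of $\Hoch(\C_r)_r$. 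Hence a cocycle $X\in \bs^k(\C_r)^{\otimes k}\cap\Hoch'(\C_r)$ is in particular a cocycle in $\bs^k(\C_r)^{\otimes k}\cap\Hoch(\C_r)_m$ with $m=r$; if $k\neq r$, Claim \ref{cl:Hoch-C} gives $X=\pa^{\C}(Y)$ for some $Y$, and projecting onto the multidegree $(1,\dots,1)$ summand (which commutes with $\pa^{\C}$) replaces $Y$ by $\wt X\in\Hoch'(\C_r)$ with $X=\pa^{\C}(\wt X)$.

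Next, for $k=r$, Claim \ref{cl:Hoch-C} produces $\wt X\in\bs^{r-1}(\C_r)^{\otimes(r-1)}\cap\Hoch(\C_r)_r$ with $X-\pa^{\C}(\wt X)=\sum_{i_1\cdots i_r}\la^{i_1\cdots i_r}(h_{i_1},\dots,h_{i_r})$ and $\la$ antisymmetric in adjacent indices. Projecting this identity onto the multidegree $(1,\dots,1)$ summand: on the left $X$ is already in that summand, and the projection of $\pa^{\C}(\wt X)$ equals $\pa^{\C}$ applied to the multidegree $(1,\dots,1)$ part of $\wt X$, which I will call $\wt X$ again (it lies in $\bs^{r-1}(\C_r)^{\otimes(r-1)}\cap\Hoch'(\C_r)$); on the right, the only surviving monomials $(h_{i_1},\dots,h_{i_r})$ are those in which $i_1,\dots,i_r$ is a permutation of $1,\dots,r$. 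So $X-\pa^{\C}(\wt X)=\sum_{\si\in S_r}\la^{\si(1)\cdots\si(r)}(h_{\si(1)},\dots,h_{\si(r)})$, and the adjacent-transposition antisymmetry of $\la$, together with the fact that adjacent transpositions generate $S_r$, forces $\la^{\si(1)\cdots\si(r)}=(-1)^{|\si|}\la$ for a single scalar $\la=\la^{1\cdots r}$. This gives the displayed normal form.

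Finally, for the non-triviality statement, I would again invoke the last assertion of Claim \ref{cl:Hoch-C}: the cocycle $\sum_{\si}(-1)^{|\si|}(h_{\si(1)},\dots,h_{\si(r)})$ is a special case of $\sum_{i_1\cdots i_k}\la^{i_1\cdots i_k}(h_{i_1},\dots,h_{i_k})$ with $k=r$ and not all coefficients zero (the coefficient of $(h_1,\dots,h_r)$ is $1$), hence it is non-exact in $\Hoch(\C_r)$, and a fortiori non-exact in the direct summand $\Hoch'(\C_r)$. I do not anticipate a serious obstacle here; the only point requiring mild care is checking that the multidegree projection is a chain map and a genuine direct-summand projection, so that "exact in $\Hoch(\C_r)$" can be upgraded to "exact in $\Hoch'(\C_r)$" and conversely — this is where one must be slightly careful, but it is routine once the multidegree grading is set up.
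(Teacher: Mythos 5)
Your proposal is correct and follows exactly the route the paper takes: the paper simply remarks that $\Hoch'(\C_r)$ is a direct summand of $\Hoch(\C_r)_r$ (it is the multidegree $(1,\dots,1)$ component, preserved by $\pa^{\C}$) and deduces Claim \ref{cl:Hoch-pr} from Claim \ref{cl:Hoch-C}. You have merely spelled out the projection and sign-bookkeeping details that the paper leaves implicit, so there is nothing to correct.
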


\subsection{Computing cohomology of  $\KGra^{\Hoch}$ and 
$\KGra^{\Hoch}_{\inv}$}
Let us now return to the cochain complex $\KGra^{\Hoch}$ \eqref{Hoch-simpler}\,.
  
It is clear that  $\KGra^{\Hoch}$ splits into the direct sum
of sub-complexes  
\begin{equation}
\label{KGra-Hoch-splits}
\KGra^{\Hoch} = \bigoplus_r  \KGra^{\Hoch}_r 
\end{equation}
where $ \KGra^{\Hoch}_r $ is spanned by graphs with 
exactly $r$ edges terminating at white vertices.

To compute the cohomology of  $ \KGra^{\Hoch}_r $ we 
introduce an auxiliary subspace: 
\begin{equation}
\label{KGra-pr}
\KGra'(n,r) \subset \KGra(n,r) 
\end{equation}
which consists of linear combinations of graphs in 
$\dgra_{n,r}$ with all white vertices (if any) having valency $1$\,.

Let us now suppose that we are given 
a tensor monomial with $k$ factors
\begin{equation}
\label{monom}
X =  h_{i_{11}} h_{i_{12}} \dots  h_{i_{1r_1}} \otimes 
h_{i_{21}} h_{i_{22}} \dots h_{i_{2r_2}} \otimes 
\dots  \otimes 
h_{i_{k1}} h_{i_{k2}} \dots  h_{i_{kr_k}}  \in \Hoch'(\C_r) 
\end{equation}
and a graph $\G' \in \dgra_{n,r}$ with all white vertices 
having valency $1$\,. To the pair $(X, \G')$ we 
assign a graph $\G\in \dgra_{n,k}$ following these steps:

\begin{itemize}

\item First, for each $i \in \{1,2, \dots, r\}$ we find 
the number of the tensor factor in \eqref{monom}
which contains the 
co-generator\footnote{Recall that each co-generator 
$h_i$ enters the monomial \eqref{monom} exactly once.} $h_i$\,. 
We denote this number by $d_i$\,.

\item Second, we erase white vertices of $\G'$ and attach
the resulting free edges to new $k$ white vertices 
with labels   $1,2, \dots, k$ following this rule: 
the edged which previously terminated at the white 
vertex with label $i$ should now terminate at 
the white vertex with label $d_i$\,.

\item Finally, in the resulting graph $\G$, 
we keep the same total order on 
the set of edges as for $\G'$\,. 

\end{itemize}

\begin{example}
\label{ex:map-C}
To a graph $\G'$ depicted in figure \ref{fig:map-C}
and the monomial 
$$
(h_1h_2, 1, h_3, 1) \in \Hoch'(\C_3)
$$
we should assign the graph $\G$ shown on 
figure \ref{fig:map-C1}.  The total order on the set 
of edges of $\G$ is inherited from the total order 
on the set of edges of $\G'$\,. 
\begin{figure}[htp]
\begin{minipage}[t]{0.45\linewidth}
\centering 
\begin{tikzpicture}[scale=0.5, >=stealth']
\tikzstyle{w}=[circle, draw, minimum size=4, inner sep=1]
\tikzstyle{b}=[circle, draw, fill, minimum size=4, inner sep=1] 
\node [b] (b1) at (0,1) {};
\draw (0,1.6) node[anchor=center] {{\small $1$}};
\node [b] (b3) at (1,2) {};
\draw (1,2.6) node[anchor=center] {{\small $3$}};
\node [b] (b2) at (3,1.5) {};
\draw (3,2.1) node[anchor=center] {{\small $2$}};
\node [w] (w1) at (0.5,0) {};
\draw (0.5,-0.6) node[anchor=center] {{\small $1$}};
\node [w] (w2) at (2,0) {};
\draw (2,-0.6) node[anchor=center] {{\small $2$}};
\node [w] (w3) at (3,0) {};
\draw (3,-0.6) node[anchor=center] {{\small $3$}};
\draw [->] (b1) edge (b3);
\draw [->] (b1) edge (w1);
\draw [->] (b3) edge (w2);
\draw [->] (b2) edge (w3);
\end{tikzpicture}
\caption{A graph $\G' \in \dgra_{3,3}$} \label{fig:map-C}
\end{minipage}
\begin{minipage}[t]{0.45\linewidth}
\centering 
\begin{tikzpicture}[scale=0.5, >=stealth']
\tikzstyle{w}=[circle, draw, minimum size=4, inner sep=1]
\tikzstyle{b}=[circle, draw, fill, minimum size=4, inner sep=1] 
\node [b] (b1) at (0,1) {};
\draw (0,1.6) node[anchor=center] {{\small $1$}};
\node [b] (b3) at (1,2) {};
\draw (1,2.6) node[anchor=center] {{\small $3$}};
\node [b] (b2) at (3,1.5) {};
\draw (3,2.1) node[anchor=center] {{\small $2$}};
\node [w] (w1) at (0.5,0) {};
\draw (0.5,-0.6) node[anchor=center] {{\small $1$}};
\node [w] (w2) at (2,0) {};
\draw (2,-0.6) node[anchor=center] {{\small $2$}};
\node [w] (w3) at (3,0) {};
\draw (3,-0.6) node[anchor=center] {{\small $3$}};
\node [w] (w4) at (4,0) {};
\draw (4,-0.6) node[anchor=center] {{\small $4$}};
\draw [->] (b1) edge (b3);
\draw [->] (b1) edge (w1);
\draw [->] (b3) edge (w1);
\draw [->] (b2) edge (w3);
\end{tikzpicture}
\caption{The graph $\G \in \dgra_{3,4}$} \label{fig:map-C1}
\end{minipage}
\end{figure}
\end{example}

The described procedure gives us an obvious map 
\begin{equation}
\label{map-C-pr}
\Ups' : \bs^{2n-2}\KGra'(n,r) \otimes \Hoch'(\C_r) \to
 \KGra^{\Hoch}_r\,.
\end{equation}

The group $S_r$ acts in the obvious way on the 
source of the map \eqref{map-C-pr} by simultaneously 
rearranging the labels on white vertices and co-generators 
of $\C_r$\,. It is easy to see that $\Ups'$  \eqref{map-C-pr} descends to 
an isomorphism
\begin{equation}
\label{map-C}
\Ups : \left( \bs^{2n-2}\KGra'(n,r) \otimes \Hoch'(\C_r)\right)_{S_r} \to
 \KGra^{\Hoch}_r\,.
\end{equation}
from the space 
$$
\left( \bs^{2n-2}\KGra'(n,r) \otimes \Hoch'(\C_r)\right)_{S_r}
$$
of $S_r$-coinvariants to the complex in question $\KGra^{\Hoch}_r$\,.
It is not hard to see that the map \eqref{map-C} is compatible with 
the differential $\pa^{\Hoch}$ on  $\KGra^{\Hoch}_r$ and 
the differential on the source coming from $\pa^{\C}$ on  
$\Hoch'(\C_r)$\,.

Thus, using Claim \ref{cl:Hoch-pr}, it is not hard to 
prove the following statement about cohomology 
of $\KGra^{\Hoch}$ \eqref{Hoch-simpler}.
\begin{prop}
\label{prop:Hoch-simpler}
For every cocycle 
$$
\ga \in \bs^{2n-2+k} \KGra(n,k)^{\mo} 
$$
there exists a vector 
$$
\ga_1 \in  \bs^{2n-2+k-1} \KGra(n,k-1)^{\mo} 
$$
such that the difference 
$$
c = \ga - \pa^{\Hoch} (\ga_1) 
$$
satisfies Properties \ref{P:one} and \ref{P:anti-symm}. 
A cocycle $c$ in  \eqref{Hoch-simpler} 
satisfying Properties \ref{P:one} and 
\ref{P:anti-symm} is trivial if and only if $c=0$\,. $\Box$
\end{prop}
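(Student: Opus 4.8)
The plan is to deduce the statement from Claim \ref{cl:Hoch-pr} about the Hochschild-type complex $\Hoch'(\C_r)$ of a cofree cocommutative coalgebra, using the decomposition \eqref{KGra-Hoch-splits} of $\KGra^{\Hoch}$ and the isomorphism $\Ups$ of \eqref{map-C}. First I would record that $\pa^{\Hoch}$ preserves the number $r$ of edges terminating at white vertices: every term of $\pa^{\Hoch}$ \eqref{pa-Hoch-app} comes from an insertion of $\G_{\ww}$, which carries no edges and merely redistributes the edges attached to one white vertex over two white vertices, creating or deleting none. Hence $\KGra^{\Hoch}=\bigoplus_r\KGra^{\Hoch}_r$ is a decomposition of cochain complexes, and it suffices to treat each $\KGra^{\Hoch}_r$ separately.

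For fixed $r$, I would use $\Ups$ of \eqref{map-C} to identify $\KGra^{\Hoch}_r$ with the complex of $S_r$-coinvariants $\bigl(\bs^{2n-2}\KGra'(n,r)\otimes\Hoch'(\C_r)\bigr)_{S_r}$, where the factor $\KGra'(n,r)$ carries the zero differential and the differential on the source is induced by $\id\otimes\pa^{\C}$; that $\Ups$ is an isomorphism of cochain complexes is essentially bookkeeping and was noted in the construction above. Since $\bbK$ has characteristic zero, cohomology commutes with passage to $S_r$-coinvariants, so the Künneth theorem gives
\[
H\bigl(\KGra^{\Hoch}_r\bigr)\;\cong\;\Bigl(\bs^{2n-2}\KGra'(n,r)\otimes H\bigl(\Hoch'(\C_r)\bigr)\Bigr)_{S_r}.
\]
By Claim \ref{cl:Hoch-pr}, $H(\Hoch'(\C_r))$ vanishes in every tensor degree $k\neq r$, and in tensor degree $r$ it is one-dimensional, spanned by the class of $\sum_{\si\in S_r}(-1)^{|\si|}(h_{\si(1)},\dots,h_{\si(r)})$, on which $S_r$ acts through the sign character.

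To finish, I would transport these canonical representatives back through $\Ups$: the image of $\G'\otimes\sum_{\si}(-1)^{|\si|}(h_{\si(1)},\dots,h_{\si(r)})$ is the signed sum over all relabellings of the white vertices of $\G'$, which is precisely a vector satisfying Properties \ref{P:one} and \ref{P:anti-symm}. Summing over $r$ shows that every cocycle $\ga\in\bs^{2n-2+k}\KGra(n,k)^{\mo}$ differs from such a $c$ by $\pa^{\Hoch}(\ga_1)$, and since $\pa^{\C}$ is homogeneous in tensor degree one may choose $\ga_1$ in $\bs^{2n-2+k-1}\KGra(n,k-1)^{\mo}$. The triviality criterion then follows from the non-triviality clause of Claim \ref{cl:Hoch-pr}, equivalently from Remark \ref{rem:Hoch-closed}. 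The part that requires genuine care, and which I view as the main obstacle, is exactly this last translation together with the chain-level identification underlying $\Ups$: one must check that the canonical Hochschild cocycles of Claim \ref{cl:Hoch-pr} correspond under $\Ups$ precisely to the graph combinations with Properties \ref{P:one} and \ref{P:anti-symm}, that the insertions $\ga\,\c_{i,\mo}\,\G_{\ww}$ and $\G_{\ww}\,\c_{i,\mo}\,\ga$ match the coproduct and extreme terms of $\pa^{\C}$ \eqref{pa-Hoch-C}, and that the $S_r$-coinvariant quotient collapses none of these classes — so the edge orderings, suspension signs, and the use of characteristic zero must all be handled consistently.
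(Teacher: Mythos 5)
Your proposal is correct and follows essentially the same route as the paper: split $\KGra^{\Hoch}$ by the number $r$ of edges ending at white vertices, identify $\KGra^{\Hoch}_r$ via $\Ups$ with the $S_r$-coinvariants of $\bs^{2n-2}\KGra'(n,r)\otimes\Hoch'(\C_r)$, and invoke Claim \ref{cl:Hoch-pr}. The Künneth and exactness-of-coinvariants steps you spell out are exactly the details the paper leaves to the reader in "it is not hard to prove," and your identification of the canonical representatives with the vectors satisfying Properties \ref{P:one} and \ref{P:anti-symm} matches the intended argument.
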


To deduce an analogous statement for the cochain
complex $\KGra^{\Hoch}_{\inv}$ \eqref{Hoch-app}
we need to use the averaging operator 
$$
\frac{1}{n!} \, \sum_{\si\in S_n} \si\,.
$$
More precisely,  Proposition  \ref{prop:Hoch-simpler} implies 
that 
\begin{cor}
\label{cor:Hoch}
For every cocycle 
$$
\ga \in \bs^{2n-2+k}  \Big( \KGra(n,k)^{\mo}  \Big)^{S_n}
$$
there exists a vector 
$$
\ga_1 \in  \bs^{2n-2+k-1} \Big( \KGra(n,k-1)^{\mo} \Big)^{S_n}
$$
such that the difference 
$$
c = \ga - \pa^{\Hoch} (\ga_1) 
$$
satisfies Properties \ref{P:one} and \ref{P:anti-symm}. 
A cocycle $c$ in the complex   \eqref{Hoch-app}
satisfying Properties \ref{P:one} and 
\ref{P:anti-symm} is trivial if and only if $c=0$\,. $\Box$
\end{cor}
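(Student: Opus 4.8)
The plan is to deduce Corollary \ref{cor:Hoch} from Proposition \ref{prop:Hoch-simpler} by an averaging argument over the symmetric group $S_n$. The starting observation is that the action of $S_n$ on $\KGra(n,k)^{\mo}$ which rearranges the labels of black vertices commutes with the differential $\pa^{\Hoch}$: indeed, each summand of $\pa^{\Hoch}$ \eqref{pa-Hoch-app} is built from the elementary insertions $\circ_{i,\mo}$ involving the graph $\G_{\ww}$, and these insertions only affect white vertices and the total order on edges, leaving the black-vertex structure untouched. Consequently $\KGra^{\Hoch}_{\inv}$ is precisely the subcomplex of $S_n$-invariants in $\KGra^{\Hoch}$, and the averaging operator
\[
\mathrm{Av} \;=\; \frac{1}{n!}\sum_{\si \in S_n} \si
\]
is a chain map which restricts to the identity on $\KGra^{\Hoch}_{\inv}$.

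First I would take a cocycle $\ga \in \bs^{2n-2+k}\big(\KGra(n,k)^{\mo}\big)^{S_n}$, regard it as a cocycle in the larger complex $\KGra^{\Hoch}$, and apply Proposition \ref{prop:Hoch-simpler} to produce a vector $\ga_1' \in \bs^{2n-2+k-1}\KGra(n,k-1)^{\mo}$ (a priori not $S_n$-invariant) such that $c' = \ga - \pa^{\Hoch}(\ga_1')$ satisfies Properties \ref{P:one} and \ref{P:anti-symm}. Then I would set $\ga_1 = \mathrm{Av}(\ga_1')$, which lies in $\bs^{2n-2+k-1}\big(\KGra(n,k-1)^{\mo}\big)^{S_n}$. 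Since $\ga$ is $S_n$-invariant and $\mathrm{Av}$ commutes with $\pa^{\Hoch}$,
\[
\ga - \pa^{\Hoch}(\ga_1) \;=\; \mathrm{Av}\bigl(\ga - \pa^{\Hoch}(\ga_1')\bigr) \;=\; \mathrm{Av}(c') \;=:\; c .
\]
It then remains to check that $c$ still satisfies Properties \ref{P:one} and \ref{P:anti-symm}. Property \ref{P:one} is preserved because permuting labels of black vertices never changes the valency of a white vertex, so each summand $\si(c')$ has only univalent white vertices. Property \ref{P:anti-symm} is preserved because the $S_n$-action on black-vertex labels commutes with the $S_k$-action on white-vertex labels; hence for $\si \in S_k$ we have $(\id,\si)\,c = \mathrm{Av}\bigl((\id,\si)\,c'\bigr) = (-1)^{|\si|}\, c$. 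This finishes the first assertion.

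For the triviality statement the ``if'' direction is immediate. For the converse, suppose $c \in \KGra^{\Hoch}_{\inv}$ is a cocycle satisfying Properties \ref{P:one} and \ref{P:anti-symm} which is $\pa^{\Hoch}$-exact inside the invariant complex; then it is in particular $\pa^{\Hoch}$-exact in $\KGra^{\Hoch}$, so the last assertion of Proposition \ref{prop:Hoch-simpler} (cf. Remark \ref{rem:Hoch-closed}) forces $c=0$. I do not expect a genuine obstacle: the real content has already been isolated in Proposition \ref{prop:Hoch-simpler}, and the only point that deserves to be spelled out carefully is the commutation of $\pa^{\Hoch}$ with the black-vertex $S_n$-action together with the compatibility of the two commuting symmetric group actions, which is exactly what makes $\mathrm{Av}$ a chain map and ensures that invariance, univalence of white vertices, and antisymmetry in the white labels are all simultaneously stable under averaging.
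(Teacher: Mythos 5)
Your proposal is correct and follows essentially the same route as the paper: the paper likewise deduces Corollary \ref{cor:Hoch} from Proposition \ref{prop:Hoch-simpler} by applying the averaging operator $\frac{1}{n!}\sum_{\si\in S_n}\si$, and your write-up simply makes explicit the points the paper leaves implicit (that the $S_n$-action on black labels commutes with $\pa^{\Hoch}$ and preserves Properties \ref{P:one} and \ref{P:anti-symm}, and that exactness in the invariant subcomplex implies exactness in $\KGra^{\Hoch}$).
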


It is clear that for every vector 
$$
\ga \in \bs^{2n-2} \Big( \KGra(n,0)^{\mo}  \Big)^{S_n}
$$
\begin{equation}
\label{k-equals-0}
\pa^{\Hoch} (\ga) = 0\,. 
\end{equation}

Due to this observation Corollary \ref{cor:Hoch}
implies the following statement.
\begin{cor}
\label{cor:k-one}
A vector 
\begin{equation}
\label{k-one}
\ga \in \bs^{2n-1} \Big(\KGra(n,1)^{\mo} \Big)^{S_n}
\end{equation}
is a cocycle in \eqref{Hoch-app} if and only if 
the white vertex in each  
graph in the linear combination $\ga$ 
has valency $1$\,. Furthermore, a cocycle $\ga$ in 
$ \bs^{2n-1} \Big( \KGra(n,1)^{\mo} \Big)^{S_n} $ 
is trivial if and only if  $\ga = 0$\,.     $\Box$
\end{cor}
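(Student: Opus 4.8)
The final statement to prove is Corollary \ref{cor:k-one}, which characterizes cocycles in $\bs^{2n-1}\big(\KGra(n,1)^{\mo}\big)^{S_n}$ inside the complex $\KGra^{\Hoch}_{\inv}$ \eqref{Hoch-app}. The plan is to deduce it directly from Corollary \ref{cor:Hoch} together with the trivial observation \eqref{k-equals-0} that $\pa^{\Hoch}$ vanishes on $\bs^{2n-2}\big(\KGra(n,0)^{\mo}\big)^{S_n}$, which is the stated source of the corollary.

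First I would treat the ``if'' direction. Suppose $\ga\in \bs^{2n-1}\big(\KGra(n,1)^{\mo}\big)^{S_n}$ is a linear combination of graphs in which the unique white vertex has valency $1$. Then $\ga$ satisfies Property \ref{P:one} vacuously, and it satisfies Property \ref{P:anti-symm} trivially since $S_1$ is the trivial group. By Remark \ref{rem:Hoch-closed} every such vector is $\pa^{\Hoch}$-closed, so $\ga$ is a cocycle. (One could also just compute $\pa^{\Hoch}(\ga)$ by hand for $k=1$: the formula \eqref{pa-Hoch-app} gives $\pa^{\Hoch}(\ga)=\G_{\ww}\c_{2,\mo}\ga - \ga\c_{1,\mo}\G_{\ww} + \ga\c_{2,\mo}\G_{\ww} + \G_{\ww}\c_{1,\mo}\ga$ landing in $\KGra(n,2)^{\mo}$, and the terms cancel pairwise precisely when the white vertex of each graph of $\ga$ is univalent; but quoting Remark \ref{rem:Hoch-closed} is cleaner.)

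Next, the ``only if'' direction. Let $\ga\in \bs^{2n-1}\big(\KGra(n,1)^{\mo}\big)^{S_n}$ be an arbitrary cocycle. Apply Corollary \ref{cor:Hoch} with $k=1$: there exists $\ga_1\in \bs^{2n-2}\big(\KGra(n,0)^{\mo}\big)^{S_n}$ such that $c:=\ga-\pa^{\Hoch}(\ga_1)$ satisfies Properties \ref{P:one} and \ref{P:anti-symm}. But by \eqref{k-equals-0} we have $\pa^{\Hoch}(\ga_1)=0$, so in fact $c=\ga$. Hence $\ga$ itself satisfies Property \ref{P:one}, i.e. the white vertex of each graph in $\ga$ has valency $1$. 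Finally, the nontriviality statement: if $\ga$ is a cocycle in $\bs^{2n-1}\big(\KGra(n,1)^{\mo}\big)^{S_n}$ which is exact, say $\ga=\pa^{\Hoch}(\eta)$ with $\eta\in\bs^{2n-2}\big(\KGra(n,0)^{\mo}\big)^{S_n}$, then $\eta$ is the only option for a primitive (arity reasons), and $\pa^{\Hoch}(\eta)=0$ by \eqref{k-equals-0}, so $\ga=0$; alternatively, since $\ga$ already satisfies Properties \ref{P:one} and \ref{P:anti-symm}, the last sentence of Corollary \ref{cor:Hoch} says an exact such cocycle must vanish.

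There is essentially no obstacle here: the whole content has been front-loaded into Corollary \ref{cor:Hoch} (via Proposition \ref{prop:Hoch-simpler}, the isomorphism $\Ups$ \eqref{map-C}, and Claim \ref{cl:Hoch-pr} on the Hochschild complex of the cofree cocommutative coalgebra). The only thing to be careful about is the bookkeeping of which graded pieces $\ga_1$ and a putative primitive $\eta$ can live in — both are forced into $\KGra(n,0)^{\mo}$ by the arity of the differential $\pa^{\Hoch}$, and on that piece $\pa^{\Hoch}\equiv 0$ by \eqref{k-equals-0}. So the proof is a two-line corollary of what precedes it, and I would simply write it as such, invoking Corollary \ref{cor:Hoch}, equation \eqref{k-equals-0}, and Remark \ref{rem:Hoch-closed}.
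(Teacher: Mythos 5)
Your proof is correct and follows exactly the route the paper intends: Corollary \ref{cor:k-one} is stated there as an immediate consequence of Corollary \ref{cor:Hoch} together with the vanishing \eqref{k-equals-0} of $\pa^{\Hoch}$ on $\bs^{2n-2}\big(\KGra(n,0)^{\mo}\big)^{S_n}$, which is precisely the argument you spell out (including the arity bookkeeping forcing $\ga_1$ and any primitive into $\KGra(n,0)^{\mo}$). No gaps; your write-up just makes explicit what the paper leaves as a two-line deduction.
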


\section{The complex of  ``hedgehogs''} 
\label{app:Hg}
This appendix is devoted to an auxiliary cochain complex
which is assembled from graphs $\G \in \dgra_{m,k}$ 
satisfying the additional property: {\it each white vertex of $\G$
has valency $1$}\,. Since such graphs look like hedgehogs 
we call this cochain complex the complex of ``hedgehogs''.  

This cochain complex and especially Corollary 
\ref{cor:Hg} (proved below) are used in Sections \ref{sec:transit} and \ref{sec:free}.
 
We start by introducing the following graded vector space
\begin{equation}
\label{Hg}
\Hg = \Big\{
\ga \in 
\bigoplus_{m, k} \bs^{2m- 2 + k} \big( \KGra(m,k)^{\mo} \big)^{S_m} \quad \big| \quad 
\ga ~ \textrm{obeys Properties \ref{P:one}, \ref{P:anti-symm} }
 \Big\}
\end{equation}
and the families of cycles
$\tau_{m,i} \in S_m$, and  $\si_{k,i}, \vs_{k,i} \in S_k$
\begin{equation}
\label{tau-m-i}
\tau_{m,i} : = (i, i+1, \dots, m-1, m),
\end{equation}
\begin{equation}
\label{si-k-i}
\si_{k,i} : = (i, i-1, \dots 2, 1),
\end{equation}
and
\begin{equation}
\label{vs-k-i}
\vs_{k,i} : = (1,2, \dots, i-1, i).
\end{equation}

Next, we denote by $\md$ the following degree $1$ operation
on $\Hg$
\begin{equation}
\label{md}
\md(\ga) =  k \, \sum_{i=1}^{m+1} \, (\tau_{m+1,i}, \id) 
\big(\, \ga  \circ_{1, \mo} \G^{\br}_{0}
\, \big)\,, \qquad  \ga \in \bs^{2m- 2 + k} \big( \KGra(m,k)^{\mo} \big)^{S_m}\,.
\end{equation}
Notice that, since the graph $\G^{\br}_0$ consists of a single 
black vertex and has no edges, the insertion $\circ_{1, \mo}$
of $\G^{\br}_{0}$ replaces the white vertex with label $1$
by a black vertex with label $m+1$ and shifts 
the labels on the remaining white vertices 
down by $1$\,. 

Using the fact that each linear combination 
$\ga\in \Hg$ is anti-symmetric with respect 
to permutations of labels on white vertices, it is not hard to deduce 
that
\begin{equation}
\label{md-sq}
\md^2  = 0\,.
\end{equation}
Thus $(\Hg, \md)$ is a cochain complex. 
We call this cochain complex the complex of ``hedgehogs''.

For our purposes, we need a degree $-1$ operation 
\begin{equation}
\label{md-star}
\md^*  : \Hg \to \Hg
\end{equation}
which we will now define.
Let $\ga $ be a vector in  $\bs^{2m- 2 + k} \big( \KGra(m,k)^{\mo} \big)^{S_m} $
satisfying Properties \ref{P:one}, \ref{P:anti-symm}.
To compute $\md^*(\ga)$ we follow these steps:

\begin{itemize}

\item First, we omit in $\ga$ all graphs for which the black 
vertex with label $1$ is not a pike. We denote 
the resulting linear combination in 
$\bs^{2m- 2 + k}  \KGra(m,k)^{\mo}$ by $\ga'$\,.

\item Second, we replace the black vertex with label $1$
in each graph of $\ga'$ by a white vertex and shift all labels 
on black vertices down by $1$\,. 
We assign label $1$ 
to this additional white vertex and shift the labels of the remaining 
white vertices up by $1$\,. We denote the resulting 
linear combination in 
$\bs^{2(m-1)- 2 + k+1}  \KGra(m-1,k+1)^{\mo}$ by $\ga''$\,.

\item Finally, we set 
\begin{equation}
\label{md-star-ga}
\md^*(\ga) = \sum_{i = 1}^{k+1} \frac{(-1)^{i-1}}{k+1} 
(\id , \si_{k+1, i}) (\ga'')\,.
\end{equation}

\end{itemize}
Note that the linear combination $\ga'$ is invariant with respect to the 
action of the group $S_{\{2, 3, \dots, m\}}$\,. Hence, the linear combination 
$\md^*(\ga)$ is $S_{m-1}$-invariant. Furthermore, $\md^*(\ga)$ obviously 
satisfies Properties \ref{P:one} and \ref{P:anti-symm}.

\begin{remark}
\label{rem:md-star}
Notice that 
\begin{equation}
\label{md-star-nopikes}
\md^* (\ga) = 0\,.
\end{equation}
if each graph in the linear combination $\ga$ does not have pikes.
\end{remark}

\begin{example}
\label{ex:md-md-star}
Let us denote by $\G_k$ the graph depicted in figure \ref{fig:G-k} 
and let 
\begin{equation}
\label{ga-exam}
\ga = \G_k + (\si_{12}, \id)\big(\G_k \big)\,,
\end{equation}
where $\si_{12}$ is the transposition in $S_2$\,.
\begin{figure}[htp]
\centering 
\begin{tikzpicture}[scale=0.5, >=stealth']
\tikzstyle{w}=[circle, draw, minimum size=4, inner sep=1]
\tikzstyle{b}=[circle, draw, fill, minimum size=4, inner sep=1]
\node [b] (b1) at (0,3) {};
\draw (0,3.6) node[anchor=center] {{\small $1$}};
\node [b] (b2) at (2.5,3) {};
\draw (2.5,3.6) node[anchor=center] {{\small $2$}};
\node [w] (v1) at (0,0) {};
\draw (0,-0.6) node[anchor=center] {{\small $1$}};
\node [w] (v2) at (1,0) {};
\draw (1,-0.6) node[anchor=center] {{\small $2$}};
\draw (2.5,0.7) node[anchor=center] {{\small $\dots$}};
\node [w] (vk) at (4,0) {};
\draw (4,-0.6) node[anchor=center] {{\small $k$}};
\draw [->] (b2) edge (b1);
\draw [->] (b2) edge (1,1.2);
\draw  (1,1.2) edge (v1);
\draw [->] (b2) edge (1.5,1);
\draw  (1.5,1) edge (v2);
\draw [->] (b2) edge (3.5,1);
\draw  (3.5,1) edge (vk);
\end{tikzpicture}
~\\[0.3cm]
\caption{Edges are ordered in this way 
$(2_{\mc}, 1_{\mc}) <(2_{\mc}, 1_{\mo}) < (2_{\mc}, 2_{\mo}) < \dots < (2_{\mc}, k_{\mo})$
} \label{fig:G-k}
\end{figure} 

It is obvious that $\ga$ is a vector in 
$\bs^{k+2}  \big( \KGra(2,k)^{\mo}  \big)^{S_2}$ satisfying 
Properties \ref{P:one}, \ref{P:anti-symm}.

Following the steps outlined above, we get 
$$
\ga' = \G_k \qquad \textrm{and} \qquad \ga'' = \G^{\br}_{k+1}\,,  
$$
where $\G^{\br}_{k}$ is the family of ``brooms'' shown on 
figure \ref{fig:brooms}\,. Since $ \G^{\br}_{k+1} $ is already antisymmetric 
with respect to permutations of labels on white vertices, 
$$
\md^* (\ga) = 
\sum_{i=1}^{k+1} \frac{(-1)^{i-1}}{k+1}
(\id, \si_{k+1, i}) (\G^{\br}_{k+1}) =  \G^{\br}_{k+1}\,.  
$$  
\end{example}

We need the following lemma.
\begin{lem}
\label{lem:Koszul}
For every vector
$$
\ga \in \bs^{2m- 2 + k} \big( \KGra(m,k)^{\mo} \big)^{S_m} 
$$
satisfying Properties \ref{P:one}, \ref{P:anti-symm} 
we have 
\begin{equation}
\label{Hodge-decomp}
\md \md^* (\ga) + \md^* \md(\ga) = k \ga 
+ \sum_{r \ge 1} r \ga_r\,,   
\end{equation}
where $\ga_r$ is the linear combination in $\Hg$
which is obtained from $\ga$ by retaining 
the graphs with exactly $r$ pikes. 
\end{lem}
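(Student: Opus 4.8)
The plan is to prove the identity \eqref{Hodge-decomp} by a direct but careful combinatorial bookkeeping of what the operators $\md$ and $\md^*$ do to a graph, organized according to the ``pike structure'' of the graphs involved. Recall that for $\ga$ obeying Properties \ref{P:one}, \ref{P:anti-symm}, the operator $\md$ takes a graph with $m$ black and $k$ white vertices, replaces the white vertex with label $1$ by a fresh black vertex labelled $m+1$, symmetrizes over the $m+1$ positions of the new black vertex via the cycles $\tau_{m+1,i}$, and multiplies by $k$; while $\md^*$ keeps only those graphs whose black vertex labelled $1$ is a pike, turns that pike into a new white vertex labelled $1$, and antisymmetrizes over the $k+1$ white-vertex labels. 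The key observation is that $\md^*\md$ acts on a graph $\G$ in $\ga$ by: (a) creating a new black vertex from the old white vertex $1$ (this vertex is automatically a pike since the old white vertex had valency one), then (b) immediately converting \emph{some} black pike back into a white vertex. The composite $\md\md^*$ first converts a black pike of $\G$ into white vertex $1$, then converts some white vertex back into a black pike. So both composites are sums over pairs (black pike, white vertex) with one of them being ``the moving one,'' and most terms in $\md^*\md(\ga)$ and $\md\md^*(\ga)$ cancel against each other, leaving exactly the diagonal terms.

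First I would fix notation for the elementary ``move'' operators: for a graph $\G$, let $P(\G)$ denote its set of black pikes and $W(\G)$ its set of white vertices (all univalent by Property \ref{P:one}). I would write $\md(\G) = k\sum_{i}(\tau_{m+1,i},\id)(\G\c_{1,\mo}\G^{\br}_0)$ and expand $\md^*\md(\G)$ by applying the definition of $\md^*$ term-by-term: for each summand, $\md^*$ selects the contribution in which a chosen black vertex is a pike and labelled $1$. One must track signs coming from the cycles $\tau$ and $\si$ carefully; this is where the antisymmetry hypothesis \eqref{app-anti-symm} and the antisymmetry of $\ga$ in its black labels (the $S_m$-invariance plus the sign conventions on edge orderings, i.e. that permuting black labels acts by the sign on vectors in $\dGra$-type spaces) will be used to consolidate the $\tau_{m+1,i}$-sum and the $\si_{k+1,i}$-sum. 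Since $\ga$ is $S_m$-invariant (and $S_k$-antiinvariant), both $\md\md^*(\ga)$ and $\md^*\md(\ga)$ can be rewritten with the symmetrizations ``absorbed,'' reducing the sum over pairs of vertices to a sum over the choice of which vertex is newly created versus which is newly destroyed.

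Then I would do the pairing argument. Write $\md^*\md(\ga)$ as a sum over ordered pairs $(w, b)$ where $w$ is a white vertex of a graph $\G$ in $\ga$ (which $\md$ promotes to black) and $b$ is a black pike of the resulting graph (which $\md^*$ demotes to white). The pikes of $\md(\G)$ are: the newly created black vertex, plus every pike already present in $\G$ (these survive since $\md$ does not add or remove edges except turning one white leaf-edge into a black leaf-edge). Symmetrically, $\md\md^*(\ga)$ is a sum over ordered pairs $(b, w)$ where $b$ is a black pike of $\G$ (demoted to white) and $w$ is a white vertex of the resulting graph (promoted to black), and the white vertices after $\md^*$ are the newly created one plus all the old white vertices of $\G$. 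The terms where $w\neq$ (newly created vertex) and $b\neq$ (newly created vertex) match up in pairs between the two composites with opposite signs — this is the cancellation — and a count of the surviving ``diagonal'' terms, where the vertex created by one operation is the vertex destroyed by the other, or where one of $b,w$ is an \emph{old} pike/white vertex that stays put, produces exactly $k\ga + \sum_{r\ge 1} r\ga_r$: the $k\ga$ comes from the case where $\md$ creates a black pike and $\md^*$ immediately demotes that same vertex (the factor $k$ is the multiplicity in the definition of $\md$, and the averaging factors $\tfrac1{k+1}$ reassemble correctly), and the $r\ga_r$ comes from graphs already having $r$ pikes, where one of those $r$ old pikes is demoted by $\md^*$ in $\md^*\md$ then re-promoted by $\md$, contributing a factor $r$.

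The main obstacle will be the sign bookkeeping: reconciling the signs from the cyclic permutations $\tau_{m+1,i}$ in the definition of $\md$, the signs $(-1)^{i-1}$ and the cycles $\si_{k+1,i}$ in the definition of $\md^*$, and the $S_n$-equivariance signs from the $\KGra$-operad structure (graphs carry ordered edge sets, so relabelling vertices introduces Koszul signs). I expect that after choosing canonical representatives (e.g. always promote/demote to the label-$1$ slot and use the antisymmetry of $\ga$ to move any other slot there, absorbing the sign), all the stray signs collapse and the two composites become manifestly ``the same sum with opposite sign'' off the diagonal. A clean way to organize this is to first prove the identity on a single graph $\G$ of a fixed pike-count $r$, i.e. compute $(\md\md^* + \md^*\md)(\G_{\mathrm{sym}})$ where $\G_{\mathrm{sym}}$ is the $S_m$-(anti)symmetrization of $\G$, show it equals $(k+r)\G_{\mathrm{sym}}$, and then sum over $\G$ grouped by $r$; linearity finishes the proof. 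I would also double-check the boundary/degenerate cases $k=0$ (where $\md^*\md$ still can be nonzero through old pikes, giving $\sum r\ga_r$ with no $k\ga$ term, consistent since $k\ga = 0$) and graphs with a single black vertex.
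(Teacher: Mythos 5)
Your proposal is correct and follows essentially the same route as the paper's proof: reduce to a single $S_m\times S_k$-(anti)symmetrized graph with a fixed number $r$ of pikes, expand both composites, cancel the off-diagonal terms in pairs (the paper encodes these as the operations $\Cg^i_p$, which enter $\md\,\md^*$ and $\md^*\md$ with opposite signs), and read off the diagonal multiplicities $k$ and $r$, giving $(k+r)$ per graph. The only slips are labeling ones: the $r\,\ga_r$ contribution (and the entire nonzero part in the $k=0$ case) comes from $\md\,\md^*$, not from $\md^*\md$, exactly as your own earlier description of the two composites indicates.
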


\begin{proof}
Let us observe that the space
$$
\bs^{2m- 2 + k} \big( \KGra(m,k)^{\mo} \big)^{S_m} 
$$
is spanned by vectors of the form 
\begin{equation}
\label{sp-set}
\sum_{\tau\in S_m, \, \si \in S_k} (-1)^{|\si|}(\tau, \si) \big( \G \big)   
\end{equation}
where $\G$ is a graph in $\dgra_{m,k}$ with all
white vertices having valency $1$\,.
 
Thus we may assume, without loss of generality, that 
\begin{equation}
\label{this-ga}
\ga =  \sum_{\tau\in S_m,\, \si \in S_k} (-1)^{|\si|}(\tau, \si) \big(\G \big)  
\end{equation}
for a graph $\G \in \dgra_{m,k}$  with all
white vertices having valency $1$\,.

Using the cycles $\vs_{k,i}$ \eqref{vs-k-i} we 
rewrite \eqref{this-ga} as follows: 
\begin{equation}
\label{this-ga-new}
\ga = 
\sum_{\tau\in S_m} 
\sum_{\si' \in S_{\{2,3, \dots, k\}}} (-1)^{|\si'|}(\tau, \si') 
\left(
\sum_{i=1}^k (-1)^{i-1} (\id , \vs_{k,i}) (\G)
\right)\,,  
\end{equation}
where $S_{\{2,3, \dots, k\}}$ denotes the permutation group 
of the set $\{2,3, \dots, k\}$\,.

Next, using \eqref{this-ga-new} together with 
the obvious identity
$$
\big( (\id , \vs_{k,i}) (\G) \big) \circ_{1, \mo} \G^{\br}_0 = 
\G  \circ_{i, \mo} \G^{\br}_0
$$
we deduce that 
$$
\md(\ga) = k \,   
\sum_{j=1}^{m+1} (\tau_{m+1, j}, \id) \, 
\left(  
\sum_{\tau\in S_m,\, \si' \in S_{k-1}} (-1)^{|\si'|}(\tau, \si')
\left(\sum_{i=1}^{k} (-1)^{i-1} \G \circ_{i, \mo} \G^{\br}_{0} \right)
\right) =
$$
\begin{equation}
\label{md-ga}
k \,
\sum_{\tau\in S_{m+1},\, \si' \in S_{k-1}} (-1)^{|\si'|}(\tau, \si')
\left(\sum_{i=1}^{k} (-1)^{i-1} \G \circ_{i, \mo} \G^{\br}_{0} \right)\,.
\end{equation}

Let us, first, consider the case when the graph $\G$ 
does not have pikes. In this case, due to Remark \ref{rem:md-star}, 
we have 
$$
\md^* (\ga) = 0\,. 
$$
Furthermore, using \eqref{md-ga}, we get
\begin{equation}
\label{md-star-md-ga}
\md^* \md(\ga) =
\end{equation}
$$
\sum_{j =1}^{k} (-1)^{j-1} (\id, \si_{k,j}) \quad \left(
\sum_{\tau\in S_{m}} \sum_{\si' \in S_{\{2,3, \dots, k\}} } (-1)^{|\si'|}(\tau, \si')
\left(\sum_{i=1}^{k} (-1)^{i-1} (\id, \vs_{k,i}) (\G)  \right) \quad \right)\,,
$$
where $S_{\{2,3, \dots, k\}}$ denotes the permutation group 
of the set $\{2,3, \dots, k\}$\,, and $\vs_{k,i}$ is the family of 
cycles defined in \eqref{vs-k-i}.

According to \eqref{this-ga-new},
\begin{equation}
\label{simpler} 
\sum_{\tau\in S_{m}} \sum_{\si' \in S_{\{2,3, \dots, k\}} } (-1)^{|\si'|}(\tau, \si')
\left(\sum_{i=1}^{k} (-1)^{i-1} (\id, \vs_{k,i}) (\G)  \right) = \ga.
\end{equation}
Moreover, since $\ga$ is antisymmetric with respect to 
permutations of labels on white vertices,
$$ 
(\id, \si_{k,j}) (\ga) = (-1)^{j-1} \ga.
$$
Hence
\begin{equation}
\label{k-times}
 \sum_{j =1}^{k} (-1)^{j-1} (\id, \si_{k,j}) (\ga) = k \ga\,.
\end{equation}

Therefore, combining \eqref{md-star-md-ga} with \eqref{simpler}
and \eqref{k-times}, we get
\begin{equation}
\label{md-star-md-ga1}
\md^* \md(\ga) = k \ga\,.
\end{equation}
Thus, if each graph in a linear combination $\ga$ 
does not have pikes, then equation
\eqref{Hodge-decomp} holds. 

Let us now turn to the case when $\G$ has exactly $r \ge 1$ pikes. 

Without loss of generality, we may assume that the pikes 
of $\G$ are labeled by $1, 2, \dots, r$\,.

Let us recall that the vector $\ga'$ is obtained from $\ga$
by discarding all graphs for which the black 
vertex with label $1$ is not a pike.
In our case, the vector $\ga'$ 
can be written as follows:
\begin{equation}
\label{ga-pr}
\ga' = \sum_{\si \in S_k} \sum_{\tau' \in S_{\{2,3, \dots, m\}}} 
(-1)^{|\si|}(\tau', \si)
\left(\sum_{p=1}^r  (\vs_{m,p}, \id) (\G) \right) \,,
\end{equation}
where $S_{\{2,3, \dots, m\}}$ denotes the permutation group 
of the set $\{2,3, \dots, m\}$\,, and $\vs_{m,p}$ is the family of 
cycles in $S_m$ defined in \eqref{vs-k-i}.

Using \eqref{ga-pr} we get 
\begin{equation}
\label{md-star-ga1}
\md^*(\ga) = \sum_{i=1}^{k+1} \frac{(-1)^{i-1}}{k+1}  (\id, \si_{k+1,i}) (\ga'')
\end{equation}
with
\begin{equation}
\label{ga-prpr}
\ga'' = \sum_{\si \in S_{\{2,3, \dots, k+1\}}  } \sum_{\tau' \in S_{m-1}} 
(-1)^{|\si|}(\tau', \si)
\left(\sum_{p=1}^r  R_{\circ} \big( (\vs_{m,p}, \id) (\G) \big) \right)\,, 
\end{equation}
where $S_{\{2,3, \dots, k+1\}}$ is the group of permutations of 
the set $\{2,3, \dots, k+1\}$ and
$R_{\circ}$ is the operation which replaces the pike with label $1$ 
by a white vertex with label $1$, shifts labels on the remaining 
white vertices up by $1$ and shifts labels on black vertices down 
by $1$\,.

For the vector $\md \md^* (\ga) $ we get 
\begin{equation}
\label{md-md-star-ga}
\md \md^* (\ga) = \sum_{j=1}^{m} (\tau_{m, j} \,,\, \id)  \left(\,
\sum_{\si \in S_k} \sum_{\tau' \in S_{m-1}  } 
(-1)^{|\si|}(\tau', \si)
\left(\sum_{p=1}^r   (\vs_{m,p}, \id) (\G)   \right)\, \right) + 
\end{equation}
$$
+  \sum_{j=1}^{m} (\tau_{m,j}, \id)
\left(
\sum_{i=2}^{k+1}  (-1)^{i-1}
\big( (\id, \si_{k+1,i}) (\ga'') \big) \circ_{1, \mo} \G^{\br}_0 
\right)\,,
$$
where the first sum comes from the first term 
in the sum \eqref{md-star-ga1} and the second sum 
comes from the remaining terms in \eqref{md-star-ga1}\,.
  
The first sum in \eqref{md-md-star-ga} can be simplified as follows.
$$
\sum_{j=1}^{m} (\tau_{m, j} \,,\, \id)  \left(\,
\sum_{\si \in S_k} \sum_{\tau' \in S_{m-1} } 
(-1)^{|\si|}(\tau', \si)
\left(\sum_{p=1}^r   (\vs_{m,p}, \id) (\G)   \right)\, \right) = 
$$
$$
\sum_{\si \in S_k} \sum_{\tau \in S_m } 
(-1)^{|\si|}(\tau, \si)
\left(\sum_{p=1}^r   (\vs_{m,p}, \id) (\G)   \right)= 
$$
$$
r \, \sum_{\si \in S_k} \sum_{\tau \in S_m } 
(-1)^{|\si|}(\tau, \si) (\G) =  r\,  \ga\,.
$$
In other words, 
\begin{equation}
\label{first-sum}
\sum_{j=1}^{m} (\tau_{m, j} \,,\, \id)  \left(\,
\sum_{\si \in S_k} \sum_{\tau' \in S_{m-1} } 
(-1)^{|\si|}(\tau', \si)
\left(\sum_{p=1}^r   (\vs_{m,p}, \id) (\G)   \right)\, \right) = r\, \ga
\end{equation}

To simplify the  second sum in  \eqref{md-md-star-ga}
we notice that the subsets of $S_{k+1}$
$$
\{\si_{k+1,i}  \circ \si ~|~ \si \in  S_{\{2,3, \dots, k+1\}}, ~ 
2 \le i \le  k+1 \}
$$ 
and 
$$
\{\si \circ \vs_{k+1,i} ~|~ \si \in  S_{\{2,3, \dots, k+1\}}, ~ 
2 \le i \le  k+1 \}
$$ 
coincide.

Hence, 
$$
\sum_{i=2}^{k+1} \frac{(-1)^{i-1}}{k+1}  (\id, \si_{k+1,i}) (\ga'') =
$$
\begin{equation}
\label{for-second-sum}
\frac{1}{k+1} 
\sum_{\si \in S_{\{2,3, \dots, k+1\}}  } \sum_{\tau' \in S_{m-1}} 
(-1)^{|\si|}(\tau', \si)
\left(\sum_{p=1}^r 
\sum_{i=2}^{k+1} (-1)^{i-1} 
(\id, \vs_{k+1, i})
R_{\circ} \big( (\vs_{m,p}, \id) (\G) \big) \right)\,. 
\end{equation}

Next, we introduce operations $\big \{ \Cg^i_p \big\}_{1\le p \le r, 1 \le i \le k }$  
whose input is our graph $\G$ and whose outputs are graphs in $\dgra_{m,k}$
with the same properties, i.e. each white vertex of $\Cg^i_p(\G)$ 
has valency $1$ and $\Cg^i_p(\G)$ has exactly $r$ pikes. 
This operation is illustrated in figure \ref{fig:Cg}.
\begin{figure}[htp]
\centering 
\begin{tikzpicture}[scale=0.5, >=stealth']
\tikzstyle{ahz}=[circle, draw, fill=gray, minimum size=26, inner sep=1]
\tikzstyle{w}=[circle, draw, minimum size=4, inner sep=1]
\tikzstyle{b}=[circle, draw, fill, minimum size=4, inner sep=1]
\node[b] (b1) at (-0.5,4) {};
\draw (-0.5,4.5) node[anchor=center] {{\small $1$}};
\draw (0.5,4) node[anchor=center] {{\small $\dots$}};
\node[b] (b1p) at (1.5,4) {};
\draw (1.5,4.5) node[anchor=center] {{\small $p-1$}};
\node[b] (bp) at (3,4) {};
\draw (3,4.5) node[anchor=center] {{\small $p$}};
\node[b] (bp1) at (4.5,4) {};
\draw (4.5,4.5) node[anchor=center] {{\small $p+1$}};
\draw (5.5,4) node[anchor=center] {{\small $\dots$}};
\node[b] (br) at (6.5,4) {};
\draw (6.5,4.5) node[anchor=center] {{\small $r$}};
\node [ahz] (ahz) at (2.5,2) {};
\node[w] (w1) at (0,0) {};
\draw (0,-0.5) node[anchor=center] {{\small $1$}};
\node[w] (w2) at (1,0) {};
\draw (1,-0.5) node[anchor=center] {{\small $2$}};
\node[w] (w3) at (2,0) {};
\draw (2,-0.5) node[anchor=center] {{\small $3$}};
\draw (3.5,0) node[anchor=center] {{\small $\dots$}};
\node[w] (wk) at (5,0) {};
\draw (5,-0.5) node[anchor=center] {{\small $k$}};
\draw [->] (ahz) edge (w1);
\draw [->] (ahz) edge (w2);
\draw [->] (ahz) edge (w3);
\draw [->] (ahz) edge (wk);
\draw [->] (ahz) edge (b1);
\draw [->] (ahz) edge (b1p);
\draw [->] (ahz) edge (bp);
\draw [->] (ahz) edge (bp1);
\draw [->] (ahz) edge (br);
\draw (8,2) node[anchor=center] {{$\stackrel{\Cg^i_p ~~}{\longrightarrow}$}};
\node[b] (bb1) at (9.5,4) {};
\draw (9.5,4.5) node[anchor=center] {{\small $1$}};
\draw (10.5,4) node[anchor=center] {{\small $\dots$}};
\node[b] (bb1p) at (11.5,4) {};
\draw (11.5,4.5) node[anchor=center] {{\small $p-1$}};
\node[w] (ww1) at (13,4) {};
\draw (13,4.5) node[anchor=center] {{\small $1$}};
\node[b] (bbp) at (14.5,4) {};
\draw (14.5,4.5) node[anchor=center] {{\small $p$}};
\draw (15.5,4) node[anchor=center] {{\small $\dots$}};
\node[b] (bb1r) at (16.5,4) {};
\draw (16.5,4.5) node[anchor=center] {{\small $r-1$}};
\node [ahz] (ahz1) at (12.5,2) {};
\node[w] (ww2) at (10,0) {};
\draw (10,-0.5) node[anchor=center] {{\small $2$}};
\draw (11,0) node[anchor=center] {{\small $\dots$}};
\node[w] (wwi) at (12,0) {};
\draw (12,-0.5) node[anchor=center] {{\small $i$}};
\node[b] (bbm) at (13,0) {};
\draw (13,-0.5) node[anchor=center] {{\small $m$}};
\node[w] (wwi1) at (14.5,0) {};
\draw (14.5,-0.5) node[anchor=center] {{\small $i+1$}};
\draw (15.5,0) node[anchor=center] {{\small $\dots$}};
\node[w] (wwk) at (16.5,0) {};
\draw (16.5,-0.5) node[anchor=center] {{\small $k$}};
\draw [->] (ahz1) edge (bbm);
\draw [->] (ahz1) edge (ww2);
\draw [->] (ahz1) edge (wwi);
\draw [->] (ahz1) edge (wwi1);
\draw [->] (ahz1) edge (wwk);
\draw [->] (ahz1) edge (bb1);
\draw [->] (ahz1) edge (bb1p);
\draw [->] (ahz1) edge (ww1);
\draw [->] (ahz1) edge (bbp);
\draw [->] (ahz1) edge (bb1r);
\end{tikzpicture}
~\\[0.3cm]
\caption{The operation $\G \mapsto \Cg^i_p (\G) $\,. 
Gray regions denote subgraphs formed by black 
vertices which are not pikes} 
\label{fig:Cg}
\end{figure} 
More precisely, $\Cg^i_{p}(\G)$ is obtained from $\G$ via
these steps: 
\begin{itemize}

\item first, we replace the black vertex with label $p$ by
a white vertex and replace the white vertex with label $i$
by a black vertex;

\item second, we shift the labels on the black vertices which are 
$ > p$ down by $1$; 

\item third, we shift the labels on the white vertices which are 
$ < i$ up by $1$; 

\item finally, we assign label $1$ to the new white vertex 
and we assign label $m$ to the new black vertex. 

\end{itemize}

Using equation \eqref{for-second-sum}
and the graphs $\Cg^i_p(\G)$ we present
the second sum in  \eqref{md-md-star-ga}
in the following way. 
\begin{equation}
\label{second-sum}
 \sum_{j=1}^{m} (\tau_{m,j}, \id)
\left(
\sum_{i=2}^{k+1}  (-1)^{i-1}
\big( (\id, \si_{k+1,i}) (\ga'') \big) \circ_{1, \mo} \G^{\br}_0 
\right) = 
\end{equation}
$$
\sum_{j=1}^{m} (\tau_{m,j}, \id)
\sum_{\tau' \in S_{m-1}} 
\sum_{\si \in S_k }
(-1)^{|\si|} (\tau',  \si)
\left(
\sum_{p=1}^r
\sum_{i=2}^{k+1}  (-1)^{i-1}
 \Big(\Cg^{i-1}_{p} \big( \G \big) \Big)
\right) =
$$
$$
- \sum_{j=1}^{m} 
\sum_{\tau' \in S_{m-1}} 
\sum_{\si \in S_k }
(-1)^{|\si|} (\tau_{m,j} \tau',  \si)
\left(
\sum_{p=1}^r
\sum_{i=1}^{k}  (-1)^{i-1}
 \Big(\Cg^i_{p} \big( \G \big) \Big)
\right) =  
$$
$$
- \sum_{\tau \in S_{m}} 
\sum_{\si \in S_k }
(-1)^{|\si|} (\tau,  \si) \Big(  \sum_{p=1}^r
\sum_{i=1}^{k}  (-1)^{i-1}
\Cg^i_{p} \big( \G \big) \Big)\,.
$$

Combining this observation with equation \eqref{first-sum}, 
we conclude that 
\begin{equation}
\label{md-md-star-ga-new}
\md \md^* (\ga) =  r \,\ga  
\end{equation}
$$
- \sum_{\tau \in S_{m}} 
\sum_{\si \in S_k }
(-1)^{|\si|} (\tau,  \si) \Big( \sum_{p=1}^r
\sum_{i=1}^{k}  (-1)^{i-1}
\Cg^i_{p} \big( \G \big) \Big)
$$

To unfold $\md^* \md (\ga)$, we denote by $\om$ the vector $\md(\ga)$ 
 \eqref{md-ga}.  By discarding in $\om$  all graphs for 
which black vertex with label $1$ is not a pike we get 
the expression 
\begin{equation}
\label{om-pr}
\om' =  k \, \sum_{\tau\in S_{\{2,3, \dots, m+1\}}} 
\sum_{\si' \in S_{k-1}} (-1)^{|\si'|} (\tau, \si')\,
\left(
\sum_{i=1}^k (-1)^{i-1} (\tau_{m+1,1}, \id)(\G \circ_{i, \mo} \G^{\br}_0)
\right) 
\end{equation}
$$
+  k \, \sum_{\tau\in S_{\{2,3, \dots, m+1\}}} 
\sum_{\si' \in S_{k-1}} (-1)^{|\si'|} (\tau, \si')\,
\left(\sum_{p=1}^r
\sum_{i=1}^k (-1)^{i-1} (\vs_{m+1, p}, \id)(\G \circ_{i, \mo} \G^{\br}_0)
\right)\,.  
$$

Next, replacing the black vertices with label $1$ in each graph in 
$\om'$ by a white vertex with label $1$ and shifting the 
labels of the remaining vertices correspondingly, we get 
\begin{equation}
\label{om-prpr}
\om'' =  k \, \sum_{\tau\in S_m} 
\sum_{\si' \in S_{\{2,3, \dots, k\}}} (-1)^{|\si'|} (\tau, \si')\,
\left(
\sum_{i=1}^k (-1)^{i-1} (\id, \vs_{k, i})(\G)
\right) 
\end{equation}
$$
+   k \, \sum_{\tau\in S_m} 
\sum_{\si' \in S_{\{2,3, \dots, k\}}} (-1)^{|\si'|} (\tau, \si')\,
\left( \sum_{p=1}^r
\sum_{i=1}^k (-1)^{i-1} \Cg^i_p \big( \G \big)
\right)\,. 
$$

Thus 
\begin{equation}
\label{md-star-md-ga11}
\md^* \md(\ga) = 
\sum_{j=1}^k \frac{(-1)^{j-1}}{k} 
(\id, \si_{k,j}) (\om'') =
\end{equation}
$$ 
\sum_{\tau\in S_m} 
\sum_{\si \in S_k } (-1)^{|\si|} (\tau, \si)\,
\left(
\sum_{i=1}^k (-1)^{i-1} (\id, \vs_{k, i})(\G)
\right) 
$$
$$
+   \sum_{\tau\in S_m} 
\sum_{\si \in S_k} (-1)^{|\si|} (\tau, \si)\,
\left( \sum_{p=1}^r
\sum_{i=1}^k (-1)^{i-1} \Cg^i_p \big(  \G \big)
\right) = 
$$
$$
k \, \ga 
+   \sum_{\tau\in S_m} 
\sum_{\si \in S_k} (-1)^{|\si|} (\tau, \si)\,
\left( \sum_{p=1}^r
\sum_{i=1}^k (-1)^{i-1} \Cg^i_p \big(  \G \big)
\right)
$$

Combining \eqref{md-md-star-ga-new} with \eqref{md-star-md-ga11}
we immediately deduce equation \eqref{Hodge-decomp}.   
 
Lemma  \ref{lem:Koszul} is proved. 
\end{proof}
\begin{remark}
\label{rem:Koszul}
The cochain complex $\Hg$ \eqref{Hg} with the 
differential $\md$ \eqref{md} is very similar to Koszul 
complex for the exterior algebra. However, the author 
could not find an elegant way to reduce $\Hg$ to this 
well known complex. 
\end{remark}

We have the following corollary. 
\begin{cor}
\label{cor:Hg}
Let $\ga$ be a vector in 
$$
 \bs^{2m-2 +k}\, \big( \KGra(m,k)^{\mo} \big)^{S_m} 
$$
satisfying Properties  \ref{P:one}, \ref{P:anti-symm}.
If $k \ge 1$ and $\ga$ is $\md$-closed then 
there exists 
$$
\wt{\ga} \in  \bs^{2(m-1) -2  + k+1}\, \big( \KGra(m-1,k+1)^{\mo} \big)^{S_{m-1}} 
$$
which satisfies Properties  \ref{P:one}, \ref{P:anti-symm} and
such that 
\begin{equation}
\label{ga-md-exact}
\ga = \md(\,\wt{\ga}\,)\,.
\end{equation}
\end{cor}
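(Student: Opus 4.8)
The statement to prove (Corollary \ref{cor:Hg}) asserts that the complex of hedgehogs $\Hg$ with differential $\md$ has trivial cohomology in the relevant range: every $\md$-closed element with at least one white vertex is $\md$-exact, and moreover the primitive can be chosen inside the hedgehog subspace (obeying Properties \ref{P:one}, \ref{P:anti-symm}).

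\medskip

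\textbf{Plan of proof.} The whole argument rests on Lemma \ref{lem:Koszul}, which is the ``Hodge-type'' identity
$$
\md \md^* (\ga) + \md^* \md(\ga) = k \ga + \sum_{r \ge 1} r \ga_r
$$
for any $\ga \in \bs^{2m-2+k}\big(\KGra(m,k)^{\mo}\big)^{S_m}$ obeying Properties \ref{P:one}, \ref{P:anti-symm}, where $\ga_r$ is the part of $\ga$ consisting of graphs with exactly $r$ pikes. The first step is to reduce to the case where $\ga$ has \emph{no pikes}. Indeed, given an arbitrary $\md$-closed $\ga$, write $\ga = \ga_0 + \sum_{r \ge 1}\ga_r$ where $\ga_0$ has no pikes; I would show that each $\ga_r$ is itself $\md$-closed (one checks that $\md$ does not change the number of pikes of a graph, or more precisely that $\md$ preserves the ``pike-grading'' up to a controlled shift — actually the cleaner route is to observe that the operator $\md^*\md + \md\md^*$ on the right-hand side of Lemma \ref{lem:Koszul} is diagonal in the pike-grading, so each $\ga_r$ is killed by $\md$ separately). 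For $r \ge 1$, Lemma \ref{lem:Koszul} applied to $\ga_r$ gives $\md\md^*(\ga_r) = r\,\ga_r$ (since $\md(\ga_r)=0$), hence $\ga_r = \md\big(\tfrac1r \md^*(\ga_r)\big)$ is exact with an explicit primitive $\tfrac1r\md^*(\ga_r)$, which manifestly lands in $\bs^{2(m-1)-2+k+1}\big(\KGra(m-1,k+1)^{\mo}\big)^{S_{m-1}}$ and obeys Properties \ref{P:one}, \ref{P:anti-symm} by the remarks following the definition of $\md^*$.

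\medskip

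\textbf{Handling the pike-free part.} It remains to treat $\ga_0$, which has no pikes, is $\md$-closed, and has $k \ge 1$ white vertices. By Remark \ref{rem:md-star}, $\md^*(\ga_0) = 0$, so Lemma \ref{lem:Koszul} collapses to $\md^*\md(\ga_0) = k\,\ga_0$. Since $\md(\ga_0)=0$ we would get $k\,\ga_0 = 0$, and because $k \ge 1$ and we are in characteristic zero, this forces $\ga_0 = 0$. Thus the pike-free part of a $\md$-closed hedgehog with $k\ge1$ white vertices automatically vanishes, and $\ga = \sum_{r\ge1}\ga_r$ is exact via $\wt{\ga} = \sum_{r \ge 1}\tfrac1r\,\md^*(\ga_r)$. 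This $\wt{\ga}$ obeys Properties \ref{P:one}, \ref{P:anti-symm} because $\md^*$ preserves these properties (it only relabels a pike as a white vertex and antisymmetrizes over white-vertex labels), and it is $S_{m-1}$-invariant for the same reason noted after the definition of $\md^*$. This establishes \eqref{ga-md-exact}.

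\medskip

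\textbf{Expected main obstacle.} The genuinely nontrivial input is Lemma \ref{lem:Koszul} itself, whose proof (the long combinatorial bookkeeping with the cycles $\tau_{m,i}$, $\si_{k,i}$, $\vs_{k,i}$ and the grafting operators $\Cg^i_p$) is where all the work lives; given that lemma, Corollary \ref{cor:Hg} is a short formal consequence. Within the corollary proof, the only subtle point I anticipate is justifying cleanly that $\md$ respects the decomposition $\ga = \sum_r \ga_r$ by number of pikes, i.e. that $\md(\ga) = 0$ implies $\md(\ga_r)=0$ for each $r$ separately; the cleanest way around this is to avoid claiming $\md$ preserves the pike-grading directly and instead invoke Lemma \ref{lem:Koszul} for the \emph{full} $\ga$ together with the pike-grading-diagonality of the right-hand side, then extract the degree-$r$ component. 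Once that is in place, everything else is the elementary linear algebra sketched above.
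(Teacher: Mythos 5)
Your overall strategy is the same as the paper's: split $\ga$ according to the number of pikes, use Lemma \ref{lem:Koszul} to kill the pike-free part (here $k\ge 1$ is exactly what makes $k\,\ga_0=0$ force $\ga_0=0$), and invert $\md$ on the remaining pieces by means of $\md^*$. However, one step as written is false: applying Lemma \ref{lem:Koszul} to $\ga_r$ — which still has $k$ white vertices and whose graphs all have exactly $r$ pikes — gives $\md\md^*(\ga_r)+\md^*\md(\ga_r)=k\,\ga_r+r\,\ga_r=(k+r)\,\ga_r$, not $r\,\ga_r$. The eigenvalue $r$ alone occurs only when $k=0$ (no white vertices), which is the situation of eq.\ \eqref{mdmdr} in Claim \ref{cl:kill-pikes}, not of the present corollary where $k\ge 1$ by hypothesis. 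Consequently your primitive $\wt{\ga}=\sum_{r\ge1}\tfrac1r\,\md^*(\ga_r)$ satisfies $\md(\wt{\ga})=\sum_{r\ge1}\tfrac{k+r}{r}\,\ga_r\ne\ga$ in general; the correct choice is $\wt{\ga}=\sum_{r\ge1}\tfrac1{k+r}\,\md^*(\ga_r)$, which is precisely the paper's formula. With this coefficient repaired, your argument goes through and coincides with the paper's proof (which also notes that $\md^*$ preserves Properties \ref{P:one}, \ref{P:anti-symm} and $S_{m-1}$-invariance, so the primitive lands in the right space, as you observed).

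On the secondary point you flagged: the ``cleaner route'' you propose (diagonality of $\md\md^*+\md^*\md$ in the pike grading) does not by itself give $\md(\ga_r)=0$; diagonality of the Laplacian says nothing about the components of $\md(\ga)$. What does work is your first remark: $\md$ raises the number of pikes by exactly one (replacing a univalent white vertex by a black vertex creates a pike and leaves the rest of the graph untouched), so $\md(\ga)=0$ splits into $\md(\ga_r)=0$ for every $r$, and then the corrected eigenvalue computation applies to each $\ga_r$. Alternatively, one can bypass $\md(\ga_r)=0$ altogether, which is closer to the paper's route: from $\md(\ga)=0$ one gets $\md\md^*(\ga)=k\,\ga+\sum_{r\ge1}r\,\ga_r$; since every graph in the image of $\md$ has at least one pike this forces $\ga_0=0$, and since $\md\md^*$ preserves the pike grading one extracts the degree-$r$ component to obtain $\md\md^*(\ga_r)=(k+r)\,\ga_r$, whence the formula for $\wt{\ga}$.
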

\begin{proof}
Since $\ga$ is $\md$-closed, equation \eqref{Hodge-decomp}
implies that
\begin{equation}
\label{Hodge-for-ga}
\md \md^*(\ga) = k \ga + \sum_{r \ge 1} r \ga_r \,, 
\end{equation}
where $\ga_r$ is the linear combination in $\Hg$
which is obtained from $\ga$ by retaining 
the graphs with exactly $r$ pikes. 

Since each graph in the image of  $\md$ has 
at least one pike, equation   \eqref{Hodge-for-ga} 
implies that each graph in the linear combination 
$\ga$ has at least one pike. Hence,  
\begin{equation}
\label{ga-really}
\ga =  \sum_{r \ge 1}  \ga_r 
\end{equation}
and 
 \eqref{Hodge-for-ga} 
can be rewritten as 
\begin{equation}
\label{Hodge-for-ga-new}
\md \md^*(\ga) = \sum_{r \ge 1} (k+r)\ga_r \,. 
\end{equation}

Thus, setting 
\begin{equation}
\label{wt-ga}
\wt{\ga} =  \sum_{r \ge 1} \frac{1}{k+r}  \md^* (\ga_r) 
\end{equation}
we get the desired identity
$$
\ga = \md (\,\wt{\ga}\, )\,.
$$
\end{proof}

\section{Maurer-Cartan (MC) elements of filtered Lie algebras}
\label{app:MC}

Let $\cL$ be a Lie algebra in the category $\Ch_{\bbK}$ of 
unbounded cochain complexes of $\bbK$-vector spaces. 
Let us assume that $\cL$ is equipped with a descending  
filtration
\begin{equation}
\label{cL-filtr}
 \dots \supset \cF_{-1} \cL  \supset \cF_0 \cL  \supset \cF_1 \cL \supset   \cF_2 \cL \supset \cF_3 \cL \supset \dots
\end{equation}
which is compatible with the Lie bracket, and such that $\cL$ is complete 
and cocomplete with respect to this filtration, i.e. 
\begin{equation}
\label{cL-complete}
\cL = \lim_{k} \cL\, \big/ \, \cF_k \cL
\end{equation}
and 
\begin{equation}
\label{cL-cocomplete}
\cL = \bigcup_{k} \cF_k \cL\,.
\end{equation}
We call such Lie algebras {\it filtered}.

Condition \eqref{cL-complete} guarantees that the subalgebra 
$\cF_1 \cL^0$ of degree zero elements in $\cF_1 \cL$ is
a pro-nilpotent Lie algebra (in the category of $\bbK$-vector spaces). 
Hence,  $\cF_1 \cL^0$ can be exponentiated to a pro-unipotent 
group which we denote by 
\begin{equation}
\label{the-group}
\exp(\cF_1 \cL^0)\,.
\end{equation}

We recall that a {\it Maurer-Cartan (MC) element} of 
$\cL$ is a degree $1$ vector $\al \in \cL$ satisfying the equation
\begin{equation}
\label{eq:MC}
\pa \al + \frac{1}{2} [\al,\al] =0\,,
\end{equation}
where $\pa$ denotes the differential on $\cL$\,.

For a vector $\xi \in \cF_1\cL^0$ and a MC element $\al$ 
we consider the new degree $1$ vector $\wt{\al} \in \cL$
which is given by the formula 
\begin{equation}
\label{xi-acts}
\wt{\al} = \exp(\ad_{\xi})\, \al -
\frac{\exp(\ad_{\xi}) - 1}{\ad_{\xi}} \pa \xi\,,
\end{equation}
where the expressions 
$$
 \exp(\ad_{\xi}) \qquad \textrm{and} \qquad
\frac{\exp(\ad_{\xi}) - 1}{\ad_{\xi}}
$$
are defined in the obvious way using the Taylor 
expansions of the functions 
$$
e^x \qquad \textrm{and} \qquad   \frac{e^x -1}{x}
$$
around the point $x = 0$\,, respectively. 

Condition \eqref{cL-complete} guarantees that the
right hand side of equation \eqref{xi-acts} makes sense.
 
It is known (see, e.g. \cite[Appendix B]{BDW} or \cite{GM}) that, 
for every MC element $\al$ and for every degree zero 
vector $\xi \in \cF_1 \cL$, the vector $\wt{\al}$ in \eqref{xi-acts}
is also a MC element. Furthermore, formula \eqref{xi-acts} 
defines an action of the group \eqref{the-group} on 
the set of MC elements of $\cL$\,. 

The transformation groupoid corresponding to this 
action is called the {\it Deligne groupoid}  of the Lie 
algebra $\cL$\,. This groupoid and its higher versions 
were studied extensively in \cite{Berglund}, \cite{BGNT},
 \cite{B-Felix-M-Tanre},
\cite{GMtheorem}, \cite{Ezra} 
and \cite{Ezra-infty}, \cite{Henriques}, \cite{Yalin}.

\begin{example}
\label{ex:Conv} 
Let $\cC$ (resp. $\cO$) be a $\Xi$-colored pseudo-cooperad
(resp. $\Xi$-colored pseudo-operad) in $\Ch_{\bbK}$\,. 
The convolution Lie algebra $\Conv(\cC, \cO)$ described in 
Section \ref{sec:Conv} gives us an example of a filtered Lie 
algebra. Thus it makes sense to talk about the 
Deligne groupoid of $\Conv(\cC, \cO)$\,.  
\end{example}

\subsection{Differential equations on the Lie algebra $\cL \hotimes \bbK[t]$}
\label{app:diffura}

Given a filtered dg Lie algebra $\cL$,
we introduce the new dg Lie algebra\footnote{$t$ is an auxiliary degree zero variable.} 
\begin{equation}
\label{cL-adjoin-t}
\cL \hotimes \bbK[t] 
\end{equation}
where $\cL$ is considered with the topology coming from 
the filtration and $\bbK[t]$ is considered with the discrete topology. 

It is clear that $\cL \hotimes \bbK[t]$ consists of vectors 
\begin{equation}
\label{v-t}
v = \sum_{k=0}^{\infty} v_k\, t^k  ~~\in~~ \cL[[t]]
\end{equation}
satisfying the condition 
\begin{cond}
\label{cond:growth}
For every integer $m$, the image of $v$ in 
\begin{equation}
\label{cL-cFmcL}
\big( \cL \big/  \cF_m \cL \big) [[t]]
\end{equation}
is a polynomial in $t$. In other words, for every integer $m$ 
there exists $k_m$ such that $v_k \in \cF_m  \cL$ for all $k \ge k_m$\,.
\end{cond}

Let us also observe that $\cL \hotimes \bbK[t]$ comes with the obvious descending filtration:
\begin{equation}
\label{filtr-cL-t}
\cF_m \big( \cL \hotimes \bbK[t] \big)  : = (\cF_m \cL) \hotimes \bbK[t],
\end{equation}
the subspace $\cL \hotimes \bbK[t] \subset \cL[[t]]$ is closed with respect to the 
(formal) derivative $d/dt$ and 
$$
d/dt  \big( (\cF_m \cL) \hotimes \bbK[t] \big) ~~ \subset ~~ (\cF_m \cL) \hotimes \bbK[t].
$$

Condition \ref{cond:growth} and completeness of $\cL$ with respect to 
the filtration $\cF_{\bul}$ imply that the assignment 
\begin{equation}
\label{set-t-1}
v \mapsto v \Big|_{t=1}  
\end{equation}
defines a Lie algebra homomorphism from $ \cL \hotimes \bbK[t]$ to $\cL$. 
Furthermore, this homomorphism is compatible with the filtrations on 
$\cL \hotimes \bbK[t]$ and $\cL$. 

Next, we claim that 
\begin{claim} 
\label{cl:cL-t-complete}
The dg Lie algebra $\cL \hotimes \bbK[t]$ is complete and cocomplete with respect to 
the filtration \eqref{filtr-cL-t}. 
\end{claim}
\begin{proof}
The cocompleteness follows readily from Property \eqref{cL-cocomplete} 
and Condition \ref{cond:growth}.

To prove the completeness, we need to show that for every 
infinite sequence of vectors 
\begin{equation}
\label{sequence-v-r}
v^{(r)} = \sum_{k\ge 0} v^{(r)}_k t^k \in (\cF_{m_r} \cL) \hotimes \bbK[t], \qquad r \ge 1
\end{equation}
satisfying the condition  
\begin{equation}
\label{m-r-growth}
m_1 \le m_2 \le m_3 \le \dots, \qquad 
\lim_{r \to \infty} m_r = \infty
\end{equation}
the sum 
\begin{equation}
\label{sum-v-r}
\sum_{r \ge 1} v^{(r)} 
\end{equation}
belongs to the subalgebra $\cL \hotimes \bbK[t]$\,.

The sum \eqref{sum-v-r} can be rewritten as follows: 
\begin{equation}
\label{sum-v-r-new}
\sum_{r \ge 1} v^{(r)} =  \sum_{k \ge 0} w_k t^k\,,  
\end{equation}
where 
\begin{equation}
\label{w-k}
w_k = \sum_{r=1}^{\infty} v^{(r)}_k  
\end{equation}

Let us choose an integer $m$. Condition \eqref{m-r-growth} implies that 
there exist $r'$ such that 
$$
m_r \ge m \qquad \forall~~ r \ge r'\,.
$$
Hence 
\begin{equation}
\label{incl-cF-m-cL}
\sum_{r=r'}^{\infty} v^{(r)}_k  \in \cF_{m} \cL
\end{equation}
for all $k$\,. 

On the other hand,  $v^{(r)}  \in \cL \hotimes \bbK[t]$ for every $r$. 
So for every $r\ge 1$ there exists $k^r_m$ such that 
$$
v^{(r)}_k \in \cF_m \cL \qquad \forall ~~ k \ge k^r_m\,.
$$
Therefore, setting $k_m = \max\{k^1_m, k^2_m, \dots, k^{r'-1}_m\}$, we get 
the inclusion
$$
\sum_{r=1}^{r'-1} v^{(r)}_k  \in \cF_{m} \cL \qquad \forall~~ k \ge k_m\,.
$$

Combining this inclusion with \eqref{incl-cF-m-cL}, we conclude that 
\begin{equation}
\label{desired-incl}
\sum_{r=1}^{\infty} v^{(r)}_k  \in \cF_{m} \cL \qquad \forall~~ k \ge k_m\,.
\end{equation}

Claim \ref{cl:cL-t-complete} is proved.
\end{proof}

We will need the following proposition
\begin{prop}
\label{prop:diff-eq}
For every degree $1$ vector $\al \in \cL$ and $\eta(t) \in  \cF_1 \cL^0 \hotimes \bbK[t]$
the equation
\begin{equation}
\label{diffura}
\frac{d}{dt} \al(t) = - \pa \eta(t) + [\eta(t), \al(t)]
\end{equation}
with initial condition
\begin{equation}
\label{initial-cond}
 \al(t) \Big|_{t=0} = \al
\end{equation}
has a unique solution in $\cL \hotimes \bbK[t]$\,.
In addition, if $\al$ satisfies the MC equation
$$
\pa \al + \frac{1}{2}[\al, \al] = 0,
$$
then so does $\al(t)$\,. 
\end{prop}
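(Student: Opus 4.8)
The plan is to treat equation \eqref{diffura} as a standard linear-type ODE in the complete filtered Lie algebra $\cL\{t\}$ and solve it by Picard iteration (successive approximations). First I would rewrite \eqref{diffura} together with \eqref{initial-cond} as the integral equation
\begin{equation}
\label{plan:integral}
\al(t) = \al - \int_0^t \pa\xi(s)\, ds + \int_0^t [\xi(s), \al(s)]\, ds\,,
\end{equation}
where the integral of a formal power series in $t$ is taken termwise, so $\int_0^t$ raises the $t$-degree by one and acts coefficientwise on $\cL$. I would then define the sequence $\al^{(0)}(t) = \al$ and $\al^{(N+1)}(t) = \al - \int_0^t \pa\xi(s)\, ds + \int_0^t [\xi(s), \al^{(N)}(s)]\, ds$, and show that the differences $\al^{(N+1)}(t) - \al^{(N)}(t)$ stabilize in the $t$-adic topology (each Picard step increases the lowest power of $t$ appearing in the difference by at least one, since $\xi(t)$ has no constant term is not needed — rather because the new contribution always passes through an $\int_0^t$). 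Hence the limit $\al(t) = \lim_{N\to\infty}\al^{(N)}(t)$ exists in $\cL[[t]]$ and solves \eqref{plan:integral}; differentiating \eqref{plan:integral} recovers \eqref{diffura}, and setting $t = 0$ gives \eqref{initial-cond}. Uniqueness follows the same way: the difference of two solutions satisfies a homogeneous integral equation $\beta(t) = \int_0^t[\xi(s),\beta(s)]\,ds$ with $\beta(0) = 0$, forcing $\beta \equiv 0$ by iterating and using $t$-adic separatedness.

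Next I would verify that $\al(t)$ actually lies in the subalgebra $\cL\{t\}$, i.e. that it satisfies Condition \ref{cond:growth}. Here I would use that $\xi(t) \in \cF_1\cL^0\{t\}$, so by Property \ref{P:new-cocomplete} there is an integer $n$ with $\xi(t) \in (\cF_n\cL)\{t\}$; together with $\al \in \cL = \bigcup_k \cF_k\cL$ (so $\al \in \cF_{n'}\cL$ for some $n'$), compatibility of the filtration with the bracket and the differential gives that every Picard iterate $\al^{(N)}(t)$ lies in $(\cF_{\min(n,n')}\cL)\{t\}$, and this property is preserved in the limit by Claim \ref{cl:cL-t-complete} (completeness of $\cL\{t\}$ with respect to $\cF_\bullet\cL\{t\}$). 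Moreover, inductively the $t^k$-coefficient of $\al^{(N)}(t) - \al$ for $k$ large enough lands in arbitrarily deep filtration pieces because the bracket with $\xi(t) \in \cF_1$ pushes things deeper; this is exactly what is needed to confirm Condition \ref{cond:growth}, so $\al(t) \in \cL\{t\}$.

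Finally, for the Maurer–Cartan claim, I would set $\mathrm{MC}(t) := \pa\al(t) + \tfrac12[\al(t),\al(t)] \in \cL\{t\}$ and compute its derivative using \eqref{diffura}:
\begin{equation}
\label{plan:MCderiv}
\frac{d}{dt}\,\mathrm{MC}(t) = \pa\bigl(\tfrac{d}{dt}\al(t)\bigr) + [\tfrac{d}{dt}\al(t), \al(t)] = \pa\bigl(-\pa\xi(t) + [\xi(t),\al(t)]\bigr) + \bigl[-\pa\xi(t) + [\xi(t),\al(t)], \al(t)\bigr]\,.
\end{equation}
Expanding the right-hand side using $\pa^2 = 0$, the Leibniz rule $\pa[\xi,\al] = [\pa\xi,\al] - [\xi,\pa\al]$, and the Jacobi identity, all terms that do not already involve $\mathrm{MC}(t)$ cancel, and one is left with an identity of the form $\tfrac{d}{dt}\mathrm{MC}(t) = [\xi(t), \mathrm{MC}(t)]$. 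Since $\mathrm{MC}(0) = \pa\al + \tfrac12[\al,\al] = 0$ by hypothesis, the uniqueness argument for the homogeneous equation (same $t$-adic iteration as above) forces $\mathrm{MC}(t) \equiv 0$, i.e. $\al(t)$ is a Maurer–Cartan element. The main obstacle I anticipate is purely bookkeeping: making the termwise integration and the filtration estimates precise enough that convergence in $\cL[[t]]$ genuinely upgrades to membership in $\cL\{t\}$, and carefully expanding \eqref{plan:MCderiv} so the cancellation is transparent; there is no conceptual difficulty, only the need to invoke Claim \ref{cl:cL-t-complete} and Condition \ref{cond:growth} at the right moments.
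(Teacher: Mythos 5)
Your proposal is correct and follows essentially the same route as the paper: Picard iteration on the integral form of \eqref{diffura}, uniqueness via the homogeneous integral equation, and preservation of the Maurer--Cartan condition by showing $\pa\al(t)+\tfrac12[\al(t),\al(t)]$ satisfies a linear equation with zero initial value. The only cosmetic difference is that you first argue $t$-adic convergence in $\cL[[t]]$ and then upgrade to membership in $\cL\{t\}$, whereas the paper gets convergence in $\cL\{t\}$ directly from the filtration estimate $\xi(t)\in\cF_1\cL^0\{t\}$ together with Claim \ref{cl:cL-t-complete}; both are sound.
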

\begin{proof}
Let us set up the following iterative procedure in $r \ge 0$
$$
\al^{(0)}(t) = \al 
$$
and 
\begin{equation}
\label{iter}
\al^{(r)}(t) =  \al -  \int_0^t \pa \eta(t_1)  \, d t_1 +  \int_0^t \,  [\eta(t_1), \al^{(r-1)}(t_1)] \, d t_1\,. 
\end{equation}

Since the differences $\al^{(r+1)}(t) - \al^{(r)}(t)$ and  $\al^{(r)}(t) - \al^{(r-1)}(t)$
satisfy the equation
$$
\al^{(r+1)}(t) - \al^{(r)}(t) =  \int_0^t \,  [\eta(t_1), \al^{(r)}(t_1) - \al^{(r-1)}(t_1)] \, d t_1 
$$
and $\eta(t) \in  \cF_1 \cL^0  \hotimes \bbK[t]$, this iterative procedure converges to 
a vector $\al(t) \in \cL \hotimes \bbK[t]$\,. Moreover, $\al(t)$ satisfies the integral equation
\begin{equation}
\label{int-eq}
\al(t) =  \al -  \int_0^t \pa \eta(t_1)  \, d t_1 +  \int_0^t \,  [\eta(t_1), \al(t_1)] \, d t_1
\end{equation}
and hence differential equation \eqref{diffura} with initial condition 
\eqref{initial-cond}.

To prove the uniqueness, let us assume that $\wt{\al}(t)$ is another 
solution of \eqref{diffura} with the initial condition \eqref{initial-cond}.
Then the difference: 
$$
\psi(t) =  \wt{\al}(t) - \al(t) 
$$
satisfies the differential equation
\begin{equation}
\label{diff-eq-psi}
\frac{d}{d t} \psi(t) = [\eta(t), \psi(t)]
\end{equation}
with the initial condition 
\begin{equation}
\label{initial-psi}
\psi(t) \Big|_{t=0} = 0\,.
\end{equation}

Using \eqref{diff-eq-psi} and \eqref{initial-psi} we conclude that 
\begin{equation}
\label{int-eq-psi}
\psi(t) = \int_0^t [\eta(t_1), \psi(t_1)]\, d t_1\,.
\end{equation}

Hence the inclusion $\eta(t) \in  \cF_1 \cL^0 \hotimes \bbK[t]$ implies that 
$$
\psi(t) \in \bigcap_{m} \cF_m \cL  \hotimes \bbK[t]\,.
$$
Therefore, by Claim \ref{cl:cL-t-complete}, $\psi(t) = 0$ and $\al(t) = \wt{\al}(t)$.

The first statement of Proposition \ref{prop:diff-eq} is proved.

To prove the second statement, we consider the following element
\begin{equation}
\label{Psi-t}
\Psi(t) = \pa \al(t) + \frac{1}{2}[\al(t), \al(t)] \in \cL^2 \hotimes \bbK[t].
\end{equation}

Taking the derivative $d/dt$ and using \eqref{diffura}, we get
\[
\frac{d}{d t} (\pa \al(t) + \frac{1}{2}[\al(t), \al(t)] ) = [\eta(t), \pa \al(t)
+ \frac{1}{2}[\al(t), \al(t)]]\,.
\]
In other words, the element $\Psi(t)$ satisfies the differential 
equation
\begin{equation}
\label{diff-eq-Psi}
\frac{d}{d t}\Psi(t) = [\eta(t), \Psi(t)]\,.
\end{equation}
Since $\al$ satisfies the MC equation, we conclude that 
\begin{equation}
\label{initial-Psi}
\Psi(t) \Big|_{t=0} = 0\,.
\end{equation}

Using \eqref{diff-eq-Psi} and \eqref{initial-Psi}, we deduce that 
\begin{equation}
\label{int-eq-Psi}
\Psi(t) = \int_0^t [\eta(t_1), \Psi(t_1)] \, d t_1\,.
\end{equation}

Equation \eqref{int-eq-Psi} implies that 
$$
\Psi(t) \in  \bigcap_{m} \cF_m \cL  \hotimes \bbK[t]\,.
$$
and hence $\Psi(t) =0$\,.

Proposition \ref{prop:diff-eq} is proved. 
\end{proof}

Proposition \ref{prop:diff-eq} implies that using 
an element $\eta(t) \in \cF_1 \cL^0 \hotimes \bbK[t] $ and a MC element 
$\al \in \cL$ we can produce another MC element $\al'$ by 
solving equation (\ref{diffura}) with initial condition
(\ref{initial-cond}) and setting
$$
\al' = \al(t) \Big|_{t=1}\,.
$$
Theorem \ref{thm:isom} below states that the MC elements $\al$ and $\al'$ are 
isomorphic.

To prove this statement we need the following technical lemma.
\begin{lem}
\label{nuzhna}
If $\al$ is a MC element of $\cL$,  $\eta(t) \in \cF_1 \cL^0  \hotimes \bbK[t] $, 
and $\al(t)$ is the unique
solution of \eqref{diffura} with initial condition
(\ref{initial-cond}), then for every $\ka \in \cF_{1} \cL^0$ and
every nonnegative integer $k$, the element
\begin{equation}
\label{beta-t}
\beta(t) = \exp\left(\frac{t^{k+1}}{k+1} \ad_{\ka} \right) \al(t) 
 - \frac{\exp\left(\frac{t^{k+1}}{k+1} \ad_{\ka}\right) - 1}{\ad_{\ka}} ~ \pa \ka
\end{equation}
satisfies the differential equation
\begin{equation}
\label{diffura-beta}
\frac{d}{d t} \beta(t) = [\ti{\eta}, \beta(t)] -  \pa \ti{\eta}\,, 
\end{equation}
where
\begin{equation}
\label{tilde-eta}
\ti{\eta} (t) = t^k \ka + \exp\left(\frac{t^{k+1}}{k+1}\ad_{\ka}  \right) \eta(t)\,.
\end{equation}
\end{lem}
\begin{proof}
First, we remark that, the infinite series in  \eqref{beta-t} and 
\eqref{tilde-eta} belong to $\cL  \hotimes \bbK[t] $ due to Claim \ref{cl:cL-t-complete}.

Second, we compute the derivative $\displaystyle \frac{d}{d t} \beta(t)$ using 
\eqref{diffura}
\begin{equation}
\label{deriv-beta}
\frac{d}{d t} \beta(t) = \exp\left(\frac{t^{k+1}}{k+1} \ad_{\ka} \right) \, [t^k \ka,  \al(t)]  +
\end{equation}
$$
+ \exp\left(\frac{t^{k+1}}{k+1} \ad_{\ka} \right) \frac{d}{d t}  \al(t) 
- t^k  \exp\left(\frac{t^{k+1}}{k+1} \ad_{\ka}\right)\, \pa \ka=
$$
$$
\exp\left(\frac{t^{k+1}}{k+1} \ad_{\ka} \right) \,   [t^k \ka,  \al(t)] 
+ \exp\left(\frac{t^{k+1}}{k+1} \ad_{\ka} \right) \, [\eta(t), \al(t)] 
$$
$$
-\exp\left(\frac{t^{k+1}}{k+1} \ad_{\ka} \right) 
\, \pa \eta(t) 
-  \exp\left(\frac{t^{k+1}}{k+1} \ad_{\ka}\right)\, \pa (t^k \ka)\,.
$$

Using the notation 
$$
U_{\ka} : =  \exp\left(\frac{t^{k+1}}{k+1} \ad_{\ka} \right)
$$
and the obvious identity $[\ka, \ka]=0$,
we rewrite the derivative $\displaystyle \frac{d}{d t} \beta(t)$ as follows: 
\begin{equation}
\label{deriv-beta1}
 \frac{d}{d t} \beta(t) =
   [t^k \ka + U_{\ka} (\eta(t)),  U_{\ka}(\al(t))]  
\end{equation}
$$
- U_{\ka} ( \pa \eta(t) ) - U_{\ka} (\pa (t^k \ka))\,.
$$

On the other hand, 
$$
\beta(t) = U_{\ka}( \al(t)) 
 - \frac{U_{\ka} - 1}{\ad_{\ka}} ~ \pa \ka\,.
$$
Hence
\begin{equation}
\label{deriv-beta11}
 \frac{d}{d t} \beta(t) =
  [t^k \ka + U_{\ka} (\eta(t)), \beta(t)]  
   +
   \left[ t^k \ka + U_{\ka} (\eta(t)), \frac{U_{\ka} - 1}{\ad_{\ka}} ~ \pa \ka \right] 
\end{equation}
$$
- U_{\ka} ( \pa \eta(t) ) - U_{\ka} (\pa (t^k \ka)) =
$$
$$
[t^k \ka + U_{\ka} (\eta(t)), \beta(t)]  - \pa (t^k \ka)
$$
$$
- U_{\ka} ( \pa \eta(t) ) - \left[  \frac{U_{\ka} - 1}{\ad_{\ka}} ~ \pa \ka \,,\, 
U_{\ka} (\eta(t)) \right]\,. 
$$

Thus, to prove Lemma \ref{nuzhna}, we need to verify that 
\begin{equation}
\label{does-it-hold}
(\pa \circ  U_{\ka} - U_{\ka} \circ \pa)\, (\eta(t)) =  \left[  \frac{U_{\ka} - 1}{\ad_{\ka}} ~ \pa \ka \,,\, 
U_{\ka} (\eta(t)) \right]\,.
\end{equation}

Clearly, it suffices to check that 
\begin{equation}
\label{does-it-hold-v}
(\pa \circ  U_{\ka} - U_{\ka} \circ \pa)\, (v) =  \left[  \frac{U_{\ka} - 1}{\ad_{\ka}} ~ \pa \ka \,,\, 
U_{\ka} (v) \right]
\end{equation}
for every vector $v \in \cL$\,.

Let us denote by $\Psi_1(t)$ (resp. $\Psi_2(t)$) the left (resp. right) hand side 
of \eqref{does-it-hold-v}. It is easy to see that 
\begin{equation}
\label{Psi-i-0}
\Psi_1(0) = \Psi_2(0) = 0\,.
\end{equation}

A direct computation shows that both $\Psi_1(t)$ and $\Psi_2(t)$ satisfy the 
same differential equation: 
\begin{equation}
\label{Psi-diffura}
\frac{d}{d t} \, \Psi_i (t) = [t^k \ka, \Psi_i(t) ]+ [t^k \pa \ka,  U_{\ka}(v)]\,. 
\end{equation}

Therefore, the difference $\Psi_2(t) - \Psi_1(t)$ satisfies the integral equation 
\begin{equation}
\label{int-eq-diff-Psi}
\Psi_2(t) - \Psi_1(t) = \int_0^t  t_1^k [\ka, \Psi_2(t_1) -  \Psi_1(t_1)] d t_1\,.
\end{equation}

Since $\ka \in \cF_1 \cL^0$, we have 
$$
\Psi_2(t) - \Psi_1(t) \in   \bigcap_{m} \cF_m \cL \hotimes \bbK[t]\,.
$$
Thus $\Psi_2(t) - \Psi_1(t) = 0$ and Lemma \ref{nuzhna} is proved. 
\end{proof}

Let us now prove a statement which is used in 
Section \ref{sec:the-action}.

\begin{thm}
\label{thm:isom}
Let $\mg$ be a Lie subalgebra of $\cL^0$, $n$ be an integer $\ge 2$, and
\begin{equation}
\label{eta-mg}
\eta(t) = \sum_{k\ge 0} \eta_k t^k, \qquad \eta_k \in \cF_{m_k} \mg 
\end{equation}
be a vector  in $\cF_1 \mg  \hotimes \bbK[t] $\,. If $\al$ is a MC element of $\cL$ 
and $\al(t)$ is the unique
solution of (\ref{diffura}) with initial condition
(\ref{initial-cond}), then there exists a vector $\xi  \in \cF_1\mg^0$
such that 
\begin{equation}
\label{eta-connects}
 \exp(\ad_{\xi})\, \al -
\frac{\exp(\ad_{\xi}) - 1}{\ad_{\xi}} \pa \xi ~ = ~   
\al(t) \Big|_{t=1} \,.
\end{equation}
Moreover if
\begin{equation}
\label{eta-t-eta-0}
\eta(t) - \eta_0  \in \cF_n \mg[[t]]
\end{equation}
then there exists $\xi  \in \cF_1\mg^0$ such that 
\eqref{eta-connects} holds and
\begin{equation}
\label{xi-eta-0}
\xi - \eta_0 \in  \cF_n \mg.
\end{equation}
\end{thm}
\begin{proof}
The statement of this theorem is very similar 
to  \cite[Proposition B.7]{BDW}. Unfortunately, theorem \ref{thm:isom} is not a Corollary 
of  \cite[Proposition B.7]{BDW}. So we give a separate proof.

Let us construct recursively the following sequence of vectors in $\cF_1 \mg  \hotimes \bbK[t]$: 
\begin{equation}
\label{eta-sequence}
\eta^{(k)} (t) =  \eta^{(k)}_k  t^k +  \eta^{(k)}_{k+1}  t^{k+1} +  \eta^{(k)}_{k+2}  t^{k+2} + \dots\,,
\end{equation}
\begin{equation}
\label{base}
\eta^{(0)}(t) : = \eta(t)\,, 
\end{equation}
\begin{equation}
\label{step}
\eta^{(k+1)} (t) : = \exp\left(-\frac{t^{k+1}}{k+1}\ad_{\eta^{(k)}_k}  \right) \eta^{(k)}(t) 
-t^k \eta^{(k)}_k\,.
\end{equation}

By Lemma \ref{nuzhna}, we get the sequence of MC elements in $\cL  \hotimes \bbK[t]$
\begin{equation}
\label{al-0}
 \al^{(0)}(t) := \al(t)
\end{equation}
\begin{equation}
\label{al-step}
 \al^{(k+1)}(t) :=  \exp\left(-\frac{t^{k+1}}{k+1} \ad_{\eta^{(k)}_k} \right) \al^{(k)}(t) 
 - \frac{\exp\left(-\frac{t^{k+1}}{k+1} \ad_{\eta^{(k)}_k}\right) - 1}{\ad_{\eta^{(k)}_k}} ~ \pa \eta^{(k)}_k\,,
\end{equation}
where $ \al^{(k)}(t)$ is the unique solution of the differential equation 
\begin{equation}
\label{diffura-al-kk}
\frac{d}{d t} \al^{(k)}(t) = [\eta^{(k)} (t), \al^{(k)}(t)] -  \pa \eta^{(k)}(t)
\end{equation}
with the initial condition 
\begin{equation}
\label{al-k-initial}
 \al^{(k)}(t) \Big|_{t = 0} = \al\,.
\end{equation}

Let us prove that the sequence of vectors  
$$
\{ \eta^{(k)}_k \}_{k \ge 0}
$$
satisfies the property
$$
\eta^{(k)}_k \in \cF_{n_k} \mg
$$
with 
\begin{equation}
\label{n-k-going-up}
n_0 \le n_1 \le n_2 \le n_3 \le \dots
\qquad \textrm{and}
\qquad 
\lim_{k \to \infty} n_k = \infty \,.
\end{equation}

Without loss of generality, we may assume that the 
sequence of number $\{m_k\}_{k \ge 0}$ in \eqref{eta-mg}
is increasing 
$$
m_0 \le m_1 \le m_2 \le m_3 \le \dots
$$
and $m_0 = 1$\,.

Hence the property
$$
n_0 \le n_1 \le n_2 \le n_3 \le \dots
$$
follows immediately from the construction. 

It remains to prove that for every $m$ there exists $k_m$ such that 
\begin{equation}
\label{xi-k-desired}
\eta^{(k)}(t) \in \cF_{m} \mg \hotimes \bbK[t] \qquad \forall ~~ k \ge k_m\,.
\end{equation}

Since $\eta(t) \in \cF_1\mg  \hotimes \bbK[t]$, there exists $r_1$ such that 
$$
\eta^{(0)}_k = \eta_k \in \cF_m \mg \qquad \forall ~~ k > r_1\,.
$$

In ``the worst case scenario'', $m_k=1$ for all $k \le r_1$\,. 

So after $r_1+1$ steps \eqref{step} we get 
$$
\eta^{(r_1+1)}(t) \in \cF_2 \mg \hotimes \bbK[t].
$$
 
Since $\eta^{(r_1+1)}(t) \in \cF_2 \mg \hotimes \bbK[t]$ there exists $r_2$ such that 
all coefficients, except for the first $r_2$ ones belong to $\cF_3 \mg$\,.

Hence, after additional $r_2$ steps \eqref{step} we get 
\begin{equation}
\label{after-r2}
\eta^{(r_1+1+r_2)}(t) \in \cF_3 \mg  \hotimes \bbK[t].
\end{equation}

Therefore, in finitely many steps \eqref{step}, we will arrive at 
$$
\eta^{(k_m)}(t) \in \cF_{m} \mg  \hotimes \bbK[t]. 
$$

On the other hand, if $\eta^{(k)}(t)$ belongs to  $\cF_{m} \mg  \hotimes \bbK[t]$ then so does 
$\eta^{(k+1)}(t)$. Thus, the desired inclusion \eqref{xi-k-desired} is proved. 

Property \eqref{n-k-going-up} implies that the sequence of vectors 
\begin{equation}
\label{the-sequence}
\CH\Big(-\eta^{(k)}_k /(k+1)~,~ \dots \CH \big(-\eta^{(2)}_2 /3~, ~
\CH(-\eta^{(1)}_1 /2~,~  -\eta^{(0)}_0) \big) \Big)
\end{equation}
converges in $\cF_1\mg$ and we denote the limiting vector 
by $\xi_{\infty}$\,. In addition, the sequence of vectors 
$\{\eta^{(k)}(t)\}_{k \ge 0}$ converges to zero and hence the sequence of vectors
\begin{equation}
\label{al-k-converges}
\{ \al^{(k)}(t) \}_{k \ge 0} 
\end{equation}
converges to the constant path $\al$. 

Therefore 
\begin{equation}
\label{al-al-1-connect}
\exp(\xi_{\infty}) \left( \al(t)\Big|_{t=1}  \right) = \al
\end{equation}
Hence, setting 
$$
\xi : = - \xi_{\infty}\,,
$$ 
we prove the first part of Theorem \ref{thm:isom}. 
 
According to \eqref{eta-t-eta-0}, $\eta_k \in \cF_n \mg$ for all $k \ge 1$ in \eqref{eta-mg}. 
Therefore, the sequence of vectors \eqref{eta-sequence} satisfies
$$
\eta^{(k)} (t) \in  t^k \cF_n \mg[[t]]
$$
for all $k \ge 1$. 

Hence 
$$
\xi_{\infty}  + \eta_0 \in \cF_n \mg
$$
and the desired inclusion in \eqref{xi-eta-0} follows.
\end{proof}


\noindent\textsc{Department of Mathematics,
Temple University, \\
Wachman Hall Rm. 638\\
1805 N. Broad St.,\\
Philadelphia PA, 19122 USA \\
\emph{E-mail address:} {\bf vald@temple.edu}}

\end{document}